\title[Constructing non-semisimple modular categories]{Constructing non-semisimple modular categories \\with 
local modules}
\author{Robert Laugwitz}
\address{School of Mathematical Sciences,
University of Nottingham, University Park, Nottingham, NG7 2RD, UK}
\email{robert.laugwitz@nottingham.ac.uk}
\author{Chelsea Walton}
\address{Department of Mathematics, Rice University,
P.O. Box 1892, Houston, TX 77005-1892, USA}
\email{notlaw@rice.edu}
\newcommand{\stkout}[1]{\ifmmode\text{\sout{\ensuremath{#1}}}\else\sout{#1}\fi}
\definecolor{forest}{rgb}{0.0, 0.5, 0.0}
\newcommand{\bijar}[1][]{%
 \ar[#1]
 \ar@<0.7ex>@{}[#1]|-*=0[@]{\sim}} 
\DeclareMathAlphabet{\cal}{OMS}{zplm}{m}{n}
\newcommand{\leftexpsub}[3]{{\vphantom{#3}}^{#1}_{#2}{#3}}
\newcommand{\lYD}[1]{\leftexpsub{#1}{#1}{\mathsf{YD}}}
\newcommand{\ov}[1]{\overline{#1}}
\newcommand{\bimod}[1]{#1\text{-}\mathsf{Bimod}}
\newcommand{\lmod}[1]{#1\text{-}\mathsf{Mod}}
\newcommand{\rmod}[1]{\mathsf{Mod}\text{-}#1}
\newcommand{\lcomod}[1]{#1\text{-}\mathsf{Comod}}
\newcommand{\rcomod}[1]{\mathsf{Comod}\text{-}#1}
\newcommand{\cha}{\operatorname{char}}
\newcommand{\coev}{\mathsf{coev}}
\newcommand{\coevr}{\widetilde{\mathsf{coev}}}
\newcommand{\Drin}{\operatorname{Drin}}
\newcommand{\ev}{\mathsf{ev}}
\newcommand{\evr}{\widetilde{\mathsf{ev}}}
\newcommand{\End}{\operatorname{End}}
\newcommand{\Hom}{\operatorname{Hom}}
\newcommand{\ide}{\mathsf{Id}}
\newcommand{\isomorph}{\stackrel{\sim}{\to}}
\newcommand{\tensimeq}{\overset{\otimes}{\simeq}}
\newcommand{\Ob}{\operatorname{Ob}}
\newcommand{\one}{\mathds{1}}
\newcommand{\triv}{\mathrm{triv}}
\newcommand{\Alg}{\mathsf{Alg}}
\newcommand{\Bialg}{\mathsf{Bialg}}
\newcommand{\Coalg}{\mathsf{Coalg}}
\newcommand{\FPdim}{\mathsf{FPdim}}
\renewcommand{\dim}{\mathsf{dim}}
\newcommand{\HopfAlg}{\mathsf{HopfAlg}}
\newcommand{\Rep}{\mathsf{Rep}}
\newcommand{\locmod}{\mathsf{Rep}^{\mathsf{loc}}}
\newcommand{\Vect}{\mathsf{Vect}_\Bbbk}
\newcommand{\cA}{\cal{A}}
\newcommand{\cC}{\cal{C}}
\newcommand{\cD}{\cal{D}}
\newcommand{\cB}{\cal{B}}
\newcommand{\cZ}{\cal{Z}}
\newtheoremstyle{defstyle}
  {0.5cm}                   
  {0.5cm}                   
  {\normalfont}           
  {}     
  {\normalfont\bfseries}  
  {:}                     
  {0.3cm}              
  {\thmname{#1}\thmnumber{ #2}\thmnote{ (#3)}}
\numberwithin{equation}{section}
\newtheorem*{rep@theorem}{\rep@title}
\newcommand{\newreptheorem}[2]{%
\newenvironment{rep#1}[1]{%
 \def\rep@title{#2 \ref{##1}}%
 \begin{rep@theorem}}%
 {\end{rep@theorem}}}
\newtheorem{theorem}{Theorem}[section]
\newtheorem{proposition}[theorem]{Proposition}
\newtheorem{corollary}[theorem]{Corollary}
\newtheorem{lemma}[theorem]{Lemma}
\newtheorem{conjecture}[theorem]{Conjecture}
\newtheorem{theorem*}{Theorem}
\theoremstyle{definition}
\newtheorem{definition}[theorem]{Definition}
\newtheorem{notation}[theorem]{Notation}
\newtheorem{example}[theorem]{Example}
\newtheorem{remark}[theorem]{Remark}
\newtheorem{question}[theorem]{Question}
\let\c@equation\c@theorem  
\numberwithin{equation}{section}
\subjclass[2020]{18M20, 18M15}
\keywords{Frobenius algebra, local module, modular tensor category, relative monoidal center}
\begin{document}


\begin{abstract}
We define the class of rigid Frobenius algebras in a (non-semisimple) modular category and prove that their categories of local modules are, again, modular. This generalizes previous work of A.~Kirillov, Jr. and V.~Ostrik [\emph{Adv. Math.} {\bf 171} (2002), no.~2] in the semisimple setup. Examples of non-semisimple modular categories via local modules, as well as connections to the authors' prior work on relative monoidal centers, are provided. In particular, we classify rigid Frobenius algebras in Drinfeld centers of module categories over  group algebras, thus generalizing the classification by A.~Davydov [\emph{J. Algebra} {\bf 323} (2010), no.~5] to arbitrary characteristic. 
\end{abstract}

\maketitle

\vspace{-10pt}

\begin{changemargin}{1.5cm}{1.5cm} 
{\footnotesize \tableofcontents}
\end{changemargin}


\section{Introduction}\label{sec:intro}

The goal of this work is to extend constructions of modular categories in the semisimple setting to the non-semisimple (= not necessarily semisimple) setting. We work over an algebraically closed field $\Bbbk$. Here, we use Kerler-Lyubashenko's notion of a modular category, which does not require semisimplicity: a braided finite tensor category is said to be {\it modular} if it is non-degenerate and ribbon \cite{KL}*{Definition~\ref{def:modular}}. The original notion of a (semisimple) modular category has appeared in a myriad of fields including 
low-dimensional topology \cites{Turaev-1992,TV}, conformal field theory \cites{MoS,Gannon,Hua}, and subfactor theory \cites{KLM-2001,EP}. Moreover, modular categories in the non-semisimple setting have been structures of intense investigation due to their expanding list of applications, such as in non-semisimple topological quantum field theories \cites{KL, DGGPR}, logarithmic conformal field theories \cites{HLZ,Len}, modular functors and mapping class group actions \cites{FSS,LMSS,SW}. Very recently, non-semisimple modular categories have been used in a $3$-dimensional QFT construction with derived categories of quantum groups as line operators \cite{CDGG}, and in defining invariants of $4$-dimensional $2$-handle bodies \cite{BeD}. 
Constructions of non-semisimple modular categories have, for example, been given in \cites{Lusztig,BCGP,CGR,LO,GLO, Shi1, Negron,LW2}. 
We review modular tensor categories and related categorical structures  in Section~\ref{sec:monoidal}. 

\smallskip

In the semisimple setting, one known way of obtaining new modular categories from old ones is to construct a category of local modules \cites{Par,Sch} over certain commutative algebras \cite{KO}. Categories of local modules have applications in rational conformal field theory by describing the extension theory of VOAs \cite{HKL}*{Theorem~3.4} \cite{CKM}. Such categories of local modules have been used extensively in the mathematical physics literature, see e.g. \cites{FRS2,FFRS,DRCR,FL}. Here, for a finite tensor category $(\cC,\otimes)$ with braiding $c$ (e.g., a modular category) and for a commutative algebra $A$ in $\cC$, a right $A$-module in $(V,\; a_V:V \otimes A \to V)$ is said to be {\it local} if $a_V = a_V \; c_{A,V} \; c_{V,A}$  [Definition~\ref{def:locmod}]. The main achievement of the present paper is that we construct non-semisimple modular categories of local modules; see Theorem~\ref{thm:locmodular-intro}. 

\smallskip

Towards our main result, we first reconcile the various algebras $A$ in modular categories $\cC$ that have appeared in the literature for the purpose of building semisimple modular categories of local modules. We employ here in the non-semisimple setting a version of an algebra $A$ that is a certain type of  Frobenius algebra in $\cC$, as defined below. 

\begin{definition}[Definition~\ref{def:rigid-Frob}]
An algebra in a braided finite tensor category  is called a {\it rigid Frobenius algebra} if it is connected, commutative, and special Frobenius. 
\end{definition}

Each of the terms above are discussed in Section~\ref{sec:Frobalg}, and general algebraic structures in braided finite tensor categories are reviewed in Section~\ref{sec:alg-tensor}. When $\cC$ is a semisimple modular tensor category, certain {\it rigid $\cC$-algebras} [Definition~\ref{def:rigidCalgebra}] and {\it connected étale algebras} [Section~\ref{sec:co/alg}] yield modular categories of local modules; see \cite{KO} and \cite{Dav3}, respectively.  We compare  these structures below within ribbon finite tensor categories.

\begin{proposition}[Proposition~\ref{prop:conn-etale}]
\label{prop:conn-etale-intro} Let $\cC$ be a ribbon finite tensor category (e.g., a modular tensor category), and take $A$ an algebra in $\cC$. Then the following statements are equivalent.
\begin{enumerate}[(a),font=\upshape]
    \item $A$ is a rigid $\cC$-algebra with trivial twist \textnormal{(as in \cite{KO}*{Theorem~4.5})}. \smallskip
    \item $A$ is a connected \'{e}tale algebra, with nonzero quantum dimension and trivial twist \textnormal{(as in \cite{Dav3}*{Theorem 2.6.3})}. \smallskip
    \item $A$ is a rigid Frobenius algebra \textnormal{(as in Theorem~\ref{thm:locmodular-intro} below)}.
\end{enumerate}
\end{proposition}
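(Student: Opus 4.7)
The plan is to prove the cyclic chain of implications $(a)\Rightarrow(c)\Rightarrow(b)\Rightarrow(a)$. The unifying theme is that each of the three conditions packages a commutative algebra in $\cC$ together with a ``self-duality'' datum compatible with the ribbon structure; the formulations differ only in whether this datum is presented as a non-degenerate pairing (rigid $\cC$-algebra), as a separability section together with a dimension/twist condition (connected \'etale), or as a special Frobenius coproduct (rigid Frobenius). Rigidity of $\cC$ permits translation between non-degenerate pairings $A\otimes A\to\one$, sections of the multiplication $\mu$, and isomorphisms $A\cong A^*$ via the standard string-diagram calculus, so the real content of the proposition is showing that the various non-degeneracy and ribbon-compatibility conditions coincide.

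For $(a)\Rightarrow(c)$, I would use the distinguished morphism $\varepsilon\colon A\to\one$ in the rigid $\cC$-algebra datum of \cite{KO}, together with non-degeneracy of $\varepsilon\circ\mu$, to define a Frobenius comultiplication $\Delta$ via rigid duality. Commutativity and connectedness are part of the KO axioms; specialness $\mu\circ\Delta=\ide_A$ follows because $\mu\circ\Delta$ is a central endomorphism of $A$, which by connectedness is a scalar, pinned to $1$ by the normalization of $\varepsilon$ enabled by the trivial twist hypothesis. For $(c)\Rightarrow(b)$, the composite $\Delta\circ\eta\colon\one\to A\otimes A$ supplies a separability idempotent for $\mu$, giving \'etaleness, while $\varepsilon\circ\eta$ is identified with $\dim_\cC(A)$ via the pivotal trace and is nonzero by specialness; the triviality of $\theta_A$ is the key lemma, proved by combining the Frobenius relation, commutativity, and the balancing axiom to exhibit $\theta_A$ as an algebra automorphism, and then invoking connectedness to force $\theta_A=\ide_A$. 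For $(b)\Rightarrow(a)$, an \'etale algebra in a ribbon category carries a canonical Frobenius form (unique up to scalar, pinned down by the nonzero quantum dimension), whose associated pairing satisfies KO's non-degeneracy condition, and the trivial twist hypothesis transfers directly.

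The main obstacle is the absence of semisimplicity: the classical arguments in \cite{KO} and \cite{Dav3} repeatedly decompose $A$ into simple isotypic components to normalize scalars summand by summand, which is unavailable here. I would instead argue universally via the pivotal trace and connectedness ($\dim\Hom_\cC(\one,A)=1$), reducing every scalar-valued comparison to a single coefficient in $\End_\cC(\one)=\Bbbk$. A second subtlety is the twist: showing $\theta_A=\ide_A$ for a rigid Frobenius algebra is the technical heart of $(c)\Rightarrow(b)$, and requires a careful diagrammatic argument using the ribbon axioms, together with the observation that any algebra automorphism of a connected commutative algebra whose restriction to $\Hom_\cC(\one,A)$ is trivial must itself be $\ide_A$.
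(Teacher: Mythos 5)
The step that does not hold up is your proof of the trivial twist, i.e.\ the passage (c)$\Rightarrow$(b) in your chain (in the paper this is the content of Lemma~\ref{lem:rigid-alg}, used for (c)$\Rightarrow$(a)). The first half of your argument is fine: naturality of $\theta$, the balancing axiom, and commutativity do show that $\theta_A$ is a unital algebra automorphism of $A$. But your closing principle --- ``any algebra automorphism of a connected commutative algebra whose restriction to $\Hom_\cC(\one,A)$ is trivial must be $\ide_A$'' --- is false. Since $\Hom_\cC(\one,A)=\Bbbk\, u_A$, triviality on $\Hom_\cC(\one,A)$ is automatic for every algebra map, so the principle asserts that a connected commutative algebra admits no nontrivial algebra automorphisms at all. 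A counterexample already lives in $\lmod{\Bbbk G}$ with its symmetric braiding and trivial twist: the function algebra $A=\Bbbk^{G}$ (pointwise multiplication, $G$ acting by left translation) is connected, commutative and special Frobenius, yet every right translation is a $G$-equivariant unital algebra automorphism, so $\mathrm{Aut}(A)$ contains $G$. Hence connectedness alone cannot force $\theta_A=\ide_A$; your ``key lemma'' for the technical heart of (c)$\Rightarrow$(b) has no proof, and this is a genuine gap.

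What is actually needed is an argument that compares $\theta_A$ with $\ide_A$ \emph{through the Frobenius form}, not through the algebra structure alone. The paper does this by citation: \cite{FRS}*{Corollary~3.10} shows a connected (haploid) special Frobenius algebra is symmetric, using $\dim_\Bbbk\Hom_\cC(A,\one)=1$ (which follows from self-duality of $A$), and \cite{FFRS}*{Proposition~2.25(i)} shows a commutative symmetric Frobenius algebra has trivial twist; connectedness enters via the one-dimensionality of $\Hom_\cC(A,\one)$, i.e.\ by comparing two candidate Frobenius forms, not by rigidity of automorphisms. Apart from this, your cyclic chain (a)$\Rightarrow$(c)$\Rightarrow$(b)$\Rightarrow$(a) is a reasonable reshuffling of the paper's (a)$\Rightarrow$(b)$\Rightarrow$(c)$\Rightarrow$(a), with two smaller inaccuracies worth flagging: in (a)$\Rightarrow$(c) the specialness scalar $m\Delta$ is pinned to be nonzero by the nonzero quantum dimension in (r.i), not by the trivial twist (which plays no role in establishing (c)); and in (b)$\Rightarrow$(a) the ``canonical Frobenius form, unique up to scalar'' cannot be invoked for free, since $\dim_\Bbbk\Hom_\cC(A,\one)=1$ is only available once self-duality is known --- producing a counit with $\varepsilon m$ non-degenerate from the separability section is exactly the non-semisimple counitality computation (connectedness plus cyclicity of the trace) that the paper carries out in its proof of (b)$\Rightarrow$(c).
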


In subfactor theory, the theory of the local modules over an étale algebra was first developed under the name of a {\it Q-system} \cites{BEK1999, BEK2000, BEK2001}. These algebras are also related to {\it condensable algebras} and {\it bosons} in the study of topological phases of matter \cite{Kong}*{Section~2}.

\smallskip 

Next, after giving preliminary results about categories of local modules in Section~\ref{sec:local-mod}, we establish the main result of this article. 

\begin{theorem}[Theorem~\ref{thm:locmodular}]  \label{thm:locmodular-intro} If $\cC$ is a modular tensor category and $A$ is a rigid Frobenius algebra in $\cC$, then the category of local modules over $A$ in $\cC$ is also modular.
\end{theorem}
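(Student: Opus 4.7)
The plan is to verify each of the defining properties of a modular category for $\cC_A^{\mathrm{loc}}$: that it is a finite braided ribbon tensor category which is non-degenerate. For the tensor structure, the Frobenius datum on $A$ makes $\cC_A$ a rigid monoidal category with duals inherited from $\cC$, and connectedness of $A$ (i.e., $\End_\cC(A) = \Bbbk$) makes $A$ a simple unit. A direct check shows that if $M, N$ are local right $A$-modules then $M \otimes_A N$ is again local, and locality is preserved under subquotients in $\cC_A$; hence $\cC_A^{\mathrm{loc}}$ is a finite tensor subcategory of $\cC_A$. For the braiding, given $M, N \in \cC_A^{\mathrm{loc}}$, the ambient braiding $c_{M,N}$ descends across the coequalizers defining $\otimes_A$ precisely because of the locality assumption. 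By Proposition~\ref{prop:conn-etale-intro}, the rigid Frobenius algebra $A$ has trivial twist, so the ribbon twist of $\cC$ restricts to one on $\cC_A^{\mathrm{loc}}$ satisfying the ribbon axiom.

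The main step is non-degeneracy, and this is the principal obstacle. I would apply Shimizu's characterization of modularity, by which a ribbon finite tensor category is modular if and only if its M\"uger center is equivalent to $\Vect$. The task then reduces to showing that any transparent object $X \in \cC_A^{\mathrm{loc}}$, that is, one with trivial double braiding against every local $A$-module, is isomorphic to $A^{\oplus n}$ for some $n$. The key tool is the adjunction between the induction functor $\Ind_A = (-) \otimes A : \cC \to \cC_A$ and the forgetful functor $F : \cC_A \to \cC$, together with the fact that $\Ind_A(Y)$ is local precisely when $Y$ centralizes $A$ in $\cC$. Using this adjunction together with the Frobenius (hence self-dual) structure on $A$, one transfers the transparency of $X$ in $\cC_A^{\mathrm{loc}}$ to a statement about the double braiding of $F(X)$ against the $A$-centralizer in $\cC$. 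Non-degeneracy of $\cC$ then forces $F(X)$ to be supported on the subcategory generated by $\one$, from which $X \cong A^{\oplus n}$ in $\cC_A^{\mathrm{loc}}$ follows by Frobenius reciprocity.

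The principal difficulty is that the Kirillov--Ostrik argument in the semisimple setting is driven by invertibility of the $S$-matrix and a Frobenius--Perron dimension count, neither of which is available in the non-semisimple setting. The replacement I anticipate is Shimizu's non-degeneracy criterion applied in tandem with the Frobenius structure on $A$: the Frobenius form supplies the self-duality and (co)trace maps needed to identify the relevant (co)ends, to push the transparency condition through the induction/forgetful adjunction, and to control the internal Hom computations governing the M\"uger center of $\cC_A^{\mathrm{loc}}$.
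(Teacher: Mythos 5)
Your outline of the tensor, braided, and ribbon parts is essentially sound; note only that your ribbon argument (restricting $\theta$ to local modules using $\theta_A=\ide_A$, in the style of Kirillov--Ostrik) differs from the paper, which instead establishes pivotality of $\locmod_\cC(A)$ and checks that the left and right twists coincide via Proposition~\ref{prop:ribbon-left-right}; both routes can be made to work. The genuine gap is in the non-degeneracy step, which you correctly single out as the main difficulty. Transparency of $X$ in $\locmod_\cC(A)$ does not transfer to a transparency statement for $F(X)$ in $\cC$: the only test objects you can produce by induction are $Y\otimes A$ for $Y$ in the M\"uger centralizer of $A$, and the resulting identity is only an equality of morphisms after composing with the $A$-action (i.e., after passing to $\otimes_A$), not triviality of the double braiding $c_{Y,F(X)}\,c_{F(X),Y}$ in $\cC$. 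Moreover, the asserted conclusion that non-degeneracy of $\cC$ ``forces $F(X)$ to be supported on the subcategory generated by $\one$'' is false: the unit $A$ of $\locmod_\cC(A)$ is certainly transparent, yet $F(A)=A$ is not a direct sum of copies of $\one$ unless $A=\one$; the correct target is $F(X)\cong A^{\oplus n}$ as an $A$-module, not $\one^{\oplus n}$ in $\cC$. Finally, non-degeneracy of $\cC$ only controls objects transparent against all of $\cC$, whereas your transferred condition involves only the centralizer of $A$, so it cannot be invoked in the way described.

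The paper circumvents exactly this obstruction by a structural argument rather than a direct analysis of transparent objects: by Shimizu's theorem, the Drinfeld center $\cZ(\Rep_\cC(A))$ of the finite tensor category $\Rep_\cC(A)$ is factorizable, hence non-degenerate; Schauenburg's equivalence (Theorem~\ref{thm:local-center}) identifies it with $\locmod_{\cZ(\cC)}(A^+)$; factorizability of $\cC$, i.e.\ $\cZ(\cC)\simeq \cC\boxtimes\overline{\cC}$, yields $\locmod_{\cZ(\cC)}(A^+)\simeq \locmod_\cC(A)\boxtimes\overline{\cC}$ (Lemma~\ref{lem:A+local}); and non-degeneracy of a Deligne product passes to its factors (Lemma~\ref{lem:DeligneMueger}). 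If you wish to salvage a direct M\"uger-center computation in the non-semisimple setting, you would in effect need to reprove this chain of equivalences; as written, your reduction does not close.
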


Its semisimple version was established by Kirillov-Ostrik \cite{KO}*{Theorem~4.5}. The main difference between the proof of that result and the proof of  Theorem~\ref{thm:locmodular-intro} is the verification of non-degeneracy for the category of local modules: we showed that the M\"uger center is trivial, whereas \cite{KO} showed that the S-matrix is invertible (which cannot be generalized to the non-semisimple case). Moreover, ribbonality is achieved in a more direct way here: via pivotality and Proposition~\ref{prop:ribbon-left-right}; see Remark~\ref{rem:thm-vs-ss}.

\smallskip

Section~\ref{sec:local-mod} ends with the computation of the Frobenius--Perron (FP-)dimension of the categories of local modules over a commutative  algebra (e.g., rigid Frobenius algebra) in a braided finite tensor  category (e.g., a modular tensor category), see Corollary~\ref{cor:locmod-FPdim}; this generalizes work of Davydov et al. \cite{DMNO}*{Lemma~3.11 and Corollary~3.32} to the non-semisimple case. In particular, we obtain a bound on the possible FP-dimensions of rigid Frobenius algebras; see Remark~\ref{rem:FPbound}.

\smallskip

Next, in Section~\ref{sec:rel-center}, we build on previous work by the authors  on the construction of non-semisimple modular categories with relative monoidal centers \cites{LW,LW2}. In  Section~\ref{sec:rel-back}, we review  facts about the relative monoidal center, $\cZ_\cB(\cC)$, of a finite tensor category $\cC$ with respect to a braided finite tensor category $\cB$. Such categories are useful as they naturally include representation categories of quantum groups \cites{L18, LW}, and when $\cB$ is the category of $\Bbbk$-vector spaces, $\Vect$, we recover the usual monoidal center $\cZ(\cC)$. We recall conditions from \cite{LW2} that imply when $\cZ_\cB(\cC)$ is a modular tensor category [Theorem~\ref{thm:ZBCmodular}]. As a consequence of Theorem~\ref{thm:locmodular-intro}, we also have a method to construct a modular tensor category of local modules in $\cZ_\cB(\cC)$; see Corollary~\ref{cor:rel-modular1}. On a related note, we generalize Schauenburg's result, \cite{Sch}*{Corollary~4.5} (see Theorem~\ref{thm:local-center}), on monoidal centers to the relative setting as follows.

\begin{theorem}[Theorem~\ref{thm:ZRepA}]
Take a  commutative algebra $A$ in a relative monoidal center $\cZ_\cB(\cC)$. Then, the category of local modules over $A$ in $\cZ_\cB(\cC)$ is braided equivalent to the relative monoidal center of the category of right $A$-modules in $\cC$.
\end{theorem}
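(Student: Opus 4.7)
The plan is to mirror the proof of Schauenburg's absolute case (Theorem~\ref{thm:local-center}) while tracking the $\cB$-compatibility of all constructions. I will build an explicit braided functor $F$ from the category of local $A$-modules in $\cZ_\cB(\cC)$ to $\cZ_\cB(\rmod{A})$, construct a quasi-inverse $G$, and verify that they exchange braided tensor structures. Throughout, I use that $\rmod{A}$ inherits a tensor structure via $\otimes_A$ (because $A$ is commutative), as well as an action of $\cB$ obtained by composing the embedding $\cB \hookrightarrow \cC$ with the induction $-\otimes A \colon \cC \to \rmod{A}$.

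For the forward functor, let $(V,\gamma)$ be a local $A$-module in $\cZ_\cB(\cC)$ with half-braiding $\gamma_X \colon V \otimes X \to X \otimes V$ and right $A$-action $a_V \colon V \otimes A \to V$. For any right $A$-module $W$ in $\cC$, the composite of $\gamma_W$ with the coequalizer projections that define the $A$-balanced tensor products $V \otimes_A W$ and $W \otimes_A V$ yields a candidate $\bar{\gamma}_W \colon V \otimes_A W \to W \otimes_A V$. Well-definedness at the source requires that $\gamma_W$ be compatible with the right $A$-action on $V$, which follows from the half-braiding axiom together with the fact that $A$ is commutative in $\cZ_\cB(\cC)$; well-definedness at the target requires the locality condition $a_V = a_V \, c_{A,V} \, c_{V,A}$. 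Naturality of $\bar{\gamma}$ in $W$ descends from naturality of $\gamma$, and the hexagon axioms follow by unwinding coequalizers. Finally, restricting $\bar{\gamma}$ to $\cB$-objects (viewed inside $\rmod{A}$ via induction) recovers the $\cB$-braiding, so $(V,\bar{\gamma}) \in \cZ_\cB(\rmod{A})$.

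The quasi-inverse $G$ runs as follows: given $(W,\delta) \in \cZ_\cB(\rmod{A})$, the underlying object $W$ belongs to $\cC$ via the forgetful functor. For every $X \in \cC$, the free right $A$-module $X \otimes A$ lies in $\rmod{A}$, and composing $\delta_{X \otimes A}$ with the unit $\one \to A$ produces a morphism $\gamma_X \colon W \otimes X \to X \otimes W$ in $\cC$; these assemble into a half-braiding making $W$ an object of $\cZ_\cB(\cC)$. Applying this prescription to $X = A$ and exploiting that $\delta_A$ is a morphism of right $A$-modules forces $W$ to be a local $A$-module. The verifications $F G \cong \ide$ and $G F \cong \ide$ reduce to standard manipulations using the canonical isomorphism $V \otimes_A A \cong V$ and the universal property of coequalizers. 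The tensor structure on $F$ arises from associativity isomorphisms $(V \otimes_A V') \otimes_A W \cong V \otimes_A (V' \otimes_A W)$ combined with the induced half-braidings, and the braided structure transports directly because the braiding on local $A$-modules is inherited from that of $\cZ_\cB(\cC)$.

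The main obstacle is verifying that $\bar{\gamma}_W$ descends to the coequalizer $V \otimes_A W$; this is precisely where the locality hypothesis becomes essential, and it is the technical heart of the construction. A secondary but nontrivial point is ensuring that the $\cB$-compatibility required by the relative center, namely that the half-braiding with $\cB$-objects agrees with the given $\cB$-braiding, transports correctly through both $F$ and $G$; this requires careful bookkeeping of the interaction between $\cB \hookrightarrow \cC$ and the induction to $\rmod{A}$, a check that is automatic when $\cB = \Vect$ but genuinely requires attention in the relative setting.
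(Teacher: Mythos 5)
Your proposal follows essentially the same route as the paper's proof: two explicit, mutually inverse functors, with the half-braiding of a local module descending to the relative tensor products $\otimes_A$ (locality being precisely what makes the descent well defined), and the quasi-inverse built by evaluating half-braidings on the free modules $X\otimes A$ and inserting the unit $u_A$ — exactly the paper's $\Phi$ and $\Psi$, with locality of $\Psi(M)$ likewise extracted from the half-braiding with $A\cong G_A(\one)$. The one point you compress into ``bookkeeping'' is treated in the paper as a separate prerequisite (Proposition~\ref{prop:RepA-Baug}): the $\cB$-central structure on $\Rep_\cC(A)$ is not just the composite of $\ov{\cB}\to\cC$ with induction $-\otimes A$, but requires equipping $G(B)\otimes A$ with a half-braiding built from $c_{G(B),-}$ and $c_{A,-}$ and verifying that this gives a faithful braided functor out of $\ov{\cB}$ (using commutativity of $A$ and the fact that $A$ centralizes $G(\ov{\cB})$), which is needed before $\cZ_\cB(\Rep_\cC(A))$ is even defined — though the ingredients you name are the right ones.
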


Using this result, along with Corollary~\ref{cor:rel-modular1}, we obtain the following result.

\begin{corollary}[Corollary~\ref{cor:local-mod}] Assume the hypotheses of Theorem~\ref{thm:ZBCmodular} for which the  relative monoidal center $\cZ_\cB(\cC)$ is modular. Then if $A$ is a rigid Frobenius algebra $\cZ_\cB(\cC)$, we obtain that the relative monoidal center of the category of right $A$-modules in $\cC$ is also modular.
\end{corollary}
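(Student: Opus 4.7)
The plan is to combine three results already at hand: Theorem~\ref{thm:ZBCmodular} ensures that, under its hypotheses, the ambient relative monoidal center $\cZ_\cB(\cC)$ is a modular tensor category; Theorem~\ref{thm:locmodular-intro} transports modularity to categories of local modules over rigid Frobenius algebras; and Theorem~\ref{thm:ZRepA} identifies local modules in a relative center with the relative monoidal center of a module category. Because modularity is preserved and reflected by braided tensor equivalence, concatenating these three results will yield the desired conclusion without any new computation.

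First I would invoke Theorem~\ref{thm:ZBCmodular} to record that $\cZ_\cB(\cC)$ is modular. Since $A$ is, by assumption, a rigid Frobenius algebra in this modular category, the main result Theorem~\ref{thm:locmodular-intro} applies directly with ambient category $\cZ_\cB(\cC)$ and algebra $A$, giving that the category of local modules over $A$ in $\cZ_\cB(\cC)$ is again modular.

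Next I would apply Theorem~\ref{thm:ZRepA} to the same data, obtaining a braided tensor equivalence between the category of local modules over $A$ in $\cZ_\cB(\cC)$ and the relative monoidal center $\cZ_\cB(\rmod{A})$, where $\rmod{A}$ is the category of right $A$-modules in $\cC$ (with $A$ regarded as an algebra in $\cC$ via the canonical forgetful functor $\cZ_\cB(\cC) \to \cC$). Finiteness, rigidity, non-degeneracy of the braiding, and existence of a compatible ribbon twist are all invariants of braided tensor equivalence, so modularity transports across this equivalence to $\cZ_\cB(\rmod{A})$, proving the corollary.

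The only point that requires attention---rather than being a genuine obstacle---is the consistency between the two roles of $A$: as a commutative algebra in $\cZ_\cB(\cC)$ used to form local modules, and as its underlying algebra in $\cC$ used to form $\rmod{A}$. This compatibility is already encoded in the statement of Theorem~\ref{thm:ZRepA}. All substantive difficulty---in particular the non-semisimple verifications of non-degeneracy and ribbonality---resides inside the proofs of Theorems~\ref{thm:locmodular-intro}, \ref{thm:ZBCmodular}, and \ref{thm:ZRepA}; the corollary itself is a formal consequence of their combination.
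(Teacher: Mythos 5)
Your proposal is correct and follows essentially the same route as the paper: the paper proves the corollary by combining Theorem~\ref{thm:ZRepA} with Corollary~\ref{cor:rel-modular1} (which is itself just Theorem~\ref{thm:locmodular} applied to the modular category $\cZ_\cB(\cC)$ provided by Theorem~\ref{thm:ZBCmodular}), and then transports the ribbon/modular structure across the braided equivalence, exactly as you describe.
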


Next, in Section~\ref{sec:YDB}, we compute the category of local modules over a Hopf algebra $H$ in a category of Yetter-Drinfeld modules $\lYD{H}(\cB)$, for $\cB$ a braided finite tensor category. The outcome is the following statement.

\begin{proposition}[Proposition~\ref{prop:local-triv}]
Given a braided finite tensor category $\cB$ with a Hopf algebra $H$ in $\cB$, then the category of local modules over $H$ in $\lYD{H}(\cB)$ is braided equivalent to $\cB$.
\end{proposition}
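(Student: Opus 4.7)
The plan is to construct an explicit braided tensor equivalence
\[
\Phi: \cB \longrightarrow \rlocmod{H}(\lYD{H}(\cB))
\]
of ``induction along $H$'' type, with quasi-inverse given by taking $H$-coinvariants.

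On objects, set $\Phi(X) := X \otimes H$, equipped with: (i) the right $H$-action $\mathrm{id}_X \otimes \mu_H$; (ii) the left $H$-coaction $\mathrm{id}_X \otimes \Delta_H$; and (iii) a left $H$-action obtained by combining the braiding $c_{H,X}$ of $\cB$ with multiplication on $H$, as in a smash-product construction. I would first check that these structures make $\Phi(X)$ into an object of $\lYD{H}(\cB)$ and that the right $H$-action is a Yetter--Drinfeld morphism, so that $\Phi(X)$ becomes a right module over the algebra $H$ in $\lYD{H}(\cB)$. The locality condition then amounts to the claim that the double braiding $c_{H, \Phi(X)}\, c_{\Phi(X), H}$ in $\lYD{H}(\cB)$, when composed with the right $H$-action, recovers the right action itself---this should hold because the $H$-coaction on $\Phi(X)$ is free on the second tensor factor, causing the braiding loop to cancel against the multiplication.

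The inverse functor $\Psi$ would send a local module $M$ to its object of $H$-coinvariants $M^{\mathrm{co}\,H}$ in $\cB$. The essential input is the fundamental theorem of Hopf modules in the braided setting of $\cB$: the assignments $X \mapsto X \otimes H$ and $M \mapsto M^{\mathrm{co}\,H}$ are mutually inverse equivalences between $\cB$ and the category $\cB^H_H$ of right $H$-Hopf modules in $\cB$. The crux is to establish that, restricted to $\rlocmod{H}(\lYD{H}(\cB))$, the Yetter--Drinfeld compatibility combined with the locality condition is precisely the Hopf-module compatibility between the right $H$-action and the left $H$-coaction, so that the fundamental theorem applies here as well.

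Finally, I would upgrade $\Phi$ to a monoidal functor via the natural isomorphism $(X \otimes H) \otimes_H (Y \otimes H) \cong X \otimes Y \otimes H$, and verify that the braiding on $\rlocmod{H}(\lYD{H}(\cB))$---obtained by restricting the $\lYD{H}(\cB)$-braiding and descending along $\otimes_H$---matches the braiding on $\cB$ transported by $\Phi$. The main obstacle is expected to be the bookkeeping in showing that locality together with the Yetter--Drinfeld axioms exactly encodes the Hopf-module compatibility; once that identification is in place, the remaining monoidal and braided checks reduce to routine diagram chases in $\cB$.
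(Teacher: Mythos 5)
Your overall shape (induction $X\mapsto X\otimes H$ with quasi-inverse given by coinvariants) is close in spirit to the paper's proof, which realizes the equivalence as $\Gamma = R_\cB\circ F\circ G$ using the Brugui\`eres--Natale equivalence $\Rep_{\lYD{H}(\cB)}(H)\simeq \lmod{H}(\cB)$ and then shows exactly which induced modules are local. But the step you identify as the crux is not correct, and this leaves a genuine gap. You claim that ``Yetter--Drinfeld compatibility combined with the locality condition is precisely the Hopf-module compatibility'' between the right $H$-action and the left $H$-coaction. In fact the Hopf-module compatibility is automatic for \emph{every} object of $\Rep_{\lYD{H}(\cB)}(H)$, local or not: the algebra $H$ carries the regular coaction, so the requirement that the right action map $a^r_M\colon M\otimes H\to M$ be a morphism of $H$-comodules (part of its being a morphism in $\lYD{H}(\cB)$) is literally the braided Hopf-module axiom. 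Consequently the braided fundamental theorem of Hopf modules decomposes every such $M$ as $M^{\mathrm{co}H}\otimes H$ and is blind to locality; it cannot cut the category down from $\Rep_{\lYD{H}(\cB)}(H)\simeq\lmod{H}(\cB)$ (which is strictly larger than $\cB$ in general) to the local modules.

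Locality is an additional constraint on the leftover datum that the Hopf-module structure ignores, namely the Yetter--Drinfeld left $H$-action on $M$. What your argument needs, and does not supply, is the equivalence: for $M\cong V\otimes H$ induced from an $H$-module $V$ in $\cB$ (equivalently $V=M^{\mathrm{co}H}$ with its residual action), $M$ is local if and only if the $H$-action on $V$ is trivial, $a^l_V=\varepsilon_H\otimes\ide_V$. This is exactly the computation the paper performs to prove essential surjectivity of $\Gamma$ onto $\locmod_{\lYD{H}(\cB)}(H)$, and without it your proposed isomorphism $\Phi\Psi(M)\cong M$ is only an isomorphism of Hopf modules, not of objects of $\lYD{H}(\cB)$ compatible with the $A$-module structure, so $\Psi$ is not shown to be quasi-inverse on the subcategory of local modules. (Two smaller points: your coaction ``$\ide_X\otimes\Delta_H$'' must have one output leg braided past $X$ to land in $H\otimes(X\otimes H)$, and the left action on $X\otimes H$ should be the tensor-product action of the trivial action on $X$ with the \emph{adjoint} action on $H$, not a smash-product action; these are fixable, but the locality-versus-trivial-action equivalence is the essential missing step.)
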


\smallskip

In Section~\ref{sec:examples}, we close the paper by studying many examples of the results above, the first set of examples involving commutative Hopf algebras in positive characteristic.

\begin{proposition}[Proposition~\ref{prop:ex-charp}] \label{prop:ex-charp-intro}
Assume that $\textnormal{char}(\Bbbk) = p>0$.
Let $K$ be a finite-dimensional, commutative, non-semisimple Hopf algebra over $\Bbbk$ such that its Drinfeld double is ribbon. Let $L= \Bbbk N$, for $N$ a finite abelian group with $p \nmid  |N|$. Let $H$ be the Hopf algebra $K \otimes L$ over $\Bbbk$. Then, $L$ is a rigid Frobenius algebra in the non-semisimple modular tensor category $\cZ(\lmod{H})$, and the category of local modules over $L$ in $\cZ(\lmod{H})$ is equivalent to  $\cZ(\lmod{K})$ as modular  categories.
\end{proposition}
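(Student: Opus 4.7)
The plan is to reduce the claim to Proposition~\ref{prop:local-triv} with $\cB = \cZ(\lmod{K})$ and Hopf algebra $L$, the latter viewed as a Hopf algebra in $\Vect \hookrightarrow \cZ(\lmod{K})$ via the central tensor embedding (trivial $K$-action, trivial half-braiding). First, I would establish that $\cZ(\lmod{H})$ is a non-semisimple modular tensor category. Since $H = K \otimes L$ is a tensor product of Hopf algebras, the Drinfeld double factors as $D(H) \simeq D(K) \otimes D(L)$; ribbonality of both tensor factors --- $D(K)$ by hypothesis, and $D(L) \simeq \Bbbk N \otimes (\Bbbk N)^*$ being semisimple with its standard ribbon element (recall $p \nmid |N|$) --- yields a ribbon structure on $D(H)$. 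Combined with the factorizability of any finite-dimensional Drinfeld double, this makes $\cZ(\lmod{H}) \simeq \lmod{D(H)}$ modular, and it is non-semisimple because $K$ is.

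\smallskip

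The crucial step is to identify $\cZ(\lmod{H})$ with the relative Yetter-Drinfeld category $\lYD{L}(\cZ(\lmod{K}))$ as modular tensor categories. Because $K$ and $L$ sit inside $H = K \otimes L$ as commuting Hopf subalgebras, the iteration identity $\lYD{H}(\Vect) \simeq \lYD{L}(\lYD{K}(\Vect))$ holds, and combining this with the standard identifications $\lYD{K}(\Vect) \simeq \cZ(\lmod{K})$ and $\lYD{H}(\Vect) \simeq \cZ(\lmod{H})$ yields the required equivalence. Under this equivalence, the object $L \in \cZ(\lmod{H})$ (with trivial $K$-action, regular $L$-action, and coproduct-induced half-braiding) corresponds exactly to the Hopf algebra $L$ viewed inside $\lYD{L}(\cB)$ as in Proposition~\ref{prop:local-triv}.

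\smallskip

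To see that $L$ is a rigid Frobenius algebra in $\cZ(\lmod{H})$, I would note that $L = \Bbbk N$ with $N$ abelian and $p \nmid |N|$ is a commutative semisimple cocommutative Hopf algebra, hence a rigid Frobenius algebra in $\Vect$. This structure transports along the braided tensor functor $\Vect \hookrightarrow \cZ(\lmod{K}) \hookrightarrow \lYD{L}(\cZ(\lmod{K})) \simeq \cZ(\lmod{H})$, preserving connectedness, commutativity, and the special Frobenius condition. Proposition~\ref{prop:local-triv} then yields a braided equivalence between the category of local modules over $L$ in $\cZ(\lmod{H})$ and $\cZ(\lmod{K})$. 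Theorem~\ref{thm:locmodular-intro} ensures the local module category is modular, and the equivalence is then one of modular categories.

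\smallskip

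The main obstacle I anticipate is rigorously verifying the equivalence $\cZ(\lmod{H}) \simeq \lYD{L}(\cZ(\lmod{K}))$ as \emph{modular} (not just braided) tensor categories: the iterated Yetter-Drinfeld interpretation requires carefully unpacking how the half-braiding encodes both the $K$- and $L$-structures, and one must check that the ribbon twists on both sides agree. A secondary point is checking that the distinguished algebra $L$ appearing in Proposition~\ref{prop:local-triv} applied to $\cB = \cZ(\lmod{K})$ indeed matches, under the above equivalence, the object $L \in \cZ(\lmod{H})$ carrying the rigid Frobenius structure under discussion.
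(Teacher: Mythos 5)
The central gap is in your third paragraph, where you claim the rigid Frobenius structure of $L$ can be transported from $\Vect$ along braided embeddings. First, $L=\Bbbk N$ is \emph{not} a rigid Frobenius algebra in $\Vect$: it is commutative and special Frobenius (since $p\nmid |N|$), but it is not connected, as $\dim_\Bbbk\Hom_{\Vect}(\Bbbk,\Bbbk N)=|N|>1$. Second, and more seriously, any strong braided functor of the type you describe ($\Vect\hookrightarrow\cZ(\lmod{K})\hookrightarrow\lYD{L}(\cZ(\lmod{K}))$, with trivial structures) sends $L$ to an object with \emph{trivial} $L$-coaction, i.e.\ a transparent copy of $L$. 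That is not the algebra in the statement: the relevant object carries the trivial (adjoint) $H$-action together with the \emph{regular} coaction $\delta(\ell)=\ell\otimes\ell$, and connectedness holds precisely because this coaction cuts $\Hom_{\cZ(\lmod{H})}(\one,L)$ down to $\Bbbk 1_L$. The trivially coacting copy fails connectedness (so is not rigid Frobenius), and its local modules are all of $\Rep_{\cZ(\lmod{H})}(L)$, not $\cZ(\lmod{K})$. So the verification that the correct object $(L,\ \mathrm{ad}=\mathrm{triv},\ \Delta_{\Bbbk N})$ is connected, commutative and special Frobenius in $\cZ(\lmod{H})$ cannot be outsourced to functoriality; it has to be checked directly — this is exactly what the paper does by hand: connectedness via $L_1=\Bbbk 1_L$, the maps $\Delta_L(a)=\sum_{b\in N}b\otimes b^{-1}a$ and $\varepsilon_L(a)=\delta_{e,a}$ being Yetter--Drinfeld morphisms, and $m\Delta_L=|N|\,\ide\neq 0$ using $p\nmid|N|$. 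Relatedly, your description ``regular $L$-action'' should be the adjoint (here trivial) action; the regular action does not make the multiplication of $L$ an $H$-module map.

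On the remaining architecture: your reduction of the local-module computation differs from the paper's. You propose $\cZ(\lmod{H})\simeq\lYD{L}(\cZ(\lmod{K}))$ (an iterated Yetter--Drinfeld/braided Drinfeld double identification) and then apply Proposition~\ref{prop:local-triv} with $\cB=\cZ(\lmod{K})$, whereas the paper factors $\cZ(\lmod{H})\simeq\cZ(\lmod{K})\boxtimes\cZ(\lmod{L})$ (Lemma~\ref{lem:Z-Deligne}), uses Lemma~\ref{lem:locmodAB} to split the local modules across the Deligne product, and applies Proposition~\ref{prop:local-triv} only in the base case $\cB=\Vect$ to kill the $\cZ(\lmod{L})$ factor. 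Your route is viable in principle, but the iterated equivalence and the matching of ribbon twists (which you rightly flag) are unproved and require comparable effort to the paper's Deligne-product bookkeeping; in either route the identification of the algebra with the canonical one of Proposition~\ref{prop:local-triv} must be made explicit, and the rigid Frobenius verification above must be supplied before Theorem~\ref{thm:locmodular} can be invoked.
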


In particular, one could take $K = \Bbbk G$, for $G$ a finite abelian group with $ |G|$ divisible by $p$, or take $K = u^{[p]}(\mathfrak{g})$, the restricted enveloping algebra of an abelian restricted Lie algebra $\mathfrak{g}$ [Example~\ref{ex:charp}]. We inquire in Question~\ref{ques:charp} ways in which Proposition~\ref{prop:ex-charp-intro} can be generalized.

\smallskip 
For the next set of examples, we revise Davydov's classification of rigid Frobenius algebras in $\cZ(\lmod{\Bbbk G})$ \cite{Dav3}*{Theorem~3.5.1}, by way of Proposition~\ref{prop:conn-etale-intro}, and generalize the result to the non-semisimple setup.

\begin{theorem}[Theorem~\ref{thm:davclassification}] \label{thm:davclassification-intro}
For $\Bbbk$ a field of arbitrary characteristic, the rigid Frobenius algebras in $\cZ(\lmod{\Bbbk G})$ are explicitly classified in terms of group-theoretic data.
\end{theorem}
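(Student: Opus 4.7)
The plan is to adapt Davydov's proof of \cite{Dav3}*{Theorem~3.5.1} to arbitrary characteristic by isolating each point where semisimplicity or the invertibility of $|G|$ enters, and replacing it with a characteristic-free argument based on the rigid Frobenius structure together with Proposition~\ref{prop:conn-etale-intro}. Throughout, I would work via the braided equivalence $\cZ(\lmod{\Bbbk G})\simeq\lYD{\Bbbk G}$ (valid in any characteristic), so that objects are $G$-graded $\Bbbk G$-modules $V=\bigoplus_{g\in G}V_g$ with $h\cdot V_g\subseteq V_{hgh^{-1}}$, and the braiding is $c_{V,W}(v\otimes w)=(g\cdot w)\otimes v$ for $v\in V_g$. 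Algebras are then $G$-graded $G$-equivariant algebras, and braided commutativity becomes the Davydov-type condition $ab=(g\cdot b)a$ for $a\in A_g,\, b\in A_h$, which is a purely combinatorial/group-theoretic constraint independent of $\cha(\Bbbk)$.

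The first step is to show that for a rigid Frobenius algebra $A$, the support $S(A):=\{g\in G\mid A_g\neq 0\}$ is a subgroup $H\leq G$, and that $A_e\cong\Bbbk$ by connectedness. Both facts follow from the Davydov commutativity relation together with the non-degeneracy of the Frobenius form $\lambda=\varepsilon\circ m$; these arguments are characteristic-free and parallel \cite{Dav3}*{Sections~3.2–3.3}. The second step is to show that each homogeneous component $A_g$ with $g\in H$ is one-dimensional. The third step is to encode the multiplication $A_g\otimes A_h\to A_{gh}$ and the residual $H$-action by a normalized $2$-cocycle $\omega\in Z^2(H,\Bbbk^\times)$ compatible with the equivariant structure, thereby identifying $A$ with a twisted group algebra $\Bbbk^\omega H$ equipped with the Yetter-Drinfeld structure dictated by conjugation. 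This produces exactly the group-theoretic data appearing in Davydov's classification.

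Next, I would use Proposition~\ref{prop:conn-etale-intro} to impose the rigidity constraint: the quantum dimension $\dim_\cC(A)=|H|\cdot 1_\Bbbk$ must be nonzero, and the special Frobenius equation $m\circ\Delta=\beta\,\mathrm{id}_A$ with $\beta\neq 0$ must hold. Both translate into the single condition $\cha(\Bbbk)\nmid|H|$. For the converse direction, I would verify that for every pair $(H,\omega)$ satisfying Davydov's conditions together with $\cha(\Bbbk)\nmid|H|$, the twisted group algebra $\Bbbk^\omega H$, regarded as a $G$-graded $\Bbbk G$-module via the inclusion $H\hookrightarrow G$ and conjugation, is indeed a rigid Frobenius algebra: commutativity and connectedness are built into the construction, and the Frobenius form is the standard trace form on $\Bbbk^\omega H$, which is non-degenerate and special precisely because $|H|$ is invertible. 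Combining all this yields the explicit classification in the same group-theoretic terms as in \cite{Dav3}, augmented by the characteristic condition on $|H|$.

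The main obstacle is the one-dimensionality claim in the second step, since Davydov's original argument uses a semisimple decomposition of $A$ into simple summands followed by character-theoretic dimension counting, neither of which is available when $\cha(\Bbbk)\mid|G|$. I would circumvent this by arguing directly from the Frobenius pairing: non-degeneracy of $\lambda$ restricts to a non-degenerate pairing $A_g\otimes A_{g^{-1}}\to\Bbbk$, while braided commutativity forces the Yetter-Drinfeld action of $\langle g\rangle$ on $A_g$ to be by the scalar $\omega(g,g^{-1})/\omega(g^{-1},g)$; combined with connectedness, these constraints rigidify $A_g$ to a line. The remaining verifications (cocycle identities, equivariance, and the equivalence of data up to gauge transformations) are then formal manipulations at the level of group cohomology and carry over verbatim from \cite{Dav3}.
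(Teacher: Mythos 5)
Your structural claims in Steps 1--2 are false, and this is where the proposal breaks down. Connectedness of $A$ means $\dim_\Bbbk\Hom_{\cZ(\lmod{\Bbbk G})}(\one,A)=1$, i.e.\ the $G$-invariant part of the degree-$1_G$ component is one-dimensional; it does not force $A_{1_G}\cong\Bbbk$, nor does it force the homogeneous components to be lines or the support to be a subgroup. Already the function algebra $\Bbbk(G/H)$ for a proper subgroup $H$ with $|G:H|\in\Bbbk^\times$, concentrated in degree $1_G$ with the permutation $G$-action, is a rigid Frobenius algebra with $\dim_\Bbbk A_{1_G}=|G:H|>1$; more generally the algebras $A(H,N,\gamma,\epsilon)$ of Theorem~\ref{thm:davclassification} have support $\bigcup_{g\in G} gNg^{-1}$ (typically not a subgroup) and higher-dimensional homogeneous components. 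Consequently the answer is not a twisted group algebra $\Bbbk^\omega H$ classified by a pair $(H,\omega)$, and the divisibility condition is not $\cha(\Bbbk)\nmid|H|$ but $|N|\in\Bbbk^\times$ and $|G:H|\in\Bbbk^\times$: in the paper's example $G=S_4$, $H=A_4$, $N=C_2\times C_2$ in characteristic $3$, one has $3$ dividing $|H|$, yet the algebra exists. So the non-degenerate-pairing-plus-connectedness argument you propose for one-dimensionality of $A_g$ cannot work; $\Bbbk(G/H)$ is a counterexample in degree $1_G$.

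What the outline misses is the two-layer structure that both Davydov and the paper use. One first shows that $A_{1_G}$ is itself a connected \'etale algebra in $\lmod{\Bbbk G}$, hence isomorphic to $\Bbbk(G/H)$ for some subgroup $H$; condensing by it, via the braided equivalence $\locmod_{\cZ(\lmod{\Bbbk G})}(\Bbbk(G/H))\simeq\cZ(\lmod{\Bbbk H})$ coming from Theorem~\ref{thm:local-center} and the induction functor of Remark~\ref{rem:laxmon}, reduces the problem to rigid Frobenius algebras $B$ in $\cZ(\lmod{\Bbbk H})$ with $\dim_\Bbbk B_1=1$. Only at that second stage does an argument of the kind you describe apply, and even there the outcome is not $\Bbbk^\omega H$: such $B$ is supported on a normal subgroup $N\triangleleft H$, has one-dimensional components, and carries, besides the $2$-cocycle $\gamma\in Z^2(N,\Bbbk^\times)$, the extra equivariance datum $\epsilon\colon H\times N\to\Bbbk^\times$ recording the $H$-action (Proposition~\ref{prop:B(N,g,e)}), which your data $(H,\omega)$ cannot encode. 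The characteristic-free ingredients you do propose --- replacing semisimplicity by Proposition~\ref{prop:conn-etale} and the special Frobenius equation --- are in the right spirit and are indeed how the paper avoids Davydov's characteristic-zero arguments, but without the condensation step and the correct parametrizing data $(H,N,\gamma,\epsilon)$ the classification you would obtain is incorrect.
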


In particular, in Theorem~\ref{thm:davclassification-intro}  we provide a correction to the product formulas of the algebras in \cite{Dav3}*{Theorem~3.5.1}, as their version of relations do not form an ideal [Remark~\ref{rem:correction}]. Moreover, our revised multiplication and unit formulas are derived from a lax monoidal functor discussed in Remark~\ref{rem:laxmon}.
Results about the corresponding modular tensor categories of local modules are provided in  Corollary~\ref{cor:localmodules}, including their Frobenius--Perron dimension, and a ribbon equivalence to $\cZ(\lmod{\Bbbk H})$, for $H$ a subgroup of $G$ in special cases. We inquire if there is such a ribbon equivalence in the general case in Question~\ref{ques:Davrib}.

\smallskip

Our last set of examples in this work are conjectural, and are related to the notion of a {\it completely anisotropic category} and to the notion of {\it Witt equivalence} of non-degenerate braided finite tensor categories. The former pertains to a non-degenerate braided finite tensor category containing only the unit object as a rigid Frobenius algebra, and the latter is an important tool for classifying such categories. See Section~\ref{sec:ques} and \cite{DMNO} for more details. These notions have an application to  anyon condensation and boundary conditions for topological phases of matter (in the semisimple setting); see, e.g., \cite{Kong}*{Section~7} \cite{FSV}. Now consider the representation category of the small quantum group, $u_q(\mathfrak{sl}_2)$, and the conjecture below.

\begin{conjecture}[Conjecture~\ref{conj:uqsl2}]
Assume that $\Bbbk$ has characteristic 0. The non-semisimple modular tensor category $\cC:=\lmod{u_q(\mathfrak{sl}_2)}$, for $q$ an odd root of unity,   is completely anisotropic in the sense that the only rigid Frobenius algebra in $\cC$ is the unit object.
\end{conjecture}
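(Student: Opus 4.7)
The plan is to argue by contradiction: suppose $A$ is a rigid Frobenius algebra in $\cC := \lmod{u_q(\ssl)}$ with $A \not\cong \one$, and derive a contradiction. A first observation is that connectedness, $\Hom_\cC(\one, A) \cong \Bbbk$, forces $\one$ to appear with multiplicity one in a Krull--Schmidt decomposition of $A$, while every other indecomposable summand $X$ of $A$ must satisfy $\Hom_\cC(\one, X) = 0$. Commutativity and triviality of the twist $\theta_A = \ide_A$ (needed for $A$ to be a rigid Frobenius algebra) will then be the main structural constraints to exploit.

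First, I would invoke the FP-dimension bound from Remark~\ref{rem:FPbound} applied to this setting, namely $\FPdim(A)^2 \leq \FPdim(\cC) = \ell^3$, where $\ell$ is the order of $q$ (odd by hypothesis). Together with the explicit twist scalars $\theta_{L(n)} = q^{n(n+2)/2}$ on the simple $u_q(\ssl)$-module $L(n)$ of highest weight $n$, and the known extension data between simples and indecomposable projectives of $u_q(\ssl)$, this substantially cuts down the list of indecomposables eligible to appear in $A$. Next, I would apply the monoidal semisimplification functor $\cC \to \ov{\cC}$ that quotients by negligible morphisms; its target is the semisimple $\ssl$-modular fusion category at level $\ell - 2$, and the image $\ov{A}$ is a rigid commutative algebra there. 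The Kirillov--Ostrik classification \cite{KO} of such algebras by simply-laced Dynkin diagrams then restricts $\ov{A}$ to a short explicit list (type $A$ for generic $\ell$, with extra $D_{\text{even}}$, $E_6$, $E_8$ possibilities only at certain special values of $\ell$).

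The heart of the argument, and the main obstacle, is completing two cases. In the first case, $\ov{\cC}$ admits a non-trivial ADE-type rigid algebra; one must then show that no lift of this algebra to $\cC$ can simultaneously be commutative, special, and Frobenius once the non-negligible (projective) corrections are included, ideally producing an explicit obstruction in $\Ext^\bullet$-groups computed from the $u_q(\ssl)$-module structure. In the second and harder case, $A$ is entirely concentrated in the projective ideal modulo $\one$, so that $\ov{A} \cong \one$ carries no information; here semisimple methods fail entirely, and one would need a direct analysis of the possible algebra structures on an extension of $\one$ by projectives in $\cC$, likely using the explicit Hopf-algebraic description of $u_q(\ssl)$ together with the coend $\cF = \int^{X \in \cC} X \otimes X^{\ast}$ that universally controls commutative algebras in the Drinfeld sense. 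Ruling out this ``ghost'' case has no counterpart in the semisimple classification of \cite{KO}, and is, I expect, where genuinely new techniques beyond those developed in the present paper will be required.
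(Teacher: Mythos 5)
The statement you are addressing is posed in the paper as Conjecture~\ref{conj:uqsl2}; the paper offers no proof of it (it is explicitly left open, motivated only by ``extensive experimentation''), so there is nothing to compare your argument against, and the only question is whether your proposal actually proves the statement. It does not: it is a strategy outline whose two decisive steps are left open, as you yourself say. Concretely, (i) you do not rule out that a nontrivial ADE-type algebra in the semisimplified category lifts to a connected, commutative, special Frobenius algebra in $\cC=\lmod{u_q(\ssl)}$ --- you only indicate where an obstruction ``ideally'' would live; and (ii) you do not treat the case where $A$ is an extension of $\one$ by negligible (e.g.\ projective) objects, for which the semisimplification functor retains no information at all. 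Since either case could in principle produce a counterexample, the contradiction you set out to derive is never reached, and the proposal cannot be accepted as a proof.

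Beyond the admitted gaps, a few of the intermediate steps also need more care before they could serve as rigorous scaffolding. The bound from Remark~\ref{rem:FPbound} gives $\FPdim_{\cC}(A)^2\le \ell^3$, which is correct but by itself eliminates very little, and the phrase ``known extension data \ldots substantially cuts down the list'' is doing unquantified work. The semisimplification step needs justification on two points: you must identify the negligible quotient of $\lmod{u_q(\ssl)}$ precisely (for odd $\ell$ one has to check which indecomposables have nonzero quantum dimension, not just quote the level-$(\ell-2)$ fusion category), and you must verify that the quotient functor is braided monoidal with respect to the ribbon/pivotal structures used, so that $\ov{A}$ is a commutative special Frobenius algebra to which the Kirillov--Ostrik ADE list applies; note also that connectedness of $A$ does not automatically give connectedness of $\ov{A}$, since $\Hom$-spaces change in the quotient. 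None of these issues is obviously fatal to the strategy, but as written the argument establishes no part of the conjecture.
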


On a related note, we inquire in Question~\ref{ques:Witt}, which (modular) non-degenerate  braided finite tensor categories are Witt equivalent to $\lmod{u_q(\mathfrak{sl}_2)}$. This could have physical applications in the non-semisimple setting via the references mentioned above. Moreover, after discussions with Victor Ostrik and Christoph Schweigert, we end with general questions about the completely anisotropic property and Witt equivalence for non-semisimple modular tensor categories in characteristic 0; see Question~\ref{ques:nonsem-rigidFrob}.


\section{Preliminaries on Monoidal Categories}\label{sec:monoidal}
 In this section, we review terminology pertaining to monoidal categories. We refer the reader to \cite{BK}, \cite{EGNO}, and \cite{TV} for general information. We recall monoidal categories  [Section~\ref{sec:monoidal1}], various types of rigid categories [Section~\ref{sec:rigid}], finite tensor categories [Section~\ref{sec:finitetens}], various braided monoidal categories [Section~\ref{sec:braided}], ribbon monoidal categories [Section~\ref{sec:ribbon}], and modular tensor categories in the non-semisimple setting [Section~\ref{sec:modular}].
 
 \smallskip
 
We assume that all categories here are {\it locally small} (i.e., the collection of morphisms between any two objects is a set),  and are abelian. A full subcategory of a category is called \emph{topologizing} if it is closed under finite direct sums and subquotients \cite{Ros}*{Section~3.5.3}, \cite{Shi1}*{Definition~4.3}. 
 Given a functor $F\colon \cC\to \cD$ between two categories $\cC$ and $\cD$, the \emph{full image} of $F$ is the full subcategory of $\cD$ on all objects isomorphic to an object of the form $F(C)$ for $C$ in~$\cC$. 


\subsection{Monoidal categories and monoidal functors}
\label{sec:monoidal1}
We refer the reader to \cite{EGNO}*{Sections~2.1--2.6} and \cite{TV}*{Sections~1.1--1.4} for further details.

\smallskip

A {\it monoidal category} consists of a category $\cC$ equipped with a bifunctor $\otimes\colon  \cC \times \cC \to \cC$, a natural isomorphism $\alpha_{X,Y,Z}\colon  (X \otimes Y) \otimes Z \overset{\sim}{\to} X \otimes (Y \otimes Z)$ for each $X,Y,Z \in \cC$, an object $\one \in \cC$, and natural isomorphisms $l_X\colon  \one \otimes X \overset{\sim}{\to} X$ and $r_X\colon  X  \otimes \one \overset{\sim}{\to} X$ for each $X \in \cC$, such that the pentagon and triangle axioms hold. By MacLane's coherence theorem, we will assume that all monoidal categories are {\it strict} in the sense that $(X \otimes Y) \otimes Z = X \otimes (Y \otimes Z)$ and $\one \otimes X  = X = X \otimes \one$, for all $X, Y, Z \in \cC$; that is, $\alpha_{X,Y,Z},\; l_X,\; r_X$ are identity maps. 

\smallskip

A {\it (strong) monoidal functor} $(F, F_{-,-}, F_0)$ between monoidal categories $(\cC, \otimes_\cC, \one_\cC)$ to $(\cD, \otimes_\cD, \one_\cD)$ is a functor $F\colon  \cC \to \cD$ equipped with a natural isomorphism $F_{X,Y}\colon  F(X) \otimes_\cD F(Y) \isomorph F(X \otimes_\cC Y)$ for all $X,Y \in \cC$, and an isomorphism $F_0\colon  \one_\cD \isomorph F(\one_\cC)$ in $\cD$, that satisfy associativity and unitality constraints. An {\it equivalence of  monoidal categories} is provided by a  monoidal functor between the two monoidal categories that yields an equivalence of the underlying categories.


\subsection{Rigid and pivotal monoidal categories, and quantum dimension} \label{sec:rigid}
We refer  to \cite{EGNO}*{Sections~2.10 and~4.7} and \cite{TV}*{Sections~1.5--1.7} for further details of the items discussed below.

\smallskip

A monoidal category $(\cC, \otimes, \one)$ is {\it rigid} if it comes equipped with left and right dual objects,  i.e., for each $X \in \cC$ there exist, respectively, an object $X^* \in \cC$ with co/evaluation maps $\ev_X\colon  X^* \otimes X \to \one$ and $\coev_X\colon  \one \to  X \otimes X^*$, as well as an object ${}^*X \in \cC$ with co/evaluation maps $\evr_X\colon  X \otimes {}^*X \to \one$, $\coevr_X\colon \one \to  {}^*X \otimes X$,  satisfying the usual coherence conditions of left and right duals. 

\smallskip

 A rigid monoidal category is {\it pivotal} if it is equipped with isomorphisms $j_X\colon  X \overset{\sim}{\to} X^{**}$ natural in $X$ and satisfying $j_{X \otimes Y} = j_X \otimes j_Y$ for all $X,Y \in \cC$. In this case,  a right duality can be defined on the left duals via ${}^*X = X^*$ and
\begin{align}\label{rightdual-pivotal}
\coevr_X := (\ide_{X^*}\otimes j_X^{-1})\coev_{X^*} \quad \text{and} \quad
\evr_X := \ev_{X^*}(j_X\otimes \ide_{X^*}).
\end{align}
Given a pivotal category, we will use this right dual structure unless specified otherwise. Moreover, a pivotal category has the following characterization, which will be of use later.

\begin{proposition}\cite{TV}*{Remark~1.7.4} \label{prop:piv-equiv}
A rigid monoidal category $\cC$ is pivotal if and only if the left and right duality functors of $\cC$ coincide in the following sense: There exists a left and right duals on the same objects, ${}^* X = X^*$ for all $X \in \cC$, and
\begin{enumerate}[(a),font=\upshape]
    \item [(i)] For all morphisms $f: X \to Y$ in $\cC$, we get that 
    $$(\ev_Y \otimes \ide_{X^*})(\ide_{Y^*} \otimes f \otimes \ide_{X^*})(\ide_{Y^*} \otimes \coev_X) = (\ide_{X^*} \otimes \evr_Y)(\ide_{X^*} \otimes f \otimes \ide_{Y^*})(\coevr_X \otimes \ide_{Y^*}).$$
    \item [(ii)] $\coev_\one = \coevr_\one$. \smallskip
    \item [(iii)] For all $X,Y \in \cC$, we get that
    \[
    \begin{array}{l}
    \smallskip
    (\ev_X \otimes \ide_{(Y \otimes X)^*})(\ide_{X^*} \otimes \ev_Y \otimes \ide_{X \otimes (Y \otimes X)^*})(\ide_{X^* \otimes Y^*} \otimes \coev_{Y \otimes X})\\
    = (\ide_{(Y \otimes X)^*} \otimes \evr_Y)(\ide_{(Y \otimes X)^* \otimes Y} \otimes \evr_X \otimes \ide_{Y^*})(\coev_{Y \otimes X} \otimes \ide_{X^* \otimes Y^*}).  \qed     
    \end{array}
    \]
\end{enumerate}
\end{proposition}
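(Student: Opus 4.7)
The plan is to set up a bijection between pivotal structures $j\colon \ide_\cC \natisomorph (-)^{**}$ and systems of right duals on the left dual objects (i.e., data $\{\evr_X, \coevr_X\}$ with ${}^*X = X^*$) satisfying conditions (i)--(iii), and to check that going back and forth recovers the starting data.

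Forward direction. Starting from a pivotal structure $j_X$, define the right duals by formulas \eqref{rightdual-pivotal}. The required rigidity (snake) axioms for this right duality follow from those of the left duality combined with the naturality of $j$. Condition (i) is a diagrammatic rewriting of the naturality square for $j$ applied to a morphism $f\colon X \to Y$, once $\coevr, \evr$ are expanded via \eqref{rightdual-pivotal}; explicitly, both sides compute $f^*\colon Y^* \to X^*$ in terms of the other duality. Condition (ii) amounts to the equation $j_\one = \ide_\one$, which is forced by monoidality $j_{X \otimes Y} = j_X \otimes j_Y$ together with strictness of the unit. Condition (iii) follows from the monoidal axiom $j_{Y \otimes X} = j_Y \otimes j_X$ combined with the canonical rigidity isomorphism $(Y \otimes X)^* \cong X^* \otimes Y^*$.

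Reverse direction. Given the right-duality data, define, for each $X \in \cC$,
\[
j_X \;:=\; (\evr_X \otimes \ide_{X^{**}})\,(\ide_X \otimes \coev_{X^*})\colon X \longrightarrow X^{**},
\]
which is the unique morphism satisfying $\evr_X = \ev_{X^*}\,(j_X \otimes \ide_{X^*})$, by the universal property of $X^{**}$ as the left dual of $X^*$. An inverse $k_X\colon X^{**} \to X$ is produced symmetrically by interpreting $(\ev_{X^*}, \coev_{X^*})$ as expressing $X$ in the role of a dual of $X^*$ through the right-duality data; that $j_X k_X = \ide_{X^{**}}$ and $k_X j_X = \ide_X$ follow from uniqueness in the respective universal properties. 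Naturality of $j$ in $X$ is extracted directly from condition (i) applied to a general morphism $f$. The monoidality equation $j_{X \otimes Y} = j_X \otimes j_Y$ is the abstract content of (iii) after translation through the standard identification $(Y \otimes X)^* \cong X^* \otimes Y^*$, and condition (ii) pins down $j_\one = \ide_\one$ under $\one^{**} = \one$.

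The main obstacle is the bookkeeping in the reverse direction: each piece of data must be carefully tagged as belonging to either the left or the right duality, and the snake identities for both dualities must be interleaved to deduce naturality, monoidality, and invertibility of $j_X$ from the three axioms. I would carry out the routine verification with string diagrams, paralleling the argument recorded in \cite{TV}*{Remark~1.7.4}.
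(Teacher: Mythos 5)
Your outline is correct and is essentially the argument the paper relies on: the proposition is stated there without proof, with a citation to \cite{TV}*{Remark~1.7.4}, and your two directions — producing the right duality from $j$ via \eqref{rightdual-pivotal} and checking (i)--(iii), then conversely recovering $j_X$ as the canonical isomorphism comparing $(X,\evr_X,\coevr_X)$ and $(X^{**},\ev_{X^*},\coev_{X^*})$ as duals of $X^*$, with naturality from (i), monoidality from (iii), and the unit normalization from (ii) — is exactly the standard proof recorded in that reference. The remaining work is only the routine string-diagram verification you indicate.
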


\smallskip

The {\it quantum dimension} of an object $X$ of a pivotal (rigid) monoidal category $(\cC, \otimes, \one, j)$  is defined to be 
\begin{align}\label{qdim-j}
\dim_j(X) = {\sf ev}_{X^*}  (j_X \otimes {\sf Id}_{X^*})  {\sf coev}_X = \evr_X \; \coev_X \in \End_\cC(\one).
\end{align}


\subsection{Finite tensor categories and Frobenius--Perron dimension} \label{sec:finitetens}
Recall that $\Bbbk$ is an algebraically closed field. We now discuss certain $\Bbbk$-linear monoidal categories following the terminologies of \cite{EGNO}*{Sections~1.8,~4.2,~4.5,~6.1,~7.1--7.3,~7.9}.

\smallskip

A $\Bbbk$-linear abelian category $\cC$ is {\it locally finite} if, for any two objects $V,W$ in $\cC$, $\Hom_{\cC}(V,W)$ is a finite-dimensional $\Bbbk$-vector space and every object has a finite filtration by simple objects. Moreover, we say that $\cC$ is {\it finite} if it is locally finite, if every simple object has a projective cover, and there are finitely many isomorphism classes of simple objects. Equivalently, $\cC$ is finite if it is equivalent to the category of finite-dimensional modules over a finite-dimensional $\Bbbk$-algebra.
A  {\it tensor category} is a locally finite, rigid, monoidal category $(\cC, \otimes, \one)$ such that $\otimes$ is $\Bbbk$-linear in each slot and $\one$ is a simple object of $\cC$ (i.e., $\Hom_\cC(\one,\one) \cong \Bbbk$).
 A {\it tensor functor} is a $\Bbbk$-linear, exact, faithful,  monoidal functor $F$ between  tensor categories $\cC$ and $\cD$, with $F(\one_\cC) = \one_\cD$. 
 
\smallskip

An example of a finite tensor category is $\Vect$, the category of finite-dimensional $\Bbbk$-vector spaces. More generally, the category $\lmod{H}$ of finite-dimensional $\Bbbk$-modules over a (finite-dimensional) Hopf algebra $H$ is a (finite) tensor category.

\smallskip

We will use the following tensor product of finite tensor categories. The {\it Deligne tensor product} of two finite abelian categories is the abelian category $\cC \boxtimes \cD$  equipped with a bifunctor $\boxtimes\colon  \cC \times \cD \to \cC \boxtimes \cD$, $(X,Y) \mapsto X \boxtimes Y$, right exact in both variables so that for any abelian category $\cA$ and any bifunctor $F\colon  \cC \times \cD \to \cA$ right exact in both slots, there exists a unique right exact functor $\overline{F}\colon  \cC \boxtimes \cD \to \cA$ with $\overline{F} \circ \boxtimes = F$ \cite{Del}*{Section 5}. Note that the simple objects in $\cC\boxtimes \cD$, up to isomorphism, of the form $S\boxtimes T$, where $S$ and $T$ are simple objects in $\cC$, respectively, $\cD$ since $\Bbbk$ is algebraically closed and hence a perfect field \cite{Del}*{5.9}. Since $\cC$ and $\cD$ are equivalent to module categories over a finite-dimensional $\Bbbk$-algebra, all indecomposable projective modules in $\cC\boxtimes \cD$ are isomorphic to projective objects of the form $P\boxtimes Q$, for $P,Q$ projective objects in $\cC$, respectively, $\cD$.
We have that $\cC\boxtimes \cD$ is monoidal  when both $\cC$ and $\cD$ are monoidal such that the tensor products of $\cC$ and $\cD$ are $\Bbbk$-linear and right exact; this is via 
\begin{equation} \label{eq:Deligne-monoidal}
    (X \boxtimes Y) \otimes^{\cC \boxtimes \cD} (X' \boxtimes Y'):=(X \otimes^\cC X') \boxtimes (Y \otimes^\cD Y'),
\end{equation}
for all $X,X' \in \cC$ and $Y,Y' \in \cD$,
and with the unit object $\one_\cC \boxtimes \one_\cD$.
As  $\cC$, $\cD$ are finite tensor categories, then so is $\cC\boxtimes \cD$. Given two tensor functors $F\colon \cC\to\cD$ and $F'\colon \cC'\to \cD'$ between finite tensor categories, there exists an induced tensor functor $F\boxtimes F'\colon \cC\boxtimes\cC'\to \cD\boxtimes \cD'$.

\smallskip

For example, in the case when $\cC = \lmod{K}$ and $\cD = \lmod{L}$, for $K$ and $L$ finite-dimensional Hopf algebras over $\Bbbk$, we get that
\begin{equation} \label{eq:boxtimes}
    \lmod{(K \otimes_{\Bbbk} L)} \simeq \lmod{K} \;  \boxtimes\; \lmod{L}.
\end{equation}

Next, we turn our attention to the Frobenius--Perron dimension of categories and objects within them. Let $\{X_i\}_{i \in I}$ be a collection of isomorphism class representatives of simple objects in a finite tensor category $\cC$. For an object $X \in \cC$, define the matrix $N_X$ by
$$(N_X)_{i,j} := ([X \otimes X_i : X_j])_{i,j}, \quad \text{for } i,j \in I,$$
where $[X \otimes X_i : X_j]$ is the multiplicity of $X_j$ in a Jordan-H\"{o}lder series of $X \otimes X_i$ in $\cC$. The largest nonnegative real eigenvalue of $N_X$ (which exists by the Frobenius--Perron Theorem) is called the {\it Frobenius--Perron (FP-)dimension of $X$}, denoted by $\FPdim_{\cC}(X)$. The {\it Frobenius--Perron (FP-)dimension of $\cC$} is defined as
$$\FPdim(\cC):=\textstyle \sum_{i \in I} \FPdim(X_i) \cdot \FPdim(P_i),$$
where $P_i$ is the projective cover of $X_i$. For example, $\FPdim(\lmod{H})= \dim_\Bbbk H$ for any finite-dimensional Hopf algebra $H$.

\smallskip

We will later use the fact that for finite tensor categories  $\cC$, $\cD$,
\begin{equation}\label{eq:FPdim-Deligne}
\FPdim(\cC\boxtimes \cD)=\FPdim(\cC) \cdot \FPdim(\cD).    
\end{equation}
This follows since all simple (projective) objects in $\cC\boxtimes \cD$ are isomorphic to $S\boxtimes T$, for $S$, $T$ simple (respectively, projective) objects in $\cC$, respectively, $\cD$.


\subsection{Braided tensor categories, the monoidal center \texorpdfstring{$\cZ(\cC)$}{Z(C)}, and the M\"uger center~\texorpdfstring{$\cC'$}{C'}} 
\label{sec:braided}
Here, we discuss braided tensor categories and related constructions, and refer the reader to \cite{BK}*{Chapter~1}, \cite{EGNO}*{Sections~8.1--8.3, 8.5,~8.20}, and \cite{TV}*{Sections~3.1 and~5.1} for more information.

\smallskip

A {\it braided tensor category} $(\cC, \otimes, \one, c)$ is a tensor category equipped with a natural isomorphism $c_{X,Y}\colon  X \otimes Y \overset{\sim}{\to} Y \otimes X$ for each $X,Y \in \cC$ such that the hexagon axioms hold. We also have a {\it mirror braiding} on $\cC$ given by 
$$c_{Y,X}^{-1}\colon X \otimes Y \overset{\sim}{\to} Y \otimes X$$
for $X,Y \in \cC$. 
We refer to the braided tensor category
$$\overline{\cC}:=(\cC, \otimes, \one, c^{-1})$$
as the {\it mirror} of $(\cC, \otimes, \one, c)$. 
By a {\it braided tensor subcategory} of a braided tensor category $\cC$ we mean a subcategory of $\cC$ containing the unit object of $\cC$, closed under the tensor product of $\cC$, and containing the braiding isomorphisms. 
A {\it braided tensor functor} between braided tensor categories $\cC$ and $\cD$ is a tensor  functor $(F, F_{-,-}, F_0)\colon  \cC \to \cD$ so that $F_{Y,X} \; c^{\cD}_{F(X),F(Y)} = F(c^{\hspace{.01in}\cC}_{X,Y}) \; F_{X,Y}$ for all $X,Y \in \cC$. An {\it equivalence of  braided tensor categories} is a braided tensor functor between the two tensor categories that yields an equivalence of the underlying categories.

\smallskip 

An important example of a braided tensor category is the {\it monoidal center} (or {\it Drinfeld center}) $\cZ(\cC)$ of a tensor category $(\cC, \otimes, \one)$: its objects are pairs $(V, c_{V,-})$ where $V$ is an object of $\cC$ and $c_{V,X}\colon  V \otimes X \overset{\sim}{\to} X \otimes V$ is a natural isomorphism (called a  {\it half-braiding}) satisfying $$c_{V,X \otimes Y} = (\ide_{X} \otimes c_{V,Y})(c_{V,X} \otimes \ide_{Y}),$$
and its morphisms $(V, c_{V,-}) \to (W, c_{W,-})$ are given by
 morphisms $f \in \Hom_{\cC}(V,W)$ such that $(\ide_X \otimes f)c_{V,X} = c_{W,X}(f \otimes \ide_X)$ for all $X \in \cC$. 
The monoidal product is $(V,c_{V,-}) \otimes (W,c_{W,-}):=(V\otimes W, c_{V \otimes W,-})$, for $c_{V\otimes W, X}:=(c_{V,X} \otimes \ide_W)(\ide_V \otimes c_{W,X})$ for all $X \in \cC$.
An important feature of $\cZ(\cC)$ is the braiding defined by $$c_{(V,c_{V,-}),(W,c_{W,-})}:=c_{V,W}.$$
If $\cC$ is a (finite) tensor category, then $\cZ(\cC)$ is a braided (finite) tensor category. Moreover,
\begin{equation} \label{eq:FP-ZC}
\FPdim(\cZ(\cC)) = \FPdim(\cC)^2.
\end{equation}

For example,  when $\cC = \lmod{H}$ for $H$ a finite-dimensional Hopf algebra over $\Bbbk$, we get that
\begin{equation} \label{eq:Drin}
   \cZ(\lmod{H})\; \simeq \;\lmod{\Drin(H)} \;\simeq \;\lYD{H},
\end{equation}
where $\Drin(H)$ is the Drinfeld double of $H$, and $\lYD{H}$ is the category of left Yetter-Drinfeld modules over $H$.

Note that if $(\cC,c)$ is a braided monoidal category, there is a functor of braided monoidal categories
\begin{equation} \label{eq:Z(C)object+}
\cC\to \cZ(\cC), \quad V\longmapsto V^+:=(V,c_{V,-}).
\end{equation}
We will denote the image of a morphism $f\colon V\to W$ in $\cC$ under this functor by 
\begin{equation} \label{eq:Z(C)morphism+}
    f^+\colon V^+ \longrightarrow W^+.
\end{equation}

Given two braided finite tensor  categories $(\cC,\otimes^\cC,\one_\cC,c^\cC)$ and $(\cD,\otimes^\cD,\one_\cD,c^\cD)$, the Deligne tensor product $\cC\boxtimes \cD$  is a braided finite tensor category. The braiding is obtained from 
\begin{equation} \label{eq:Deligne-braiding}
    c_{X\boxtimes Y,X'\boxtimes Y'}=c^\cC_{X,X'}\boxtimes c^\cD_{Y,Y'}\colon (X\otimes^\cC X')\boxtimes (Y\otimes^\cD Y')\to (X'\otimes^\cC X)\boxtimes (Y'\otimes^\cD Y)
\end{equation}
for all $X,X'\in \cC$, $Y,Y'\in\cD$.

\begin{lemma}\label{lem:Z-Deligne}
Given two finite tensor categories $\cC$ and $\cD$, the canonical functor 
$$\cZ(\cC)\boxtimes \cZ(\cD)\to \cZ(\cC\boxtimes\cD), \quad (V,c_{V,-})\boxtimes (W,d_{W,-})\mapsto (V\boxtimes W,c_{V,-}\boxtimes d_{W,-})$$
is an equivalence of braided tensor categories.
\end{lemma}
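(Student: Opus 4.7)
The plan is to verify that the canonical assignment defines a braided tensor functor and then to conclude it is an equivalence via a Frobenius--Perron dimension count.

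First, I would verify well-definedness on objects. Given $(V,c_{V,-})\in \cZ(\cC)$ and $(W,d_{W,-})\in \cZ(\cD)$, I define a candidate half-braiding on $V\boxtimes W$ by setting
\[
e_{V\boxtimes W,\, X\boxtimes Y} := c_{V,X}\boxtimes d_{W,Y}
\]
on simple tensors $X\boxtimes Y\in \cC\boxtimes\cD$, and extend it to all objects of $\cC\boxtimes\cD$ via the universal property of the Deligne tensor product applied to the right-exact bifunctor $(X,Y)\mapsto (V\boxtimes W)\otimes (X\boxtimes Y)$. The hexagon axiom for $e$ reduces to those for $c$ and $d$ via the formula \eqref{eq:Deligne-monoidal} for the monoidal product in $\cC\boxtimes\cD$, and functoriality on morphisms is immediate since morphisms in $\cZ(\cC), \cZ(\cD)$ are exactly the underlying morphisms that intertwine half-braidings. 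A second application of the universal property of $\boxtimes$ then extends the assignment to a $\Bbbk$-linear exact functor $\cZ(\cC)\boxtimes \cZ(\cD)\to \cZ(\cC\boxtimes\cD)$.

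Next, I would check the braided tensor structure. The monoidal coherence morphisms come from the identification $(V\otimes V')\boxtimes (W\otimes W')=(V\boxtimes W)\otimes^{\cC\boxtimes\cD}(V'\boxtimes W')$ of \eqref{eq:Deligne-monoidal}, and a direct calculation shows these intertwine the constructed half-braidings. By \eqref{eq:Deligne-braiding}, the braiding in $\cZ(\cC)\boxtimes \cZ(\cD)$ on pure tensors is $c_{V,V'}\boxtimes d_{W,W'}$, which agrees with the half-braiding of the image that by definition provides the braiding in $\cZ(\cC\boxtimes\cD)$.

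Finally, to establish the equivalence, I would appeal to Frobenius--Perron dimensions: by \eqref{eq:FPdim-Deligne} and \eqref{eq:FP-ZC},
\[
\FPdim\bigl(\cZ(\cC)\boxtimes \cZ(\cD)\bigr) = \FPdim(\cC)^2\,\FPdim(\cD)^2 = \FPdim\bigl(\cZ(\cC\boxtimes\cD)\bigr).
\]
Since the functor we have constructed is a tensor functor (hence exact and faithful by the conventions of Section~\ref{sec:finitetens}) between finite tensor categories of equal Frobenius--Perron dimension, it must be an equivalence by a standard result (see, e.g., \cite{EGNO}*{Proposition~6.3.3}). The main anticipated obstacle lies in the first step: carefully applying the universal property of the Deligne tensor product to extend the half-braiding $e$ from pure tensors to all of $\cC\boxtimes\cD$ while preserving naturality and the hexagon identities; once this bookkeeping is in place, the remaining verifications and the FP-dimension count are formal.
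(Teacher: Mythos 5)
Your overall route is the same as the paper's: construct the canonical braided tensor functor and then conclude by comparing Frobenius--Perron dimensions via \cite{EGNO}*{Proposition~6.3.3}. The construction and the verification of the braided monoidal structure (including the extension of the half-braiding $c_{V,-}\boxtimes d_{W,-}$ from pure tensors by the universal property of $\boxtimes$) are fine and match what the paper leaves as a routine check.

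However, there is a genuine gap in your final step. \cite{EGNO}*{Proposition~6.3.3} applies to an \emph{injective}, i.e.\ fully faithful, tensor functor: for such a functor one has $\FPdim$ of the source at most $\FPdim$ of the target, with equality if and only if it is an equivalence. You invoke it after noting only that a tensor functor is exact and faithful; but exactness and faithfulness are automatic for any tensor functor and, together with equality of FP-dimensions, do \emph{not} imply equivalence. For instance, if $\varphi\colon G\to G$ is a non-bijective endomorphism of a finite group (say $G=\mZ/p\times\mZ/p$ with $\varphi(a,b)=(a,0)$), then the pullback $\varphi^*\colon \lmod{\Bbbk G}\to \lmod{\Bbbk G}$ is a $\Bbbk$-linear, exact, faithful monoidal functor between finite tensor categories of the same FP-dimension $|G|$, yet it is neither full nor an equivalence. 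So the missing ingredient is precisely full faithfulness of the canonical functor, which is the point the paper records ("it is fully faithful") before applying Proposition~6.3.3. To close the gap you should check, for pure tensors, that
$\Hom_{\cZ(\cC\boxtimes\cD)}\bigl((V\boxtimes W, c_{V,-}\boxtimes d_{W,-}),(V'\boxtimes W', c'_{V',-}\boxtimes d'_{W',-})\bigr)$,
viewed inside $\Hom_{\cC}(V,V')\otimes_\Bbbk\Hom_{\cD}(W,W')$, coincides with $\Hom_{\cZ(\cC)}(V,V')\otimes_\Bbbk\Hom_{\cZ(\cD)}(W,W')$ (write a morphism as $\sum_i f_i\otimes g_i$ with the $g_i$, respectively the $f_i$, linearly independent and test against half-braidings with objects of the form $X\boxtimes\one$ and $\one\boxtimes Y$), and then extend to general objects of the Deligne product; with full faithfulness in hand, your FP-dimension count completes the proof exactly as in the paper.
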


\begin{proof}
One checks that the given functor is indeed a functor of braided tensor categories. Further, it is fully faithful. Now, by \cite{EGNO}*{Proposition~6.3.3.} it is an equivalence of categories since 
$$\FPdim(\cZ(\cC)\boxtimes \cZ(\cD))=\FPdim(\cC)^2\cdot \FPdim(\cD)^2 =\FPdim(\cZ(\cC\boxtimes \cD)),$$
using \eqref{eq:FPdim-Deligne} and \eqref{eq:FP-ZC}.
\end{proof}

We also need  to consider later the {\it M\"uger center}  of a braided tensor category $(\cC, \otimes, \one, c)$, which is the full subcategory on the objects
\begin{equation} \label{eq:Mugercenter}
\Ob (\cC') := \{ X \in \cC ~|~ c_{Y,X}\; c_{X,Y} = {\sf Id}_{X \otimes Y} \text{ for all } Y \in \cC \}.
\end{equation}


\subsection{Ribbon tensor categories}
\label{sec:ribbon}
In this section we assume that $\cC := (\cC, \otimes, \one, c)$ is a braided tensor category, and we refer the reader to \cite{BK}*{Chapter~2}, \cite{EGNO}*{Sections~8.9--8.11},  \cite{TV}*{Section~3.3}, and \cite{Rad}*{Chapter~12} for details of the discussion below.

\smallskip

A braided tensor category $(\cC, \otimes, \one, c)$  is {\it ribbon} (or \emph{tortile}) if  it is equipped with a natural isomorphism $\theta_X\colon X \overset{\sim}{\to} X$  (a {\it twist}) satisfying $\theta_{X \otimes Y} = (\theta_X \otimes \theta_Y)   c_{Y,X}  c_{X,Y}$, $\theta_\one = \ide_\one$, 
and $(\theta_X)^* = \theta_{X^*}$ for all $X,Y \in \cC$. A \emph{functor (or, equivalence) of ribbon categories} is a functor (respectively, equivalence) $F\colon \cC\to \cD$ of braided  tensor  categories such that $F(\theta_X^{\cC})=\theta^{\cD}_{F(X)}$, for any $X\in \cC$, cf. \cite{Shum}*{Section~1}.

\smallskip

In a ribbon category $(\cC,\otimes,\one,c,\theta)$, consider the {\it Drinfeld isomorphism}:
\begin{align}\label{Driniso}
\phi_X=(\ide_{X^{**}}\otimes \ev_X)(c_{X^*,X^{**}}\otimes \ide_{X})(\coev_{X^*}\otimes\ide_X)\colon X \overset{\sim}{\to} X^{**}.
\end{align}
Then,
\begin{align}\label{eq:ribbonpivotal} 
j_X:=\phi_X\theta_X\colon X\isomorph X^{**}
\end{align}
defines a pivotal structure on $\cC$. Now by \eqref{rightdual-pivotal}, a right duality can be defined from a left duality in a ribbon category via ${}^*X = X^*$, and
\begin{align}\label{rightdual}
\coevr_X := 
c_{X^*,X}^{-1}(\theta_X^{-1} \otimes \ide_{X^*})\coev_X\quad \text{and} \quad
\evr_X := 
\ev_X(\ide_{X^*} \otimes \theta_X) c_{X,X^*}
.
\end{align}


\smallskip

The following lemma reconciles the definition of ribbonality in \cite{EGNO} and in \cite{TV}.

\begin{proposition} \label{prop:ribbon-left-right}
A ribbon category $\cC$ is, equivalently, a braided pivotal category such that the left and right twists coincide: That is,   $\theta^l_X=\theta^r_X$ for any object $X$ of $\cC$, where
\begin{align*}
    \theta_X^l&:=(\ev_X\otimes\ide_X)(\ide_{X^*}\otimes c_{X,X})(\coevr_{X}\otimes\ide_X), &\theta_X^r&:=(\ide_X\otimes\evr_X)(c_{X,X}\otimes \ide_{X^*})(\ide_X\otimes \coev_{X}).
\end{align*}
\end{proposition}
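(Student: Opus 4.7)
The plan is to prove both implications of the equivalence. In the forward direction, I will start from a ribbon category and verify that the pivotal structure recalled in \eqref{eq:ribbonpivotal} satisfies the matching-twists condition. In the backward direction, given a braided pivotal category with $\theta^l = \theta^r$, I will define $\theta := \theta^l = \theta^r$ and verify it fulfills the ribbon axioms of Section~\ref{sec:ribbon}. Both halves come down to diagrammatic manipulations combining naturality of the braiding $c$, the hexagon axioms, the zig-zag identity for $(\ev_X,\coev_X)$, and the identification of left and right duals afforded by the pivotal structure.

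For the forward direction ($\Rightarrow$), assume $(\cC,\otimes,\one,c,\theta)$ is ribbon. The pivotal isomorphism $j_X=\phi_X\theta_X$ from \eqref{eq:ribbonpivotal} and the right duality in \eqref{rightdual} are already in place, so the remaining content is to show that both $\theta^l_X$ and $\theta^r_X$ recover $\theta_X$. I substitute the expressions in \eqref{rightdual} for $\coevr_X$ and $\evr_X$ into the defining formulas for $\theta^l_X$ and $\theta^r_X$, then simplify: naturality of the braiding lets one slide $\theta_X$ past each instance of $c_{X,X}$ via $(\ide_X\otimes \theta_X) c_{X,X}=c_{X,X}(\theta_X\otimes \ide_X)$, and the cancellation $c_{X,X}^{-1} c_{X,X}=\ide_{X\otimes X}$ combined with the zig-zag identity collapses the remaining diagram down to $\theta_X$.

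For the backward direction ($\Leftarrow$), with $\theta:=\theta^l=\theta^r$ I must verify the four conditions of the ribbon definition: (i) naturality and invertibility of $\theta$, (ii) $\theta_\one=\ide_\one$, (iii) the twist formula $\theta_{X\otimes Y}=(\theta_X\otimes \theta_Y) c_{Y,X} c_{X,Y}$, and (iv) $(\theta_X)^*=\theta_{X^*}$. Naturality follows from naturality of $c$, $\ev$, $\evr$, $\coev$, $\coevr$ and functoriality of $\otimes$; invertibility follows by observing that the analogous formulas using the mirror braiding $c^{-1}$ produce a two-sided inverse. Condition (ii) reduces to Proposition~\ref{prop:piv-equiv}(ii) together with triviality of the self-braiding of $\one$. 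For (iii) I use the monoidality $j_{X\otimes Y}=j_X\otimes j_Y$ of the pivotal structure to identify $(X\otimes Y)^*=Y^*\otimes X^*$ with the expected right co/evaluations; substituting into the definition of $\theta^r_{X\otimes Y}$ and applying the hexagon axioms together with the zig-zag identity yields the required expression.

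The main obstacle is verifying axiom (iv), that $(\theta_X)^*=\theta_{X^*}$. The difficulty is that $(\theta_X)^*$ is a morphism built from $(\ev_X,\coev_X)$, whereas $\theta_{X^*}$ is built from $(\ev_{X^*},\coev_{X^*})$, and bridging the two requires Proposition~\ref{prop:piv-equiv}(i) to rewrite duals of morphisms, together with the hexagon axioms to relate $c_{X^*,X^*}$ with $c_{X,X}$. Applied to $\theta^r_{X^*}$, these moves convert the computation into one on $X$ involving the Drinfeld-style formula \eqref{Driniso}, at which point the matching hypothesis $\theta^l_X=\theta^r_X$ supplies the final symmetry needed to conclude equality with $(\theta_X)^*$. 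I expect the cleanest presentation of this step to use graphical string diagrams rather than algebraic identities.
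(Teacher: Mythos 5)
The forward direction, as you have set it up, does not go through for the left twist. Your toolkit of naturality of $c$, the cancellation $c_{X,X}^{-1}c_{X,X}=\ide_{X\otimes X}$, hexagons and zig-zags does suffice for $\theta^r_X$: substituting $\evr_X$ from \eqref{rightdual}, sliding $\theta_X$ out by naturality, applying the hexagon to $(\ide_X\otimes c_{X,X^*})(c_{X,X}\otimes\ide_{X^*})=c_{X,X\otimes X^*}$ and naturality of $c$ against $\coev_X$ reduces everything to a zig-zag, so $\theta^r_X=\theta_X$. But for $\theta^l_X$ the same moves only give $\theta^l_X=\theta_X^{-1}\,\bigl[(\ev_X\otimes\ide_X)(\ide_{X^*}\otimes c_{X,X})(c_{X^*,X}^{-1}\coev_X\otimes\ide_X)\bigr]$, and the bracketed morphism is a closed-off double braiding that no amount of naturality, cancellation or zig-zagging removes; one must show it equals $\theta_X^{2}$. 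That is exactly where the paper's displayed computation invokes the ribbon axioms your sketch never uses in this direction: naturality of $\theta$ at $\coev_X$ (giving $\theta_{X\otimes X^*}\coev_X=\coev_X$), the multiplicativity $\theta_{X\otimes X^*}=(\theta_X\otimes\theta_{X^*})c_{X^*,X}c_{X,X^*}$ to rewrite $c_{X,X^*}^{-1}c_{X^*,X}^{-1}\coev_X$ as $(\theta_X\otimes\theta_{X^*})\coev_X$, and, crucially, $(\theta_X)^*=\theta_{X^*}$ together with rigidity to trade $\theta_{X^*}$ on the second leg of $\coev_X$ for $\theta_X$ on the first. The omission is not merely missing detail: in a braided rigid category carrying a natural multiplicative twist that is not compatible with duals (a balanced, non-ribbon category, e.g.\ $G$-graded vector spaces with trivial braiding and $\theta_g=\chi(g)\,\ide$ for a character $\chi$ with $\chi(g)^2\neq 1$), all the tools you list are available, $j_X=\phi_X\theta_X$ as in \eqref{eq:ribbonpivotal} is still pivotal, and one still gets $\theta^r=\theta$, yet $\theta^l\neq\theta^r$. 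Hence no argument built only from naturality, cancellation and zig-zags can collapse $\theta^l_X$ to $\theta_X$; the self-duality of the twist must enter, and inserting the three steps above repairs your forward direction and recovers the paper's proof.

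Your converse direction is in line with the paper's treatment: the paper also sets $\theta:=\theta^l=\theta^r$ and defers the verification of the twist axioms to \cite{TV}*{Exercise~3.3.3}, so your outline (with axiom $(\theta_X)^*=\theta_{X^*}$ as the delicate point) is an acceptable plan at the same level of detail, though as written it is a plan rather than a proof.
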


\begin{proof}

 Assume that $\cC$ is a ribbon category with twist $\theta$. Then, $\cC$ is pivotal with $j_X$ defined in~\eqref{eq:ribbonpivotal}. Then we have the following calculations:
 \[
 \begin{array}{rl}
 \medskip
 \theta_X^l &= (\ev_X \otimes \theta_X^{-1})(\ide_{X^*} \otimes c_{X,X}) (c_{X^*,X}^{-1} \; \coev_X \otimes \ide_X)\\ \medskip
 &= (\theta_X^{-1} \otimes \ev_X) (c_{X,X^*}^{-1}\; c_{X^*,X}^{-1}\; \coev_X \otimes \ide_X)\\ \medskip
 \medskip
 &= (\theta_X^{-1} \otimes \ev_X) (c_{X,X^*}^{-1}\; c_{X^*,X}^{-1} \; \theta_{X \otimes X^*} \; \coev_X \otimes \ide_X)\\  \medskip
  &= (\theta_X^{-1} \otimes \ev_X) (\theta_{X \otimes X^*}\; c_{X,X^*}^{-1}\; c_{X^*,X}^{-1}  \; \coev_X \otimes \ide_X)\\  \medskip
 &= (\theta_X^{-1} \otimes \ev_X) ((\theta_X \otimes \theta_{X^*})\; \coev_X \; \otimes \ide_X)\\  \medskip
 &= \theta_X \\  \medskip
 &= (\ide_X \otimes \ev_X c_{X,X^*})(\ide_X \otimes \theta_X  \otimes \ide_{X^*})(c_{X,X^*} \otimes \ide_X)(\ide_X \otimes \coev_X)\\  \medskip
 &= \theta_X^r.
 \end{array}
 \]
 The first equation holds by \eqref{rightdual} and naturality of the braiding, and the second equation also by naturality of the braiding. The next step holds by the naturality of $\theta$ applied to $\coev_X$ with $\theta_\one = \ide_\one$: $\theta_{X \otimes X^*}\; \coev_X = \coev_X$. The following equation holds again by the naturality of $\theta$ applied to $c_{X,X^*}^{-1}\; c_{X^*,X}^{-1}$. The fifth equation holds as $\theta_{X \otimes X^*} = (\theta_X \otimes \theta_{X^*})  \; c_{X^*,X} \; c_{X,X^*}$. The next step follows from rigidity and the assumption that $\theta_{X^*} = (\theta_X)^*$. The last steps hold by naturality and by definition.
  
 For the converse direction, one checks that $\theta_X:=\theta_X^l=\theta_X^r$ satisfies the axioms of a twist; see \cite{TV}*{Exercise~3.3.3}.
\end{proof}



\subsection{Non-degeneracy and modular tensor categories}
\label{sec:modular}
In the section, we discuss a notion of a modular tensor category for the non-semisimple setting. This is based on work of Kerler--Lyubashenko \cite{KL}, \cite{DGNO},  and recent work of Shimizu \cite{Shi1}. To proceed, we adopt the definition of non-degeneracy below, which extends the notion of non-degeneracy in the semisimple setting; see \cite{EGNO}*{Definition~8.19.2 and Theorem~8.20.7}.

\begin{definition}[{\cite{Shi1}*{Theorem~1}}] 
\label{def:nondeg} 
We call a braided finite tensor category $(\cC, \otimes, \one, c)$ {\it non-degenerate}  if its M\"uger center $\cC'$ (see \eqref{eq:Mugercenter}) is equal to $\Vect$. 
\end{definition}

The lemma below will also be of use.

\begin{lemma}\label{lem:DeligneMueger}
Let $\cC$ and $\cD$ be braided finite tensor categories. If $\cC\boxtimes \cD$ is non-degenerate, then so are both $\cC$ and $\cD$.
\end{lemma}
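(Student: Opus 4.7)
The plan is to construct, for each factor, a fully faithful braided tensor embedding into $\cC\boxtimes\cD$ and transfer the non-degeneracy hypothesis along it. Focusing on $\cC$ (the argument for $\cD$ is symmetric), I would consider the functor $\iota\colon\cC\to\cC\boxtimes\cD$ given by $X\mapsto X\boxtimes\one_\cD$. Using \eqref{eq:Deligne-monoidal} this is a (strong) monoidal functor, and from \eqref{eq:Deligne-braiding}, together with the fact that the braiding of $\cD$ with its unit is trivial, $\iota$ is braided. A standard property of the Deligne tensor product makes $\iota$ fully faithful: by simplicity of $\one_\cD$,
\[
\Hom_{\cC\boxtimes\cD}(X\boxtimes\one_\cD,\,X'\boxtimes\one_\cD)\;\cong\;\Hom_\cC(X,X')\otimes_{\Bbbk}\Hom_\cD(\one_\cD,\one_\cD)\;\cong\;\Hom_\cC(X,X').
\]

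Next, I would verify that $\iota$ sends $\cC'$ into $(\cC\boxtimes\cD)'$. Given $X\in\cC'$ and any simple tensor $X'\boxtimes Y'$ of $\cC\boxtimes\cD$, formula \eqref{eq:Deligne-braiding} yields
\[
c_{X'\boxtimes Y',\,X\boxtimes\one_\cD}\;c_{X\boxtimes\one_\cD,\,X'\boxtimes Y'}\;=\;\bigl(c^\cC_{X',X}\,c^\cC_{X,X'}\bigr)\boxtimes\bigl(c^\cD_{Y',\one_\cD}\,c^\cD_{\one_\cD,Y'}\bigr)\;=\;\ide_{X\otimes X'}\boxtimes\ide_{Y'},
\]
the last equality using $X\in\cC'$ and triviality of the unit braiding in $\cD$. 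Since every object of $\cC\boxtimes\cD$ is a subquotient of a direct sum of such simple tensors, and since the Müger-center condition propagates across direct sums and subquotients by naturality of the braiding, this identity extends to all $Y\in\cC\boxtimes\cD$. Therefore $\iota(X)\in(\cC\boxtimes\cD)'$.

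Combining these, the hypothesis $(\cC\boxtimes\cD)'=\Vect$ forces $\iota(X)\cong\one_{\cC\boxtimes\cD}^{\oplus n}=\iota(\one_\cC^{\oplus n})$ for some $n\geq 0$, and full faithfulness of $\iota$ then gives $X\cong\one_\cC^{\oplus n}$ in $\cC$. Hence $\cC'=\Vect$, i.e., $\cC$ is non-degenerate; interchanging the roles of $\cC$ and $\cD$ yields non-degeneracy of $\cD$.

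The main obstacle I anticipate is the extension step in the middle paragraph: promoting the double-braiding identity from simple tensors $X'\boxtimes Y'$ (where it is a direct computation from \eqref{eq:Deligne-braiding}) to arbitrary objects of $\cC\boxtimes\cD$. This relies on knowing that the defining property of $\cC\boxtimes\cD$ ensures simple tensors generate all objects under direct sums and subquotients, and on the fact that the Müger-center condition is itself closed under these operations by naturality of the braiding. The remaining steps (monoidality, braidedness, and full faithfulness of $\iota$) are routine bookkeeping with the Deligne tensor product.
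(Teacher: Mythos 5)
Your proposal is correct and follows essentially the same route as the paper: the key point in both is that $X\boxtimes\one_\cD$ lies in $(\cC\boxtimes\cD)'$ whenever $X\in\cC'$, the paper phrasing this contrapositively and leaving the verification implicit. Your version just fills in the details (the double-braiding computation on $\boxtimes$-tensors, the extension by subquotients, and the transfer back via full faithfulness of $X\mapsto X\boxtimes\one_\cD$), all of which are sound.
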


\begin{proof}
 If $\cC$ fails to be non-degenerate, then there exists an object $X \in \cC'$ that is not a finite direct sum of copies of $\one_\cC$. Since $X \boxtimes \one_{\cD}$ belongs to $(\cC \boxtimes \cD)'$, we then obtain that $\cC \boxtimes \cD$  fails to be non-degenerate. 
\end{proof}

Next we discuss a characterization of non-degeneracy. 
Let $(\cC, \otimes, \one, c_{X,Y}\colon X \otimes Y \overset{\sim}{\to} Y \otimes X)$ be a braided tensor category and $\ov{\cC}$ its mirror from Section~\ref{sec:braided}.  
The assignments $\cC \to \cZ(\cC)$, $X \mapsto (X, c_{X,-})$,  and $\overline{\cC} \to \cZ(\cC)$, $X \mapsto (X, c^{-1}_{-,X})$, extend to a braided tensor functor $\cC \boxtimes \overline{\cC} \to \cZ(\cC)$. If this functor yields an equivalence between the braided tensor categories $\cC \boxtimes \overline{\cC}$ and $\cZ(\cC)$, then we say that $(\cC, \otimes, \one, c)$  is {\it factorizable}.
A braided finite tensor category is non-degenerate if and only if it is factorizable \cite{Shi1}*{Theorem~4.2}.

\smallskip

Moreover, the following type of tensor  categories are of primary interest in this work.

\begin{definition}[{\cite{KL}*{Definition~5.2.7}, \cite{Shi1}*{Section 1}}] \label{def:modular}
A braided finite tensor category is called {\it modular} if it is non-degenerate and ribbon.
\end{definition}

Now consider the braided finite tensor category $\lmod{H}$ for $H$ a finite-dimensional, quasi-triangular Hopf algebra over $\Bbbk$. We get that $\lmod{H}$ is modular precisely when $H$ is ribbon and factorizable \cite{EGNO}*{Proposition~8.11.2 and Example~8.6.4}.
Moreover, it is straight-forward to show that if $\cC$ and $\cD$ are modular, then so is $\cC \boxtimes \cD$ via the monoidal structure~\eqref{eq:Deligne-monoidal}, the braiding~\eqref{eq:Deligne-braiding}, and with ribbon structure $\theta^{\cC \boxtimes \cD} := \theta^\cC \boxtimes \theta^\cD$.


\section{Preliminaries on algebraic structures in braided finite tensor categories}\label{sec:alg-tensor}
In this section, let $\cC:=(\cC, \otimes, \one, c)$ be a braided finite tensor category over $\Bbbk$. Assume that all structures below are $\Bbbk$-linear as well. We recall facts about algebras, coalgebras, and their (co)modules in $\cC$ in Section~\ref{sec:co/alg}, and recall facts about bialgebras, Hopf algebras, and their (co)modules in $\cC$ in Section~\ref{sec:Hopfalg}. In Section~\ref{sec:Frobalg}, we then discuss various types of Frobenius algebras in $\cC$ whose representation categories are used to obtain modular tensor categories later in this work and in other parts of the literature.


\subsection{Algebras, coalgebras, and their (co)modules} \label{sec:co/alg}
 We discuss in this part algebras and coalgebras in~$\cC$ and their (co)modules. More information is available in \cite{EGNO}*{Section~7.8} and \cite{TV}*{Section~6.1}. See also \cite{DMNO} and \cite{FFRS} for terminology.

\smallskip

An {\it algebra} in $\cC$ is an object $A \in \cC$ equipped with two morphisms $m\colon  A \otimes A \to A$ (multiplication) and $u\colon  \one \to A$ (unit) satisfying $m(m\otimes \ide_A) = m(\ide_A \otimes m)$ and $m(u \otimes \ide_A) = \ide_A = m(\ide_A \otimes  u)$. We denote by $\Alg(\cC)$ the category of algebras in $\cC$, where morphisms in $\Alg(\cC)$ are morphisms $f\colon  A \to A'$ in $\cC$ so that $f \; m_A = m_{A'} (f \otimes f)$ and $f \; u_A = u_{A'}$. 

\smallskip

For monoidal categories $\cC$ and $\cD$, with $A \in \Alg(\cC)$ and $B \in \Alg(\cD)$, we get that $A \boxtimes B \in \Alg(\cC \boxtimes \cD)$. Here, we naturally identify $(A \boxtimes B) \otimes (A \boxtimes B)$ with $(A \otimes A) \boxtimes (B \otimes B)$ (see \cite{EGNO}*{Proposition~4.6.1}) and get that $m_{A\boxtimes B} = m_A \boxtimes m_B$ and $u_{A\boxtimes B} = u_A \boxtimes u_B$.

\smallskip

 We say that an algebra $(A,m,u)$ in $\cC$ is  {\it commutative} if $m = m c_{A,A}$. Note that if $A \in \Alg(\cC)$ and $B \in \Alg(\cD)$ are commutative, then $A \boxtimes B \in \Alg(\cC \boxtimes \cD)$ is commutative as well.

\smallskip

The following result is well-known.

\begin{proposition} \label{prop:mon-alg}
 Suppose $\cC$ and $\cD$ are (resp., braided) monoidal categories, and $F: \cC \to \cD$ is a (resp., braided) monoidal functor. If $A$ is a (resp., commutative) algebra in $\cC$, then $F(A)$ is a (resp., commutative) algebra in $\cD$ with $m_{F(A)} = F(m_A) F_{A,A}$ and $u_{F(A)} = F(u_A)F_0$. \qed
\end{proposition}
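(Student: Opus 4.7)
The plan is to verify directly that the proposed structure morphisms $m_{F(A)} := F(m_A) F_{A,A}$ and $u_{F(A)} := F(u_A) F_0$ satisfy the axioms of an algebra, and, in the braided case, of a commutative algebra, by invoking the coherence axioms of the monoidal functor $F$ together with the algebra axioms of $A$. This is essentially a diagram-chase with no conceptual obstacle, so the proposal is really to lay out the order in which the axioms are used.

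For associativity, I would first expand $m_{F(A)} (m_{F(A)} \otimes \ide_{F(A)})$ to obtain $F(m_A) F_{A,A} (F(m_A) \otimes \ide_{F(A)}) F_{A,A}$, apply naturality of $F_{-,-}$ in its first slot to move $F(m_A) \otimes \ide_{F(A)}$ past $F_{A,A}$ (turning it into $F(m_A \otimes \ide_A)$ precomposed with $F_{A \otimes A, A}$), then use the associativity hexagon of the monoidal functor $(F, F_{-,-}, F_0)$ to rewrite the composition $F_{A \otimes A, A}(F_{A,A} \otimes \ide_{F(A)})$ as $F_{A, A \otimes A}(\ide_{F(A)} \otimes F_{A,A})$ (here I use that we may treat $\cC, \cD$ as strict so the associators drop out). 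Applying associativity of $m_A$ inside $F$ and reversing the same manipulations for the other side yields the equality $m_{F(A)}(m_{F(A)} \otimes \ide_{F(A)}) = m_{F(A)}(\ide_{F(A)} \otimes m_{F(A)})$.

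For unitality, I would compute $m_{F(A)}(u_{F(A)} \otimes \ide_{F(A)}) = F(m_A) F_{A,A}(F(u_A) F_0 \otimes \ide_{F(A)})$, use naturality of $F_{-,-}$ to push $F(u_A) \otimes \ide_{F(A)}$ through $F_{A,A}$, use the left unitality axiom of the monoidal functor relating $F_{\one_\cC, A} \circ (F_0 \otimes \ide_{F(A)})$ to $\ide_{F(A)}$ (again after strictification), and finally apply left unitality of $m_A$; the right unit is symmetric.

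For the commutativity statement, suppose $A$ is commutative and $F$ is braided monoidal. Then $m_{F(A)}\, c^{\cD}_{F(A),F(A)} = F(m_A) F_{A,A} c^\cD_{F(A),F(A)}$, and the braided-functor axiom $F_{A,A}\, c^\cD_{F(A),F(A)} = F(c^\cC_{A,A})\, F_{A,A}$ (reading the axiom in the excerpt with the roles of $X,Y$ interchanged) rewrites this as $F(m_A)\, F(c^\cC_{A,A})\, F_{A,A} = F(m_A \, c^\cC_{A,A})\, F_{A,A} = F(m_A)\, F_{A,A} = m_{F(A)}$ by functoriality and commutativity of $A$. The only place one has to be careful is making sure the orientation of the braided-functor axiom is applied in the correct direction, but once this is noted, the proof is immediate. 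No step presents a genuine obstacle.
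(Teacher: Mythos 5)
Your verification is correct: the paper records this result as well known and gives no proof (the \qed is attached to the statement itself), and your direct diagram chase—naturality of $F_{-,-}$, the associativity and unit constraints of the monoidal functor, and the braided-functor axiom $F_{A,A}\,c^{\cD}_{F(A),F(A)} = F(c^{\cC}_{A,A})\,F_{A,A}$—is exactly the standard argument the authors are implicitly invoking. Nothing is missing, and your care about the orientation of the braided-functor axiom matches the convention stated in Section~\ref{sec:braided}.
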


Given an algebra $(A,m,u)$ in $\cC$, a \emph{left $A$-module in $\cC$} is a pair $(V,a_V)$ for $V$ an object in $\cC$ and $$a_V:=a_V^l\colon A\otimes V\to V,$$ a morphism in $\cC$ satisfying 
$a_V(m\otimes \ide_V)=a_V(\ide_A \otimes  \; a_V)$ and $a_V(u\otimes \ide_V)=\ide_V$.
A morphism of $A$-modules $(V,a_V) \to (W, a_W)$ is a morphism $V \to W$ in $\cC$ that intertwines with $a_V$ and $a_W$. 
This way, we define the category $\lmod{A}(\cC)$ of left $A$-modules in $\cC$. Analogously, we define $\rmod{A}(\cC)$, the category of \emph{right $A$-modules $(V, a_V^r)$ in $\cC$}, with right $A$-action morphism:
$$a_V^r: V \otimes A \to V.$$
An {\it $A$-bimodule in $\cC$} is triple $(A, a_V^l, a_V^r)$ in $\cC$ where $(A,a_V^l) \in  \lmod{A}(\cC)$, $(A,a_V^r) \in  \rmod{A}(\cC)$, and $a_V^r(a_V^l \otimes \ide_A) = a_V^l(\ide_A \otimes a_V^r)$. Morphisms of $A$-bimodules in $\cC$ are morphisms simultaneously in $\lmod{A}(\cC)$ and in $\rmod{A}(\cC)$, and these together with  $A$-bimodules in $\cC$, form a category which we denote by $\bimod{A}(\cC)$.

\smallskip

An algebra $(A,m,u)$ in $\cC$ is called \emph{separable} if $m$ has a splitting as an $A$-bimodule map, i.e., if there exists a morphism $t: A \to A\otimes A$ in $\bimod{A}(\cC)$ so that $m t = \ide_A$. Here, $A \in \bimod{A}(\cC)$ via left and right multiplication, and likewise  $A \otimes A  \in \bimod{A}(\cC)$  via left and right multiplication respectively in the first and second slot. So, $t \in \bimod{A}(\cC)$ means that $t \in \cC$ and 
\begin{equation} \label{eq:sep}
(m \otimes \ide_A) (\ide_A \otimes  t) = t  m = (\ide_A \otimes m) (t  \otimes \ide_A).
\end{equation}
An algebra in $\cC$ is said to be \emph{\'etale} if it is commutative and separable.

\smallskip

We say that an algebra $(A,m,u)$ in $\cC$ is {\it connected} (or {\it haploid}) if $\dim_\Bbbk \Hom_\cC(\one, A) = 1$.
We remark that there is an isomorphism of $\Bbbk$-algebras
\begin{equation} \label{eq:modAA}
\phi\colon \Hom_\cC(\one, A) \isomorph \End_{\rmod{A}(\cC)}(A), \quad f \mapsto  \phi_f:=m(f\otimes \ide_A),
\end{equation}
where the algebra structure is given by the convolution product on the left hand side and by composition on the right hand side.

\smallskip

An algebra in $\cC$ is called \emph{indecomposable} if it is not isomorphic to a  direct sum of nonzero algebras in $\cC$; else it is {\it decomposable}. Note that if $A$ is decomposable, with $A \cong A_1 \oplus A_2$, then there exist morphisms $\one \overset{u_1}{\to} A_1 \hookrightarrow A$ and $\one \overset{u_2}{\to} A_2 \hookrightarrow A$ in $\cC$ that are not scalar multiples of each other. So, the connected condition implies indecomposability.  

\smallskip

A {\it coalgebra} in $\cC$ is an object $C \in \cC$ equipped with two morphisms $\Delta\colon  C \to C \otimes C$ (comultiplication) and $\varepsilon\colon  C \to \one$ (counit) satisfying $(\Delta \otimes \ide_C) \Delta= (\ide_C \otimes\; \Delta)\Delta$ and $(\varepsilon \otimes \ide_C) \Delta= \ide_C = (\ide_C \otimes\; \varepsilon)\Delta$. Dual to above, we can define the category $\Coalg(\cC)$ of coalgebras and their morphisms in $\cC$, and given $C \in \Coalg(\cC)$ we can define categories, $\lcomod{C}(\cC)$ and $\rcomod{C}(\cC)$, of {\it left} and {\it right $C$-comodules in $\cC$}, respectively. For $V \in \lcomod{C}(\cC)$, the left $C$-coaction map is denoted by
$$\delta_V\colon  V \to C \otimes V.$$


\subsection{Bialgebras and Hopf algebras, and their (co)modules} \label{sec:Hopfalg}
In this part, we define bialgebras and Hopf algebras in a braided finite tensor category $\cC$ over $\Bbbk$. We refer the reader to  \cite{EGNO}*{Sections~7.14,~7.15,~8.3}, \cite{TV}*{Sections~6.1 and~6.2}, and \cite{Bes}*{Section~3} for more details.

\smallskip

A {\it bialgebra} in $\cC$ is a tuple $H:=(H,m,u,\Delta,\varepsilon)$ where $(H,m,u) \in {\sf Alg}(\cC)$ and $(H,\Delta,\varepsilon) \in {\sf Coalg}(\cC)$ so that $\Delta m=(m\otimes m)(\ide\otimes c \otimes \ide)(\Delta\otimes \Delta)$, $\Delta u = u \otimes u$, $\varepsilon m = \varepsilon \otimes \varepsilon$, and $\varepsilon u = \ide_{\one}$.  We denote by $\Bialg(\cC)$ the category of bialgebras in $\cC$, where morphisms in $\Bialg(\cC)$ are morphisms in $\cC$ that belong to $\Alg(\cC)$ and $\Coalg(\cC)$ simultaneously. 

\smallskip

A {\it Hopf algebra} is a tuple $H:=(H,m,u,\Delta,\varepsilon, S)$, where $(H,m,u,\Delta,\varepsilon) \in \Bialg(\cC)$ and $S\colon H \to H$ is a morphism in $\cC$ (called an {\it antipode}) so that 
$m(S \otimes \ide_H)\Delta = m(\ide_H \otimes S)\Delta = u \varepsilon$. We denote by $\HopfAlg(\cC)$ the category of Hopf algebras in $\cC$, where morphisms  are morphisms in $\Bialg(\cC)$ that preserve the antipode. Note that by \cite{Tak99}*{Theorem~4.1, Proposition--Definition~2.9} the antipode of a Hopf algebra in $\cC$ must be invertible.

\smallskip

Now we discuss (co)modules over Hopf algebras $H$ in $\cC$.
If $V,W$ are left $H$-modules in $\cC$, then so is the tensor product $V \otimes W$, via the action \eqref{tensorproductaction} below:
\begin{equation}\label{tensorproductaction}a_{V\otimes W}:= (a_V\otimes a_W)(\ide_H\otimes c_{H,V}\otimes \ide_W)(\Delta_{H} \otimes \ide_{V\otimes W}).
\end{equation}
This makes the category $\lmod{H}(\cC)$  a monoidal category, with unit object $(\one, a_{\one} = \varepsilon_{H} \otimes \ide_{\one})$.
Take $(V,a_V) \in \lmod{H}(\cC)$. Then its left dual 
 $(V^*,a_{V^*}) \in \lmod{H}(\cC)$ is defined using $S_H$, and its right dual $({}^*V,a_{{}^*V}) \in \lmod{H}(\cC)$ is defined using $S^{-1}_H$. 
 It follows that $\lmod{H}(\cC)$ is a (finite) tensor category provided $\cC$ is a (finite) braided tensor category.
 
\smallskip

For  a supply of braided tensor categories, take a Hopf algebra $H$ in $\cC$, and consider the category of {\it $H$-Yetter--Drinfeld modules in $\cC$}, denoted by $\lYD{H}(\cC)$, which consists of objects $(V,a_V,\delta_V)$, where $(V,a_V) \in \lmod{H}(\cC)$ with left $H$-coaction  in $\cC$ denoted by $\delta_V\colon  V \to H \otimes V$, subject to the compatibility condition:
\[
\begin{split} 
\label{eq:HYDB}
&(m_H \otimes a_V)(\ide_H \otimes c_{H,H} \otimes \ide_V)(\Delta_H \otimes \delta_V)\\
&\quad = (m_H \otimes \ide_V)(\ide_H \otimes c_{V,H})(\delta_V \otimes \ide_H) (a_V \otimes \ide_H) (\ide_H \otimes c_{H,V})(\Delta_H \otimes \ide_V).
\end{split}
\]
A morphism $f\colon (V,a_V,\delta_V)\to (W,a_W,\delta_W)$ in $\lYD{H}(\cC)$ is given by a morphism $f\colon V\to W$ in $\cC$ that belongs to $\lmod{H}(\cC)$ and $\lcomod{H}(\cC)$. Given two objects $(V,a_V,\delta_V)$ and $(W,a_W,\delta_W)$ in $\lYD{H}(\cC)$, their tensor product  is given by $(V\otimes W, a_{V\otimes W}, \delta_{V\otimes W})$, where
 $a_{V\otimes W}$  as in \eqref{tensorproductaction} and 
\begin{align*}
\delta_{V\otimes W}=(m_H\otimes \ide_{V\otimes W})(\ide_H\otimes c_{H,V}\otimes \ide_W)(\delta_V\otimes \delta_W)
.
\end{align*}
The category $\lYD{H}(\cC)$ is braided with braiding  given by
$c^{\sf YD}_{V,W} = (a_W \otimes \ide_V)(\ide_H \otimes c^{\cC}_{V,W})(\delta_V \otimes \ide_W).$


\subsection{Frobenius algebras} \label{sec:Frobalg}
For this section, take $(\cC, \otimes, \one, c, \theta, j)$ to be a ribbon tensor category over $\Bbbk$,
with induced pivotal structure \eqref{eq:ribbonpivotal}. In this part, we compare different sets of assumptions on algebras in $\cC$ that yield modular categories of local modules (see Section~\ref{subsec:local} later).

A {\it Frobenius algebra} in $\cC$ is an object $A \in \cC$ equipped with morphisms $m: A \otimes A \to A$, $u: \one \to A$, $\Delta: A \to A \otimes A$, and $\varepsilon: A \to \one$ such that $(A,m,u) \in \Alg(\cC)$, $(A,\Delta, \varepsilon) \in \Coalg(\cC)$, and 
\begin{equation} \label{eq:Frob-comp}
(m \otimes \ide) (\ide \otimes  \Delta) = \Delta m = (\ide \otimes m) (\Delta \otimes \ide).
\end{equation}
Note that a Frobenius algebra $(A,m,u,\Delta,\varepsilon)$ in $\cC$ is self-dual with evaluation and coevaluation morphisms given by 
$\ev_{A} = \evr_A =\varepsilon m$ and  $\coev_A = \coevr_A =\Delta u.$ Moreover, by \eqref{qdim-j} we have
\begin{align} \label{dimj-Frob}
\dim_j(A) =  \evr_A \; \coev_A = \varepsilon \; m \; \Delta \; u.
\end{align}

We call a Frobenius algebra $(A,m,u,\Delta,\varepsilon)$ in $\cC$ {\it special} if $m \Delta = \beta_A \ide_A$ and $\varepsilon u = \beta_\one \ide_\one$ for $\beta_A, \beta_\one \in \Bbbk^\times$. In this case (whether $\beta_A, \beta_\one$ are nonzero or not), 
\begin{align} \label{dimj-beta}
\dim_j(A)  =  \beta_A \beta_{\one} \ide_\one.
\end{align}

\smallskip

The following definition will be central in Section \ref{sec:local-mod} when we build modular categories of local modules in the non-semisimple setting; see Section~\ref{subsec:local} and Theorem~\ref{thm:locmodular}.

\begin{definition} \label{def:rigid-Frob}
An algebra $A$ in $\cC$ is called {\it rigid Frobenius} if it is connected, commutative, and special Frobenius.
\end{definition}

Next, we compare the definition of a rigid Frobenius algebra above to the algebras used in \cite{KO} to obtain modular categories of  {\it local modules} (cf., Section~\ref{subsec:local} below). 
These algebras $(A,m,u)$ in $\cC$  satisfy the following conditions: 
\begin{itemize} 
\item[(r.i)] $A$ is connected and commutative, and  $\dim_j(A)$ is a nonzero scalar in $\Bbbk$, 
\item[(r.ii)] $A$ is equipped with a morphism $\varepsilon: A \to \one$ so that $\varepsilon u = \ide_\one$, 
\item[(r.iii)] $p:=\varepsilon m: A \otimes A \to \one$ is a {\it non-degenerate pairing}, i.e.,  $p(m \otimes \ide_A) = p(\ide_A \otimes m)$ and there exists a morphism 
$q: \one \to A \otimes A$ in $\cC$ such that $(p \otimes \ide_A) (\ide_A \otimes  q) = \ide_A = (\ide_A \otimes p) (q \otimes \ide_A),$
\item[(r.iv)] $\theta_A = \ide_A$, that is, $A$ has a {\it trivial twist}. 
\end{itemize}

\begin{definition} \label{def:rigidCalgebra} \cite{KO}*{Definition~1.11}
An algebra $A$ in $\cC$ for which conditions (r.i)-(r.iii) hold is called a {\it rigid $\cC$-algebra}.
\end{definition}

In particular, conditions (r.i)-(r.iv) are used to show that a category of (local) modules over $A$ is rigid \cite{KO}*{Theorem~1.15} in the semisimple setting. 
Now we compare the algebra with conditions (r.i)-(r.iv) above with rigid Frobenius algebras.

\begin{lemma} \label{lem:rigid-alg} 
If $A$ is a rigid  Frobenius algebra, then it is a rigid $\cC$-algebra with trivial twist. 
\end{lemma}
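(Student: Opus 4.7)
The plan is to verify conditions (r.i)--(r.iv) in sequence. Conditions (r.i)--(r.iii) follow from a direct inspection of the Frobenius data. Connectedness and commutativity are hypotheses of rigid Frobenius; by \eqref{dimj-beta}, the quantum dimension equals $\beta_A \beta_\one \ide_\one$, nonzero since $\beta_A, \beta_\one \in \Bbbk^\times$ by specialness, which handles (r.i). For (r.ii), after replacing $\varepsilon$ by $\beta_\one^{-1}\varepsilon$ and $\Delta$ by $\beta_\one \Delta$---a rescaling that preserves \eqref{eq:Frob-comp} and keeps $A$ special Frobenius---we may assume $\varepsilon u = \ide_\one$. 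For (r.iii), the pair $(p,q) := (\varepsilon m,\, \Delta u)$ satisfies the snake identity
\[
(p \otimes \ide_A)(\ide_A \otimes q) = (\varepsilon \otimes \ide_A)(m \otimes \ide_A)(\ide_A \otimes \Delta)(\ide_A \otimes u) = (\varepsilon \otimes \ide_A) \Delta m(\ide_A \otimes u) = \ide_A,
\]
by \eqref{eq:Frob-comp}, the unit axiom, and the counit axiom; the dual snake identity is analogous.

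For (r.iv), the strategy is to show that $\theta_A$ is a right $A$-module endomorphism of the regular right module $A$: the isomorphism \eqref{eq:modAA} and the connectedness of $A$ will then force $\theta_A = \lambda \ide_A$ for some $\lambda \in \Bbbk$, and the identity $\theta_A u = u$---immediate from naturality of $\theta$ applied to $u \colon \one \to A$ together with $\theta_\one = \ide_\one$---will pin down $\lambda = 1$. The algebra-morphism property $\theta_A m = m(\theta_A \otimes \theta_A)$ comes essentially for free: combining the balancing equation $\theta_{A \otimes A} = (\theta_A \otimes \theta_A)\, c_{A,A}^2$ with the commutativity identity $m\, c_{A,A} = m$ and naturality of the braiding immediately produces it.

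The main obstacle will be upgrading this algebra-morphism property to the stronger right $A$-module identity $\theta_A m = m(\theta_A \otimes \ide_A)$. Here my plan is to exploit the Frobenius self-duality $A \cong A^*$ induced by $\ev_A = \varepsilon m$ and $\coev_A = \Delta u$, which is symmetric because of the commutativity of $A$. The ribbon axiom $\theta_{A^*} = (\theta_A)^*$ then translates, under this self-duality, into the self-adjointness relation $\varepsilon m(\theta_A \otimes \ide_A) = \varepsilon m(\ide_A \otimes \theta_A)$. I will combine this self-adjointness with the algebra-morphism identity, the specialness condition $m \Delta = \beta_A \ide_A$ (which provides a section of $m$ up to scalar, enabling cancellation of a superfluous factor of $\theta_A$), and the non-degeneracy of the Frobenius pairing established in (r.iii) to derive the required right $A$-linearity. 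I expect this diagrammatic computation to be the most delicate part of the argument; once it is complete, the connectedness-based scalar argument outlined above yields $\theta_A = \ide_A$.
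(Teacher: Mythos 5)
Your handling of (r.i)--(r.iii) is fine and matches the paper's proof in substance (the paper cites a standard characterization of Frobenius algebras for (r.iii) where you verify the snake identities directly; you should also record $p(m\otimes\ide_A)=p(\ide_A\otimes m)$, which is immediate from associativity). The genuine gap is in (r.iv), which is the only nontrivial part of the lemma: the step you defer as "the most delicate part" is not an implementation detail but the entire statement. Indeed, right $A$-linearity $\theta_A m = m(\theta_A\otimes\ide_A)$ is \emph{equivalent} to $\theta_A=\ide_A$ (compose with $u\otimes\ide_A$), so your reduction buys nothing and your closing connectedness-plus-$\eqref{eq:modAA}$ scalar argument is vacuous once linearity is known. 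Worse, the toolkit you propose to use cannot suffice: the facts you have actually established about $\theta_A$ --- it is a multiplicative, unital, $\varepsilon$-preserving, self-adjoint (hence, combining self-adjointness with $\varepsilon\theta_A=\varepsilon$ and multiplicativity, involutive) automorphism of $A$ --- are shared by non-identity maps. For instance, the swap automorphism of the commutative special Frobenius algebra $\Bbbk\times\Bbbk$ in $\Vect$ satisfies every one of these properties together with specialness and non-degeneracy of the pairing, yet is neither the identity nor right $A$-linear. Hence no formal combination of the identities you list (with specialness and non-degeneracy) can force $\theta_A=\ide_A$; connectedness must enter the hard step, not merely at the end. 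Note also that the "symmetry" you invoke, $\varepsilon m\, c_{A,A}=\varepsilon m$, is braided symmetry coming from commutativity; the pivotal symmetry that is actually needed is, for commutative $A$, equivalent to $\theta_A=\ide_A$ itself, so appealing to it would be circular.

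The paper closes exactly this gap by citation: a connected Frobenius algebra with nonzero quantum dimension is symmetric in the pivotal sense (FRS, Corollary~3.10, using that connectedness and self-duality give $\dim_\Bbbk\Hom_\cC(A,\one)=1$), and a commutative Frobenius algebra with non-degenerate form is symmetric if and only if $\theta_A=\ide_A$ (FFRS, Proposition~2.25(i)). If you want a self-contained argument, that is the route to take: use connectedness to compare the two Frobenius isomorphisms $A\to A^*$ (built from $\varepsilon m$ with the left duality and with the right duality of \eqref{rightdual}, respectively) as morphisms of right $A$-modules, so that by \eqref{eq:modAA} they differ by a scalar, pinned to $1$ using $\dim_j(A)\neq 0$; then commutativity, the identity $\evr_A=\ev_A(\ide_{A^*}\otimes\theta_A)c_{A,A^*}$, and non-degeneracy of $\varepsilon m$ yield $\varepsilon m(\theta_A\otimes\ide_A)=\varepsilon m$ and hence $\theta_A=\ide_A$.
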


\begin{proof}
We must show that  a rigid Frobenius algebra  is an algebra for which conditions (r.i)-(r.iv) hold. Namely, condition (r.i) holds  by Definition~\ref{def:rigid-Frob} and \eqref{dimj-beta}. Next, we can choose the counit of a special Frobenius algebra in $\cC$ so that condition (r.ii) holds, i.e., with $\beta_A = \dim_j(A)$ and $\beta_\one = 1$ in \eqref{dimj-beta}.
Moreover, condition (r.iii) holds by the various characterizations of Frobenius algebras; see, e.g.,  \cite{FS}*{Proposition~8(i)}. Here, $q = \Delta u$. Finally, connected Frobenius algebras  must satisfy condition (r.iv) by
\cite{FRS}*{Corollary~3.10} and
\cite{FFRS}*{Proposition~2.25(i)}. (In particular, \cite{FRS}*{Corollary~3.10} uses that  $A$ is connected if and only if $\dim_{\Bbbk} \Hom_{\cC}(A,\one) = 1$. This follows since both $A$ and $\one$ are self-dual in $\cC$.) Therefore, $A$ is a rigid $\cC$-algebra with $\theta_A = \ide_A$.
\end{proof}

We will see that  the converse of the result above holds in Proposition~\ref{prop:conn-etale} below.

\smallskip

As a restatement of \cite{KO}*{Theorem~1.15} in the semisimple setting, the result \cite{Dav3}*{Theorem~2.6.3} also builds modular categories of local modules over certain types of algebras in modular categories. As mentioned above, the former result uses rigid $\cC$-algebras with trivial twists. But we need the next two results to understand the assumptions in the latter result.
In particular, see Remark~\ref{rem:Davydov} below.

\begin{proposition} \label{prop:conn-etale}
The following statements are equivalent:
\begin{enumerate}[(a),font=\upshape]
    \item $A$ is a rigid $\cC$-algebra with trivial twist;
    \item $A$ is a connected \'{e}tale algebra, with $\dim_j A \neq 0$, and with trivial twist;
    \item $A$ is a rigid Frobenius algebra.
\end{enumerate}
\end{proposition}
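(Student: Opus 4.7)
Since (c)$\Rightarrow$(a) is Lemma~\ref{lem:rigid-alg}, it remains to prove (a)$\Rightarrow$(c), (c)$\Rightarrow$(b), and (b)$\Rightarrow$(c); these three implications close all cycles.

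For (a)$\Rightarrow$(c), the rigid $\cC$-algebra data supplies a non-degenerate pairing $p = \varepsilon m$ with copairing $q\colon \one \to A \otimes A$. I would define
\[
\Delta := (\ide_A \otimes m)(q \otimes \ide_A)
\]
and verify, using associativity of $p$ from (r.iii) together with the duality relations between $p$ and $q$, that $(A, \Delta, \varepsilon)$ is a coalgebra satisfying the Frobenius identities~\eqref{eq:Frob-comp}. Specialness is then extracted from connectedness: the composite $m\Delta$ is a right $A$-module endomorphism of $A$ (by the Frobenius relations), so by~\eqref{eq:modAA} it must equal $\beta_A \ide_A$ for some $\beta_A \in \Bbbk$. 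From (r.ii) we have $\varepsilon u = \ide_\one$, whence $\beta_\one = 1$; from (r.i) and~\eqref{dimj-beta}, $\beta_A \beta_\one = \dim_j(A) \neq 0$ forces $\beta_A \in \Bbbk^{\times}$.

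For (c)$\Rightarrow$(b), I would set $t := \beta_A^{-1} \Delta$. The Frobenius relations~\eqref{eq:Frob-comp} express $\Delta$ (and hence $t$) as a morphism of $A$-bimodules, and specialness gives $m t = \ide_A$; thus $A$ is separable, hence étale as it is also commutative. Non-vanishing of $\dim_j(A) = \beta_A$ follows from~\eqref{dimj-beta}, and triviality of the twist follows from connectedness exactly as in the proof of Lemma~\ref{lem:rigid-alg} via \cite{FRS}*{Corollary~3.10} and \cite{FFRS}*{Proposition~2.25(i)}.

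The main obstacle is the implication (b)$\Rightarrow$(c): starting from a connected étale $A$ with $\dim_j(A) \neq 0$ and $\theta_A = \ide_A$, one must manufacture a Frobenius counit and comultiplication from only a bimodule section $t\colon A \to A \otimes A$ of $m$. My plan is to exploit the pivotal structure of $\cC$ induced by ribbonality via~\eqref{eq:ribbonpivotal} in order to exhibit $A$ as self-dual: composing $t$ with appropriate pivotal evaluation/coevaluation morphisms should produce a candidate $\varepsilon\colon A \to \one$, after which $\Delta$ is forced to be a rescaling of $t$ by a scalar compatible with $\dim_j(A)$. The delicate points are verifying the non-degeneracy of $\varepsilon m$ and fixing the correct normalization; the argument parallels the constructions in \cite{FRS}*{Section~3} and \cite{FFRS}*{Section~2.5} but must be adapted to the non-semisimple ribbon setting. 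Once these are in place, specialness follows from connectedness and~\eqref{eq:modAA} as in (a)$\Rightarrow$(c), completing the cycle.
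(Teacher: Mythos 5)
Your reduction of the equivalence to (c)$\Rightarrow$(a) (the Lemma), (a)$\Rightarrow$(c), (c)$\Rightarrow$(b), and (b)$\Rightarrow$(c) is sound, and your arguments for (a)$\Rightarrow$(c) and (c)$\Rightarrow$(b) are correct and essentially the same as the paper's (construct the Frobenius structure from the non-degenerate pairing as in \cite{FS}, extract specialness from connectedness via \eqref{eq:modAA}, rescale $\Delta$ to get the separability splitting, and quote \cite{FRS}/\cite{FFRS} for triviality of the twist). The problem is (b)$\Rightarrow$(c): what you offer there is a plan, not a proof, and the step you defer is exactly the mathematical content of the implication. Concretely, once you take the bimodule splitting $t$ of $m$ and the trace-type morphism $\varepsilon := \evr_A(m\otimes \ide_{A^*})(\ide_A\otimes\coev_A)$ (note this candidate is built from $m$ and the duality, not from $t$), the ``non-degeneracy of $\varepsilon m$'' you flag as delicate is literally equivalent to counitality of $t$: with copairing $q := t\,u$, the identity $(\ide_A\otimes \varepsilon m)(q\otimes\ide_A)=\ide_A$ reduces, via \eqref{eq:sep}, to $(\ide_A\otimes\varepsilon)t=\ide_A$. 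So you cannot first ``verify non-degeneracy'' and then deduce the Frobenius/counit structure; establishing either one is the whole task, and your proposal gives no argument for it.

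The paper closes this gap as follows: coassociativity of $\Delta:=t$ follows from \eqref{eq:sep} because $m$ is (split) epi; then $(\varepsilon\otimes\ide_A)\Delta$ is shown to be a right $A$-module endomorphism of $A$, hence $\lambda\,\ide_A$ by connectedness and \eqref{eq:modAA}; finally a trace computation using commutativity of $A$, cyclicity of the categorical trace, \eqref{eq:sep}, and $m\Delta=\ide_A$ yields $\lambda\,\dim_j(A)=\dim_j(A)$, so $\lambda=1$ because $\dim_j(A)\neq 0$ (and similarly on the other side). This is where connectedness, commutativity, and the nonvanishing of the quantum dimension enter in an essential way, and it is precisely the adaptation ``to the non-semisimple setting'' that your appeal to \cite{FRS}*{Section~3} and \cite{FFRS}*{Section~2.5} leaves unjustified. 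Until you supply this counitality/normalization argument (or an equivalent verification that $\varepsilon m$ is non-degenerate), the cycle of implications does not close.
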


\begin{proof}
(a) $\Rightarrow$ (b). Let $(A,m,u)$ be a rigid $\cC$-algebra (with trivial twist). Then, $A$ is commutative, connected, with $\dim_j A \neq 0$ by definition. Moreover, by conditions (r.ii), (r.iii) and \cite{FS}*{Proposition~8(ii)}, $A$ is special Frobenius. We can rescale the coproduct $\Delta$ in the special Frobenius condition so that it is the desired $A$-bimodule splitting of $m$ to yield separability; see \cite{FFRS}*{Remark~2.23(i)}.  Thus, $A$ is connected, \'{e}tale, with $\dim_j A \neq 0$ (with trivial twist).

\smallskip

(b) $\Rightarrow$ (c). 
Suppose that $(A,m,u)$ is a connected \'{e}tale algebra with $\dim_j A \neq 0$ (and with trivial twist). To show that $A$ is rigid Frobenius, it remains to show that $A$ is special Frobenius.

By the separability assumption on $A$, there exists a morphism $t \in \Hom_{\bimod{A}(\cC)}(A, A \otimes A)$ so that $m t = \ide_A$ and \eqref{eq:sep} holds. Take $\Delta = t$; this yields the compatibility conditions between $\Delta$ and $m$ for the special Frobenius requirement. Moreover, 
\[
\begin{array}{rll}
(\Delta \otimes \ide_A)\Delta m 
&= 
(\Delta \otimes \ide_A)(m \otimes\ide_A)(\ide_A \otimes \Delta) 
&= 
(\ide_A \otimes  m \otimes \ide_A)(\Delta \otimes \Delta)\\
&=(\ide_A \otimes \Delta)(\ide_A \otimes m)(\Delta \otimes \ide_A)
&=(\ide_A \otimes \Delta)\Delta m, 
\end{array}
\]
by applying \eqref{eq:sep} in each step above. Since $m$ is epic, we obtain that $\Delta$ is coassociative.

Next, take $\varepsilon = \evr_A(m \otimes \ide_{A^*})(\ide_A \otimes \coev_A)$. Then $\varepsilon u = \dim_j(A)$, by the unitality axiom. Therefore, $\varepsilon u = \beta_\one \ide_\one$, for $\beta_\one \neq 0$. Moreover, by using \eqref{eq:sep}, we have that $(\varepsilon \otimes \ide_A)\Delta$ is a right $A$-module map. So, $(\varepsilon \otimes \ide_A)\Delta = \lambda\; \ide_A$, for some scalar $\lambda$, by \eqref{eq:modAA} and the connected assumption. Now,
\[
\begin{split}
\lambda\;  \dim_j(A) 
&= \varepsilon(\lambda\; \ide_A)u \\
& = \varepsilon(\varepsilon \otimes \ide_A)\Delta u\\ 
&= \evr_A(m \otimes \ide_{A^*})(\ide_A \otimes \coev_A)[(\evr_A(m \otimes \ide_{A^*})(\ide_A \otimes \coev_A)) \otimes \ide_A]\Delta u\\
&= (\evr_A \otimes \evr_A)(m \otimes \ide_{A^* \otimes A \otimes A^*})(\ide_A \otimes \coev_A \otimes \ide_{A \otimes A^*})(\Delta m \otimes \ide_{A^*}) (u \otimes \coev_A)\\
&= (\evr_A \otimes \evr_A)(m \otimes \ide_{A^* \otimes A \otimes A^*})(\ide_A \otimes \coev_A \otimes \ide_{A \otimes A^*})(\Delta \otimes \ide_{A^*})\coev_A\\
&= (\evr_A \otimes \evr_A)(m \;c_{A,A} \otimes \ide_{A^* \otimes A \otimes A^*})(\ide_A \otimes \coev_A \otimes \ide_{A \otimes A^*})(\Delta \otimes \ide_{A^*})\coev_A\\
&= \evr_A(m \otimes \evr_A \otimes \ide_{A^*})(\ide_A \otimes \Delta \otimes \ide_{A^* \otimes A^*})(\ide_A \otimes \coev_A \otimes \ide_{A^*})\coev_A\\
&= \evr_A(\ide_A \otimes \evr_A \otimes \ide_{A^*})(\Delta m \otimes \ide_{A^* \otimes A^*}) (\ide_A \otimes \coev_A \otimes \ide_{A^*})\coev_A\\
&= \evr_A (m \Delta \otimes \ide_{A^*}) \coev_A\\
&= \dim_j(A).
\end{split}
\]
Here, the fourth equality holds by \eqref{eq:sep}, the fifth equality holds by unitality, the sixth equality holds by commutativity, the seventh equality holds by a rearranging of maps (best viewed by graphical calculus), the eighth equality holds by \eqref{eq:sep},  the ninth equality holds by the trace being cyclic, and the last equality holds by $m \Delta = \ide_A$. Thus, $\lambda = 1$, and $(\varepsilon \otimes \ide_A)\Delta = \ide_A$.

On the other hand, $(\ide_A \otimes \varepsilon)\Delta$ is a left $A$-module map, and is thus a scalar multiple of $\ide_A$ by a left module version of \eqref{eq:modAA} and the connected assumption. This scalar is equal to 1 by an argument similar to the above because
$\varepsilon(\ide_A \otimes \varepsilon)\Delta u =
\varepsilon(\varepsilon \otimes \ide_A)\Delta u$.
Thus, $\Delta$ is counital via $\varepsilon$ defined above.

\smallskip

(c) $\Rightarrow$ (a). This implication follows from Lemma~\ref{lem:rigid-alg}.
\end{proof}

\begin{corollary} \label{cor:sep}
Let $A=(A,m,u)$ be a connected, commutative algebra  in $\cC$ with $\dim_j(A) \neq 0$. Then, $A$ is separable if and only if $\varepsilon_{\sharp} \;  m \colon A\otimes A\to \one$ is a non-degenerate pairing of $A$, for any non-zero morphism $\varepsilon_{\sharp}:A\to \one$.
\end{corollary}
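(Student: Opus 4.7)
The plan is to leverage Proposition~\ref{prop:conn-etale} together with the observation that a connected Frobenius algebra in $\cC$ has $\Hom_\cC(A,\one)$ one-dimensional, so that the choice of $\varepsilon_\sharp$ is essentially unique up to a nonzero scalar. First I would note that any Frobenius structure on $A$ makes $A$ self-dual in $\cC$, and hence for such $A$
$$\Hom_\cC(A,\one)\;\cong\;\Hom_\cC(\one,A^*)\;\cong\;\Hom_\cC(\one,A)\;\cong\;\Bbbk$$
by the connectedness hypothesis. Thus any two nonzero morphisms $A\to\one$ differ by a nonzero scalar, and non-degeneracy of the pairing $\varepsilon_\sharp\,m$ is preserved under such rescalings. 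Both directions of the corollary therefore reduce to producing or recognizing a single suitable choice of $\varepsilon_\sharp$.

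For the forward direction, I would assume $A$ is separable, so that $(A,m,u)$ is a connected \'{e}tale algebra in $\cC$ with $\dim_j(A)\neq 0$. The argument of the implication (b)$\Rightarrow$(c) in Proposition~\ref{prop:conn-etale}---which builds $\Delta$ from the $A$-bimodule splitting of $m$ and defines $\varepsilon:=\evr_A(m\otimes\ide_{A^*})(\ide_A\otimes\coev_A)$, then verifies the (co)unit, (co)associativity, Frobenius, and specialness axioms---produces a special Frobenius structure on $A$ without requiring the trivial-twist hypothesis. By the standard characterization of Frobenius algebras via non-degenerate invariant pairings, e.g.\ \cite{FS}*{Proposition~8(i)}, the pairing $\varepsilon\,m$ is non-degenerate; the scaling remark above then transports this to $\varepsilon_\sharp\,m$ for every nonzero $\varepsilon_\sharp$.

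For the converse, I would suppose that $\varepsilon_\sharp\,m$ is non-degenerate for some (equivalently, any) nonzero $\varepsilon_\sharp\colon A\to\one$. By \cite{FS}*{Proposition~8(ii)}, $A$ then inherits a Frobenius structure with counit $\varepsilon_\sharp$ and some coproduct $\Delta$. Next, \cite{FRS}*{Corollary~3.10} and \cite{FFRS}*{Proposition~2.25(i)}, invoked exactly as in the proof of Lemma~\ref{lem:rigid-alg}, force a connected Frobenius algebra in $\cC$ to be special, so that $m\Delta=\beta_A\,\ide_A$ with $\beta_A\in\Bbbk^\times$. Rescaling $\Delta$ by $\beta_A^{-1}$ produces an $A$-bimodule splitting of $m$, witnessing that $A$ is separable.

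The main obstacle is not computational but a bookkeeping one: I would need to verify carefully that the proof of (b)$\Rightarrow$(c) in Proposition~\ref{prop:conn-etale} does not secretly rely on the trivial-twist assumption when producing the Frobenius data (a direct inspection of the cited argument confirms that the twist plays no role in constructing $\Delta$, $\varepsilon$, or in verifying the Frobenius and specialness relations), and I would need to confirm that the specialness scalar $\beta_A$ obtained in the converse direction is indeed nonzero, so that inverting it to produce the separability splitting is legitimate.
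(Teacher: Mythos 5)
Your forward direction coincides with the paper's: rerun the proof of (b)$\Rightarrow$(c) of Proposition~\ref{prop:conn-etale} (which indeed never uses the trivial twist) to obtain a special Frobenius structure on $A$, then quote \cite{FS}*{Proposition~8(i)}; and your observation that self-duality plus connectedness give $\dim_\Bbbk\Hom_\cC(A,\one)=1$, so that non-degeneracy transfers between all nonzero choices of $\varepsilon_\sharp$, is correct and is the same point recorded in the proof of Lemma~\ref{lem:rigid-alg}.

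The converse direction has a genuine gap. After producing the Frobenius structure $(A,m,u,\Delta,\varepsilon_\sharp)$ from \cite{FS}*{Proposition~8(ii)} (as in the paper), you assert that \cite{FRS}*{Corollary~3.10} and \cite{FFRS}*{Proposition~2.25(i)}, ``invoked exactly as in Lemma~\ref{lem:rigid-alg},'' force the connected Frobenius algebra $A$ to be special with $\beta_A\in\Bbbk^\times$. In Lemma~\ref{lem:rigid-alg} those results are used only to obtain the trivial twist $\theta_A=\ide_A$, i.e.\ condition (r.iv); they do not give specialness, and connectedness alone cannot: for instance $\Bbbk N$ with its regular grading and trivial action (the construction of Proposition~\ref{prop:B(N,g,e)} with trivial $\gamma$ and $\epsilon$, without assuming $|N|\in\Bbbk^\times$) is a connected commutative Frobenius algebra in $\cZ(\lmod{\Bbbk N})$ with $m\Delta=|N|\,\ide=0$ when $\cha\Bbbk$ divides $|N|$. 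The nonvanishing of $\beta_A$ is exactly where the hypothesis $\dim_j(A)\neq 0$ must enter, and your write-up leaves it ``to be confirmed.'' The missing (short) argument is the paper's: $m\Delta$ is a morphism of right $A$-modules $A\to A$, hence by connectedness and \eqref{eq:modAA} equals $\beta_A\,\ide_A$ for some scalar $\beta_A$, while $\varepsilon_\sharp u=\beta_\one\,\ide_\one$; then \eqref{dimj-beta} gives $\dim_j(A)=\beta_A\beta_\one\neq 0$, so $\beta_A\neq 0$ and $\beta_A^{-1}\Delta$ is the required $A$-bimodule splitting of $m$. With that replacement your proof closes and otherwise follows the paper's route.
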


\begin{proof}
If $p:=\varepsilon_{\sharp} \;  m$ is a non-degenerate pairing on $A$, say with copairing $q\colon \one \to A \otimes A$, then $(A,m,u)$ admits the structure of a Frobenius algebra by taking $\Delta:=(\ide_A \otimes m)(q \otimes \ide_A)$ and $\varepsilon:=p(u \otimes \ide_A) = \varepsilon_\sharp$; see, e.g., \cite{FS}*{Proposition~8(ii)}. In particular, $\Delta = t$ satisfies \eqref{eq:sep}. Moreover, $m \Delta \in \Hom_{\rmod{A}(\cC)}(A,A)$. Since $A$ is connected, we get that $m \Delta = \beta_A \ide_A$ by \eqref{eq:modAA}. But $\beta_A \neq 0$ due to $\dim_j(A) \neq 0$ (see \eqref{dimj-beta}), so we can rescale $\Delta$ to get that $\Delta$ is an $A$-bimodule splitting morphism of $m$, as required.

Now suppose that $A$ is separable. Then, $A$ is a connected \'{e}tale algebra with $\dim_j(A) \neq 0$. The proof of (b) $\Rightarrow$ (c) in Proposition~\ref{prop:conn-etale} then implies that $A$ admits the structure of a Frobenius algebra $(A,m,u,\Delta,\varepsilon)$. In particular, $\varepsilon m$ is a non-degenerate pairing on $A$; see \cite{FS}*{Proposition~8(i)}. 
\end{proof}

\begin{remark} \label{rem:Davydov}
The goal of  \cite{Dav3}*{Theorem~2.6.3} is to provide sufficient conditions for commutative algebras $(A,m,u)$ with a form $\varepsilon_\sharp: A \to \one$ to yield modular categories of local modules. Such conditions $A$ in \cite{Dav3} are that (i) $A$ must be indecomposable, and that (ii) $\varepsilon_\sharp \; m$ is a non-degenerate pairing.
\begin{enumerate}
\item The choice $\varepsilon_{\sharp}:=\ev_A \; c_{A,A^*}(m \otimes \ide_A)(\ide_A \otimes \coev_A)$ is used in \cite{Dav3} and  in  \cite{FFRS}*{(2.39)}.

\smallskip

\item The condition that $\dim_j A \neq 0$ is implicit in \cite{Dav3} (as $\cC$ is semisimple there), so this was added explicitly to Proposition~\ref{prop:conn-etale} and Corollary~\ref{cor:sep} for the non-semisimple case.

\smallskip

    \item For (i), note that the references for the result in \cite{Dav3}, namely \cite{KO} and \cite{DMNO}, employ the (stronger) connected condition instead of indecomposability. This justifies the connected assumption being used in Proposition~\ref{prop:conn-etale} and Corollary~\ref{cor:sep}. Note that if $\Bbbk$ is algebraically closed, then any indecomposable \'etale algebra of  nonzero quantum dimension in $\cC$ is connected by Corollary~\ref{cor:indec-con}. In particular, such algebras are commutative and special Frobenius by the proof of Proposition~\ref{prop:conn-etale}(b) $\Rightarrow$ (c).

    \smallskip
    
    \item Condition (ii) is referred to as separability in \cite{Dav3}, but this implies the usual notion separability  as in shown in Corollary~\ref{cor:sep} in the case when $A$ is connected.

\smallskip

\item Since \cite{Dav3}*{Theorem~2.6.3} is claimed to be a restatement of \cite{KO}*{Theorem~1.15}, a rigid $\cC$-algebra is supposed to be precisely an indecomposable \'{e}tale algebra. Given the last two remarks, this is precisely the equivalence of parts (a) and (b) in Proposition~\ref{prop:conn-etale}.

\smallskip

\item In view of Proposition~\ref{prop:conn-etale}, Davydov classified all rigid Frobenius algebras in $\cZ(\lmod{\Bbbk G})$, for a finite group $G$, over a field $\Bbbk$ of characteristic zero \cite{Dav3}*{Theorem 3.5.1}. Indeed, all algebras in this classification have a trivial twist \cite{Dav3}*{Remark 3.5.2}. We will generalize this to arbitrary characteristic in Section~\ref{sec:Z(kG-mod)}.
\end{enumerate}
\end{remark}


\section{Modularity of categories of local modules}\label{sec:local-mod}

In this section, we study a monoidal category of \emph{local modules} over a commutative algebra $A$ in a braided finite tensor category $\cC$. The main result is that if $\cC$ is modular (and not necessarily semisimple) [Definition~\ref{def:modular}], and $A$ a rigid Frobenius algebra [Definition~\ref{def:rigid-Frob}], then the category of such local modules is also modular [Theorem~\ref{thm:locmodular}]. We  provide preliminary material on categories of (local) modules over commutative algebras in Sections~\ref{subsec:RepCA} and~\ref{subsec:local}. Then, in Section~\ref{subsec:mainresult},  we establish our main result when categories of local modules over rigid Frobenius algebras are modular; this is a non-semisimple generalization of \cite{KO}*{Theorem~4.5}. 
We end by computing  the Frobenius--Perron (FP-)dimension of these categories of modules in Section~\ref{sec:FP-results}.

\begin{notation} \label{not:Sec4}
Unless stated otherwise, take $A$ to be a commutative algebra in a braided finite tensor category $(\cC,\otimes, \one, c)$. 
\end{notation}

\subsection{Categories of modules over (braided) commutative algebras}
\label{subsec:RepCA}

Consider the following construction.

\begin{definition}\label{def:RepCA}  \cite{KO}*{Definition~1.2 and Theorem~1.5} Take $\Rep_{\cC}(A)$ to be the category whose objects are pairs $(V,a_V^r) \in \rmod{A}(\cC)$, and morphisms are morphisms in $\rmod{A}(\cC)$. We define  $a_V^l$   as
\begin{align}\label{def:a^l}
a^l_V := a^r_V c_{A,V}\colon A\otimes V\isomorph V\otimes A \longrightarrow V.
\end{align}
With this, $(V,a_V^l) \in \lmod{A}(\cC)$. As $A$ is commutative, the actions $a^r_V,a^l_V$ commute, $(V,a^l_V,a^r_V)$ becomes an $A$-bimodule in $\cC$, and $\Rep_{\cC}(A)$ is a full subcategory of $\bimod{A}(\cC)$.

The category $\Rep_{\cC}(A)$ is monoidal as follows. Given two objects $V,W$ in $\Rep_{\cC}(A)$, their tensor product $V\otimes_A W$ is defined to be the coequalizer
\begin{align}\label{eq:rel-tensor}
\xymatrix{
V\otimes A \otimes W\ar@/^/[rr]^{a_V^r\otimes \ide}\ar@/_/[rr]_{\ide\otimes a_W^l}&& V\otimes W \ar[r]& V\otimes_A W,
}
\end{align}
which is an object in $\Rep_\cC(A)$ using the right $A$-module structure induced by $a_{V\otimes_A W}^r = \ide_V\otimes a^r_W$, and $a_{V\otimes_A W}^l = a_{V\otimes_A W}^r \; c_{A,V\otimes_A W}$.  The unit object is the $A$-bimodule $A$ in $\cC$. This way,  $\Rep_{\cC}(A)$ is a monoidal subcategory of $\bimod{A}(\cC)$.
\end{definition}

We will consider a version of $\Rep_\cC(A)$ where $\cC$ is not necessarily a braided monoidal category in Definition \ref{def:RepCA-general} below.

The following lemma provides a description of objects in the category $\Rep_\cC(A)$ as quotients of free modules.

\begin{lemma}\label{indrep} 
There is a monoidal functor $U\colon \cC\to \Rep_\cC(A)$ defined by $U(X)=X\otimes A$ with right $A$-module structure induced by the multiplication map $m$, which is left adjoint to the forgetful functor $F\colon \Rep_\cC(A)\to \cC$. 

Moreover, if $\cC$ is a tensor category, the functor $U$ is faithful, and is surjective in the sense that every object in the target is a quotient of an object in the image of $U$.
\end{lemma}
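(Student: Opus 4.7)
The plan is to verify the three assertions—monoidality of $U$, the adjunction $U\dashv F$, and the faithfulness/surjectivity properties—in order, using only the algebra axioms on $A$, commutativity of $A$, and (for the final claims) simplicity of $\one$.

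\emph{Monoidal structure.} First, $U(X)=X\otimes A$ carries a right $A$-action $\ide_X\otimes m$, and $U(f):=f\otimes\ide_A$ is $A$-linear; the module axioms for $U(X)$ follow from the algebra axioms on $A$. For monoidality, I would produce a natural isomorphism $\mu_{X,Y}\colon U(X)\otimes_A U(Y)\isomorph U(X\otimes Y)$ induced by
\[
(\ide_X\otimes\ide_Y\otimes m)(\ide_X\otimes c_{A,Y}\otimes\ide_A)\colon (X\otimes A)\otimes(Y\otimes A)\longrightarrow X\otimes Y\otimes A,
\]
which coequalizes the two arrows of \eqref{eq:rel-tensor} by virtue of the associativity of $m$ (for one side) and of the commutativity $m=m\,c_{A,A}$ (for the other). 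The universal property of the coequalizer then produces $\mu_{X,Y}$, and the morphism induced by $\ide_{X\otimes Y}\otimes u$ provides its inverse in $\Rep_\cC(A)$. Taking $U_0:=\ide_A\colon A\to U(\one)$, the pentagon and unit axioms for $(U,\mu,U_0)$ reduce to the algebra axioms on $A$.

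\emph{Adjunction.} I would establish $U\dashv F$ by exhibiting a unit $\eta_X:=\ide_X\otimes u\colon X\to FU(X)$ and counit $\varepsilon_M:=a^r_M\colon UF(M)\to M$. The two triangle identities reduce, respectively, to the module unit axiom $a^r_M(\ide_M\otimes u)=\ide_M$ and to the algebra unit axiom $m(u\otimes\ide_A)=\ide_A$, both of which hold.

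\emph{Faithfulness.} When $\cC$ is a tensor category, $\one$ is simple and $u\colon \one\to A$ is nonzero (otherwise $\ide_A=m(u\otimes\ide_A)=0$), hence a monomorphism. Because $\otimes$ is exact in each variable in any rigid monoidal category, $\ide_Y\otimes u\colon Y\to Y\otimes A$ is again monic. If $U(f)=U(g)$ for $f,g\colon X\to Y$, then
\[
(\ide_Y\otimes u)\circ f=(f\otimes\ide_A)(\ide_X\otimes u)=(g\otimes\ide_A)(\ide_X\otimes u)=(\ide_Y\otimes u)\circ g,
\]
whence $f=g$.

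\emph{Essential surjectivity onto quotients, and main obstacle.} For any $M\in\Rep_\cC(A)$, the counit $\varepsilon_M=a^r_M\colon UF(M)\to M$ is split epic in $\cC$ with section $\eta_{F(M)}=\ide_M\otimes u$; since the faithful functor $F$ reflects epimorphisms, $\varepsilon_M$ is epic in $\Rep_\cC(A)$, exhibiting $M$ as a quotient of $U(F(M))$. The main obstacle in the whole argument is the construction of $\mu_{X,Y}$: descending the morphism above to the relative tensor product and confirming that the resulting map is an isomorphism compatible with the associators and unitors of $\Rep_\cC(A)$ requires careful bookkeeping with the braiding together with commutativity of~$A$, whereas the remaining claims follow from formal adjunction and exactness considerations.
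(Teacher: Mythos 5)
Your proposal is correct and follows essentially the same route as the paper: the same structure maps $\mu_{X,Y}$ and $U_0=\ide_A$, the adjunction exhibited by the unit $\ide_X\otimes u$ and counit $a^r_M$ (the paper phrases this as the hom-set bijection $f\mapsto a^r_V(f\otimes\ide_A)$, $g\mapsto g(\ide_X\otimes u)$), faithfulness from tensoring with the nonzero object $A$ (you argue it directly via monicity of $\ide_Y\otimes u$, the paper cites \cite{EGNO}), and surjectivity from the epimorphism $a^r_V\colon U(F(V))\to V$. One small slip: the map inducing $\mu_{X,Y}^{-1}$ should insert the unit after $X$, i.e.\ $\ide_X\otimes u\otimes\ide_{Y\otimes A}\colon X\otimes Y\otimes A\to (X\otimes A)\otimes(Y\otimes A)$, rather than $\ide_{X\otimes Y}\otimes u$ as written.
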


\begin{proof}
Defining $$U\colon \cC\to \Rep_\cC(A), \quad U(X)=(X\otimes A,~ a^r_{U(X)}:=\ide_X\otimes m_A)$$
clearly gives a functor as stated. We check that $U$ is monoidal with the tensor product on $\Rep_\cC(A)$ defined in \eqref{eq:rel-tensor} via
\begin{align}
   \mu^U_{X,Y}:=(\ide_{X\otimes Y}\otimes m_A)(\ide_X\otimes c_{A,Y}\otimes \ide_A)\colon U(X)\otimes U(Y)\longrightarrow U(X\otimes Y). 
\end{align}
One checks that $\mu^U_{X,Y}$ descends to an isomorphism $U_{X,Y}:$ $U(X)\otimes_A U(Y)\to U(X\otimes Y)$, with inverse
$\big(\mu^U_{X,Y}\big)^{-1}= \ide_X\otimes u_A\otimes \ide_{Y\otimes A}$ inducing $U_{X,Y}^{-1}$. These maps are natural in $X,Y$. Moreover, take $U_0 = \ide_A : \one_{\Rep_\cC(A)} \to U(\one_{\cC})$, making $U$ a monoidal functor.

To prove adjointness, we note that, for $X$ in $\cC$, mutually inverse isomorphisms 
$$\Hom_{\Rep_\cC(A)}(U(X), V)\cong\Hom_{\cC}(X, F (V))$$
are given by mapping a morphism $f\colon X\to F(V)$ in $\cC$ to $a^r_V(f\otimes \ide_A)$ in $\Rep_\cC(A)$, and a morphism $g\colon U(X)=X\otimes A\to V$ in $\Rep_\cC(A)$ to $g(\ide_X\otimes u_A)$.

Now assume that $\cC$ is a tensor category. Then, $U(-)=(-)\otimes A$ is faithful by \cite{EGNO}*{Proposition~4.2.8, Exercise~4.3.11(1)}.
To show that $U$ is surjective, let $(V,a^r_V)$ be an object in $\Rep_\cC(A)$. The morphism 
$a^r_V\colon F(V)\otimes A\to V$
is a surjective morphism in $\Rep_\cC(A)$. Thus, $V$ is a quotient of $U(F(V))=V\otimes A$ and hence $U$ is surjective.
\end{proof}

The next lemma shows that exact functors of braided monoidal categories induce functors between module categories over commutative algebras.

\begin{lemma}\label{lem:RepF}
If $F\colon \cC\to \cD$ is an exact functor of braided monoidal categories, then  it induces a functor of monoidal categories 
$$F'\colon \Rep_\cC(A)\to \Rep_{\cD}(F(A)),$$
for any commutative algebra $A$ in $\cC$.
\end{lemma}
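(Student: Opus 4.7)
The plan is to construct $F'$ on objects, morphisms, and the monoidal constraint, leveraging that $F$ is braided, monoidal, and exact in turn. First, by Proposition~\ref{prop:mon-alg}, $F(A)$ is a commutative algebra in $\cD$ with structure morphisms $m_{F(A)} = F(m_A) \circ F_{A,A}$ and $u_{F(A)} = F(u_A) \circ F_0$, so the target $\Rep_{\cD}(F(A))$ makes sense. For a right $A$-module $(V, a_V^r)$ in $\cC$, I would define
\[
F'(V, a_V^r) := \bigl(F(V),\; F(a_V^r) \circ F_{V,A}\bigr),
\]
and verify the right $F(A)$-module axioms from the corresponding axioms for $a_V^r$ together with naturality of $F_{-,-}$ and the unit coherence for $F$. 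On morphisms, set $F'(f) := F(f)$, which commutes with the right $F(A)$-actions by functoriality and naturality of $F_{-,-}$.

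For the monoidal structure of $F'$, the unit constraint is transparent since $F'(A) = F(A)$ is the unit object of $\Rep_{\cD}(F(A))$. To produce the tensor constraint $F'_{V,W}$, I would apply $F$ to the coequalizer diagram \eqref{eq:rel-tensor} defining $V \otimes_A W$; since $F$ is exact, this yields a coequalizer in $\cD$. Transporting along the natural isomorphisms built from $F_{-,-}$, the resulting diagram has the same shape as the coequalizer defining $F'(V) \otimes_{F(A)} F'(W)$, and the universal property then produces a canonical isomorphism $F'(V) \otimes_{F(A)} F'(W) \isomorph F'(V \otimes_A W)$, natural in $V$ and $W$. The hexagon/pentagon coherences for $F'_{-,-}$ reduce to those for $F_{-,-}$.

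The main technical point, which is where the braided hypothesis on $F$ enters essentially, is the verification that the second parallel arrow $\ide_V \otimes a_W^l$ in \eqref{eq:rel-tensor} is transported correctly. Since $a_W^l = a_W^r \circ c^\cC_{A, W}$ by \eqref{def:a^l}, and the analogous induced left action on $F'(W)$ is $F(a_W^r) \circ F_{W,A} \circ c^\cD_{F(A), F(W)}$, agreement under $F_{-,-}$ requires exactly the braided monoidal identity
\[
F_{W,A} \circ c^\cD_{F(A), F(W)} = F(c^\cC_{A,W}) \circ F_{A,W}.
\]
Once this identity is in hand, the two parallel arrows of the image coequalizer match the parallel arrows defining $F'(V) \otimes_{F(A)} F'(W)$, and the rest of the argument is routine verification of coherence and naturality.
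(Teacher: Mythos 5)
Your proposal is correct and follows essentially the same route as the paper: identify $F(A)$ as a commutative algebra, send $(V,a_V^r)$ to $(F(V),F(a_V^r)F_{V,A})$, use the braided compatibility $F_{W,A}\,c^{\cD}_{F(A),F(W)}=F(c^{\cC}_{A,W})\,F_{A,W}$ to match the two parallel arrows of the coequalizer \eqref{eq:rel-tensor}, and invoke exactness of $F$ to get that the induced comparison map of relative tensor products is an isomorphism. The only difference is organizational (you transport the image coequalizer and identify it with the defining one, while the paper first produces the induced morphism via the universal property and then uses preservation of coequalizers to see it is invertible), which is not a substantive divergence.
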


\begin{proof}
First note that for $(A,m_A,u_A)$ a commutative algebra in $\cC$, $(F(A),F(m_A)F_{A,A}, F(u_A)F_0)$ defines a commutative algebra in $\cD$. Similarly, given a right $A$-module
$(V,a^r_V)$ in $\Rep_{\cC}(A)$, $(F(V), F(a^r_V)F_{V,A})$ is a right $F(A)$-module. This way, $F$ induces a functor of categories $$F'\colon \Rep_\cC(A)\to \Rep_\cD(F(A)).$$
Now consider a pair of right $A$-modules $(V,a^r_V)$, $(W,a^r_W)$ and compute
\begin{align*}
    F(\pi_{V,W})F_{V,W}(a^r_{F(V)}\otimes \ide_{F(W)})&=F(\pi_{V,W})F_{V,W}(F(a_V^r)F_{V,A}\otimes \ide_{F(W)})\\
    &=F(\pi_{V,W}(a_V^r\otimes \ide_W))F_{V\otimes A,W}(F_{V,A}\otimes \ide_{F(W)})\\
     &=F(\pi_{V,W}(\ide_V\otimes a^r_Wc_{A,W}))F_{V,A\otimes W}(\ide_{F(V)}\otimes F_{A,W})\\
      &=F(\pi_{V,W})F_{V,W}(\ide_{F(V)}\otimes F(a^r_Wc_{A,W}))(\ide_{F(V)}\otimes F_{A,W})\\
       &=F(\pi_{V,W})F_{V,W}(\ide_{F(V)}\otimes a^r_{F(W)}c_{F(A),F(W)}),
\end{align*}
where $\pi_{V,W}\colon V\otimes W\to V\otimes_A W$ is the canonical epimorphism of the coequalizer. The calculation uses the definition of $a^r_{F(V)}$ in the first equality, followed by naturality of $F_{V,W}$ in the first component in the second equality, and  \eqref{eq:rel-tensor} as well as the coherence of $F_{-,-}$ in the third equality. The fourth and fifth equality use naturality of $F_{-,-}$ in the second component followed by compatibility of $F$ with the braiding and the definition of $a^r_{F(W)}$. Thus, we obtain an induced morphism 
$$F'_{V,W}\colon F'(V)\otimes_AF'(W)\to F'(V\otimes_A W).$$
As $F$ is exact, it preserves coequalizers. Hence, $F'_{V,W}$ is an isomorphism, and with $F'_0 = \ide_{F(A)}$, we get that  $F'$ is a monoidal functor.
\end{proof}

\begin{lemma}\label{lem:RepAB}
Let $A,B$ be commutative algebras in braided finite tensor categories $\cC,\cD$, respectively. Then there is an equivalence of monoidal categories
$$\Rep_\cC(A)\boxtimes \Rep_\cD(B)\tensimeq \Rep_{\cC\boxtimes\cD}(A\boxtimes B).$$
\end{lemma}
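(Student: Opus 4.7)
The plan is to construct the desired equivalence directly using the universal property of the Deligne tensor product, verify monoidality via exactness of $\boxtimes$, and conclude by combining full faithfulness on generating objects with essential surjectivity derived from the free-module description of Lemma~\ref{indrep}.

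First, I would construct the functor. Since $A$ and $B$ are commutative, $A\boxtimes B$ is a commutative algebra in $\cC\boxtimes\cD$ with multiplication $m_A\boxtimes m_B$ and unit $u_A\boxtimes u_B$, via the identification~\eqref{eq:Deligne-monoidal}. Define a bifunctor
$$\Rep_\cC(A)\times \Rep_\cD(B)\longrightarrow \Rep_{\cC\boxtimes\cD}(A\boxtimes B),\quad \big((V,a^r_V),(W,a^r_W)\big)\longmapsto \big(V\boxtimes W,\; a^r_V\boxtimes a^r_W\big),$$
where $a^r_V\boxtimes a^r_W\colon (V\otimes A)\boxtimes(W\otimes B)=(V\boxtimes W)\otimes (A\boxtimes B)\to V\boxtimes W$ provides the right action. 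This assignment is $\Bbbk$-bilinear and right exact in each variable, since $\boxtimes$ is right exact in each variable. By the universal property of the Deligne tensor product (\cite{Del}*{Section~5}), it extends uniquely to a $\Bbbk$-linear right exact functor $\Phi\colon \Rep_\cC(A)\boxtimes \Rep_\cD(B)\to \Rep_{\cC\boxtimes\cD}(A\boxtimes B)$.

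For monoidality, recall that the relative tensor product $V\otimes_A V'$ in $\Rep_\cC(A)$ is the coequalizer~\eqref{eq:rel-tensor} in $\cC$, and similarly for $\otimes_B$ and $\otimes_{A\boxtimes B}$. Since $\boxtimes$ is exact in each variable it preserves these coequalizers, which yields a natural isomorphism
$$\Phi_{V\boxtimes W, V'\boxtimes W'}\colon \Phi(V\boxtimes W)\otimes_{A\boxtimes B}\Phi(V'\boxtimes W')\isomorph \Phi\big((V\otimes_A V')\boxtimes (W\otimes_B W')\big).$$
Together with the unit isomorphism $\Phi_0=\ide_{A\boxtimes B}$, the coherence axioms are straightforward to check, and $\Phi$ becomes a monoidal functor.

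Finally I would verify that $\Phi$ is an equivalence. On pure-tensor objects, the universal property of $\boxtimes$ gives
$$\Hom_{\Rep_\cC(A)\boxtimes\Rep_\cD(B)}(V\boxtimes W, V'\boxtimes W')\cong \Hom_{\Rep_\cC(A)}(V,V')\otimes_\Bbbk \Hom_{\Rep_\cD(B)}(W,W');$$
whereas on the target side a morphism $V\boxtimes W\to V'\boxtimes W'$ in $\cC\boxtimes\cD$ commutes with the $A\boxtimes B$-action if and only if it decomposes as a sum of tensor products of an $A$-linear morphism with a $B$-linear morphism, which produces the same Hom space by a direct diagram chase from the definition of $m_{A\boxtimes B}=m_A\boxtimes m_B$. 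For essential surjectivity, by Lemma~\ref{indrep} every $M\in \Rep_{\cC\boxtimes \cD}(A\boxtimes B)$ is a quotient of a free module $X\otimes (A\boxtimes B)$ with $X\in \cC\boxtimes\cD$; and each such $X$ admits a two-term resolution by direct sums of objects of the form $P\boxtimes Q$ with $P\in\cC$, $Q\in\cD$ (cf.\ Section~\ref{sec:finitetens}). Since $(P\boxtimes Q)\otimes (A\boxtimes B)\cong \Phi\big((P\otimes A)\boxtimes (Q\otimes B)\big)$, the object $M$ is a cokernel of a morphism between objects in the image of $\Phi$; right exactness of $\Phi$ then gives essential surjectivity. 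Full faithfulness on arbitrary objects is then promoted from the pure-tensor case by applying $\Hom$ to such two-term presentations and invoking the five lemma.

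The main obstacle I anticipate is the promotion of full faithfulness from pure-tensor objects to arbitrary objects, since Hom-spaces in Deligne tensor products are only directly computable on pure tensors; one must carefully combine the presentation by free modules, exactness of the Hom-bifunctor, and compatibility of $\Phi$ with the $A\boxtimes B$-action to control the five-lemma diagram.
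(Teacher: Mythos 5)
Your overall strategy is sound, but it is genuinely different from the paper's: the paper defines the same functor on pure tensors, cites \cite{DSS2}*{Proposition~3.9} for the fact that it is an equivalence of $\Bbbk$-linear categories, and then merely observes that the monoidal structure maps can be taken to be identities on pure tensors; you instead reprove the underlying linear equivalence from scratch via free modules (Lemma~\ref{indrep}), presentations by projectives of the form $P\boxtimes Q$, and a five-lemma d\'evissage. That self-contained route can be made to work, and it has the merit of not outsourcing the key step, but two of your justifications are not valid as stated, and both need the units of $A$ and $B$ to repair.

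First, monoidality: exactness of $\boxtimes$ in each variable does \emph{not} imply that the coequalizer of a ``diagonal'' parallel pair $(f_1\boxtimes f_2,\, g_1\boxtimes g_2)$ is the $\boxtimes$ of the two coequalizers. For instance, with $\cC=\cD=\Vect$ and $f_1=2,\ g_1=1,\ f_2=1,\ g_2=2$ acting on $\Bbbk$, the two maps $f_1\boxtimes f_2$ and $g_1\boxtimes g_2$ coincide, so the coequalizer on the Deligne-product side is $\Bbbk$, while the $\boxtimes$ of the coequalizers is $0$. What rescues the step here is that the pair defining $\otimes_{A\boxtimes B}$ in \eqref{eq:rel-tensor} is \emph{reflexive} (a common section is obtained by inserting $u_A\boxtimes u_B$), and a bifunctor that is right exact in each variable does preserve reflexive coequalizers of diagonal pairs; alternatively one constructs the inverse of the comparison map directly using the units. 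Second, the Hom computation on pure tensors is not a ``direct diagram chase'': the module condition is the vanishing of a difference of two product maps, and the kernel of such a difference is not in general a tensor product of kernels (same counterexample as above). The correct argument restricts the $A\boxtimes B$-module condition along $\ide\boxtimes u_B$ and $u_A\boxtimes\ide$, writes $\phi=\sum_i f_i\boxtimes g_i$ with one family linearly independent to deduce that $\phi$ lies in $\Hom_{\Rep_\cC(A)}(V,V')\otimes_\Bbbk\Hom_\cD(W,W')$ and in $\Hom_\cC(V,V')\otimes_\Bbbk\Hom_{\Rep_\cD(B)}(W,W')$, and then uses that the intersection of these two subspaces is $\Hom_{\Rep_\cC(A)}(V,V')\otimes_\Bbbk\Hom_{\Rep_\cD(B)}(W,W')$. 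Finally, a smaller point: in the essential surjectivity step, right exactness of $\Phi$ alone does not finish the argument; you must also lift the presenting morphism between direct sums of pure tensors along $\Phi$, which uses the fullness just established. With these repairs (and noting that $\Rep_\cC(A)$, $\Rep_\cD(B)$ are finite abelian, so the Deligne products and the universal property you invoke are available), your proof goes through.
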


\begin{proof}
Consider the functor $$T\colon \Rep_\cC(A)\boxtimes \Rep_\cD(B)\to \Rep_{\cC\boxtimes\cD}(A\boxtimes B)$$
determined by sending $(X,a_X^r)\boxtimes (Y,a_Y^r)$ to $(X\boxtimes Y,a_X^r\boxtimes a_Y^r)$. This functor defines an equivalence of $\Bbbk$-linear categories, cf.
 \cite{DSS2}*{Proposition~3.9}. We equip $T$ with the structure of a functor of monoidal categories. The structural isomorphisms can be defined by $$T_{(X,a_X^r)\boxtimes (Y,a_Y^r),(X',a_{X'}^r)\boxtimes (Y',a_{Y'}^r)}=\ide_{(X\otimes_A X')\boxtimes (Y\otimes_B Y')}  \quad \text{and} \quad  T_0 = \ide_{A \boxtimes B},$$ and extends uniquely to general objects in the Deligne tensor product.
\end{proof}

Now consider the following result about the algebra $A^+$ in $\cZ(\cC)$, and its category of modules $\Rep_{\cZ(\cC)}(A^+)$, and how this relates to the category $\Rep_{\cC}(A)$ when $\cC$ is non-degenerate.

\begin{lemma} \label{lem:A+}
Recall the notation of  \eqref{eq:Z(C)object+}, \eqref{eq:Z(C)morphism+}. Take $(A, c_{A,-})$  a commutative algebra in $\cC$. Then the following statements hold. 
\begin{enumerate}[(1),font=\upshape]
    \item $A^+:=(A^+, (m_A)^+, (u_A)^+)$ is a commutative algebra of $\cZ(\cC)$.
    \smallskip
    
    \item If $\cC$ is non-degenerate, then we  have equivalences of monoidal categories, $$\Rep_{\cZ(\cC)}(A^+) \; \tensimeq \; \Rep_{\cC \boxtimes \overline{\cC}}(A \boxtimes \one)\; \tensimeq \; \Rep_{\cC}(A) \boxtimes \overline{\cC},$$ where $\overline{\cC}$ is the mirror of $\cC$. 
\end{enumerate}
\end{lemma}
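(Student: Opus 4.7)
For part (1), the strategy is to use that the assignment $\iota\colon V\mapsto V^+=(V,c_{V,-})$ defined in \eqref{eq:Z(C)object+} is a functor of braided monoidal categories $\cC\to \cZ(\cC)$. Hence, applying Proposition~\ref{prop:mon-alg}, the image $\iota(A)=A^+$ inherits the structure of a commutative algebra in $\cZ(\cC)$, with multiplication $\iota(m_A)=(m_A)^+$ and unit $\iota(u_A)=(u_A)^+$. This is essentially immediate.

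For the first equivalence in part (2), I would invoke the non-degeneracy (equivalently, factorizability) of $\cC$ from Section~\ref{sec:modular} to obtain a braided tensor equivalence $\Phi\colon \cC\boxtimes \overline{\cC}\xrightarrow{\sim} \cZ(\cC)$, built from the two embeddings $X\mapsto (X,c_{X,-})$ and $Y\mapsto (Y, c^{-1}_{-,Y})$. I would then check that $\Phi$ sends the algebra $A\boxtimes \one$ (with algebra structure $m_A\boxtimes \ide_\one$, $u_A\boxtimes\ide_\one$) to the algebra $A^+$: on the level of objects this is clear since the half-braiding coming from the $\overline\cC$-factor is trivial on $\one$, and on the level of structure morphisms one must trace through the monoidal coherences of $\Phi$ to verify that the multiplication and unit are preserved. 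Because $\Phi$ is an equivalence of tensor categories, it is in particular exact, so Lemma~\ref{lem:RepF} produces an induced monoidal functor
\[
\Phi'\colon \Rep_{\cC\boxtimes\overline{\cC}}(A\boxtimes \one)\longrightarrow \Rep_{\cZ(\cC)}(A^+),
\]
and a quasi-inverse to $\Phi$ induces (again by Lemma~\ref{lem:RepF}) a quasi-inverse to $\Phi'$, giving the desired monoidal equivalence.

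For the second equivalence in part (2), the plan is to apply Lemma~\ref{lem:RepAB} directly, obtaining
\[
\Rep_{\cC\boxtimes\overline{\cC}}(A\boxtimes\one)\;\tensimeq\;\Rep_\cC(A)\boxtimes \Rep_{\overline{\cC}}(\one),
\]
and then to observe that $\Rep_{\overline{\cC}}(\one)\tensimeq \overline{\cC}$ as monoidal categories. The latter holds because any right action $a^r_V\colon V\otimes \one\to V$ is forced by unitality to equal $\ide_V$, and the relative tensor product $\otimes_\one$ coincides with $\otimes$ since the coequalizer in \eqref{eq:rel-tensor} is a split epimorphism in this case. Composing these equivalences yields the statement.

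The main technical point I expect to require care is identifying $\Phi(A\boxtimes\one)$ with $A^+$ as algebras and not merely as objects, since Lemma~\ref{lem:RepF} takes an algebra in the source to a specific algebra in the target via the lax structure $F_{-,-}$, and one must verify that this matches the algebra structure $(m_A)^+,(u_A)^+$ coming from the functor $\iota$ of \eqref{eq:Z(C)morphism+}. Once this compatibility is confirmed, both equivalences are essentially formal consequences of Lemmas~\ref{lem:RepF} and~\ref{lem:RepAB}.
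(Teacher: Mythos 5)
Your proposal is correct and follows essentially the same route as the paper: part (1) via Proposition~\ref{prop:mon-alg} and the braided monoidal functor $(-)^+$, and part (2) via the factorizability equivalence $\cC\boxtimes\overline{\cC}\simeq\cZ(\cC)$ sending $A\boxtimes\one$ to $A^+$, combined with Lemma~\ref{lem:RepF}, Lemma~\ref{lem:RepAB}, and the identification $\Rep_{\overline{\cC}}(\one)\tensimeq\overline{\cC}$. The compatibility of the algebra structures under the equivalence, which you flag as the delicate point, is exactly the check the paper records by displaying the structural isomorphism of the factorizability functor.
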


\begin{proof}
Part (1) follows from Proposition~\ref{prop:mon-alg} and the fact that $(-)^+$ is a braided monoidal functor.

\smallskip

For (2), let us assume that $\cC$ is non-degenerate. So, the functor 
$$G\colon \cC\boxtimes \ov{\cC}\longrightarrow \cZ(\cC),$$
determined by  $X\boxtimes \one\mapsto (X,c_{X,-})$ and $\one\boxtimes X\mapsto (X,c^{-1}_{X,-})$, is an equivalence of braided monoidal categories, see \cite{Shi1}*{(3.10) and Theorem~4.2}. Note that the structural isomorphism
$$ 
\xymatrix@R=10pt{
G(X\boxtimes Y)\otimes G(X'\boxtimes Y')\ar@{=}[d]\ar[rr]^{G_{X\boxtimes Y,X'\boxtimes Y'}}&& G((X\otimes X')\boxtimes (Y\otimes Y'))\ar@{=}[d]\\
X\otimes Y\otimes X'\otimes Y'\ar[rr]_{\ide_X\otimes c_{X',Y}^{-1}\otimes \ide_{Y'}}&& X\otimes X'\otimes Y\otimes Y'
}
$$
is used here.
Since $G(A\boxtimes \one)=A^+$, Lemma \ref{lem:RepF} implies that the functor $G$ induces a monoidal equivalence,
$$H \colon \Rep_{\cC\boxtimes \ov{\cC}}(A\boxtimes \one)\;\tensimeq \;\Rep_{\cZ(\cC)}(A^+).$$

Moreover, using Lemma \ref{lem:RepAB}, we have the natural equivalences of monoidal categories below:
$$\Rep_{\cC\boxtimes \ov{\cC}}(A\boxtimes \one) \;\tensimeq \;\Rep_{\cC}(A)\boxtimes \Rep_{
\ov{\cC}}(\one) \; \tensimeq \; \Rep_{\cC}(A)\boxtimes \ov{\cC}.$$
This completes the result.
\end{proof}


\subsection{Local modules}
\label{subsec:local}
We now recall background material on local modules following \cite{DMNO}*{Section~3.5}; recall Notation~\ref{not:Sec4}.

\begin{definition}\cite{Par}*{Definition 2.1}\label{def:locmod}
A right $A$-module $(V,a_V^r)$ in $\cC$ is called \emph{local}  if 
\begin{align*}
a^r_V&=a^r_V \; c_{A,V} \; c_{V,A}.
\end{align*}
The category of such local modules is denoted by $\locmod_{\cC}(A)$.
\end{definition}

\begin{proposition}\cite{Par}*{Theorem 2.5} \label{prop:pareigis}
The category $\locmod_\cC(A)$ is a monoidal subcategory of  $\Rep_\cC(A)$,  
and $\locmod_\cC(A)$ is braided. \qed
\end{proposition}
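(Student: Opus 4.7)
The plan is to prove the proposition in two stages: first closure under the monoidal structure inherited from $\Rep_\cC(A)$, and then construction of a braiding by descending the ambient braiding of $\cC$ along the coequalizer defining $\otimes_A$.

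\textbf{Stage 1 (monoidal subcategory).} I would first check that the unit object $A$ of $\Rep_\cC(A)$ lies in $\locmod_\cC(A)$. Since the right action of $A$ on itself is $m$, locality reduces to $m \; c_{A,A} \; c_{A,A} = m$, which follows at once by applying commutativity $m = m \; c_{A,A}$ twice. Next, given $V,W \in \locmod_\cC(A)$, I would show that $V \otimes_A W$ is local by lifting the locality condition along the coequalizer $\pi_{V,W} \colon V \otimes W \to V \otimes_A W$ to the map $\ide_V \otimes a_W^r \colon V \otimes W \otimes A \to V \otimes W$. Applying both hexagons gives
$$c_{A,V \otimes W}\; c_{V \otimes W, A} = (\ide_V \otimes c_{A,W})(c_{A,V}c_{V,A} \otimes \ide_W)(\ide_V \otimes c_{W,A}).$$
Composing with $\ide_V \otimes a_W^r$, then using naturality of the braiding to commute $a_W^r$ through, and finally invoking the locality of $W$ (which kills the tail $c_{A,W} c_{W,A}$) and of $V$ (which kills $c_{A,V} c_{V,A}$ after suitable rebracketing), yields $\ide_V \otimes a_W^r$. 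Passing to $V \otimes_A W$ via $\pi_{V,W}$ proves locality.

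\textbf{Stage 2 (braiding).} I would define the braiding on $\locmod_\cC(A)$ by descending the ambient braiding $c_{V,W} \colon V \otimes W \to W \otimes V$ of $\cC$ to a morphism $\widetilde{c}_{V,W}\colon V \otimes_A W \to W \otimes_A V$. This requires:
(i) that $\pi_{W,V} \; c_{V,W}$ coequalizes the two parallel arrows $a_V^r \otimes \ide_W$ and $\ide_V \otimes a_W^l$ with $a_W^l = a_W^r c_{A,W}$; a naturality-of-$c$ argument reduces this to an identity of the form $a^r \circ (\text{double braiding}) = a^r$, precisely the locality of the module under consideration;
(ii) that $\widetilde{c}_{V,W}$ is a morphism of right $A$-modules, i.e.\ intertwines $a_{V \otimes_A W}^r$ and $a_{W \otimes_A V}^r$ -- another naturality check;
(iii) that $\widetilde{c}$ is natural and satisfies the two hexagon axioms, which descend from the corresponding identities for $c$ in $\cC$ once one verifies that both sides of each hexagon factor uniquely through the relevant relative tensor product.

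\textbf{Main obstacle.} The technical heart of the argument is step (i) of Stage 2: commutativity of $A$ alone is not enough to make $c_{V,W}$ descend to the coequalizer (that would require $A$ to lie in the M\"uger center $\cC'$, recall \eqref{eq:Mugercenter}), and it is precisely the locality hypothesis on the modules that compensates. Once that step is established, the remaining verifications are diagrammatic and are cleanest to carry out in graphical calculus, using only naturality of the braiding and the hexagon axioms. This matches the strategy of \cite{Par}*{Theorem~2.5}.
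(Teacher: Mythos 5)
Your argument is correct, but there is essentially nothing in the paper to compare it against: the proposition is quoted with a citation to Pareigis's Theorem~2.5, no proof is given beyond the remark that the braiding of $\cC$ descends to the relative tensor product of two local modules, and your two-stage verification is precisely that standard argument. One point of precision: in Stage~1, composing the double braiding $c_{A,V\otimes W}\,c_{V\otimes W,A}$ with $\ide_V\otimes a_W^r$ does not simplify to $\ide_V\otimes a_W^r$ on $V\otimes W\otimes A$ itself (locality of $V$ cannot act on the $V$-leg there); you must first compose with $\pi_{V,W}$ and use the coequalizer relation $\pi_{V,W}(a_V^r\otimes\ide_W)=\pi_{V,W}(\ide_V\otimes a_W^l)$ twice to shuttle the $A$-leg between the two factors, applying locality of $V$ and then of $W$ --- exactly the mechanism you correctly isolate in Stage~2(i), so the sketch goes through once that is made explicit.
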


In particular, the braiding on $\locmod_\cC(A)$ is obtained from the braiding $c$ in $\cC$ and descends to the relative tensor products of two local modules. 
We also have the following useful result on the monoidal center of $\Rep_\cC(A)$ in terms of a category of local modules. For this, recall the notation from Lemma \ref{lem:A+}.

\begin{theorem} \cite{Sch}*{Corollary~4.5} \label{thm:local-center} 
The category $\cZ(\Rep_\cC(A))$ is equivalent to $\locmod_{\cZ(\cC)}(A^+)$  as braided monoidal categories. 
\qed
\end{theorem}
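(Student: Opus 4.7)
The plan is to construct an explicit braided monoidal equivalence by exploiting the fact that every object of $\Rep_\cC(A)$ is a quotient of a free $A$-module (Lemma~\ref{indrep}), so a half-braiding on $\Rep_\cC(A)$ is determined by its values on free modules, and this data is essentially a half-braiding on the underlying object in $\cC$.

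\textbf{Forward functor.} Given $(V, a_V^r, \gamma_{V,-}) \in \cZ(\Rep_\cC(A))$, I would define a half-braiding $c_{V,X}$ on $V$ in $\cC$ as the composite
\[
c_{V,X}\colon V\otimes X \;\xrightarrow{\;\phi_X^{-1}\;}\; V\otimes_A U(X) \;\xrightarrow{\;\gamma_{V,U(X)}\;}\; U(X)\otimes_A V \;\xrightarrow{\;\psi_X\;}\; X\otimes V,
\]
where $U(X) = X\otimes A$, and $\phi_X\colon V\otimes_A U(X) \isomorph V\otimes X$, $\psi_X\colon U(X)\otimes_A V \isomorph X\otimes V$ are the canonical isomorphisms coming from $M\otimes_A (X\otimes A)\cong M\otimes X$ (using the braiding on $\cC$). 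The hexagon for $c_{V,-}$ reduces to the hexagon for $\gamma_{V,-}$ together with the monoidality of $U$ from Lemma~\ref{indrep}. Naturality in $X$ follows since any $f\colon X\to Y$ in $\cC$ induces $U(f)\colon U(X)\to U(Y)$ in $\Rep_\cC(A)$. Thus $V^+ := (V, c_{V,-}) \in \cZ(\cC)$. The right action $a_V^r\colon V\otimes A\to V$ lifts to a morphism $(a_V^r)^+\colon V^+\otimes A^+ \to V^+$ in $\cZ(\cC)$ since $a_V^r$ is the counit of the adjunction $U\dashv F$ evaluated at $V$, interpreted through $U(\one)=A$. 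Finally, locality $a_V^r = a_V^r\, c_{A,V}\, c_{V,A}$ is forced by the fact that $\gamma_{V,A}=\ide_V$ (since $A$ is the monoidal unit of $\Rep_\cC(A)$) combined with unwinding the definition of $c_{V,A}$ in terms of $\gamma_{V,U(\one)}=\gamma_{V,A}$ together with the two canonical identifications $\phi_\one,\psi_\one$, each of which is built from one braiding; composing gives precisely $c_{A,V}\,c_{V,A}$.

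\textbf{Inverse functor.} Given a local right $A^+$-module $(V, c_{V,-}, a_V^r)$ in $\cZ(\cC)$, I would define, for each $W\in\Rep_\cC(A)$, a morphism $\gamma_{V,W}\colon V\otimes_A W \to W\otimes_A V$ as the descent of $c_{V,W}\colon V\otimes W\to W\otimes V$ across the coequalizers \eqref{eq:rel-tensor}. Well-definedness of this descent is the crucial point: one must verify that the two composites
\[
V\otimes A\otimes W \;\rightrightarrows\; V\otimes W \;\xrightarrow{c_{V,W}}\; W\otimes V \;\twoheadrightarrow\; W\otimes_A V
\]
agree. Unfolding $a^l_W = a^r_W c_{A,W}$ and using the hexagon for $c_{V,-}$ to pull $c_{V,A\otimes W}$ apart, this identity reduces exactly to the locality condition $a^r_V = a^r_V\, c_{A,V}\, c_{V,A}$ on $V$. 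The hexagon axiom for $\gamma$ follows from the hexagon for $c$ together with associativity of the relative tensor product.

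\textbf{Mutual inverses and structure.} The two assignments are mutually inverse on the nose, once one observes that $\gamma_{V,U(X)}$ is determined by $c_{V,X}$ under the identifications $\phi_X,\psi_X$ and that every $W\in\Rep_\cC(A)$ is a coequalizer of free modules. Monoidality of each functor is a direct check: on the $\cZ(\Rep_\cC(A))$ side the monoidal product is a relative tensor product of half-braided modules, while on the $\locmod_{\cZ(\cC)}(A^+)$ side it is the same relative tensor product. Braided compatibility is similarly straightforward since both braidings are induced from $c^{\cC}$.

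\textbf{Main obstacle.} The key difficulty is the well-definedness of $\gamma_{V,W}$ on $V\otimes_A W$ for arbitrary $W\in\Rep_\cC(A)$: this is exactly where the locality condition is indispensable, and it is the precise categorical shadow of the classical fact that one cannot braid past a non-local module. Once this technicality is cleared, the hexagon and compatibility checks are routine diagram chases; the bulk of the work lies in setting up the canonical identifications $\phi_X,\psi_X$ carefully enough that the braidings match without sign or convention discrepancies.
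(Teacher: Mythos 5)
Your construction is correct in outline, but note that the paper itself gives no proof of this statement: it is quoted from Schauenburg, whose own argument is more abstract (deduced from general results on centers of module/bimodule categories). What you propose is, in effect, the hands-on argument that the paper does write out later for the \emph{relative} generalization, Theorem~\ref{thm:ZRepA}: your $\phi_X,\psi_X$ are exactly the paper's $\phi_{X,V}=\ide_X\otimes a_V^r c_{A,V}$ and $\psi_{V,X}=(a_V^r\otimes\ide_X)(\ide_V\otimes c^{-1}_{A,X})$, your forward/inverse assignments are the paper's $\Psi$ and $\Phi$ (up to direction), and the descent of $c_{V,W}$ across the coequalizer~\eqref{eq:rel-tensor} is verified there by the same use of the hexagon, naturality, and locality that you indicate; so your route is a sound, more elementary specialization ($\cB=\Vect$) of an argument the paper already contains, rather than a reproduction of Schauenburg's.

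One point needs tightening in your forward direction. You claim locality follows from $\gamma_{V,A}=\ide$ ``combined with unwinding the definition of $c_{V,A}$ in terms of $\gamma_{V,U(\one)}=\gamma_{V,A}$''. But by your own definition $c_{V,A}$ is built from $\gamma_{V,U(A)}=\gamma_{V,A\otimes A}$, not from $\gamma_{V,U(\one)}$; the two free modules $U(A)=A\otimes A$ and $U(\one)=A$ are different objects of $\Rep_\cC(A)$. The fix is short: $m_A\colon U(A)\to U(\one)=A$ is a morphism in $\Rep_\cC(A)$, so naturality of $\gamma_{V,-}$ along $m_A$, together with triviality of the half-braiding at the unit $A$ (the unit isomorphisms $V\otimes_A A\cong V\cong A\otimes_A V$ being induced by $a_V^r$ and $a_V^l=a_V^rc_{A,V}$), collapses $a_V^r\,c_{A,V}\,c_{V,A}$ to $a_V^r$. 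Similarly, in the inverse direction your reduction to locality also quietly uses that $a_V^r$ is a morphism in $\cZ(\cC)$ (i.e.\ $(\ide_X\otimes a_V^r)(c_{V,X}\otimes\ide_A)(\ide_V\otimes c_{A,X})=c_{V,X}(a_V^r\otimes\ide_X)$) and the coequalizer relation in the target $W\otimes_A V$; these are routine but should be named. With those details supplied, your argument goes through.
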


We will require the following auxiliary lemmata.

\begin{lemma}\label{lem:Floc}
For an equivalence of braided monoidal categories $F\colon \cC\to \cD$ and any  commutative algebra $A$ in $\cC$, the monoidal functor $F'$ from Lemma \ref{lem:RepF} restricts to an equivalence of braided monoidal categories 
$$F'\colon \locmod_\cC(A)\to \locmod_\cD(F(A)).$$
\end{lemma}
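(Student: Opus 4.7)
The plan is to leverage Lemma~\ref{lem:RepF} and refine it in three respects: restriction to local modules, preservation of the braiding, and the upgrade from functor to equivalence.

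First, since $F$ is an equivalence of categories, it is exact, so Lemma~\ref{lem:RepF} provides a monoidal functor $F'\colon \Rep_\cC(A)\to \Rep_\cD(F(A))$ with $F'(V,a^r_V)=(F(V), F(a^r_V)F_{V,A})$. The first substantive step is to show that $F'$ sends local modules to local modules. Let $(V,a^r_V)\in\locmod_\cC(A)$, so that $a^r_V = a^r_V \, c^\cC_{A,V}\, c^\cC_{V,A}$. Using the defining identity of a braided monoidal functor $F_{Y,X}\, c^{\cD}_{F(X),F(Y)} = F(c^{\cC}_{X,Y})\, F_{X,Y}$ twice, I would compute
\[
F(a^r_V) F_{V,A}\, c^\cD_{F(A),F(V)}\, c^\cD_{F(V),F(A)} \;=\; F\bigl(a^r_V \, c^\cC_{A,V} \, c^\cC_{V,A}\bigr) F_{V,A} \;=\; F(a^r_V)F_{V,A},
\]
which is exactly the locality condition for $F'(V)$. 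Hence $F'$ restricts to a functor $\locmod_\cC(A)\to \locmod_\cD(F(A))$.

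Next, I would check that this restriction is an equivalence. Since $F$ is an equivalence of braided monoidal categories, a quasi-inverse $G\colon \cD\to\cC$ exists and can be chosen to be braided monoidal as well. Applying the previous step to $G$ (with the commutative algebra $F(A)$ in $\cD$, which $G$ sends to an algebra isomorphic to $A$) yields an induced monoidal functor $G'\colon \locmod_\cD(F(A))\to \locmod_\cC(A)$, and the natural isomorphisms between $G\circ F$ and $\ide_\cC$ (and $F\circ G$ and $\ide_\cD$) can be upgraded to natural isomorphisms between $G'\circ F'$ and $\ide_{\locmod_\cC(A)}$ (and similarly on the other side), by checking they are $A$-module maps; this last verification reduces to naturality of the given isomorphisms with respect to the action maps $a^r_V$. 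Thus $F'$ restricted to local modules is an equivalence of categories, and it is monoidal as it is the restriction of a monoidal functor.

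Finally, I would check that $F'$ is braided. By Proposition~\ref{prop:pareigis}, the braiding on $\locmod_\cC(A)$ is induced by the $\cC$-braiding $c^\cC$ on representatives before passing to the coequalizer~\eqref{eq:rel-tensor}. Since $F$ is braided and the coherence isomorphism $F'_{V,W}\colon F'(V)\otimes_{F(A)} F'(W)\to F'(V\otimes_A W)$ from Lemma~\ref{lem:RepF} is constructed from $F_{V,W}$ (which intertwines $c^\cC$ and $c^\cD$), the braided compatibility of $F'$ on local modules follows directly. I expect the only non-routine point to be verifying that the quasi-inverse $G'$ really is inverse to $F'$ at the level of local modules — i.e., that the unit and counit of the adjunction $F\dashv G$ are module maps after application to modules — but this is a direct naturality argument once one unpacks the definitions of the action on $F'(V)$ and $G'(W)$.
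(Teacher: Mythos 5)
Your argument is correct, but it settles the "equivalence" part by a different route than the paper. Your first step (locality of $F'(V)$ via the braided-functor identity applied twice) is exactly the paper's forward direction, and your remark that the braided compatibility descends through the coequalizer is also how the paper concludes. The difference is in how the restriction is shown to be an equivalence: the paper never constructs a quasi-inverse; instead it proves that $F'$ also \emph{reflects} locality — if $F'(V)$ is local, then $F(a^r_V)F_{V,A} = F(a^r_V c_{A,V}c_{V,A})F_{V,A}$, and invertibility of $F_{V,A}$ plus faithfulness of $F$ force $V$ to be local — and then simply restricts the equivalence $F'\colon \Rep_\cC(A)\to\Rep_\cD(F(A))$ to the corresponding full subcategories. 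You instead build an explicit quasi-inverse $G'$ by applying your preservation step to a braided monoidal quasi-inverse $G$ of $F$. That works, but it carries extra bookkeeping that the paper avoids: you must transport modules along the algebra isomorphism $GF(A)\cong A$, and the verification that the unit/counit components are $A$-module maps uses not only naturality but also the fact that these isomorphisms are \emph{monoidal} natural transformations (the action on $GF(V)$ involves $G_{F(V),F(A)}$ and $F_{V,A}$, so the comparison with $a^r_V$ requires the monoidality square for $\eta_{V\otimes A}$). Conversely, your route makes the equivalence claim fully explicit, whereas the paper's step ``since $F$ is an equivalence, $F'$ is an equivalence'' is asserted without detail; and you do not need the reflection computation at all. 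Both are valid; the paper's preserve-and-reflect argument is the leaner of the two given Lemma~\ref{lem:RepF}.
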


\begin{proof}
Clearly, if $(V,a^r_V)$ is a local module over $A$, then $(F(V),a^r_{F(V)})$ is a local module over $F(A)$. Conversely,
assume $F'(V)=(F(V),a^r_{F(V)})$ is a local module. Then we compute, using that $F$ is a functor of braided monoidal categories, that
\begin{align*}
F(a^r_V)F_{V,A}&=a^r_{F(V)}=a^r_{F(V)}c_{F(A),F(V)}c_{F(V),F(A)}\\
&=F(a^r_V)F_{V,A}c_{F(A),F(V)}c_{F(V),F(A)}\\
&=F(a^r_V)F(c_{A,V}c_{V,A})F_{V,A}\\
&=F(a^r_Vc_{A,V}c_{V,A})F_{V,A}.
\end{align*}
As $F_{V,A}$ is invertible, faithfulness of $F$ now implies that $(V,a^r_V)$ is a local module. Since $F$ is an equivalence, $F'$ is an equivalence and we have shown that it induces an equivalence on the full subcategories of local modules. As $F$ is compatible with braidings, this equivalence is one of braided monoidal categories.
\end{proof}

\begin{lemma}\label{lem:locmodAB}
Let $A,B$ be commutative algebras in braided finite tensor categories $\cC,\cD$, respectively. Then the monoidal equivalence $T$ from Lemma \ref{lem:RepAB} induces an equivalence of braided monoidal categories 
$$\locmod_{\cC}(A)\boxtimes \locmod_{\cD}(B) \; \overset{br.\otimes}{\simeq}\; \locmod_{\cC\boxtimes \cD}(A \boxtimes B).$$
\end{lemma}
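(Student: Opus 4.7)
The plan is to upgrade the monoidal equivalence $T$ from Lemma~\ref{lem:RepAB} to a braided monoidal equivalence on the subcategories of local modules. First, for $X \in \locmod_\cC(A)$ and $Y \in \locmod_\cD(B)$, I would verify that $T(X \boxtimes Y) = (X \boxtimes Y, a_X^r \boxtimes a_Y^r)$ lies in $\locmod_{\cC\boxtimes\cD}(A \boxtimes B)$: using the Deligne braiding formula \eqref{eq:Deligne-braiding}, the double braiding on $(A\boxtimes B, X\boxtimes Y)$ factors as $(c_{A,X}c_{X,A}) \boxtimes (c_{B,Y}c_{Y,B})$, so composing with $a_X^r \boxtimes a_Y^r$ and applying the locality of $X$ and $Y$ separately returns $a_X^r \boxtimes a_Y^r$.

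Next, I would use that local modules form a Serre subcategory of $\Rep_\cC(A)$ (closed under subobjects, quotients, direct sums, and extensions, as noted in \cite{DMNO}*{Section~3.5}), and likewise for $\Rep_\cD(B)$ and $\Rep_{\cC\boxtimes\cD}(A\boxtimes B)$. Hence $\locmod_\cC(A) \boxtimes \locmod_\cD(B)$ is the Serre subcategory of $\Rep_\cC(A)\boxtimes \Rep_\cD(B)$ whose simple objects are the pure tensors $S\boxtimes T$ of simple locals. Since Serre subcategories of a finite abelian category are determined by their simple objects, it suffices to verify that for a simple $S \in \Rep_\cC(A)$ and a simple $T \in \Rep_\cD(B)$, the image $T(S\boxtimes T)$ is local if and only if both $S$ and $T$ are local. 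The forward direction is the first step. For the converse, the locality equation lives in
\[
\Hom_{\cC\boxtimes \cD}((A\boxtimes B)\otimes (S\boxtimes T),\, S\boxtimes T)\;\cong\; \Hom_\cC(A\otimes S,\,S)\otimes_\Bbbk \Hom_\cD(B\otimes T,\,T)
\]
and reads $a^r_S\otimes a^r_T = (a^r_S\, c_{A,S}c_{S,A}) \otimes (a^r_T\, c_{B,T}c_{T,B})$. The actions are nonzero (since $a^r_S(\ide_S\otimes u_A)=\ide_S$, and likewise for $T$), so the elementary fact that a nonzero rank-one tensor in a tensor product of vector spaces determines its factors up to an inverse scalar forces $a^r_S = \lambda\, a^r_S c_{A,S}c_{S,A}$ for some $\lambda\in\Bbbk^\times$; precomposing with $\ide_S\otimes u_A$ and using $c_{A,S}c_{S,A}(\ide_S\otimes u_A)=\ide_S\otimes u_A$ (by naturality of the braiding against the unit) pins $\lambda=1$, yielding the locality of $S$, and symmetrically that of $T$. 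Compatibility of $T'$ with braidings is then immediate from \eqref{eq:Deligne-braiding} on pure tensors and extends by $\Bbbk$-bilinearity and right-exactness.

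The main obstacle is the Serre-subcategory step, and in particular the appeal to closure of local modules under extensions. This is folklore in the modular category literature (see \cite{DMNO}*{Section~3.5}) but is not entirely automatic in the non-semisimple setting: given an extension $0\to U\to V\to W\to 0$ with $U$ and $W$ local, the locality defect $\delta_V := a^r_V - a^r_V c_{A,V}c_{V,A}\colon V\otimes A\to V$ factors, by naturality of the double braiding, as $\iota\,\epsilon\,(\pi\otimes\ide_A)$ for some $\epsilon\colon W\otimes A\to U$, and one must verify $\epsilon=0$ by exploiting the commutativity of $A$ and the module-map compatibility of $a^r_V$. Once this structural fact is in hand, the simple-object analysis above delivers the essential surjectivity and completes the proof.
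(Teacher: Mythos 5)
Your forward direction (pure tensors of local modules are local, via \eqref{eq:Deligne-braiding}) and your scalar argument pinning the locality of the factors of a pure tensor are fine, and they agree in spirit with what the paper does. The problem is the step you yourself flag as the main obstacle: the claim that local modules form a \emph{Serre} subcategory, i.e.\ are closed under extensions, so that the subcategories in question are determined by their simple objects. This claim is false in the generality of the lemma (arbitrary commutative algebras in braided finite tensor categories), and \cite{DMNO}*{Section~3.5} cannot be cited for it since there the ambient categories are fusion, where extension--closure is vacuous. A concrete counterexample: take $\cC=\cZ(\lmod{\Bbbk G})\simeq\lYD{\Bbbk G}$ with $G=\mZ/p$ and $\operatorname{char}\Bbbk=p$, and let $A=\Bbbk G$ with trivial (adjoint) action and regular coaction, i.e.\ the commutative algebra of Example~\ref{expl:R1} with $\cB=\Vect$. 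Then $\Rep_\cC(A)\simeq\lmod{\Bbbk G}$ (send $V$ to its degree-$1$ component) and, as in Proposition~\ref{prop:local-triv}, a module is local exactly when the underlying $G$-action is trivial, so $\locmod_\cC(A)\simeq\Vect$. The two-dimensional $\Bbbk G$-module on which $g$ acts by a unipotent Jordan block is a non-split extension of the trivial module by itself; its image in $\Rep_\cC(A)$ is an extension of two local modules that is not local. The same example shows that the reduction to simple objects is irreparable here: $\Rep_\cC(A)$ and $\locmod_\cC(A)$ have the \emph{same} unique simple object, so a full subcategory closed merely under subquotients and direct sums (which is all one has -- note that Corollary~\ref{cor:finite-tensor} deliberately claims only closure under subobjects, subquotients, tensor products and duals) is not determined by its simples. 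Consequently your proof establishes full faithfulness of the restricted functor and locality of images of pure tensors, but not essential surjectivity onto $\locmod_{\cC\boxtimes\cD}(A\boxtimes B)$.

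The paper avoids this entirely: it uses that $T$ is already an equivalence of the ambient categories (Lemma~\ref{lem:RepAB}), so every object of $\locmod_{\cC\boxtimes\cD}(A\boxtimes B)$ lies in the essential image of $T$, and then \emph{reflects} locality back through $T$ by the faithfulness/naturality argument of Lemma~\ref{lem:Floc} (your rank-one tensor computation is a special case of this for pure tensors). If you want to salvage your write-up, replace the Serre/simple-object reduction by this reflection argument applied to $T^{-1}$ of an arbitrary local module; no extension-closure statement is needed anywhere.
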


\begin{proof}
If $(V,a_V^r) \boxtimes (W,a_W^r) \in \locmod_{\cC}(A)\boxtimes \locmod_{\cD}(B)$, then $T((V,a_V^r) \boxtimes (W,a_W^r))$ is in $\locmod_{\cC\boxtimes \cD}(A \boxtimes B)$. Conversely, an object of $\locmod_{\cC\boxtimes \cD}(A \boxtimes B)$ is in the essential image of $T$ by Lemma \ref{lem:RepAB}. So for $T((V,a_V^r) \boxtimes (W,a_W^r)) \in \locmod_{\cC\boxtimes \cD}(A \boxtimes B)$, we get that $(V,a_V^r) \boxtimes (W,a_W^r) \in \locmod_{\cC}(A)\boxtimes \locmod_{\cD}(B)$ with a similar argument as the proof of Lemma~\ref{lem:Floc}. Moreover, $T$ is compatible with the braidings of $\locmod_{\cC}(A)\boxtimes \locmod_{\cD}(B)$ and $\locmod_{\cC\boxtimes \cD}(A \boxtimes B)$.
\end{proof}

Now consider the following result regarding the category of local modules $\locmod_{\cZ(\cC)}(A^+)$, and how this relates to the category $\locmod_{\cC}(A)$ when $\cC$ is non-degenerate. This appears in \cite{DMNO}*{Lemma~3.29} in the semisimple case.

\begin{lemma} \label{lem:A+local} 
Retain the notation from Lemma~\ref{lem:A+}. If $\cC$ is non-degenerate, then we  have an equivalence of braided monoidal categories, $$\locmod_{\cZ(\cC)}(A^+) \; \overset{br.\otimes}{\simeq}\; \locmod_{\cC}(A) \boxtimes \overline{\cC}.$$
\end{lemma}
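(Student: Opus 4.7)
The plan is to assemble the equivalence from the three structural results already established: the braided monoidal equivalence $G\colon \cC\boxtimes\overline{\cC}\isomorph\cZ(\cC)$ used in the proof of Lemma~\ref{lem:A+}, the behavior of local modules under braided monoidal equivalences (Lemma~\ref{lem:Floc}), and the compatibility of local modules with Deligne tensor products (Lemma~\ref{lem:locmodAB}). Since the equivalence of Lemma~\ref{lem:A+}(2) is already shown at the level of module categories over commutative algebras, the only additional content here is to track that the full subcategories of local modules correspond under it, and that the braidings match.

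First, I would invoke non-degeneracy of $\cC$ to conclude via \cite{Shi1}*{Theorem~4.2} (as in the proof of Lemma~\ref{lem:A+}) that the functor $G\colon \cC\boxtimes\overline{\cC}\to\cZ(\cC)$ sending $X\boxtimes \one\mapsto (X,c_{X,-})$ and $\one\boxtimes Y\mapsto (Y,c^{-1}_{Y,-})$ is an equivalence of braided monoidal categories. Since $G(A\boxtimes\one)=A^+$, Lemma~\ref{lem:Floc} applies directly and yields an equivalence of braided monoidal categories
\[
\locmod_{\cC\boxtimes\overline{\cC}}(A\boxtimes\one)\;\overset{br.\otimes}{\simeq}\;\locmod_{\cZ(\cC)}(A^+).
\]

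Next, I would apply Lemma~\ref{lem:locmodAB} to the commutative algebras $A$ in $\cC$ and $\one$ in $\overline{\cC}$ to get
\[
\locmod_{\cC}(A)\boxtimes\locmod_{\overline{\cC}}(\one)\;\overset{br.\otimes}{\simeq}\;\locmod_{\cC\boxtimes\overline{\cC}}(A\boxtimes\one).
\]
Finally, it is immediate from Definition~\ref{def:locmod} that every object of $\overline{\cC}$ is a local $\one$-module (the locality condition becomes trivial when $A=\one$), so $\locmod_{\overline{\cC}}(\one)\simeq\overline{\cC}$ as braided monoidal categories. Chaining the three equivalences gives the desired braided monoidal equivalence
\[
\locmod_{\cZ(\cC)}(A^+)\;\overset{br.\otimes}{\simeq}\;\locmod_{\cC}(A)\boxtimes\overline{\cC}.
\]

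No step should present a real obstacle; the only minor thing to verify is that the equivalence $G$ really does identify $A\boxtimes\one$ with $A^+$ as \emph{commutative algebras} (so that Lemma~\ref{lem:Floc} produces a braided equivalence of the two categories of local modules with their canonical braidings induced from $\cC\boxtimes\overline{\cC}$ and $\cZ(\cC)$, respectively), which follows from Proposition~\ref{prop:mon-alg} applied to the braided monoidal functor $G$. Essentially, the statement is obtained by restricting the non-braided equivalence of Lemma~\ref{lem:A+}(2) to its full subcategories of local modules and observing that the induced braidings on both sides agree; the semisimple analogue of this argument appears in \cite{DMNO}*{Lemma~3.29}.
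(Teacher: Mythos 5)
Your proof is correct and follows essentially the same route as the paper: the paper likewise chains the Deligne-product decomposition (Lemma~\ref{lem:locmodAB}, together with $\locmod_{\ov{\cC}}(\one)\simeq\ov{\cC}$) with the transfer of locality along the braided equivalence $\cC\boxtimes\ov{\cC}\simeq\cZ(\cC)$ via Lemma~\ref{lem:Floc}, building on the monoidal equivalences of Lemmas~\ref{lem:RepAB} and~\ref{lem:A+}. Your explicit check that $G$ identifies $A\boxtimes\one$ with $A^+$ as commutative algebras is the same point the paper handles inside the proof of Lemma~\ref{lem:A+}.
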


\begin{proof}
By Lemmas \ref{lem:RepAB} and \ref{lem:A+}, we have the following equivalences of monoidal categories:
$$\Rep_{\cC}(A)\boxtimes\Rep_{\ov{\cC}}(\one)
\; \tensimeq \;  \Rep_{\cC\boxtimes \ov{\cC}}(A\boxtimes \one) 
\; \tensimeq \; \Rep_{\cZ(\cC)}(A^+).$$
The result now follows from Lemmas \ref{lem:locmodAB} and \ref{lem:Floc}.
\end{proof}


\subsection{Main result: Modularity of categories of local modules}
\label{subsec:mainresult}

The following result generalizes  \cite{KO}*{Theorem~4.5} to   the non-semisimple setting. Recall the notion of a rigid Frobenius algebra from Definition~\ref{def:rigid-Frob}.

\begin{theorem}\label{thm:locmodular}
If $\cC$ is a modular tensor category and $A$ is a rigid Frobenius algebra in $\cC$, then the category $\locmod_\cC(A)$ of local modules over $A$ in $\cC$ is also modular.  
\end{theorem}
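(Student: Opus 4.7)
The plan is to verify each of the defining properties of a modular tensor category for $\locmod_\cC(A)$: that it is a braided finite tensor category, that it is ribbon, and that it is non-degenerate. The braided monoidal structure is already in hand by Proposition~\ref{prop:pareigis}, so the work reduces to the remaining three points.

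First, I would verify that $\Rep_\cC(A)$, and hence its full subcategory $\locmod_\cC(A)$, is a finite tensor category. $\Bbbk$-linearity, abelianness, and finiteness descend from $\cC$, and the simplicity of the unit object $A$ of $\locmod_\cC(A)$ is immediate from $A$ being connected together with the isomorphism \eqref{eq:modAA}. Rigidity of $\Rep_\cC(A)$ is where the rigid Frobenius hypothesis is essential: by Proposition~\ref{prop:conn-etale}, $A$ is a rigid $\cC$-algebra with trivial twist, so the dual of a right $A$-module $M$ can be constructed as in \cite{KO}*{Theorem~1.15} from the $\cC$-dual of $M$ twisted through the Frobenius structure on $A$ (in particular, through the self-duality of $A$). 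Commutativity of $A$ then guarantees that this dual is again local when $M$ is local, so $\locmod_\cC(A)$ is a finite tensor subcategory of $\Rep_\cC(A)$.

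For the ribbon structure, I would invoke Proposition~\ref{prop:ribbon-left-right}: it suffices to equip $\locmod_\cC(A)$ with a pivotal structure whose induced left and right twists coincide. The pivotal isomorphism $j^\cC_X\colon X \isomorph X^{**}$ of $\cC$ from \eqref{eq:ribbonpivotal} should descend to a morphism of local $A$-modules on each $M\in \locmod_\cC(A)$, with triviality of $\theta_A$ (guaranteed by Proposition~\ref{prop:conn-etale}) ensuring compatibility with the $A$-action. The formulas \eqref{rightdual} then let one verify that the left and right twists of $M$ inside $\locmod_\cC(A)$, assembled from the co/evaluation maps of the previous step, agree because the corresponding expressions already agree inside the ribbon category $\cC$.

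The main obstacle is non-degeneracy, and rather than work directly with elements of the M\"uger center of $\locmod_\cC(A)$, I would deduce it from the machinery already assembled in Section~\ref{subsec:local}. Combining Theorem~\ref{thm:local-center} with Lemma~\ref{lem:A+local} yields braided tensor equivalences
\[
\cZ(\Rep_\cC(A)) \;\overset{br.\otimes}{\simeq}\; \locmod_{\cZ(\cC)}(A^+) \;\overset{br.\otimes}{\simeq}\; \locmod_\cC(A) \boxtimes \overline{\cC}.
\]
Because $\Rep_\cC(A)$ is a finite tensor category by the first step, its monoidal center is non-degenerate (the Drinfeld center of a finite tensor category is always factorizable by \cite{Shi1}), so $\locmod_\cC(A) \boxtimes \overline{\cC}$ is non-degenerate. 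Lemma~\ref{lem:DeligneMueger} then yields non-degeneracy of $\locmod_\cC(A)$, completing the proof. The delicate point throughout is to ensure that every cited equivalence genuinely lives in the world of braided finite tensor categories, which in turn rests on the rigidity and finiteness extracted from the rigid Frobenius data on $A$ in the first step.
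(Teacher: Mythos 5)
Your proposal is correct and follows essentially the same route as the paper: finiteness and rigidity via the KO-style construction of duals (Lemma~\ref{lemma:rigid}, Corollary~\ref{cor:finite-tensor}), ribbonality via pivotality combined with Proposition~\ref{prop:ribbon-left-right} (Proposition~\ref{prop:ribbon}), and non-degeneracy via exactly the chain $\cZ(\Rep_\cC(A))\simeq\locmod_{\cZ(\cC)}(A^+)\simeq\locmod_\cC(A)\boxtimes\overline{\cC}$ together with Lemma~\ref{lem:DeligneMueger} (Proposition~\ref{prop:locmod-nondeg}). The only cosmetic difference is in how pivotality and the twist identity are verified: the paper checks the criteria of Proposition~\ref{prop:piv-equiv} for $\Rep_\cC(A)$ and $\locmod_\cC(A)$ and then proves $\theta^l=\theta^r$ by an explicit computation using locality and the Frobenius relations, rather than descending $j^\cC$ directly as you sketch, but the substance of the argument is the same.
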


\begin{proof}
This holds by results established later in the section; namely, $\locmod_\cC(A)$ is a finite tensor category by Corollary~\ref{cor:finite-tensor}, is ribbon by Proposition~\ref{prop:ribbon}, and is non-degenerate by Proposition~\ref{prop:locmod-nondeg}.
\end{proof}

We now proceed to study the category $\locmod_\cC(A)$ to verify the results cited in the proof above.

\begin{notation}
We will assume, without loss of generality, the convention that a rigid Frobenius algebra $A=(A,m,u,\Delta,\varepsilon)$ is normalized in a way such that
$$m\Delta=d\; \ide_A, \qquad \varepsilon u=\ide_\one,$$
where  $d:=\dim_j(A)$. Fix the notation $p:=\varepsilon m\colon A \otimes A \to \one$ and $q:= \Delta u\colon \one \to A \otimes A.$ Note that
\begin{align} 
(m\otimes \ide_A)(\ide_A\otimes q) &=(\ide_A\otimes m)(q\otimes \ide_A) \quad (= \Delta), \label{eq:rigidFrob-I}\\
mq &=d u, \label{eq:rigidFrob-II}
\end{align}
by composing \eqref{eq:Frob-comp} with $\ide_A \otimes u_A$ and with  $u_A \otimes \ide_A$, and pre-composing the normalization $m\Delta=d\; \ide_A$ with $u$.
\end{notation}

The first step in the proof Theorem~\ref{thm:locmodular} is to show existence of duals and a ribbon structure for local modules. These arguments follow similarly to the semisimple case treated in \cite{KO}*{Section~1} and are included here for the reader's convenience. Compare the next result to \cite{KO}*{Theorems~1.15 and~1.17(3)} via Proposition~\ref{prop:conn-etale}, and note that the result on the pivotality of $\locmod_\cC(A)$ is new.


\begin{lemma}\label{lemma:rigid} 
Let $\cC$ be a  braided finite tensor  category, and take $A$ a rigid Frobenius algebra in~$\cC$. Then, the following statements hold.
\begin{enumerate}[(1),font=\upshape]
    \item $\Rep_{\cC}(A)$ is a rigid  monoidal category with rigid  subcategory $\locmod_\cC(A)$. \smallskip
    \item If, in addition, $\cC$ is pivotal, then so are $\Rep_{\cC}(A)$ and $\locmod_\cC(A)$.
\end{enumerate}
\end{lemma}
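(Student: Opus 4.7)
The plan is to construct duals in $\Rep_\cC(A)$ using the underlying duality of $\cC$ together with the Frobenius structure on $A$, and then to verify that these duals preserve locality and descend from any pivotal structure on $\cC$. Much of this runs parallel to the argument of Kirillov--Ostrik \cite{KO}*{Theorem~1.15} once we invoke Proposition~\ref{prop:conn-etale}, but the pivotality claim appears to be new and requires an additional check.

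For part (1), given $(V, a_V^r) \in \Rep_\cC(A)$, I would take the candidate left dual $V^\vee$ in $\Rep_\cC(A)$ to have underlying object $V^* \in \cC$, equipped with a right $A$-action $a_{V^\vee}^r \colon V^* \otimes A \to V^*$ built by transporting $a_V^r$ through the left duality of $\cC$: first one forms the left $A$-action on $V^*$ dual to $a_V^r$ using $\ev_V$ and $\coev_V$, and then converts it into a right action by means of the braiding, which is consistent thanks to commutativity of $A$. The candidate evaluation $\ev_V^A \colon V^\vee \otimes V \to A$ is obtained by lifting $\ev_V$ via the copairing $q = \Delta u$ composed with $a_V^r$, so as to land in $A$ rather than in $\one$; symmetrically, $\coev_V^A \colon A \to V \otimes V^\vee$ is assembled from $\coev_V$, the counit $\varepsilon$, and the action. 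Using the Frobenius identity \eqref{eq:Frob-comp} together with commutativity of $A$, one shows that $\ev_V^A$ and $\coev_V^A$ factor through the relative tensor product, so they descend to morphisms in $\Rep_\cC(A)$. The triangle identities then reduce to the triangle identities in $\cC$, with all scalar factors cancelling by the normalizations $m\Delta = d\,\ide_A$ and $\varepsilon u = \ide_\one$ (equivalently, \eqref{eq:rigidFrob-I} and \eqref{eq:rigidFrob-II}). Right duals ${}^\vee V$ are built symmetrically starting from ${}^*V \in \cC$.

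To see that $\locmod_\cC(A)$ is rigid as a monoidal subcategory, I would verify that if $V$ is local, that is, $a_V^r = a_V^r \, c_{A,V}\, c_{V,A}$, then the dual action $a_{V^\vee}^r$ satisfies the analogous identity. This reduces, via the definition of $a_{V^\vee}^r$, to naturality of the braiding and commutativity $m\,c_{A,A}=m$, so no further hypothesis on $V$ is required.

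For part (2), assuming $\cC$ is pivotal with natural isomorphism $j_X \colon X \overset{\sim}{\to} X^{**}$, I would take the candidate pivotal structure on $\Rep_\cC(A)$ to be $\bar{j}_V := j_V$, viewed as a morphism $V \to V^{\vee\vee}$ (whose underlying object in $\cC$ is $V^{**}$). The essential verification is that $\bar{j}_V$ intertwines the right $A$-actions on $V$ and on $V^{\vee\vee}$; by naturality of $j$ this amounts to the compatibility of $j_A$ with the action-plus-duality construction, which holds because $\cC$ is pivotal in the sense of Proposition~\ref{prop:piv-equiv}. Monoidality $\bar{j}_{V \otimes_A W} = \bar{j}_V \otimes_A \bar{j}_W$ descends from $j_{V \otimes W} = j_V \otimes j_W$ in $\cC$ through the coequalizer defining $\otimes_A$. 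The pivotal structure then restricts to $\locmod_\cC(A)$ by (1).

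The main obstacle is the bookkeeping in defining $a_{V^\vee}^r$ and verifying that $\ev_V^A$ and $\coev_V^A$ descend to the relative tensor product $\otimes_A$; both rely crucially on the Frobenius identity \eqref{eq:Frob-comp} together with commutativity of $A$. Once these are in place, the triangle identities and the compatibility of $\bar{j}$ with the new dualities reduce to statements in $\cC$ combined with the rigid Frobenius normalizations.
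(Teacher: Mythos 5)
Your treatment of part (1) follows the paper's own route: duals in $\Rep_\cC(A)$ with underlying objects $V^*$, ${}^*V$, actions obtained from $a_V^r$ by a braiding twist, and evaluation/coevaluation lifted through $q=\Delta u$, $\varepsilon$ and the normalization $d$; closure of $\locmod_\cC(A)$ under duals is likewise proved there assuming $V$ local (note that the paper's verification of this step uses the Frobenius structure and commutativity of $A$ in addition to naturality, so your "reduces to naturality and commutativity" is optimistic but not wrong in spirit).

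Part (2) is where your proposal genuinely diverges from the paper, and it has a gap. You propose to take the pivotal structure on $\Rep_\cC(A)$ to be $\bar{j}_V:=j_V$ and assert that $j_V$ is a morphism of right $A$-modules $(V,a^r_V)\to(V^{**},a^r_{V^{**}})$ "by naturality of $j$". Naturality of $j$ in $\cC$ does not give this: $a^r_{V^{**}}$ is obtained by applying the braided mate construction twice, and if you unwind both mates against the pairing $\ev_{V^*}$ and use $\evr_V=\ev_{V^*}(j_V\otimes\ide_{V^*})$, the identity $a^r_{V^{**}}(j_V\otimes\ide_A)=j_V\,a^r_V$ is equivalent (after sliding crossings with the braid relation) to $a^r_V(\ide_V\otimes\vartheta_A)=a^r_V$, where $\vartheta=\phi^{-1}j$ is the balancing determined by the pivotal structure and the Drinfeld morphism \eqref{Driniso}. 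Taking $V=A$ this forces $\vartheta_A=\ide_A$, i.e.\ $j_A=\phi_A$ --- the pivotal analogue of the trivial-twist condition $\theta_A=\ide_A$. This is a nontrivial property of rigid Frobenius algebras: in the paper it is only established in the ribbon setting (Lemma~\ref{lem:rigid-alg}, via \cite{FRS} and \cite{FFRS}), whereas Lemma~\ref{lemma:rigid}(2) is stated for a merely pivotal braided $\cC$. Your phrase "which holds because $\cC$ is pivotal in the sense of Proposition~\ref{prop:piv-equiv}" is circular at exactly this point; without an argument that the induced balancing of $A$ is trivial, the proposed $\bar{j}$ need not be a morphism in $\Rep_\cC(A)$ at all. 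The paper avoids this issue by not transporting $j$ directly: it instead verifies the Turaev--Virelizier criterion of Proposition~\ref{prop:piv-equiv} (coincidence of left and right transposes, etc.) for $\Rep_\cC(A)$ and $\locmod_\cC(A)$, reducing each condition to the corresponding one in $\cC$ using only the rigid Frobenius identities \eqref{eq:rigidFrob-I}--\eqref{eq:rigidFrob-II}. A secondary, smaller point: monoidality $\bar{j}_{V\otimes_AW}=\bar{j}_V\otimes_A\bar{j}_W$ does not simply "descend through the coequalizer", since the double dual of $V\otimes_AW$ in $\Rep_\cC(A)$ is identified with $V^{**}\otimes_AW^{**}$ via the new $\ev^A/\coev^A$ maps, and this identification has to be compared with the one in $\cC$; but the essential missing ingredient is the module-map property above.
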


\begin{proof}
We first check that $\Rep_{\cC}(A)$ inherits left duals from $\cC$. Given an object $(V,a_V^r, a_V^l:= a_V^r c_{A,V})$ in  $\Rep_{\cC}(A)$, define
\begin{align*}
    a_{V^*}^r&:=(\ev_V\otimes \ide_{V^*})(\ide_{V^*} \otimes a_V^r\otimes \ide_{V^*})(\ide_{V^* \otimes V} \otimes c^{-1}_{A,V^*})(\ide_{V^*}\otimes \coev_V\otimes \ide_A) : V^* \otimes A \to V^*,\\
    \widehat{\ev}_V^A&:=(\ev_V\otimes \ide_A)(\ide_{V^*}\otimes a_V^r\otimes \ide_A)(\ide_{V^*\otimes V}\otimes q): V^* \otimes V \to A,\\
    \widehat{\coev}_V^A&:=d^{-1}(a_V^r\otimes \ide_V^*)(\ide_V\otimes c^{-1}_{A,V^*})(\coev_V\otimes\ide_A): A \to V \otimes V^*.
\end{align*}
With these maps, we need to show that: 
\begin{itemize}
    \item [(i)] The right $A$-module associativity and unitality axioms hold for $(V^*, a_{V^*}^r)$; \smallskip
    \item [(ii)] $\widehat{\ev}_V^A$ and $\widehat{\coev}_V^A$ are right $A$-module maps; \smallskip
    \item [(iii)] $\widehat{\ev}_V^A(a_{V^*}^r \otimes \ide_V) = \widehat{\ev}_V^A(\ide_{V^*} \otimes a_V^l)$, as then $\widehat{\ev}_V^A$ factors through a map $\ev_V^A: V^* \otimes_A V \to A$; \smallskip
    \item [(iv)] $a_V^r(\ide_V \otimes \widehat{\ev}_V^A)(\widehat{\coev}_V^A \otimes \ide_V) = a_V^l$ and $a_{V^*}^l(\widehat{\ev}_V^A \otimes \ide_{V^*})(\ide_{V^*} \otimes \widehat{\coev}_V^A) = a_{V^*}^r$, as this implies the rigidity axiom for $\ev_V^A$ in (iii) and the composition, $\coev_V^A: A \to V \otimes_A V^*$, of $\widehat{\coev}_V^A$ and the projection map $V \otimes V^* \to  V \otimes_A V^*$.
    \end{itemize}
Condition (i) follows from the associativity and unitality of $(V, a_V^r)$; details left to the reader. Condition (ii) follows from the calculations below:
\begin{align*}
\smallskip
\widehat{\ev}_V^A(\ide_{V^*} \otimes a_V^r) &=
(\ev_V\otimes \ide_A)(\ide_{V^*}\otimes a_V^r\otimes \ide_A)(\ide_{V^*\otimes V}\otimes q)(\ide_{V^*} \otimes a_V^r)\\ 
\smallskip
&\overset{\text{$V$ \hspace{-.08in} assoc}}{=}
(\ev_V\otimes \ide_A)(\ide_{V^*}\otimes a_V^r\otimes \ide_A)(\ide_{V^* \otimes V} \otimes m \otimes \ide_A)(\ide_{V^*\otimes V \otimes A}\otimes q)\\
\smallskip
& \overset{\eqref{eq:rigidFrob-I}}{=}
(\ev_V\otimes \ide_A)(\ide_{V^*}\otimes a_V^r\otimes \ide_A)(\ide_{V^* \otimes V \otimes A} \otimes m)(\ide_{V^*\otimes V} \otimes q \otimes \ide_A)\\
\smallskip
&= m(\widehat{\ev}_V^A \otimes \ide_A)\\
\smallskip
&= a_A^r(\widehat{\ev}_V^A \otimes \ide_A);
\end{align*}
\begin{align*}
\smallskip
\widehat{\coev}_V^A \; a_A^r &= d^{-1}(a_V^r\otimes \ide_V^*)(\ide_V\otimes c^{-1}_{A,V^*})(\coev_V\otimes\ide_A)m\\ \smallskip
&\overset{\text{$V$ \hspace{-.08in} assoc}}{=}d^{-1}(a_V^r \otimes \ide_{V^*})(\ide_V \otimes c^{-1}_{A,V^*})(a_V^r\otimes \ide_{V^* \otimes A})(\ide_V\otimes c^{-1}_{A,V^*} \otimes \ide_A)(\coev_V \otimes \ide_{A \otimes A})\\
&\overset{\text{$V$ \hspace{-.08in}  rigid}}{=} (\ide_V \otimes a_{V^*}^r)(\widehat{\coev}_V^A \otimes \ide_A).
\end{align*}
\noindent Condition (iii) holds by:
\begin{align*}
\smallskip
\widehat{\ev}_V^A(a_{V^*}^r \otimes \ide_V) &= [(\ev_V\otimes \ide_A)(\ide_{V^*}\otimes a_V^r\otimes \ide_A)(\ide_{V^*\otimes V}\otimes q)] \\ \smallskip
&\qquad  \circ [(\ev_V\otimes \ide_{V^*})(\ide_{V^*} \otimes a_V^r\otimes \ide_{V^*})(\ide_{V^* \otimes V} \otimes c^{-1}_{A,V^*})(\ide_{V^*}\otimes \coev_V\otimes \ide_A) \otimes \ide_V]\\ \smallskip
&\overset{\text{$V$ \hspace{-.08in}  rigid}}{=} (\ev_V \otimes \ide_A)(\ide_{V^*} \otimes a_V^r \otimes \ide_A)(\ide_{V^*} \otimes a_V^r \otimes \ide_{A \otimes A})\\ \smallskip
& \qquad \circ (\ide_{V^* \otimes V} \otimes c^{-1}_{A,A} \otimes \ide_A)(\ide_{V^*} \otimes c_{A,V} \otimes q)\\ \smallskip
&=(\ev_V \otimes \ide_A)(\ide_{V^*} \otimes a_V^r \otimes \ide_A)(\ide_{V^* \otimes V} \otimes m \;c_{A,A} \otimes \ide_A)(\ide_{V^*} \otimes c_{A,V} \otimes q)\\ \smallskip
&\overset{\text{$A$ \hspace{-.05in}  com}}{=} (\ev_V \otimes \ide_A)(\ide_{V^*} \otimes a_V^r \otimes \ide_A)(\ide_{V^* \otimes V} \otimes m  \otimes \ide_A)(\ide_{V^*} \otimes c_{A,V} \otimes q)\\ \smallskip
&\overset{\text{$V$ \hspace{-.08in}  assoc}}{=} (\ev_V \otimes \ide_A)(\ide_{V^*} \otimes a_V^r \otimes \ide_A)(\ide_{V^* \otimes V} \otimes q)(\ide_{V^*} \otimes a_V^r)(\ide_{V^*} \otimes c_{A,V})\\ 
&= \widehat{\ev}_V^A(\ide_{V^*} \otimes a_{V}^l).
\end{align*}

\noindent The first equation of condition (iv) holds by:
\begin{align*}
\smallskip
a_V^r(\ide_V \otimes \widehat{\ev}_V^A)(\widehat{\coev}_V^A \otimes \ide_V) &= 
d^{-1}\; a_V^r \; [\ide_V \otimes (\ev_V\otimes \ide_A)(\ide_{V^*}\otimes a_V^r\otimes \ide_A)(\ide_{V^*\otimes V}\otimes q)]\\ \smallskip
& \qquad \circ [(a_V^r\otimes \ide_V^*)(\ide_V\otimes c^{-1}_{A,V^*})(\coev_V\otimes\ide_A) \otimes \ide_V]\\ \smallskip
&\overset{\text{$V$ \hspace{-.08in}  rigid}}{=} d^{-1} a_V^r(a_V^r \otimes \ide_A)(\ide_A \otimes c_{A,V} \otimes \ide_A)(\ide_A \otimes a_V^r \otimes \ide_A)(\ide_{A \otimes V} \otimes q)\\\smallskip
&\overset{\text{$V$ \hspace{-.08in}  assoc}}{=} d^{-1} a_V^r(a_V^r \otimes \ide_A)(\ide_A \otimes m \; c_{A,A} \otimes \ide_A)( c_{A,V}  \otimes q)\\ \smallskip
&\overset{\text{$A$ \hspace{-.05in}  com}}{=} d^{-1} a_V^r(a_V^r \otimes \ide_A)(\ide_A \otimes m \otimes \ide_A)( c_{A,V}  \otimes q)\\ \smallskip
&\overset{\text{$V$ \hspace{-.08in}  assoc}}{=} d^{-1} a_V^r(a_V^r \otimes \ide_A)(a_V^r \otimes \ide_{A\otimes A})( c_{A,V}  \otimes q)\\ \smallskip
&\overset{\text{$V$ \hspace{-.08in}  assoc}}{=} d^{-1} a_V^r(a_V^r \otimes m)( c_{A,V}  \otimes q)\\ \smallskip
&\overset{\eqref{eq:rigidFrob-II}}{=} a_V^r(a_V^r \otimes \ide_A)( c_{A,V}  \otimes u)\\
&\overset{\text{$V$ \hspace{-.08in}  unital}}{=} a_V^l.
\end{align*}
Moreover, the second equation of condition (iv) by similar computations involving the associativity and unitality of $V$, commutativity of $A$, and \eqref{eq:rigidFrob-I}, \eqref{eq:rigidFrob-II}.

Likewise, we can check that $\Rep_{\cC}(A)$ inherits right duals of $\cC$, we define right duals via the maps
\begin{align*}
    a_{{}^*V}^r&:=(\ide_{{}^*V}\otimes \evr_V)(\ide_{{}^*V}\otimes a_V^r\otimes \ide_{{}^*V})(\ide\otimes c^{-1}_{{}^*V,A})(\coevr_V\otimes \ide_{{}^*V\otimes A}): {}^* V \otimes A \to {}^*V,\\
    \widehat{\widetilde{\ev}}_V^A&:=(\evr_V\otimes \ide_A)(a_V^r\otimes c_{A,{}^*V})(\ide_V\otimes q\otimes \ide_{{}^*V}): V \otimes {}^* V \to A,\\
    \widehat{\widetilde{\coev}}_{V}^A&:=d^{-1}(\ide_{{}^*V}\otimes a_V^r)(\coevr_V\otimes \ide_A): A \to {}^* V \otimes V.
\end{align*}
These yields morphisms
$$ \widetilde{\ev}_V^A\colon V \otimes_A {}^*V\to A, \qquad  \widetilde{\coev}_V^A\colon A\to {}^*V\otimes_A V$$
that give $({}^* V, a_{{}^* V}^r)$ the structure of a right dual of $(V,a_V^r)$. We leave the details to the reader.
Thus,  $\Rep_{\cC}(A)$ is a rigid monoidal category.

Now we show that $\locmod_\cC(A)$ is a rigid subcategory of $\Rep_{\cC}(A)$. Take $(V, a_V^r, a_V^l = a_V^r c_{A,V})$ in $\Rep_{\cC}(A)$. Then,
\begin{align*}
\smallskip
a_{V^*}^r  c_{A, V^*}  c_{V^*, A} &= (\ev_V\otimes \ide_{V^*})(\ide_{V^*} \otimes a_V^r\otimes \ide_{V^*})(\ide_{V^* \otimes V} \otimes c^{-1}_{A,V^*})(\ide_{V^*}\otimes \coev_V\otimes \ide_A)  c_{A, V^*} c_{V^*, A}\\[.2pc]
&\overset{\text{$V$ \hspace{-.06in} local}}{=} (\ev_V\otimes \ide_{V^*})(\ide_{V^*} \otimes a_V^r c_{A,V} c_{V,A} \otimes \ide_{V^*})(\ide_{V^* \otimes V} \otimes c^{-1}_{A,V^*}) \\
&\qquad \circ (\ide_{V^*}\otimes \coev_V\otimes \ide_A)\; c_{A, V^*} \;c_{V^*, A}\\[.5pc]
&= (\ev_A \otimes \ide_{V^*}) (\ide_V \otimes \ev_V \otimes \ide_{A \otimes V^*})(\ide_{A \otimes V^*} \otimes a^r_V c_{A,V} c_{V,A} \otimes \ide_{A \otimes V^*}) \\ 
&\qquad \circ (\ide_{A \otimes V^* \otimes V}\otimes c_{A,A} \coev_A \otimes \ide_{V^*}) (\ide_{A \otimes V^*}\otimes \coev_V) c_{V^*, A}\\[.5pc]
&= (\ev_V \otimes \ide_{V^*})(\ide_{V^*} \otimes a^r_V c_{A,V} \otimes \ide_{V^*}) (\ide_{V^* \otimes A} \otimes  \ev_A c_{A,A} \otimes \ide_{V \otimes V^*}) \\ 
&\qquad \circ (\ide_{V^*} \otimes c_{A,A} \coev_A \otimes \ide_A \otimes \coev_V)\\[.2pc]
&\overset{A \hspace{.02in}\textnormal{Frob}}{=} (\ev_V \otimes \ide_{V^*})(\ide_{V^*} \otimes a^r_V c_{A,V} \otimes \ide_{V^*}) (\ide_{V^* \otimes A} \otimes  \varepsilon_A m_A c_{A,A} \otimes \ide_{V \otimes V^*}) \\ 
&\qquad \circ (\ide_{V^*} \otimes c_{A,A} \coev_A \otimes \ide_A \otimes \coev_V)\\[.2pc]
&\overset{A \hspace{.02in}\textnormal{com}}{=} (\ev_V \otimes \ide_{V^*})(\ide_{V^*} \otimes a^r_V c_{A,V} \otimes \ide_{V^*}) (\ide_{V^* \otimes A} \otimes  \varepsilon_A m_A c^{-1}_{A,A} \otimes \ide_{V \otimes V^*}) \\ 
&\qquad \circ (\ide_{V^*} \otimes c_{A,A} \coev_A \otimes \ide_A \otimes \coev_V)\\[.5pc]
&\overset{A \hspace{.02in}\textnormal{Frob}}{=} (\ev_V \otimes \ide_{V^*})(\ide_{V^*} \otimes a^r_V c_{A,V} \otimes \ide_{V^*}) (\ide_{V^* \otimes A} \otimes  \ev_A c^{-1}_{A,A} \otimes \ide_{V \otimes V^*}) \\ 
&\qquad \circ (\ide_{V^*} \otimes c_{A,A} \coev_A \otimes \ide_A \otimes \coev_V)\\[.5pc]
&= (\ev_V\otimes \ide_{V^*})(\ide_{V^*} \otimes a_V^r \otimes \ide_{V^*})(\ide_{V^*}\otimes c_{A,V} \otimes \ide_{V^*})(\ide_{V^*\otimes A}\otimes \coev_V) \\[.5pc]
&= a_{V^*}^r.
\end{align*}
Likewise, $a_{{}^*V}^r \; c_{A, {}^*V}  \; c_{{}^*V, A} = a_{{}^*V}^r$. Thus, $\locmod_\cC(A)$ inherits rigidity from $\Rep_{\cC}(A)$.



\medskip

(2) It suffices to establish the conditions (i)-(iii) of Proposition~\ref{prop:piv-equiv} for the categories $\Rep_\cC(A)$ and $\locmod_\cC(A)$ when these conditions hold for $\cC$. Assume that ${}^* X = X^*$ for all $X \in \cC$. 

Then towards condition (i), take a morphism $(V, a_V^r) \to (W, a_W^r)$, with $f$ denoting the morphism $V \to W$, in $\Rep_\cC(A)$ (or in $\locmod_\cC(A)$). Now we need to show that 
\begin{align*} \smallskip
&(\ev_W^A \otimes_A \ide_{V^*})(\ide_{W^*} \otimes_A f \otimes_A \ide_{V^*})(\ide_{W^*} \otimes_A \coev_V^A)\\ 
&= (\ide_{V^*} \otimes_A \evr_W^A)(\ide_{V^*} \otimes_A f \otimes_A \ide_{W^*})(\coevr_V^A \otimes_A \ide_{W^*}).
\end{align*}
This holds due to the following computations:
\begin{align*}\medskip
&a_{V^*}^l(\widehat{\ev}_W^A \otimes \ide_{V^*})(\ide_{W^*} \otimes f \otimes \ide_{V^*})(\ide_{W^*} \otimes \widehat{\coev}_V^A)(\ide_{W^*} \otimes u_A)\\ \medskip
&=d^{-1}\;(\ev_V\otimes \ide_{V^*})(\ide_{V^*} \otimes a_V^r\otimes \ide_{V^*})(\ide_{V^* \otimes V} \otimes c^{-1}_{A,V^*})(\ide_{V^*}\otimes \coev_V\otimes \ide_A)c_{A,V^*}\\ \medskip
& \qquad \circ ([(\ev_W\otimes \ide_A)(\ide_{W^*}\otimes a_W^r\otimes \ide_A)(\ide_{W^*\otimes W}\otimes q)] \otimes \ide_{V^*})(\ide_{W^*} \otimes f \otimes \ide_{V^*})\\ \medskip
&\qquad \circ (\ide_{W^*} \otimes [(a_V^r\otimes \ide_V^*)(\ide_V\otimes c^{-1}_{A,V^*})(\coev_V\otimes\ide_A)])(\ide_{W^*} \otimes u_A)\\ \medskip
&\overset{\text{$V$ \hspace{-.05in} unital, rigid; $f \in \cC_A$}}{=} d^{-1} (\ev_W \otimes \ide_{V^*})(\ide_{W^*} \otimes f \otimes \ide_{V^*})(\ide_{W^*} \otimes a_V^r \otimes \ide_{V^*})(\ide_{W^*} \otimes a_V^r \otimes \ide_{A \otimes V^*})\\
&\qquad \qquad \qquad \qquad \circ (\ide_{W^* \otimes V} \otimes q \otimes \ide_{V^*})(\ide_{W^*} \otimes \coev_V)\\ \medskip
&\overset{\text{$V$ \hspace{-.05in} assoc}}{=} d^{-1} (\ev_W \otimes \ide_{V^*})(\ide_{W^*} \otimes f \; a_V^r \otimes \ide_{V^*})(\ide_{W^* \otimes V} \otimes mq \otimes \ide_{V^*})(\ide_{W^*} \otimes \coev_V)\\ \medskip
&\overset{\eqref{eq:rigidFrob-II},\text{$V$ \hspace{-.05in} unital}}{=} (\ev_W \otimes \ide_{V^*})(\ide_{W^*} \otimes f \otimes \ide_{V^*})(\ide_{W^*} \otimes \coev_V)\\ \medskip
&\overset{\cC \; \text{pivotal}}{=} (\ide_{V^*} \otimes \evr_W)(\ide_{V^*} \otimes f \otimes \ide_{W^*})(\coevr_V \otimes \ide_{W^*}),
\end{align*}
and with similar arguments,
\begin{align*} \medskip
&a_{V^*}^r(\ide_{V^*} \otimes \widehat{\evr}_W^A)(\ide_{V^*} \otimes f \otimes \ide_{W^*})(\widehat{\coevr}_V^A \otimes \ide_{W^*})(u_A \otimes \ide_{W^*} ) \\
&= (\ide_{V^*} \otimes \evr_W)(\ide_{V^*} \otimes f \otimes \ide_{W^*})(\coevr_V \otimes \ide_{W^*}).
\end{align*}
Thus, condition (i) of Proposition~\ref{prop:piv-equiv} holds for $\Rep_\cC(A)$ and $\locmod_\cC(A)$.

Moreover, we leave it to the reader to verify conditions (ii) and (iii) of Proposition~\ref{prop:piv-equiv} for $\Rep_\cC(A)$ and $\locmod_\cC(A)$. Therefore, the rigid categories $\Rep_\cC(A)$ and $\locmod_\cC(A)$ are pivotal.
\end{proof}

Moreover, we also have that $\locmod_{\cC}(A)$ is a finite tensor category when $\cC$ is a finite tensor category.

\begin{corollary}\label{cor:finite-tensor}
If $\cC$ is a braided finite tensor category, and $A$ is a rigid Frobenius algebra in $\cC$,  then  $\Rep_{\cC}(A)$ and $\locmod_\cC(A)$ are finite tensor categories.
\end{corollary}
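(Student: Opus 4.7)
The plan is to establish that $\Rep_\cC(A)$ is a finite tensor category, and then to exhibit $\locmod_\cC(A)$ as a topologizing rigid monoidal subcategory with simple unit which inherits the finite tensor category structure.

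First, for $\Rep_\cC(A) = \rmod{A}(\cC)$, I will invoke the standard fact that modules over an algebra object in a finite abelian $\Bbbk$-linear category form a finite abelian $\Bbbk$-linear category (see \cite{EGNO}*{Section~7.8}): the abelian structure and finite-dimensionality of Hom-spaces lift along the exact, conservative forgetful functor $F\colon \Rep_\cC(A)\to\cC$ from Lemma~\ref{indrep}, and the free-module adjunction $U\dashv F$ transports a projective generator of $\cC$ to one of $\Rep_\cC(A)$, yielding finitely many simples and enough projectives. Rigidity comes from Lemma~\ref{lemma:rigid}(1). The relative tensor product $\otimes_A$, defined as a coequalizer in \eqref{eq:rel-tensor}, is $\Bbbk$-bilinear and right exact in each slot since $\otimes_\cC$ is. The unit object $A$ is simple: by \eqref{eq:modAA} and the connectedness of $A$, $\End_{\Rep_\cC(A)}(A)\cong\Hom_\cC(\one,A)\cong\Bbbk$.

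Next, for $\locmod_\cC(A)$, the regular module $A$ is local since commutativity gives $m = mc_{A,A}$, whence $mc_{A,A}c_{A,A} = mc_{A,A} = m$; hence the unit of $\locmod_\cC(A)$ coincides with that of $\Rep_\cC(A)$ and is simple. By Proposition~\ref{prop:pareigis} and Lemma~\ref{lemma:rigid}(1), $\locmod_\cC(A)$ is a rigid monoidal subcategory of $\Rep_\cC(A)$, with tensor product inherited. I will verify that $\locmod_\cC(A)$ is a topologizing subcategory of $\Rep_\cC(A)$: closure under finite direct sums is immediate from the locality equation $a_V = a_V c_{A,V}c_{V,A}$, while closure under subquotients follows by pre- and post-composing this equation with the structure morphisms of a sub- or quotient module in $\Rep_\cC(A)$, using naturality of the braiding.

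The main obstacle is the existence of projective covers in $\locmod_\cC(A)$. To handle it, I plan to show that the inclusion $\iota\colon \locmod_\cC(A)\hookrightarrow \Rep_\cC(A)$ admits a left adjoint $L$, the \emph{local quotient} functor: for $V\in\Rep_\cC(A)$, let $K_V$ be the sub-$A$-module of $V$ generated by the image of $a_V - a_V c_{A,V}c_{V,A}\colon V\otimes A\to V$, and set $L(V) := V/K_V$. Commutativity of $A$ and naturality of the braiding will ensure that $L(V)$ is local, and universality of this construction yields the adjunction $L\dashv\iota$. Since $\iota$ is exact, $L$ preserves projectives, so the projective cover of any simple $S$ in $\locmod_\cC(A)$ is obtained by applying $L$ to its projective cover in $\Rep_\cC(A)$. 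Combined with the observation that simples of $\locmod_\cC(A)$ form a finite subset of those of $\Rep_\cC(A)$, this yields the required finiteness and completes the proof.
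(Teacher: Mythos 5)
Your proposal is correct, and while the first half matches the paper, the second half takes a genuinely different route. For $\Rep_\cC(A)$ you and the paper use the same ingredients: rigidity from Lemma~\ref{lemma:rigid}(1), simplicity of the unit from connectedness via \eqref{eq:modAA}, and the remaining abelian/finiteness structure, which the paper simply quotes from \cite{KO}*{Lemma~1.4} and which you re-derive from the adjunction $U\dashv F$ of Lemma~\ref{indrep}. The divergence is in $\locmod_\cC(A)$: the paper notes that it is a full tensor subcategory of $\Rep_\cC(A)$ closed under subquotients, tensor products and duals (Proposition~\ref{prop:pareigis}, Lemma~\ref{lemma:rigid}(1)) and then relies on the standard fact that such a topologizing tensor subcategory of a finite tensor category is again finite, whereas you build an explicit left adjoint $L\dashv\iota$ to the inclusion, the maximal local quotient $L(V)=V/K_V$, and use it to transport projectives. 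Your construction does go through: writing $\pi\colon V\to W=V/K_V$, naturality of the braiding gives $(a_W^r-a_W^r\,c_{A,W}c_{W,A})(\pi\otimes\ide_A)=\pi\,(a_V^r-a_V^r\,c_{A,V}c_{V,A})=0$, and since $\pi\otimes\ide_A$ is epic (the tensor product of $\cC$ is exact by rigidity), a single quotient already yields a local module — no iteration is needed; likewise any morphism to a local module annihilates $K_V$, so $L$ is indeed left adjoint to the exact inclusion, $L(P)$ is projective, and $L(P)\twoheadrightarrow L(S)=S$ gives enough projectives, after which projective covers exist by the Krull--Schmidt property of Hom-finite abelian categories (your phrasing that $L(P)$ \emph{is} the projective cover is slightly too strong — the cover is a direct summand of $L(P)$, which suffices). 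The trade-off: the paper's argument is shorter, leaning on a known structural result about tensor subcategories, while yours is self-contained and produces the localization functor $L$ as a by-product, which also makes transparent why the simples of $\locmod_\cC(A)$ form a subset of those of $\Rep_\cC(A)$.
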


\begin{proof}
We have that the monoidal category $\Rep_{\cC}(A)$ is a finite tensor category with rigid structure given by Lemma~\ref{lemma:rigid}(1), and the rest of the finite tensor structure given by \cite{KO}*{Lemma~1.4}. In particular, $\Hom_{\Rep_{\cC}(A)}(A,A) \cong \Bbbk$ follows from the assumption that $A$ is connected.

Moreover, $\locmod_\cC(A)$ is a full tensor subcategory of  $\Rep_{\cC}(A)$ in the sense that it is closed under taking  subobjects, subquotients, tensor products and duals due to Proposition~\ref{prop:pareigis} and Lemma~\ref{lemma:rigid}(1). So, $\locmod_\cC(A)$ is also a finite tensor category.
\end{proof}

\begin{corollary} \label{cor:indec-con}
Take $A$ to be a commutative, special Frobenius algebra in $\cC$. If $A$ is indecomposable as an algebra in $\cC$, then $A$ is connected.
\end{corollary}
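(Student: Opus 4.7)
The plan is to use the isomorphism \eqref{eq:modAA} to identify $B := \Hom_\cC(\one, A)$ with the commutative, finite-dimensional $\Bbbk$-algebra $\End_{\rmod{A}(\cC)}(A)$, and then to reduce the claim $\dim_\Bbbk B = 1$ to a statement about idempotents controlled by the special Frobenius structure. First I would observe that algebra decompositions $A \cong A_1 \oplus A_2$ with both summands nonzero correspond bijectively to nontrivial orthogonal idempotent decompositions $u = e_1 + e_2$ in $B$: given such idempotents, the morphisms $\phi_{e_i} = m(e_i \otimes \ide_A) \colon A \to A$ are complementary projectors in $\End_{\rmod{A}(\cC)}(A)$, whose images $A_i := \Img \phi_{e_i}$ are two-sided ideals by commutativity of $A$, satisfy $A_1 \cdot A_2 \subseteq A_1 \cap A_2 = 0$, and have $e_i$ as unit. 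Hence indecomposability of $A$ is equivalent to the absence of nontrivial idempotents in $B$, which---since $\Bbbk$ is algebraically closed---forces $B$ to be a local commutative finite-dimensional algebra with residue field $\Bbbk$.

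The main technical step is then to show that under the special Frobenius hypothesis, the maximal ideal $\mathfrak{m}$ of $B$ vanishes. I plan to achieve this by establishing that $B$ is a semisimple $\Bbbk$-algebra, since a local semisimple commutative finite-dimensional algebra over an algebraically closed field must be $\Bbbk$ itself. Semisimplicity of $B$ should follow from the separability of $A$ that is implicit in the proof of Proposition~\ref{prop:conn-etale}(b)$\Rightarrow$(c): the scaled coproduct $\beta_A^{-1} \Delta \colon A \to A \otimes A$ is an $A$-bimodule section of $m$, and in particular the element $\beta_A^{-1} \Delta u \in \Hom_\cC(\one, A \otimes A)$ satisfies $m \circ (\beta_A^{-1} \Delta u) = u$. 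Together with the Frobenius relation $\Delta m = (\ide \otimes m)(\Delta \otimes \ide)$ and commutativity of $A$, this should yield a Casimir element in $B$ that is a unit, producing separability (and hence semisimplicity) of $B$ by a Higman-style argument (compare \cite{FFRS}*{Proposition~2.25}).

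The hardest part is the transfer of the separability datum from $A$ (inside $\cC$) to $B$ (as a $\Bbbk$-algebra), because the lax monoidal functor $\Hom_\cC(\one, -) \colon \cC \to \Vect$ does not in general preserve tensor products, so the categorical Casimir need not lie in the image of the natural map $B \otimes_\Bbbk B \to \Hom_\cC(\one, A \otimes A)$. Once this obstacle is navigated and $B$ is shown to be semisimple, we conclude $B = \Bbbk$, i.e., $\dim_\Bbbk \Hom_\cC(\one, A) = 1$, so $A$ is connected.
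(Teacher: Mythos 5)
Your reduction of the statement to a claim about $B:=\Hom_\cC(\one,A)\cong\End_{\rmod{A}(\cC)}(A)$ is sound, and your idempotent argument (nontrivial orthogonal idempotents in $B$ give an algebra decomposition of $A$, using commutativity so that $\phi_e\,m=m(\phi_e\otimes\phi_e)$) is essentially the same computation as in the paper. However, the proof has a genuine gap at its central step: you never establish that $B$ is semisimple (equivalently, reduced). Indecomposability only tells you $B$ is a local commutative finite-dimensional $\Bbbk$-algebra; without ruling out nilpotents, $B$ could a priori be something like $\Bbbk[x]/(x^2)$, and then $A$ would be indecomposable but not connected. Your plan to get semisimplicity by transferring the separability datum of $A$ to a Casimir element for $B$ is exactly the point where the argument stalls, and you yourself identify the obstruction --- $\Hom_\cC(\one,-)$ is only lax monoidal, so $\Delta u\in\Hom_\cC(\one,A\otimes A)$ need not come from $B\otimes_\Bbbk B$ --- without offering any way to navigate it. "Once this obstacle is navigated" is precisely the missing proof.

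The paper's route avoids this transfer problem entirely by changing categories rather than trying to push the Casimir down to $\Vect$: since $A$ is commutative and special Frobenius, the dual-object construction of Lemma~\ref{lemma:rigid} (which does not use connectedness) makes $\Rep_\cC(A)$ a rigid finite \emph{multi}tensor category, and then \cite{EGNO}*{Theorem~4.3.1} says that the endomorphism algebra of its unit object, namely $A_\cC=\End_{\Rep_\cC(A)}(A)\cong B$, is a finite direct sum of copies of $\Bbbk$. That is where the special Frobenius hypothesis actually enters, and with $B\cong\Bbbk^n$ in hand your idempotent argument immediately gives the contrapositive: $n>1$ forces $A$ to be decomposable. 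If you want to complete your write-up, replace the Casimir-transfer step by this passage through $\Rep_\cC(A)$ and the structure theorem for multitensor categories.
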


\begin{proof}
Take a commutative, special Frobenius algebra  $A$ in $\cC$. Also note that ~\eqref{eq:modAA} can be updated by replacing $\rmod{A}(\cC)$ with $\Rep_{\cC}(A)$. Then by the proof of Corollary~\ref{cor:finite-tensor} it follows  that  $\Rep_{\cC}(A)$ is a multitensor category (that is, a tensor category where we do have necessarily have that $\End_{\Rep_{\cC}(A)}(A) \cong \Bbbk$.)  Therefore by \cite{EGNO}*{Theorem~4.3.1},  $A_\cC:=\End_{\Rep_{\cC}(A)}(A)$ is isomorphic to a direct sum of finitely many copies of $\Bbbk$.
Now if $A$ is not connected, then $\dim_{\Bbbk}\Hom_\cC(\one,A)=\dim_\Bbbk A_\cC> 1$ via~\eqref{eq:modAA}. In this case, $A_\cC$ has two orthogonal idempotent endomorphisms,   which are algebra endomorphisms. Indeed, using commutativity of $A$ we see that 
\begin{align*}
    \phi_e \; m=\;&\phi_e \; \phi_e \;  m=\phi_e \;  m \; (\ide_A\otimes \phi_e)=\phi_e \;  m \; c_{A,A} \; (\ide_A\otimes \phi_e)\\
    &=m \; (\ide\otimes \phi_e) \; c_{A,A} \; (\ide_A\otimes \phi_e)=m \; c_{A,A} \; (\phi_e\otimes \phi_e)=m \; (\phi_e\otimes \phi_e),
\end{align*}
for an idempotent $e\in \Hom_\cC(\one,A)$. Thus, $A$ is decomposable as an algebra in $\cC$.
\end{proof}

Next, we discuss the ribbon structure of $\locmod_\cC(A)$.

\begin{proposition}\label{prop:ribbon}
Let $\cC$ be a ribbon category and $A$ a rigid Frobenius algebra in $\cC$, then $\locmod_\cC(A)$ is a ribbon category.
\end{proposition}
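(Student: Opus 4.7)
The plan is to apply Proposition~\ref{prop:ribbon-left-right}: by Proposition~\ref{prop:pareigis} and Lemma~\ref{lemma:rigid}(2) (the latter applied to the pivotal structure on $\cC$ coming from the ribbon structure via~\eqref{eq:ribbonpivotal}), $\locmod_\cC(A)$ is already a braided pivotal category, so it suffices to exhibit a twist. The natural candidate is $\theta^A_V:=\theta_V\colon V\to V$, the ambient twist of $\cC$, and I would verify the three twist axioms for this choice.

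The first step is to show that $\theta_V$ really lifts to $\locmod_\cC(A)$, i.e., is a right $A$-module morphism. Naturality of $\theta$ applied to $a^r_V\colon V\otimes A\to V$ combined with the twist axiom gives $\theta_V\,a^r_V=a^r_V(\theta_V\otimes \theta_A)\,c_{A,V}\,c_{V,A}$. Substituting $\theta_A=\ide_A$ (the trivial twist of a rigid Frobenius algebra, Lemma~\ref{lem:rigid-alg}), then applying naturality of the braiding twice to slide $\theta_V\otimes \ide_A$ past $c_{A,V}\,c_{V,A}$, and invoking the locality condition $a^r_V\,c_{A,V}\,c_{V,A}=a^r_V$, yields $\theta_V\,a^r_V=a^r_V(\theta_V\otimes \ide_A)$, as needed.

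Next, verify the three ribbon axioms for $\theta^A$. The unit normalization $\theta^A_{\one}=\theta_A=\ide_A$ is immediate. For multiplicativity, post-compose the $\cC$-level twist axiom $\theta_{V\otimes W}=(\theta_V\otimes \theta_W)\,c_{W,V}\,c_{V,W}$ with the coequalizer projection $\pi\colon V\otimes W\twoheadrightarrow V\otimes_A W$; using that the braiding on local modules satisfies $\pi\circ c=c^A\circ \pi$, that $\theta_V\otimes\theta_W$ descends through $\pi$ to $\theta_V\otimes_A\theta_W$ (by the previous step), and naturality of $\theta$ on $\pi$, the epiness of $\pi$ yields $\theta_{V\otimes_A W}=(\theta^A_V\otimes_A\theta^A_W)\,c^A_{W,V}\,c^A_{V,W}$. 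For the compatibility with duals, expand the dual of $\theta_V$ inside $\locmod_\cC(A)$ using the explicit $\widehat{\ev}^A_V,\widehat{\coev}^A_V$ from the proof of Lemma~\ref{lemma:rigid}(1), and reduce to the ribbon identity $(\theta_V)^\ast=\theta_{V^\ast}$ in $\cC$ by absorbing the extra $A$-action factors via naturality of $\theta$ with respect to $a^r_V$.

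The main obstacle is the duality axiom, where the extra braidings and $A$-actions appearing in $\widehat{\ev}^A_V,\widehat{\coev}^A_V$ must be absorbed using both commutativity of $A$ and locality of $V$ to reduce to the corresponding identity in $\cC$. As an alternative aligned with Remark~\ref{rem:thm-vs-ss}, one may bypass choosing an explicit twist and verify directly that $\theta^{A,l}_V=\theta^{A,r}_V$ per Proposition~\ref{prop:ribbon-left-right}, by substituting the formulas for $\ev^A_V,\coev^A_V,\evr^A_V,\coevr^A_V,c^A_{V,V}$ into the definitions and reducing to the equality $\theta^l_V=\theta^r_V$ in $\cC$.
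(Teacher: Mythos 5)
Your proposal is correct, but your primary route differs from the paper's. The paper does not construct a twist directly: having established that $\locmod_\cC(A)$ is braided (Proposition~\ref{prop:pareigis}) and pivotal (Lemma~\ref{lemma:rigid}(2)), it invokes Proposition~\ref{prop:ribbon-left-right} and verifies by an explicit computation with the lifted maps $\widehat{\ev}^A_V$, $\widehat{\coevr}^A_V$ that the left and right twists $\widehat{\theta}^l_V=\widehat{\theta}^r_V$ coincide, using unitality, locality of $V$, and ribbonness of $\cC$ — this is exactly the alternative you mention in your last sentence. Your main route instead exhibits the twist explicitly as the ambient $\theta_V$ (using $\theta_A=\ide_A$ from Lemma~\ref{lem:rigid-alg} and locality to see it is an $A$-module map) and checks the three ribbon axioms; the unit and multiplicativity arguments via the coequalizer are fine, and the duality axiom, which you leave as the "main obstacle," in fact reduces to a computation the paper already carries out: condition (i) in the proof of Lemma~\ref{lemma:rigid}(2) shows that for an $A$-module morphism $f$ the dual computed with $\ev^A,\coev^A$ agrees, as a morphism in $\cC$, with the dual computed in $\cC$, so $(\theta_V)^{*}$ taken in $\locmod_\cC(A)$ equals $(\theta_V)^{*}=\theta_{V^*}$ in $\cC$, which is your candidate twist on the dual object. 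What your approach buys is an explicit identification of the ribbon structure on $\locmod_\cC(A)$ as the one induced by $\theta$ (useful, e.g., when comparing ribbon equivalences as in Corollary~\ref{cor:localmodules}); what the paper's approach buys is that no descent or axiom-by-axiom verification is needed once pivotality is in place, at the cost of one longer string computation.
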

    
\begin{proof}
By Proposition~\ref{prop:pareigis} and Lemma~\ref{lemma:rigid}, we have that $\locmod_\cC(A)$ is a braided pivotal category. So, by Proposition~\ref{prop:ribbon-left-right}, it suffices to show that the left and right twists, 
\[\theta_V^l := (\ev_V^A \otimes_A\ide_V)(\ide_{V^*}\otimes_A c_{V,V})(\coevr_{V}^A \otimes_A\ide_V), \quad \theta_V^r := (\ide_V\otimes_A \evr_V^A)(c_{V,V}\otimes \ide_{V^*})(\ide_V\otimes_A \coev_{V}^A),\]  coincide for all $(V, a_V^r) \in \locmod_\cC(A)$. Here, we adopt the notation of Lemma~\ref{lemma:rigid}, and consider ${}^* V = V^*$ via the pivotality of $\cC$. To check this, we consider the `lifts'  $\widehat{\theta}_V^l$ and $\widehat{\theta}_V^r$ as done in the proof of Lemma~\ref{lemma:rigid}, and compute:
\begin{align*}
\medskip
\widehat{\theta}_V^l &=a_V^l(\widehat{\ev}_V^A \otimes\ide_V)(\ide_{V^*}\otimes c_{V,V})(\widehat{\coevr}_V^A \otimes\ide_V)(u_A \otimes \ide_V), \\ \medskip  
&= d^{-1}\;a_V^l([(\ev_V\otimes \ide_A)(\ide_{V^*}\otimes a_V^r\otimes \ide_A)(\ide_{V^*\otimes V}\otimes q)] \otimes\ide_V)(\ide_{V^*}\otimes c_{V,V})\\ \medskip
& \quad \quad \circ ([(\ide_{{}^*V}\otimes a_V^r)(\coevr_V\otimes \ide_A)] \otimes\ide_V)(u_A \otimes \ide_V), \\ \medskip  
&\overset{\text{$V$ \hspace{-.05in} unital}}{=} d^{-1}\; a_V^r \; c_{A,V} \; c_{V,A}\; (\ev_V \otimes\ide_{V \otimes A})(\ide_{V^*}\otimes c_{V,V} \otimes \ide_A)(\ide_{V^* \otimes V} \otimes a_V^r \otimes \ide_A)\\ \medskip
& \quad  \quad \circ (\coevr_V \otimes \ide_V \otimes q), \\ \medskip  
&\overset{\text{$V$ \hspace{-.05in} local}}{=} d^{-1}\; a_V^r \;  (\ev_V \otimes\ide_{V \otimes A})(\ide_{V^*}\otimes c_{V,V} \otimes \ide_A)(\ide_{V^* \otimes V} \otimes a_V^r \otimes \ide_A)(\coevr_V \otimes \ide_V \otimes q), \\ \medskip  
&\overset{\text{$\cC$ \hspace{-.05in} ribbon}}{=} d^{-1}\; a_V^r \; (\ide_V \otimes \evr_V \otimes \ide_A)(c_{V,V} \otimes \ide_{V^* \otimes A}) (\ide_V \otimes \coev_V \otimes \ide_A)(\ide_V \otimes q), \\
\medskip   
&\overset{\text{$V$ \hspace{-.05in} unital}}{=} d^{-1}\; a_V^r(\ide_V\otimes [(\evr_V\otimes \ide_A)(a_V^r\otimes c_{A,{}^*V})(\ide_V\otimes q\otimes \ide_{{}^*V})])(c_{V,V}\otimes \ide_{V^*})\\
\medskip
& \quad \quad  \circ (\ide_V\otimes [(a_V^r\otimes \ide_V^*)(\ide_V\otimes c^{-1}_{A,V^*})(\coev_V\otimes\ide_A)])(\ide_V \otimes u_A)\\ \medskip 
&=a_V^r(\ide_V\otimes \widehat{\evr}_V^A)(c_{V,V}\otimes \ide_{V^*})(\ide_V\otimes \widehat{\coev}_V^A)(\ide_V \otimes u_A)\\
&=\widehat{\theta}_V^r.
\end{align*}
This implies that $\theta_V^l = \theta_V^r$, as required.
\end{proof}

For the proof of Theorem~\ref{thm:locmodular}, it remains to show that $\locmod_\cC(A)$ is non-degenerate. In fact, the following result holds.

\begin{proposition}\label{prop:locmod-nondeg}
Let $A$ be a rigid Frobenius algebra in a non-degenerate braided finite tensor category $\cC$. Then,  $\locmod_\cC(A)$ is non-degenerate.
\end{proposition}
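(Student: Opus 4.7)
The plan is to deduce non-degeneracy of $\locmod_\cC(A)$ from non-degeneracy of the Drinfeld center of $\Rep_\cC(A)$, using the equivalences already at our disposal.

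First, I would invoke Theorem~\ref{thm:local-center} together with Lemma~\ref{lem:A+local} to obtain a chain of braided monoidal equivalences
\[
\cZ(\Rep_\cC(A)) \; \tensimeq \; \locmod_{\cZ(\cC)}(A^+) \; \tensimeq \; \locmod_\cC(A) \boxtimes \overline{\cC},
\]
where Lemma~\ref{lem:A+local} applies because $\cC$ is non-degenerate by hypothesis, and $A^+$ is a commutative algebra in $\cZ(\cC)$ by Lemma~\ref{lem:A+}(1).

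Next, Corollary~\ref{cor:finite-tensor} guarantees that $\Rep_\cC(A)$ is a finite tensor category. Applying the general fact that the Drinfeld center of a finite tensor category is always non-degenerate (equivalently, factorizable) — which is exactly \cite{Shi1}*{Theorem~4.2} together with the standard observation that $\cZ(\Rep_\cC(A))$ is factorizable — we conclude that $\cZ(\Rep_\cC(A))$ is non-degenerate. Transferring non-degeneracy along the above braided equivalence, the Deligne product $\locmod_\cC(A) \boxtimes \overline{\cC}$ is non-degenerate.

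Finally, I would apply Lemma~\ref{lem:DeligneMueger} to conclude that both factors, and in particular $\locmod_\cC(A)$, are non-degenerate. The main subtlety I anticipate is ensuring that Lemma~\ref{lem:A+local} indeed applies here (which requires nothing beyond $\cC$ being non-degenerate and $A^+$ being a commutative algebra in $\cZ(\cC)$) and that the standard non-degeneracy of $\cZ(\Rep_\cC(A))$ is properly cited; once these are in place, the argument is a short three-step chain and no explicit computation with the Müger center of $\locmod_\cC(A)$ is needed. This is a more conceptual route than the S-matrix argument used in \cite{KO}, as foreshadowed in Remark~\ref{rem:thm-vs-ss}.
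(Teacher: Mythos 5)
Your argument is correct and follows essentially the same route as the paper's proof: non-degeneracy of $\cZ(\Rep_\cC(A))$ (via Corollary~\ref{cor:finite-tensor} and Shimizu's result that the center of a finite tensor category is factorizable), transferred along Theorem~\ref{thm:local-center} and Lemma~\ref{lem:A+local} to $\locmod_\cC(A)\boxtimes\overline{\cC}$, then Lemma~\ref{lem:DeligneMueger}. The only quibble is the citation: the factorizability of $\cZ(\Rep_\cC(A))$ is itself a theorem of Shimizu (cited in the paper as \cite{Shi1}*{Theorem~1.1}), while \cite{Shi1}*{Theorem~4.2} supplies the equivalence between factorizability and non-degeneracy.
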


\begin{proof}
By Corollary~\ref{cor:finite-tensor}, $\Rep_{\cC}(A)$ is a finite tensor category. Now $\cZ(\Rep_{\cC}(A))$ is  factorizable, and therefore non-degenerate, by \cite{Shi1}*{Theorem 1.1}. Next, the equivalent category $\locmod_{\cZ(\cC)}(A^+)$ is also non-degenerate due to Theorem~\ref{thm:local-center}. Then, by the braided tensor equivalence in Lemma~\ref{lem:A+local}, we obtain that $\locmod_\cC(A)\boxtimes \ov{\cC}$ is non-degenerate. Finally, we conclude that $\locmod_\cC(A)$ is non-degenerate by Lemma~\ref{lem:DeligneMueger}.
\end{proof}

\begin{remark} \label{rem:thm-vs-ss}
The main difference between the proof of  Theorem~\ref{thm:locmodular} and the semisimple version, \cite{KO}*{Theorem~4.5}, is the verification of non-degeneracy: we showed that the M\"uger center is trivial, whereas \cite{KO} showed that the S-matrix is invertible (which cannot be generalized to the non-semisimple case). The other aspects of $\locmod_\cC(A)$ being modular follow somewhat similarly to the semisimple case, but we provided more details in our arguments above for the reader's convenience. Moreover, ribbonality is achieved in a different way: via pivotality and Proposition~\ref{prop:ribbon-left-right}.
\end{remark}


\subsection{Invariants of categories of local modules} \label{sec:FP-results}

We can now compute the Frobenius--Perron dimension of categories of local modules. This generalizes \cite{DMNO}*{Lemma~3.11 and Corollary~3.32} to the non-semisimple case.

\begin{corollary}\label{cor:locmod-FPdim}
Let $A$ be a rigid Frobenius algebra in a braided finite tensor category $\cC$. Then 
$$\FPdim(\Rep_\cC(A))=\frac{\FPdim(\cC) }{\FPdim_{\cC}(A)}.$$
If $\cC$ is  non-degenerate, then 
$$\FPdim \big(\locmod_\cC(A)\big)=\frac{\FPdim(\cC)}{\FPdim_{\cC}(A)^2}.$$
\end{corollary}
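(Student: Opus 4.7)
The plan is to deduce both identities from structural results already proved in this section, combined with multiplicativity of the Frobenius--Perron dimension. I would treat the two formulas separately, handling the first by the free/forgetful adjunction and then bootstrapping to the second using the center.

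For the first formula, by Lemma~\ref{indrep} together with Corollary~\ref{cor:finite-tensor}, the free module functor $U\colon \cC \to \Rep_\cC(A)$ is an exact, faithful, surjective $\Bbbk$-linear monoidal functor between finite tensor categories, with right adjoint given by the forgetful functor $F\colon \Rep_\cC(A) \to \cC$. I would then apply the standard fact (cf.\ \cite{EGNO}*{Proposition~6.3.3} and the adjoint-algebra formalism of \cite{EGNO}*{Section~7.17}) that for any surjective tensor functor between finite tensor categories, one has
\[
\FPdim(\cC) \;=\; \FPdim_{\cC}\!\bigl(F(\one_{\Rep_\cC(A)})\bigr) \cdot \FPdim(\Rep_\cC(A)).
\]
Since $F(\one_{\Rep_\cC(A)}) = A$ as an object of $\cC$, rearranging yields the first identity.

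For the second formula, assume in addition that $\cC$ is non-degenerate. Combining Theorem~\ref{thm:local-center} with Lemma~\ref{lem:A+local} gives braided monoidal equivalences
\[
\cZ(\Rep_\cC(A)) \;\simeq\; \locmod_{\cZ(\cC)}(A^+) \;\simeq\; \locmod_\cC(A) \boxtimes \overline{\cC}.
\]
Taking Frobenius--Perron dimensions on both sides and applying \eqref{eq:FP-ZC} on the left and \eqref{eq:FPdim-Deligne} on the right (noting $\FPdim(\overline{\cC}) = \FPdim(\cC)$), I would obtain the equation
\[
\FPdim(\Rep_\cC(A))^2 \;=\; \FPdim(\locmod_\cC(A)) \cdot \FPdim(\cC).
\]
Substituting the first formula into the left-hand side and solving for $\FPdim(\locmod_\cC(A))$ then produces the desired expression.

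The main obstacle is the first equality: while it is standard in the semisimple setting \cite{DMNO}*{Lemma~3.11}, in the non-semisimple case one must verify that the multiplicativity formula for $\FPdim$ under a surjective tensor functor between finite (not necessarily semisimple) tensor categories applies to $U$. This is why establishing $\Rep_\cC(A)$ as a genuine finite tensor category (Corollary~\ref{cor:finite-tensor}), and $U$ as an honest tensor functor (Lemma~\ref{indrep}), is the essential input; once those are in hand, the second formula follows mechanically from the center/local-module identifications.
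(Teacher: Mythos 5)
Your proposal is correct and follows essentially the same route as the paper: the first formula via the free/forgetful adjunction of Lemma~\ref{indrep} and the $\FPdim$ multiplicativity for a dominant tensor functor evaluated at the unit (the paper cites \cite{EGNO}*{Lemma~6.2.4} for this, which is the precise reference rather than Proposition~6.3.3), and the second via $\cZ(\Rep_\cC(A)) \simeq \locmod_{\cZ(\cC)}(A^+) \simeq \locmod_\cC(A)\boxtimes\overline{\cC}$ together with $\FPdim(\cZ(\cD))=\FPdim(\cD)^2$. No gaps.
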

\begin{proof}
The forgetful functor $F\colon \Rep_\cC(A)\to \cC$ is  right adjoint to the faithful monoidal functor $U$ in Lemma \ref{indrep}. Thus, $U$ is dominant, and by \cite{EGNO}*{Lemma 6.2.4}, we find that 
$$\FPdim(\Rep_\cC(A))=\frac{\FPdim_{\Rep_\cC(A)}(X)}{\FPdim_{\cC}(F(X))}\FPdim(\cC),$$
for any object $X$ in $\Rep_\cC(A)$. Specifying $X=A$, which is the tensor unit of $\Rep_\cC(A)$, and thus has FP-dimension $1$
by \cite{EGNO}*{Proposition 3.3.6(1)}. We obtain the first claimed formula.

Next, by Lemma~\ref{lem:A+local} we get that
$\locmod_{\cZ(\cC)}(A^+)\simeq \locmod_\cC(A)\boxtimes \ov{\cC}.$
Thus, 
$$\FPdim \big(\locmod_\cC(A)\big)=\frac{\FPdim \big(\locmod_{\cZ(\cC)}(A^+)\big)}{\FPdim (\cC)}=\frac{\FPdim(\Rep_\cC(A))^2}{\FPdim (\cC)}=\frac{\FPdim(\cC)}{\FPdim_{\cC}(A)^2}\, ,$$
where the second equality uses Theorem~\ref{thm:local-center} (\cite{Sch}*{Corollary 4.5}) and \cite{EGNO}*{Theorem~7.16.6}, and the third equality follows from the first claimed equation.
\end{proof}

\begin{remark} \label{rem:FPbound}
We obtain a restriction on the possible FP-dimensions of rigid Frobenius algebras in $\cC$. First, $\FPdim(\locmod_\cC(A))\geq 1$ by \cite{EGNO}*{Proposition 3.3.4}; hence, Corollary \ref{cor:locmod-FPdim} implies that $\FPdim_\cC(A)^2\leq \FPdim(\cC)$.  Secondly, if there exists a (quasi)-fiber functor $F\colon \cC\to \Vect$, then $\FPdim_\cC(A)=\dim_\Bbbk(F(A))$ \cite{EGNO}*{Proposition 4.5.7}.
\end{remark}


\section{Connections to relative monoidal centers}\label{sec:rel-center}

In this section, we illustrate our main theorem, Theorem~\ref{thm:locmodular}, for local modules over commutative algebras in relative monoidal centers. We first discuss a generalization of the material in Section~\ref{subsec:RepCA} in the case when $\cC$ is a finite tensor category, but  not necessary braided. Next, we recall the background for relative monoidal centers in Section~\ref{sec:rel-back}. We then present the main result of this part, Theorem~\ref{thm:ZRepA}, in Section~\ref{sec:rel-main}; this is a relative analogue of Schauenburg's result, Theorem~\ref{thm:local-center}. This gives us various ways to produce new modular categories from old ones. The main result is then used to study categories of Yetter-Drinfeld modules over braided tensor categories in Section~\ref{sec:YDB}.

\subsection{Local modules over central algebras}
In Definition \ref{def:RepCA}, we defined the category $\Rep_{\cC}(A)$ for a commutative algebra $A$ in a \emph{braided} finite tensor  category. Following \cites{Sch,DMNO} we can relax the requirement of $\cC$ being braided.

\begin{definition}\label{def:AinZC}
Let $\cC$ be a finite tensor category and $(A, c_{A,-})$ a commutative algebra in $\cZ(\cC)$. The underlying object $A$ is an algebra in $\cC$. We refer to $(A, c_{A,-})$ as a \emph{central algebra in $\cC$}. If, further, $(A,c_{A,-})$ is a rigid Frobenius algebra in $\cZ(\cC)$, then we  refer to the central algebra $(A,c_{A,-})$ as a {\it rigid Frobenius central algebra in $\cC$}.
\end{definition}

\begin{definition}\label{def:RepCA-general}
Let $(A, c_{A,-})$ be a central algebra in $\cC$.
We denote by $\Rep_\cC(A)$ the category $\rmod{A}(\cC)$ of right $A$-modules in $\cC$ with monoidal structure given as follows. Given a right $A$-module $(V,a^r_V)$, define the left action $a^l_V=a^r_Vc_{A,V}$ as in \eqref{def:a^l}. This way, $(V,a^l_V,a^r_V)$ defines an object in $\bimod{A}(\cC)$, and $\Rep_{\cC}(A)$ is monoidal with tensor product $\otimes_A$ and unit object $A$ as in Definition~\ref{def:RepCA}.
\end{definition}

\begin{remark} We note that if $(\cC,c)$ is braided and $A$ is a commutative algebra in $\cC$, then $(A^+,(c_{A,-})^+)$ is a commutative algebra in $\cZ(\cC)$ by Lemma~\ref{lem:A+}, and the definition of $\Rep_\cC(A^+)$ in Definition \ref{def:RepCA-general} recovers the monoidal category $\Rep_\cC(A)$ from Definition~\ref{def:RepCA}.
\end{remark}

\begin{remark}
We observe that the following results from Section~\ref{sec:local-mod} generalizes to using $\Rep_\cC(A)$ as defined in Definition \ref{def:RepCA-general}, i.e., without assuming that $\cC$ is braided, with the same proof.
\begin{enumerate}
\item Lemma~\ref{indrep}, verbatim. 
\smallskip
\item Theorem~\ref{thm:local-center} (or, \cite{Sch}*{Corollary 4.5}): there exists a braided equivalence between
$\cZ(\Rep_\cC(A))$ and $\locmod_{\cZ(\cC)}(A,c_{A,-}),$
for a central algebra $(A,c_{A,-})$ in $\cC$.
\smallskip
\item Lemma~\ref{lemma:rigid}: Let $\cC$ be a rigid (resp. pivotal) monoidal category, and take $A$ a rigid Frobenius central algebra in $\cC$. Then, $\Rep_\cC(A)$ is a rigid (resp. pivotal) monoidal category.
\smallskip
\item Corollary~\ref{cor:finite-tensor}: If $\cC$ is a finite tensor category and $A$ is a rigid Frobenius central algebra in~$\cC$, then $\Rep_\cC(A)$ is a finite tensor category.
\end{enumerate} 
\end{remark}

\subsection{Relative monoidal centers} \label{sec:rel-back}
In this section, we summarize the setup required for relative monoidal centers from \cites{LW,LW2}. 
Let $\cB$ be a braided finite tensor category, with mirror $\overline{\cB}$,  throughout this section.

\begin{definition} \label{def:Bcentral}
A finite tensor category $\cC$ is {\it $\cB$-central} if there exists a faithful braided tensor functor $G\colon \overline{\cB} \to \cZ(\cC)$.  
In this case, we refer to the functor $G$ as {\it $\cB$-central} as well.
\end{definition}

\begin{definition} \label{def:relcenter}
Given a $\cB$-central finite tensor category $\cC$, we define the \emph{relative monoidal center} $\cZ_\cB(\cC)$ to be the braided monoidal full subcategory consisting of objects $(V,c)$ of $\cZ(\cC)$, where $V$ is an object of $\cC$, and the half-braiding $c:=\{c_{V,X}\colon  V \otimes X \isomorph X \otimes V\}_{X \in \cC}$ is a natural isomorphism satisfying the two conditions below:
\begin{enumerate}
\item[(i)] [tensor product compatibility] \; $c_{V,X \otimes Y} = (\ide_X \otimes c_{V,Y})(c_{V,X} \otimes \ide_Y)$, for $X,Y \in \cC$.
\smallskip
\item[(ii)] [compatibility with $\cB$-central structure] \; $c_{G(B),V} \circ c_{V,G(B)} = \ide_{V \otimes G(B)}$, for any $B \in \overline{\cB}$.
\end{enumerate}
That is, $\cZ_\cB(\cC)$ is the full subcategory of $\cZ(\cC)$ of all objects that centralize $G(B)$ for any object $B$ of $\ov{\cB}$, the {\it M\"{u}ger centralizer} of $G(\overline{\cB})$ in $\cZ(\cC)$. 
\end{definition}

\begin{proposition} \label{prop:ZBA-bftc}
\cite{LW2}*{Proposition~4.9}  Given a $\cB$-central finite tensor category, we have that $\cZ_\cB(\cC)$ is a braided finite tensor subcategory  of $\cZ(\cC)$.  \qed
\end{proposition}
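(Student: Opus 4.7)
The plan is to verify that $\cZ_\cB(\cC)$ inherits each structural feature of a braided finite tensor category from the ambient $\cZ(\cC)$, which is itself a braided finite tensor category by Section~\ref{sec:braided}. Since $\cZ_\cB(\cC)$ is defined as the full subcategory of $\cZ(\cC)$ picking out those half-braidings that double-trivialize against every $G(B)$, the main issue is to show that the centralizing condition (ii) of Definition~\ref{def:relcenter} is stable under all relevant categorical operations, and then to argue that the resulting full subcategory is finite.

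First I would handle closure under the monoidal structure. The unit $(\one, c_{\one,-} = \ide)$ trivially satisfies condition (ii). For a tensor product $(V,c_{V,-}) \otimes (W,c_{W,-})$, I would expand $c_{V\otimes W, G(B)}$ and $c_{G(B),V\otimes W}$ using the defining formula $c_{V\otimes W, X} = (c_{V,X}\otimes\ide_W)(\ide_V\otimes c_{W,X})$ of the half-braiding in $\cZ(\cC)$, and use condition (ii) twice (once for $V$, once for $W$), together with naturality, to obtain $c_{G(B),V\otimes W}\,c_{V\otimes W, G(B)} = \ide$. The braiding of $\cZ(\cC)$ automatically restricts because for $V, W\in\cZ_\cB(\cC)$ the braiding $c_{V,W}$ is already a morphism in $\cZ(\cC)$ between objects of $\cZ_\cB(\cC)$, so the braided subcategory structure is inherited by fullness.

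Next I would verify closure under duals and under subquotients, giving the rigid and topologizing properties. For a dual $(V^*, c_{V^*,-})$ of an object of $\cZ_\cB(\cC)$, the half-braiding $c_{V^*,X}$ in $\cZ(\cC)$ is built from $c_{V,X}$ using $\ev_V,\coev_V$; applying this construction, together with the identity $c_{G(B),V}\,c_{V,G(B)}=\ide_{V\otimes G(B)}$, yields the analogous identity with $V$ replaced by $V^*$ after a short graphical-calculus manipulation. For a subobject $W\hookrightarrow V$ in $\cZ(\cC)$ with $V\in\cZ_\cB(\cC)$, the half-braiding on $W$ is induced by naturality from that of $V$, and condition (ii) for $W$ follows by applying $\ide_{G(B)}\otimes\iota$ (with $\iota\colon W\hookrightarrow V$) to both sides of the centralizing identity for $V$; quotients are symmetric. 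Direct sums are immediate.

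The main obstacle is the finiteness assertion: $\cZ_\cB(\cC)$ must have finitely many simple objects and enough projectives. The strategy is that $\cZ_\cB(\cC)$ is a topologizing full subcategory of the finite abelian category $\cZ(\cC)$, so simple objects of $\cZ_\cB(\cC)$ are, in particular, simple in $\cZ(\cC)$ (or images of such), giving finitely many isomorphism classes. For projective covers, I would argue that if $S\in\cZ_\cB(\cC)$ is simple and $P\twoheadrightarrow S$ is its projective cover in $\cZ(\cC)$, then $P$ lies in $\cZ_\cB(\cC)$: one realises $P$ as a summand of $S\otimes S^*\otimes P$ and uses the already established closure of $\cZ_\cB(\cC)$ under tensor products, duals, and direct summands to conclude. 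Alternatively, one can identify $\cZ_\cB(\cC)$ with the category of modules over a commutative central algebra (the image of the unit under the adjoint to the inclusion $G(\ov{\cB})\hookrightarrow \cZ(\cC)$), which is automatically finite; either route suffices to complete the verification.
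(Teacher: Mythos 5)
Your closure arguments are fine: the unit, tensor products, duals, and subquotients all preserve condition (ii) of Definition~\ref{def:relcenter} by the standard M\"uger-centralizer manipulations you sketch (for a subobject $\iota\colon W\hookrightarrow V$ one post-composes the double braiding of $W$ with the monomorphism $\iota\otimes\ide_{G(B)}$, uses naturality and the identity for $V$, and cancels the mono; quotients are dual), fullness gives the braiding, and local finiteness plus finitely many simples are inherited exactly as you say. Note also that the paper itself offers no proof of this statement -- it is quoted from \cite{LW2}*{Proposition~4.9} -- so the whole weight of the finiteness assertion rests on your last paragraph, and that is where the argument breaks down.

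The genuine gap is the claim that the projective cover $P$ in $\cZ(\cC)$ of a simple $S\in\cZ_\cB(\cC)$ lies in $\cZ_\cB(\cC)$; this is false precisely in the non-semisimple situations the paper is about. Take $K$ a factorizable non-semisimple Hopf algebra, $\cB=\lmod{K}$, and $H=\Bbbk$ the trivial Hopf algebra in $\cB$, so $\cC=\lmod{H}(\cB)=\cB$ and, by Example~\ref{expl:YDB}, $\cZ_\cB(\cC)\simeq\cB$ sits inside $\cZ(\cC)\simeq\cB\boxtimes\ov{\cB}$ (non-degeneracy $=$ factorizability) as $\cB\boxtimes\Vect$. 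The projective cover in $\cZ(\cC)$ of a simple $S\boxtimes\one$ is $P_S\boxtimes P_\one$, which lies in $\cB\boxtimes\Vect$ only if $\one$ is projective in $\cB$, i.e.\ only if $\cB$ is semisimple; the projectives of $\cZ_\cB(\cC)$ are simply not projective in $\cZ(\cC)$, so they cannot be imported from the ambient category. Your proposed splitting of $P$ off $S\otimes S^*\otimes P$ fails twice over: the composite of coevaluation and evaluation is the quantum dimension of $S$, which may vanish in a non-semisimple category, and in any case $S\otimes S^*\otimes P$ is not known to lie in $\cZ_\cB(\cC)$ since $P$ is not (closure under $\otimes$ only applies to objects already in the subcategory). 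The fallback suggestion -- modules over a commutative central algebra obtained from the adjoint of $G(\ov{\cB})\hookrightarrow\cZ(\cC)$ -- is also unjustified at this generality: module categories of such canonical algebras are of the type $\Rep_{\cZ(\cC)}(A)$, not M\"uger centralizers, and identifications of centralizers with (local) module categories require extra hypotheses (non-degeneracy of $\cB$, topologizing image) that Proposition~\ref{prop:ZBA-bftc} does not assume. What is needed for the ``enough projectives'' step is a different mechanism intrinsic to the relative center, e.g.\ realizing $\cZ_\cB(\cC)$ as modules over a monad or as a Yetter--Drinfeld-type category as in Example~\ref{expl:YDB}, which is how finiteness is established in \cite{LW2} rather than by transporting projective covers from $\cZ(\cC)$.
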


\begin{example}\label{expl:YDB}
Given $H$ a Hopf algebra in $\cB$, we have that $\cC=\lmod{H}(\cB)$ is a $\cB$-central finite tensor category \cite{L18}*{Example 3.17}. Here, the braided monoidal functor $G\colon \ov{\cB}\to \cZ(\cC)$ by sending $V\in \ov{\cB}$ to $((V,a_V^{\triv}),\psi^{-1}_{-,V})$,  where 
$a_V^\triv:=\varepsilon\otimes \ide_V\colon H \otimes V \to V$
is the trivial $H$-action on $V$ and $\psi$ is the braiding of $\cB$. Moreover, we have an equivalence of braided monoidal categories 
$$\cZ_\cB(\cC) \; \overset{br.\otimes}{\simeq} \; \lYD{H}(\cB),$$
see \cite{L18}*{Proposition 3.36}, where $\lYD{H}(\cB)$ is the braided tensor category of Yetter-Drinfeld modules over $H$ in $\cB$.
\end{example}

\begin{example}\label{expl:br-Drin}
As a special case of Example~\ref{expl:YDB}, let $\cB=\lmod{K}$ for a finite-dimensional quasi-triangular Hopf algebra $K$. Moreover, take $H$ a finite-dimensional Hopf algebra in $\cB$ with dual $H^*$ (as in dually paired Hopf algebras \cite{L17}*{Definition 3.1}). Then, $$\cC :=\lmod{H}(\lmod{K})\tensimeq \lmod{H\rtimes K},$$ and there is an equivalence of braided monoidal categories,
$$\cZ_\cB(\cC) \; \overset{br.\otimes}{\simeq}\; \lYD{H}(\cB) \; \overset{br.\otimes}{\simeq}\; \lmod{\Drin_K(H,H^*)}.$$
Here, $\Drin_K(H,H^*)$ is a quasi-triangular Hopf algebra called the \emph{braided Drinfeld double} of $H$. It is due to \cite{Maj99} where it is referred to as the \emph{double bosonization}. For details, including a presentation of $\Drin_K(H,H^*)$, see \cite{L17}*{Section 3.2}. 
\end{example}

Next, recall the main result of \cite{LW2}, which provides sufficient conditions for $\cZ_\cB(\cC)$ to be a modular tensor category, building on earlier work of \cites{KR93,Shi2}.

\begin{theorem}\cite{LW2}*{Theorem~4.14, Corollary~4.16}  \label{thm:ZBCmodular}
 Let $\cB$ be a non-degenerate braided finite tensor category. Let $\cC$ be a $\cB$-central finite tensor category such that the full image $G(\ov{\cB})$ is a topologizing subcategory of $\cZ(\cC)$.
Then,  the relative monoidal center $\cZ_\cB(\cC)$ is a modular tensor category if the set $\mathsf{Sqrt}_\cC(D,\xi_{D})$ from \cite{LW2}*{Definition~3.3} is non-empty. \qed
 \end{theorem}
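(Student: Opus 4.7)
The plan is to verify the two defining properties of a modular tensor category for $\cZ_\cB(\cC)$: non-degeneracy and the existence of a ribbon structure. We already know from Proposition~\ref{prop:ZBA-bftc} that $\cZ_\cB(\cC)$ is a braided finite tensor category, so these are the only remaining ingredients.

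First I would address non-degeneracy. Since $\cC$ is a finite tensor category, its Drinfeld center $\cZ(\cC)$ is factorizable (and hence non-degenerate) by Shimizu's theorem \cite{Shi1}*{Theorem~1.1}. By assumption, $\cB$ is non-degenerate, and since taking the mirror preserves non-degeneracy, $\ov{\cB}$ is non-degenerate as well. The braided tensor functor $G\colon \ov{\cB}\to \cZ(\cC)$ is faithful, and when its full image $G(\ov{\cB})$ is a topologizing subcategory of $\cZ(\cC)$, standard arguments identify $G(\ov{\cB})$ with a non-degenerate braided tensor subcategory of $\cZ(\cC)$. By definition, $\cZ_\cB(\cC)$ is precisely the M\"uger centralizer of $G(\ov{\cB})$ inside $\cZ(\cC)$. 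The key general fact I would invoke is that the M\"uger centralizer of a non-degenerate braided tensor subcategory inside a non-degenerate braided finite tensor category is itself non-degenerate, which follows from the factorization $\cZ(\cC)\tensimeq G(\ov{\cB})\boxtimes \cZ_\cB(\cC)$ via a Müger-type decomposition theorem (e.g.\ along the lines of \cite{DGNO} in the semisimple case, extended to the non-semisimple setting using factorizability and the topologizing hypothesis). This gives non-degeneracy of $\cZ_\cB(\cC)$.

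Next I would construct the ribbon structure. A ribbon structure on a braided pivotal finite tensor category is equivalent, by Proposition~\ref{prop:ribbon-left-right}, to requiring that the left and right twists coincide, which in turn corresponds to the pivotal structure being compatible with a suitable trivialization of the distinguished invertible object. For the Drinfeld center $\cZ(\cC)$ of a finite tensor category, the existence of a ribbon structure is governed by the existence of a square root of the Radford-type object $(D,\xi_D)$; this is the content of the set $\mathsf{Sqrt}_\cC(D,\xi_D)$ from \cite{LW2}*{Definition~3.3}, following \cite{Shi2}. The non-emptiness assumption supplies such a square root, and this produces a ribbon structure on $\cZ(\cC)$. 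I would then verify that this ribbon twist restricts to $\cZ_\cB(\cC)\subseteq \cZ(\cC)$; this is automatic because the twist is a natural transformation of the identity functor, and the centralizer condition in Definition~\ref{def:relcenter} is preserved.

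The main obstacle will be the non-degeneracy step, specifically establishing the factorization $\cZ(\cC)\tensimeq G(\ov{\cB})\boxtimes \cZ_\cB(\cC)$ in the non-semisimple setting. In the semisimple case this is a theorem of M\"uger, but in the non-semisimple case one needs to use Shimizu's characterization of non-degeneracy via factorizability together with the topologizing hypothesis on $G(\ov{\cB})$, which is what allows Frobenius--Perron (or length) counting arguments to go through to conclude that the centralizer really is the full complementary factor rather than a proper subcategory. Once this factorization is in place, both non-degeneracy of $\cZ_\cB(\cC)$ and compatibility of the ribbon structure follow cleanly.
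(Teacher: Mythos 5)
This statement is quoted from \cite{LW2}*{Theorem~4.14, Corollary~4.16} and carries no proof in the present paper, so the comparison is with the cited source; your outline follows essentially the same route as that proof. Namely, non-degeneracy of $\cZ_\cB(\cC)$ is obtained by viewing it as the M\"uger centralizer of the topologizing non-degenerate subcategory $G(\ov{\cB})$ inside the factorizable category $\cZ(\cC)$ and invoking Shimizu's non-semisimple centralizer theory (this is exactly the role of the topologizing hypothesis you flag as the main obstacle), and the ribbon structure comes from the square root in $\mathsf{Sqrt}_\cC(D,\xi_D)$ via Shimizu's correspondence between such square roots and ribbon structures on $\cZ(\cC)$, which then restricts to the full rigid braided subcategory $\cZ_\cB(\cC)$.
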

 
Here, $D$ is the distinguished invertible object of $\cC$, and $\xi_D(-)$ is a certain natural isomorphism of the form $D \otimes (-) \to (-)^{4*} \otimes D$. Moreover, $\mathsf{Sqrt}_\cC(D,\xi_{D})$ are the equivalence classes of pairs $(V, \sigma_V)$, for $V \in \cC$ and $\sigma_V$ a certain natural isomorphism of the form $V \otimes (-) \to (-)^{**} \otimes V$, that `squares' to $(D,\xi_D)$. A special case of when $\cC$ satisfies the condition that $\mathsf{Sqrt}_\cC(D,\xi_{D}) \neq \varnothing$ is when $\cC$ is spherical in the sense of \cite{DSS}; see \cite{LW2}*{Definition 3.9 and Remark~3.11}.
 Now by applying Theorem~\ref{thm:locmodular}, we achieve the following result.
 
\begin{corollary} \label{cor:rel-modular1}
Take $\cC$ to be a $\cB$-central finite tensor category such that the full image $G(\ov{\cB})$ is a topologizing subcategory of $\cZ(\cC)$, with $\cB$ a non-degenerate finite tensor category. Assume that either
\begin{itemize}
    \item the set $\mathsf{Sqrt}_\cC(D,\xi_{D})$ from \cite{LW2}*{Definition~3.3} is non-empty, or 
    \item $\cC$ is spherical in the sense of \cite{DSS}.
\end{itemize}
Then, for $A$ a rigid Frobenius algebra in $\cZ_\cB(\cC)$, we obtain that  $\locmod_{\cZ_\cB(\cC)}(A)$ is a modular  tensor category. \qed
\end{corollary}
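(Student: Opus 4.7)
The proof is a direct synthesis of the two main modularity results already established in the paper, applied in sequence. The plan is to first invoke Theorem~\ref{thm:ZBCmodular} to conclude that the ambient relative monoidal center $\cZ_\cB(\cC)$ is itself modular under the stated hypotheses, and then to feed this modular category into Theorem~\ref{thm:locmodular} with the given rigid Frobenius algebra $A$.

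More concretely, first I would verify that the hypotheses of Theorem~\ref{thm:ZBCmodular} are met: $\cB$ is non-degenerate, $\cC$ is $\cB$-central, and $G(\overline{\cB})$ is topologizing in $\cZ(\cC)$. Under either alternative assumption in the statement, the set $\mathsf{Sqrt}_\cC(D,\xi_D)$ is non-empty; for the spherical alternative, this follows from \cite{LW2}*{Remark~3.11}, which guarantees that a spherical structure in the sense of \cite{DSS} produces an element of $\mathsf{Sqrt}_\cC(D,\xi_D)$. Therefore Theorem~\ref{thm:ZBCmodular} applies and yields that $\cZ_\cB(\cC)$ is a modular tensor category in the sense of Definition~\ref{def:modular}, i.e., a non-degenerate ribbon finite tensor category.

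Next, I would apply Theorem~\ref{thm:locmodular} to the modular tensor category $\cZ_\cB(\cC)$ and the rigid Frobenius algebra $A$ in it. Since $\cZ_\cB(\cC)$ is a braided finite tensor category, the notion of local module over $A$ is well-defined via Definition~\ref{def:locmod}, and Theorem~\ref{thm:locmodular} directly yields that $\locmod_{\cZ_\cB(\cC)}(A)$ is modular, which is exactly the claim.

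There is no real obstacle here, since both Theorem~\ref{thm:ZBCmodular} and Theorem~\ref{thm:locmodular} do all the technical work; the only (minor) thing to check is that rigid Frobenius algebras exist in this setting (which is the standing hypothesis of the corollary) and that the two assumptions on $\cC$ indeed produce the non-emptiness of $\mathsf{Sqrt}_\cC(D,\xi_D)$ needed to trigger Theorem~\ref{thm:ZBCmodular}. Consequently the proof is a one-line citation of these two theorems once the spherical-implies-$\mathsf{Sqrt}$-non-empty reduction is noted.
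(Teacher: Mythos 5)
Your proposal is correct and matches the paper's own reasoning exactly: the paper notes that sphericality gives $\mathsf{Sqrt}_\cC(D,\xi_D)\neq\varnothing$ (via \cite{LW2}*{Remark~3.11}), applies Theorem~\ref{thm:ZBCmodular} to conclude $\cZ_\cB(\cC)$ is modular, and then cites Theorem~\ref{thm:locmodular}, which is why the corollary carries a \qed with no separate proof. Nothing further is needed.
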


In the next subsection, we will show that the categories of local modules appearing in Corollary~\ref{cor:rel-modular1}
are themselves relative centers.


\subsection{On categories of local modules in relative monoidal centers} \label{sec:rel-main}

For this section, assume the following conditions.

\begin{notation} \label{not:locmod-relcenter} 
Let $\cC$ be a $\cB$-central finite tensor category, and consider the braided finite tensor category $(\cZ_\cB(\cC), c)$ [Proposition~\ref{prop:ZBA-bftc}]. Let $(A,c_{A,-})$ be a  commutative algebra in $\cZ_\cB(\cC)$, i.e. a central algebra in $\cC$, cf. Definition~\ref{def:AinZC}. We denote by $A$ the image of $(A,c_{A,-})$ under the forgetful functor $F\colon \cZ_\cB(\cC) \to \cC$.
\end{notation}

Towards generalizing Theorem~\ref{thm:local-center} to the relative setting, we first establish the following result.

\begin{proposition}\label{prop:RepA-Baug} We obtain that the category $\Rep_\cC(A)$ is a $\cB$-central finite tensor category.
\end{proposition}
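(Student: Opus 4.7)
The plan is to split the proof into two tasks: endowing $\Rep_\cC(A)$ with the structure of a finite tensor category, and then producing a faithful braided tensor functor $G'\colon \ov{\cB}\to \cZ(\Rep_\cC(A))$. For the first task, I would appeal to the generalization of Corollary~\ref{cor:finite-tensor} to central algebras noted after Definition~\ref{def:RepCA-general}, which gives $\Rep_\cC(A)$ a finite tensor structure inherited from $\cC$ with unit object $A$.

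For the second task, I would use the generalization of Schauenburg's equivalence (Theorem~\ref{thm:local-center}) to central algebras,
\[
\cZ(\Rep_\cC(A)) \;\overset{br.\otimes}{\simeq}\; \locmod_{\cZ(\cC)}(A,c_{A,-}),
\]
and construct a faithful braided tensor functor into the right-hand side. Concretely, define $G'$ as the composition
\[
\ov{\cB} \xrightarrow{\;G\;} \cZ(\cC) \xrightarrow{\;U\;} \Rep_{\cZ(\cC)}(A,c_{A,-}),
\]
where $U$ is the free module functor of Lemma~\ref{indrep} sending $X\mapsto (X\otimes A,\ide_X\otimes m_A)$. The main technical point is to verify that $U(G(B)) = G(B)\otimes A$ actually lies in $\locmod_{\cZ(\cC)}(A,c_{A,-})$ for every $B\in\ov{\cB}$. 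Computing the half-braidings of $A$ and of $G(B)\otimes A$ in $\cZ(\cC)$, one obtains
\[
c_{A,G(B)\otimes A}\circ c_{G(B)\otimes A,A}=(\ide_{G(B)}\otimes c_{A,A})(c_{A,G(B)}\otimes \ide_A)(c_{G(B),A}\otimes \ide_A)(\ide_{G(B)}\otimes c_{A,A}).
\]
By Definition~\ref{def:relcenter}(ii) applied to $(A,c_{A,-})\in \cZ_\cB(\cC)$, we have $c_{G(B),A}\circ c_{A,G(B)}=\ide_{A\otimes G(B)}$, and by invertibility of the half-braidings also $c_{A,G(B)}\circ c_{G(B),A}=\ide_{G(B)\otimes A}$, so the above expression collapses to $\ide_{G(B)}\otimes c_{A,A}^{\,2}$. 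Composing with the right $A$-action $a^r = \ide_{G(B)}\otimes m_A$ and invoking commutativity of $(A,c_{A,-})$, namely $m_A\circ c_{A,A}=m_A$, twice then yields the required locality identity $a^r\circ c_{A,V}\circ c_{V,A}=a^r$.

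The remaining checks are routine: $U\circ G$ is monoidal by composing the monoidal structures of $G$ and $U$ (the latter from Lemma~\ref{indrep}); it is braided because $G$ is braided by hypothesis and the braiding on $\locmod_{\cZ(\cC)}(A,c_{A,-})$ is inherited from $\cZ(\cC)$; and it is faithful because $G$ is faithful by Definition~\ref{def:Bcentral} and $U$ is faithful by Lemma~\ref{indrep} (applied to the finite tensor category $\cZ(\cC)$). Transporting along Schauenburg's equivalence yields the desired $G'\colon \ov{\cB}\to \cZ(\Rep_\cC(A))$, making $\Rep_\cC(A)$ a $\cB$-central finite tensor category. The only nontrivial step is the locality verification above; everything else is formal.
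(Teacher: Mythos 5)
Your proposal is essentially correct, but it reaches the $\cB$-central structure by a genuinely different route than the paper. The paper constructs the central functor by hand: it defines $G_A(B)=(G(B)\otimes A,\ide_{G(B)}\otimes m_A)$ with explicit half-braiding induced by $(c_{G(B),V}\otimes\ide_A)(\ide_{G(B)}\otimes c_{A,V})$, and then directly verifies that this half-braiding is a morphism in $\Rep_\cC(A)$, that $G_A$ is monoidal, compatible with the inverse braiding of $\cB$, and faithful, using Definition~\ref{def:relcenter}(ii) and commutativity of $A$ at the key steps. You instead transport along the generalized Schauenburg equivalence $\cZ(\Rep_\cC(A))\simeq\locmod_{\cZ(\cC)}(A,c_{A,-})$ and take the composite of $G$ with the free-module functor $U$ of Lemma~\ref{indrep}, reducing the whole construction to one computation: that $G(B)\otimes A$ is local, which you verify correctly (your collapse of the double braiding to $\ide_{G(B)}\otimes c_{A,A}^2$ via Definition~\ref{def:relcenter}(ii), followed by two uses of commutativity, is exactly right). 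In fact your functor coincides with the paper's $G_A$ under the equivalence, so your argument can be read as an abstract repackaging of theirs: it buys brevity at the cost of invoking the central-algebra version of Theorem~\ref{thm:local-center}, while the paper's hands-on construction is self-contained and is what gets reused in the proof of Theorem~\ref{thm:ZRepA}.

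Two caveats. First, the braided compatibility of $U\circ G$ is not as formal as you suggest: $U$ is not a braided functor into a braided category ($\Rep_{\cZ(\cC)}(A)$ is only monoidal), and its structure maps $\mu^U$ involve the braiding, so checking that $U\circ G$ intertwines the braiding of $\ov{\cB}$ with the descended braiding on local modules requires another computation of the same kind as your locality check (it does go through, again using Definition~\ref{def:relcenter}(ii) applied to $A$ together with $m_A c_{A,A}=m_A$; this is precisely the paper's displayed verification of compatibility with the inverse braiding of $\cB$). Second, for the finite tensor structure you cite the generalization of Corollary~\ref{cor:finite-tensor}, which requires $A$ to be a \emph{rigid Frobenius} central algebra, whereas Notation~\ref{not:locmod-relcenter} only assumes $(A,c_{A,-})$ is a commutative central algebra; under the stated hypotheses that citation does not apply. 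The paper handles this part more modestly: monoidality comes from Definition~\ref{def:RepCA-general}, and finiteness is finiteness of the underlying abelian category $\rmod{A}(\cC)$ via \cite{EGNO}*{Exercise 7.8.16}; you should argue the same way (or note that rigidity is only available, and only needed, in the later applications where $A$ is rigid Frobenius).
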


\begin{proof}
Denote the $\cB$-central functor of $\cC$ by  $G\colon \ov{\cB}\to \cZ(\cC), \; B\mapsto (G(B),c_{G(B),-}).$
Define the functor 
\begin{equation} \label{eq:GA}
    G_A\colon \ov{\cB}\to \cZ(\Rep_\cC(A))
\end{equation} by sending $B$ to the pair $(G_A(B),c_{G_A(B),-})$, where $G_A(B)=(G(B)\otimes A,a^r_{G_A(B)})$, with $$a^r_{G_A(B)}=\ide_{G(B)}\otimes m_A.$$ 
From this, we get that 
$$a^l_{G_A(B)} = a^r_{G_A(B)}c_{A,G_A(B)} = (\ide_{G(B)}\otimes m_A)c_{A,G_A(B)}.$$
Moreover,  for $V \in \Rep_{\cC}(A)$, the morphism 
$$c_{G_A(B),V}\colon G_A(B)\otimes_A V\to V\otimes_AG_A(B)$$ 
is induced by
\begin{align*}
c'_{G_A(B),V}:=(c_{G(B),V}\otimes\ide_A)(\ide_{G(B)}\otimes c_{A,V})\colon G_A(B) \otimes V\to V  \otimes G_A(B).
\end{align*}

To check that $G_A$ is well-defined, we first need to show that $c_{G_A(B),V}$ defines a morphism in $\Rep_\cC(A)$. Consider the following computation: 
\begin{align*}
c'_{G_A(B),V} (a^r_{G_A(B)}\otimes \ide_V)&=(c_{G(B),V}\otimes\ide_A)(\ide_{G(B)}\otimes c_{A,V})(\ide_{G(B)}\otimes m_A\otimes \ide_V)\\
&=(c_{G(B),V}\otimes m_A)(\ide_{G(B)}\otimes c_{A,V}\otimes \ide_A)(\ide_{G(B)\otimes A}\otimes c_{A,V})\\
&=(\ide_{V\otimes G(B)}\otimes m_A)(\ide_V\otimes c_{A,G(B)\otimes A}\;c_{G(B)\otimes A,A})c_{G(B)\otimes A \otimes A,V}\\
&=(\ide_V\otimes a^l_{G_A(B) })
(\ide_V\otimes c_{G(B)\otimes A,A})c_{G(B)\otimes A \otimes A,V}\\
&=(a^r_V\otimes \ide_{G(B)\otimes A})
(\ide_V\otimes c_{G(B)\otimes A,A}) c_{G(B)\otimes A \otimes A,V}\\
&=c'_{G_A(B),V}(\ide_{G_A(B)}\otimes a^r_Vc_{A,V})\\
&=c'_{G_A(B),V}(\ide_{G_A(B)}\otimes a^l_V).
\end{align*}
The first, second, fourth, sixth, and seventh equalities follow from definition or by naturality. The third equality uses Definition~\ref{def:relcenter}(ii) 
applied to $G(B)\otimes A$ 
and naturality.
The fifth equality holds using that the target space is $V\otimes_A G_A(B)$.
A very similar computation shows that $c_{G_A(B),V}$ commutes with the right $A$-module structures and hence defines a morphism in $\Rep_{\cC}(A)$. 

Now, we check that $G_A$ is well-defined. As $c_{G_A(B),V}$ is defined using natural transformations in $V$, it is itself natural in $V$. Further, tensor compatibility of $c_{G_A(B),-}$ is inherited from tensor compatibility of the half-braidings $c_{A,-}$ and $c_{G(B),-}$. Thus, $(G_A(B), c_{G_A(B),-})$ defines an object in $\cZ(\cC)$. The assignment extends to morphisms by setting  $G_A(f)=G(f)\otimes \ide_A$, and thus defines a functor.

Next, we verify that $G_A$ defines a monoidal functor $\ov{\cB}\to \cZ(\Rep_\cC(A))$. We define the structural natural transformation $\mu^{G_A}$ for two objects $B,C$ in $\ov{\cB}$ by
$$\mu_{B,C}^{G_A}:=(\mu^G_{B,C} \otimes m_A)(\ide_{G(B)}\otimes c_{A,G(C)}\otimes \ide_A)\colon G_A(B) \otimes  G_A(C) \to G_A(B \otimes C),$$
where $\mu^G$ is the natural isomorphism of the monoidal functor $G\colon \ov{\cB}\to \cZ(\cC)$. It follows using associativity of $A$ that $\mu_{B,C}^{G_A}$ gives a morphism from $G_A(B)\otimes_AG_A(C)$ to $G_A(B\otimes C)$ in $\Rep_\cC(A)$. The natural transformation obtained has an inverse induced by
$$
\left(\mu^{G_A}_{B,C}\right)^{-1}:=(\ide_{G(B)}\otimes u_A\otimes \ide_{G (C)\otimes A})\left(\left(\mu_{B,C}^{G}\right)^{-1}\otimes \ide_A\right).
$$

Next, we check compatibility of $G_A$ with the inverse braiding of $\cB$.
{\small
\begin{align*}
\big(\mu^{G_A}_{C,B}\big)^{-1}G_A(c_{C,B}^{-1})\mu^{G_A}_{B,C}
&=(\ide_{G(C)}\otimes u_A\otimes \ide_{G(B) \otimes A})(\mu^{G}_{C,B}G(c^{-1}_{C,B})\mu^{G}_{B,C}\otimes m_A)(\ide_{G(B)}\otimes c_{A,G(C)}\otimes \ide_{A})\\
&=(\ide_{G(C)}\otimes u_A\otimes \ide_{G(B) \otimes A})(c_{G(B),G(C)}\otimes m_A)(\ide_{G(B)}\otimes c_{A,G(C)}\otimes \ide_{A})\\
&=(\ide_{G(C)}\otimes u_A\otimes \ide_{G(B) \otimes A})(c_{G(B),G(C)}\otimes m_A c_{A,A})(\ide_{G(B)}\otimes c_{A,G(C)}\otimes \ide_{A})\\
&=(\ide_{G(C)}\otimes u_A\otimes \ide_{G(B)}\otimes m_A)(\ide_{G(C)} \otimes c_{A,G(B)}\otimes \ide_{A})c_{G_A(B),G_A(C)}\\
&=(\ide_{G(C)}\otimes u_A\otimes \ide_{G(B)}\otimes m_Ac_{A,A})(\ide_{G(C)} \otimes c_{A,G(B)}\otimes \ide_{A}) c'_{G_A(B),G_A(C)}\\
&=(\ide_{G(C)}\otimes u_A\otimes \ide_{G(B)}\otimes \ide_A)a^l_{G_A(B)} c'_{G_A(B),G_A(C)}\\
&=(\ide_{G(C)}\otimes m_A(u_A\otimes \ide_A)\otimes \ide_{G_A(B)}) c'_{G_A(B),G_A(C)}\\
&= c'_{G_A(B),G_A(C)}.
\end{align*}
}

\noindent Here, the first equality follows from the definition of $\mu^{G_A}$ and its inverse, while the second equality uses that $G$ is compatible with the inverse braiding of $\cB$, and the third equality uses commutativity of $A$. The fourth equality uses  Definition~\ref{def:relcenter}(ii), since $A$ is in the relative center.
Next, we again use that $A$ is commutative, and then apply the definition of $a^l_{G_A(B)}$ in the sixth equality.
Next, we apply that the codomain is the relative tensor product $G_A(C)\otimes_A G_A(B)$
, followed by the unit axiom for $A$ in the last equality. Thus, $G_A\colon\ov{\cB}\to\cZ(\Rep_\cC(A))$ is a braided monoidal functor. 

As the tensor product in $\cB$ is faithful and $G$ is faithful, we conclude that $G_A(f)=f\otimes \ide_A$ is faithful on morphisms.
Finally, if $\cC$ is finite category, then $\Rep_{\cC}(A)$ is finite since as an abelian category it is $\rmod{A}(\cC)$ which is finite by \cite{EGNO}*{Exercise 7.8.16}.
\end{proof}



Now we prove a relative version of Theorem~\ref{thm:local-center}.

\begin{theorem}\label{thm:ZRepA}
Retain Notation~\ref{not:locmod-relcenter}. There is an equivalence of braided monoidal categories:
$$\locmod_{\cZ_\cB(\cC)}(A,c_{A,-})\isomorph \cZ_\cB(\Rep_\cC(A)).$$
\end{theorem}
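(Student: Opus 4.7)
The plan is to prove the equivalence by restricting Schauenburg's braided equivalence
\[\Phi\colon \locmod_{\cZ(\cC)}(A, c_{A,-}) \;\overset{\sim}{\longrightarrow}\; \cZ(\Rep_\cC(A))\]
from Theorem~\ref{thm:local-center} to the two relevant full subcategories. Recall that $\Phi$ acts as the identity on underlying right $A$-modules in $\cC$: for a local $A$-module $(V, a_V^r, c_{V,-})$ in $\cZ(\cC)$, locality is precisely what makes the half-braiding $c_{V,-}$ descend to a half-braiding $\bar c_{V,-}$ on $\Rep_\cC(A)$, with $\bar c_{V, W}\colon V \otimes_A W \to W \otimes_A V$ induced from $c_{V, W}\colon V \otimes W \to W \otimes V$ on the coequalizer.

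I would next unpack the defining conditions of the two subcategories. An object $(V, a_V^r, c_{V,-})$ of $\locmod_{\cZ(\cC)}(A, c_{A,-})$ belongs to $\locmod_{\cZ_\cB(\cC)}(A, c_{A,-})$ precisely when
\[c_{G(B), V} \circ c_{V, G(B)} \;=\; \ide_{V \otimes G(B)} \qquad \text{for every } B \in \ov{\cB},\]
whereas an object $(V, \bar c_{V,-})$ of $\cZ(\Rep_\cC(A))$ belongs to $\cZ_\cB(\Rep_\cC(A))$ precisely when
\[\bar c_{G_A(B), V} \circ \bar c_{V, G_A(B)} \;=\; \ide_{V \otimes_A G_A(B)} \qquad \text{for every } B \in \ov{\cB},\]
with $G_A(B) = G(B) \otimes A$ the $\cB$-central functor of Proposition~\ref{prop:RepA-Baug}. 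So the task reduces to showing that these two conditions correspond under $\Phi$.

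To establish this correspondence, I would lift the composite $\bar c_{G_A(B), V} \bar c_{V, G_A(B)}$ to a morphism $V \otimes G(B) \otimes A \to V \otimes G(B) \otimes A$. Applying the formula $c'_{G_A(B), V} = (c_{G(B), V} \otimes \ide_A)(\ide_{G(B)} \otimes c_{A, V})$ from the proof of Proposition~\ref{prop:RepA-Baug}, together with tensor compatibility of the half-braiding $c_{V, G(B) \otimes A} = (\ide_{G(B)} \otimes c_{V, A})(c_{V, G(B)} \otimes \ide_A)$, this lift equals
\[(c_{G(B), V} \otimes \ide_A)\bigl(\ide_{G(B)} \otimes c_{A, V} c_{V, A}\bigr)(c_{V, G(B)} \otimes \ide_A).\]
Because $G_A(B) = U(G(B))$ arises from the free-module functor of Lemma~\ref{indrep}, any $A$-linear endomorphism of $V \otimes_A G_A(B)$ is determined by its precomposition with the canonical map $\ide_V \otimes \ide_{G(B)} \otimes u_A\colon V \otimes G(B) \to V \otimes G(B) \otimes A$ projected into $V\otimes_A G_A(B)$. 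Under this restriction, the half-braiding identities $c_{V, A}(\ide_V \otimes u_A) = u_A \otimes \ide_V$ and $c_{A, V}(u_A \otimes \ide_V) = \ide_V \otimes u_A$ force the middle factor $c_{A, V} c_{V, A}$ to collapse, leaving $(c_{G(B), V} c_{V, G(B)}) \otimes \ide_A$ precomposed with $\ide_V \otimes \ide_{G(B)} \otimes u_A$. Thus $\bar c_{G_A(B), V} \bar c_{V, G_A(B)} = \ide$ if and only if $c_{G(B), V} c_{V, G(B)} = \ide$, establishing the matching.

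Assembling these observations, $\Phi$ restricts to an equivalence $\locmod_{\cZ_\cB(\cC)}(A, c_{A,-}) \overset{\sim}{\to} \cZ_\cB(\Rep_\cC(A))$; and because $\Phi$ is braided monoidal and both subcategories are braided tensor subcategories of the ambient categories (with structures inherited accordingly), the restricted equivalence is automatically braided monoidal. The main obstacle is the matching step: one must verify that the lifted composite truly descends, under the identification $V \otimes_A G_A(B) \cong V \otimes G(B)$, to $c_{G(B), V} c_{V, G(B)}$. This requires the commutativity of $A$ (via $m_A c_{A,A} = m_A$, implicit in the coequalizer relations), the $\cB$-centrality $c_{G(B), A} c_{A, G(B)} = \ide$ (needed to identify the coequalizer with $V \otimes G(B)$), and the unital behaviour of the half-braidings (which causes $c_{A, V} c_{V, A}$ to collapse).
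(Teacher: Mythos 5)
Your proposal is correct, but it takes a genuinely different route from the paper. The paper does not deduce the relative statement from Theorem~\ref{thm:local-center}; instead it constructs the two functors from scratch in the relative setting — a functor $\Phi$ given by forgetting to $\Rep_\cC(A)$ with the descended half-braiding, and an explicit inverse $\Psi$ built from the free-module isomorphisms $\phi_{X,V},\psi_{V,X}$, defining $c'_{M,X}:=\phi_{X,M}\,c_{M,X\otimes A}\,\psi_{M,X}^{-1}$ — and then verifies naturality, tensor compatibility, the condition of Definition~\ref{def:relcenter}(ii), locality of $\Psi(M)$, and that $\Phi\Psi=\ide$, $\Psi\Phi=\ide$. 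You instead take the equivalence $\locmod_{\cZ(\cC)}(A,c_{A,-})\simeq\cZ(\Rep_\cC(A))$ as a black box and reduce the theorem to matching the two M\"uger-centralizing conditions under it; your key computation (lifting $\bar c_{G_A(B),V}\bar c_{V,G_A(B)}$ to $(c_{G(B),V}\otimes\ide_A)(\ide_{G(B)}\otimes c_{A,V}c_{V,A})(c_{V,G(B)}\otimes\ide_A)$, precomposing with $\ide_{V\otimes G(B)}\otimes u_A$, and using naturality of $c_{V,-}$ together with $u_A$ being a morphism in $\cZ(\cC)$ to collapse the middle factor) is correct, and since the precomposition is the isomorphism $V\otimes G(B)\cong V\otimes_A G_A(B)$, it yields the needed "if and only if" in both directions; fullness of the subcategories then gives the restricted braided equivalence. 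What your route buys is brevity — no re-verification of monoidality, well-definedness of the inverse, or mutual inverseness. What it costs is a stronger dependence on Theorem~\ref{thm:local-center}: you need not merely the existence of some braided equivalence but its explicit Schauenburg form (identity on underlying $A$-modules, half-braidings descending to $\otimes_A$), and you need this for a central algebra $(A,c_{A,-})$ rather than for $A^+$ with $\cC$ braided — the paper only asserts that generalization in a remark, whereas its self-contained proof of Theorem~\ref{thm:ZRepA} sidesteps both issues (and in fact reproves the Schauenburg-type equivalence relatively). If you adopt your route in writing, you should state the explicit form of the ambient equivalence you are restricting (you essentially verify the relevant descent-by-locality yourself), and note that the defining conditions of $\locmod_{\cZ_\cB(\cC)}(A,c_{A,-})$ and $\cZ_\cB(\Rep_\cC(A))$ are isomorphism-invariant, so essential surjectivity of the restriction follows from that of the ambient equivalence. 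One small attribution slip: the identification $V\otimes_A G_A(B)\cong V\otimes G(B)$ does not use $c_{G(B),A}c_{A,G(B)}=\ide$ (it holds for any free module $X\otimes A$); the $\cB$-centrality of $A$ enters rather through Proposition~\ref{prop:RepA-Baug}, i.e., in making $G_A$ a well-defined $\cB$-central structure on $\Rep_\cC(A)$.
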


\begin{proof}
Consider the composition $$\Phi\colon \locmod_{\cZ_\cB(\cC)}(A,c_{A,-})\hookrightarrow \Rep_{\cZ_\cB(\cC)}(A) \xrightarrow{F}\Rep_\cC(A)$$
of the inclusion and the functor $F$ induced by the forgetful functor $F\colon \cZ_\cB(\cC)\to \cC$.
Then $\Phi$ extends to a $\cB$-central functor, where for $(V,c_{V,-}) \in \cZ_\cB(\cC)$ we have the half-braiding for $\Phi(V)$ given by the  morphism $$c_{\Phi(V),X}\colon V\otimes_AX\to X\otimes_A V$$
induced by the half braiding $c_{V,X}$. Tensor compatibility of $c_{\Phi(V),-}$ is inherited from tensor compatibility of $c_{V,-}$. Since $(V,c_{V,-})$ is an object in the relative center, it follows that $$c_{G(B),\Phi(V)}\;c_{\Phi(V),G(B)}=\ide_{\Phi(V) \otimes G(B)},$$
for any object $B\in \ov{\cB}$, and hence $\Phi$ is, indeed, a functor 
$$\Phi\colon \locmod_{\cZ_\cB(\cC)}(A,c_{A,-})\to \cZ_\cB(\Rep_\cC(A)).$$
Note that here $\Rep_\cC(A)$ is $\cB$-central monoidal using Proposition \ref{prop:RepA-Baug}. 
It is also clear that $\Phi$ is a monoidal functor, as it is the a composition of monoidal functors with structural isomorphism $\Phi_{V,W}$ given by identities.

To construct an inverse functor $\Psi$ to $\Phi$, we note the following isomorphisms of right $A$-modules, for $(V,a_V^r)$ in $\Rep_\cC(A)$ and $X\in \cC$.
\begin{align*}
\medskip
  \phi_{X,V} := \ide_X \otimes a_V^rc_{A,V} \colon  &(X\otimes A)\otimes_A V\longrightarrow X\otimes V\\
  \psi_{V,X} := (a_V^r\otimes \ide_X)(\ide_V\otimes c^{-1}_{A,X}) \colon   &V\otimes_A(X\otimes A)\longrightarrow V\otimes X.
\end{align*}
Assume given an object $(M,c_{M,-})$ in $\cZ_\cB(\Rep_\cC(A))$, define for any $X\in \cC$,  the isomorphism
$$c'_{M,X}:=\phi_{X,M}\;c_{M,X\otimes A}\;\psi_{M,X}^{-1}.$$
As a composition of natural transformations in $X$, we see that $c'_{M,-}$ defines a natural transformation. Further, $c'_{M,-}$ is tensor compatible since $c_{M,-}$ is tensor compatible. Similarly, for any object $B$ in $\ov{\cB}$, we have that $c_{G_A(B),M}\; c_{M,G_A(B)}=\ide_{M \otimes_A G_A(B)}$ since $M$ is in the relative center of $\Rep_\cC(A)$. This directly implies that $c'_{G(B),M}c'_{M,G(B)}=\ide_{M \otimes G(B)}$ showing that $(M,c'_{M,-})$ defines an object in $\cZ_\cB(\cC)$. In particular,  $(M,c'_{M,-})$ is an object in $\Rep_{\cZ_\cB(\cC)}(A,c_{A,-})$.
Next, we check that $(M,c'_{M,-})$ defined above is a local module via the following computations:
\[
    a_{M}^rc_{A,M} 
    c'_{M,A} =a_M^r c_{A,M} c_{M,A}
    =a_{M}^r.
\]
In the first equality, we use the definition of $c'_{M,A}$ and $\psi_{M,A}^{-1}=\ide_{M\otimes A}\otimes u_A$, along with unitality and naturality.
The second equality uses Definition~\ref{def:relcenter}(ii), Proposition~\ref{prop:RepA-Baug} and \eqref{eq:GA} with the fact that $G_A(\one) \cong  A$ (namely, $G(\one) \cong \one$ as $G$ is a tensor functor). This way, we get that $(M,c'_{M,-})$ is an object in $\locmod_{\cZ_\cB(\cC)}(A)$.

Now it is easily checked that $$\Psi\colon \cZ_\cB(\Rep_\cC(A))\longrightarrow \locmod_{\cZ_\cB(\cC)}(A,c_{A,-}), \quad \Psi(M,c_{M,-})=(M,c'_{M,-}),$$
defined as the identity on morphism spaces, gives a functor as desired. We can use identities as structural isomorphisms, and $c_{M,N}$ to make $\Psi$ a monoidal functor which is clearly braided. 

Finally, we check that $\Phi$ and $\Psi$ are mutally inverse functors. Let $(M,c)$ be an object in $\cZ_\cB(\Rep_\cC(A))$, and consider $\Phi \Psi(M,c)=(M,d)$. We compare the half-braidings $d$ and $c$. The half-braiding $d$ is the restriction to relative tensor products $\otimes_A$ of the morphism $c'$ considered above. Hence, for $X$ in $\Rep_\cC(A)$, 
\begin{align*}
    d_{M,X}&=(\ide_X\otimes a_M^rc_{A,M})c_{M,X\otimes A}(\ide_{M\otimes X}\otimes u_A)\\
    &=(\ide_X\otimes a_M^l)c_{M,X\otimes A}(\ide_{M\otimes X}\otimes u_A)\\
    &=(a_X^r\otimes \ide_M)c_{M,X\otimes A}(\ide_{M\otimes X}\otimes u_A)\\
    &=c_{M,X}(\ide_M\otimes a^r_X)(\ide_{M\otimes X}\otimes u_A)\\
    &=c_{M,X}.
\end{align*}
This shows that $\Phi \Psi (M,c)=(M,c)$.

Now, let $(V,c)$ be an object in $\locmod_{\cZ_\cB(\cC)}(A,c_{A,-})$ and consider $\Psi\Phi(V,c)=(V,d)$. Again, we compare the half-braidings $d$ and $c$, this time for $X$ an object in $\cC$:
\begin{align*}
    d_{V,X}&=(\ide_X\otimes a_V^rc_{A,V})\overline{c}_{V,X\otimes A}(\ide_{V\otimes X}\otimes u_A)\\
    &=(\ide_X\otimes a_V^rc_{A,V})c_{V,X\otimes A}(\ide_{V\otimes X}\otimes u_A)\\
    &=(\ide_X\otimes a_V^rc_{A,V}(u_A\otimes \ide_V))c_{V,X\otimes A}\\
    &=(\ide_X\otimes a_V^r( \ide_V\otimes u_A))c_{V,X}\\
    &=c_{V,X}
\end{align*}
Here, $\ov{c}_{M,X\otimes A}$ denotes the induced morphism from $c_{M,X\otimes A}$ to relative tensor products $\otimes_A$. However, the map $\ide_{V\otimes X}\otimes u_A$ lifts along the quotient map $V\otimes_A(X\otimes A)\to V\otimes(X\otimes A)$, giving the first equality. The second equality uses naturality of $\ov{c}_{M,-}$ with respect to morphisms in $\cC$. The third equality uses that $u_A$ is a morphism in $\cZ_\cB(\cC)$, followed by the unit axiom. Hence $\Psi\Phi(V,c)=(V,c)$, which completes the proof.
\end{proof}

\begin{corollary} \label{cor:local-mod}
Take $\cC$ to be a $\cB$-central finite tensor category such that the full image $G(\ov{\cB})$ is a topologizing subcategory of $\cZ(\cC)$, with $\cB$ a non-degenerate finite tensor category. Assume that either
\begin{itemize}
    \item the set $\mathsf{Sqrt}_\cC(D,\xi_{D})$ from \cite{LW2}*{Definition~3.3} is non-empty, or 
    \item $\cC$ is spherical in the sense of \cite{DSS}.
\end{itemize} 
Then, if $(A,c_{A,-})$ a central algebra in $\cC$, so that $(A,c_{A,-})$ is rigid Frobenius in $\cZ_\cB(\cC)$, we obtain that $\cZ_\cB(\Rep_\cC(A))$ is a modular tensor category.
\end{corollary}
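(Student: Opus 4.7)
The plan is to chain together the three main results already developed in the paper: Theorem~\ref{thm:ZBCmodular} (which yields modularity of the relative center itself), Corollary~\ref{cor:rel-modular1} (which yields modularity of categories of local modules over rigid Frobenius algebras in a modular relative center), and Theorem~\ref{thm:ZRepA} (which identifies these local modules with a relative center of the category of $A$-modules in $\cC$).

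More precisely, I would first invoke Theorem~\ref{thm:ZBCmodular} under the stated hypotheses (either $\mathsf{Sqrt}_\cC(D,\xi_D)\neq\varnothing$, or $\cC$ spherical, together with $\cB$ non-degenerate and $G(\ov{\cB})$ topologizing in $\cZ(\cC)$) to conclude that the relative monoidal center $\cZ_\cB(\cC)$ is a modular tensor category. In particular, $\cZ_\cB(\cC)$ is a non-degenerate ribbon finite tensor category in which the assumption on $A$ as a rigid Frobenius algebra makes sense. Next, I would apply Theorem~\ref{thm:locmodular} (packaged as Corollary~\ref{cor:rel-modular1}) to the modular category $\cZ_\cB(\cC)$ together with its rigid Frobenius algebra $(A,c_{A,-})$, which yields that $\locmod_{\cZ_\cB(\cC)}(A,c_{A,-})$ is itself modular.

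Finally, I would invoke Theorem~\ref{thm:ZRepA} to transfer this modular structure to $\cZ_\cB(\Rep_\cC(A))$ via the braided monoidal equivalence
\[
\locmod_{\cZ_\cB(\cC)}(A,c_{A,-})\;\overset{br.\otimes}{\simeq}\;\cZ_\cB(\Rep_\cC(A)).
\]
Since modularity is the conjunction of two properties that are each preserved under braided tensor equivalences, namely non-degeneracy of the M\"uger center (Definition~\ref{def:nondeg}, which is manifestly preserved since the M\"uger center is a braided-categorical invariant) and existence of a compatible ribbon twist (transported along the equivalence using the pivotal/ribbon structure of the source, cf.\ Proposition~\ref{prop:ribbon-left-right}), we conclude that $\cZ_\cB(\Rep_\cC(A))$ is modular.

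The only subtle point, and thus the main obstacle, is ensuring that the hypotheses of Theorem~\ref{thm:locmodular}/Corollary~\ref{cor:rel-modular1} apply verbatim to $\cZ_\cB(\cC)$: namely that $(A,c_{A,-})$ being rigid Frobenius as a central algebra in $\cC$ (Definition~\ref{def:AinZC}) is exactly the condition needed, namely rigid Frobenius in the modular category $\cZ_\cB(\cC)$, and that the braided equivalence of Theorem~\ref{thm:ZRepA} indeed preserves the ribbon twist produced in Proposition~\ref{prop:ribbon}. Both amount to unwinding the definitions, and no new argument is required beyond those already given in Sections~\ref{sec:local-mod} and~\ref{sec:rel-center}.
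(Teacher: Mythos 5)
Your proposal is correct and follows essentially the same route as the paper: the paper's proof simply cites Corollary~\ref{cor:rel-modular1} (which already packages Theorem~\ref{thm:ZBCmodular} and Theorem~\ref{thm:locmodular}) together with the braided equivalence of Theorem~\ref{thm:ZRepA}, transporting the ribbon structure along that equivalence exactly as you describe. Your extra unwinding of the hypotheses is fine but not needed, since the corollary's assumption is stated directly as $(A,c_{A,-})$ being rigid Frobenius in $\cZ_\cB(\cC)$.
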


\begin{proof}
This follows from Theorem~\ref{thm:ZRepA} and Corollary~\ref{cor:rel-modular1}. 
The category $\cZ_\cB(\Rep_\cC(A))$ obtains a ribbon structure through the equivalence from Theorem~\ref{thm:ZRepA}.
\end{proof}


\subsection{Examples for Yetter-Drinfeld categories over braided tensor categories}
\label{sec:YDB}

Recalling Examples~\ref{expl:YDB} and~\ref{expl:br-Drin}, we compute in this part the category of local modules over a Hopf algebra $H$ in a category of Yetter-Drinfeld modules $\lYD{H}(\cB)$, for $\cB$ a braided finite tensor category. The main result here is Proposition~\ref{prop:local-triv}.

\smallskip

To start, note that the result below is an immediate consequence of work of Brugui\`{e}res-Natale.

\begin{proposition}\label{prop-BNappl}
Recall Notation~\ref{not:locmod-relcenter}, and assume that  the forgetful functor $F\colon \cZ_\cB(\cC)\to \cC$ has a right adjoint $R_\cB$ which is faithful and exact. Then, $(R_\cB(\one), c_{R_\cB(\one),-})$ is a commutative algebra in $\cZ_\cB(\cC)$, and there is an equivalence of monoidal categories 
$$\Rep_{\cZ_\cB(\cC)}(R_\cB(\one), c_{R_\cB(\one),-})\;\overset{\otimes}{\simeq}\; \cC.$$
\end{proposition}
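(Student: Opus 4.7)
The plan is to derive both claims from the general theory of Hopf monads arising from monoidal adjunctions, as developed by Brugui\`eres and Natale; the central exact sequence / central algebra perspective is directly applicable once one replaces $\cZ(\cC)$ with its relative version $\cZ_\cB(\cC)$.

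First, the forgetful functor $F\colon \cZ_\cB(\cC)\to\cC$ is strong monoidal, so by doctrinal adjunction its right adjoint $R_\cB$ acquires a canonical lax monoidal structure. Explicitly, its multiplication is the mate of the counit composite
\[
F(R_\cB(X)\otimes R_\cB(Y))=FR_\cB(X)\otimes FR_\cB(Y)\xrightarrow{\varepsilon_X\otimes\varepsilon_Y} X\otimes Y,
\]
and its unit is the adjunction unit $\one\to R_\cB(\one)$. Applying $R_\cB$ to the trivial algebra $\one\in\cC$ thus yields an algebra structure on $R_\cB(\one)\in\cZ_\cB(\cC)$. Its commutativity follows from the standard universal-property argument for algebras built from right adjoints to central functors: the half-braiding of $R_\cB(\one)\in\cZ_\cB(\cC)$ intertwines the two orders of multiplication, mirroring the classical case $\cB=\Vect$ where $R(\one)\cong\int^{X\in\cC} X\otimes X^*$ is the commutative coend algebra.

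For the monoidal equivalence, the faithfulness and exactness of $R_\cB$ imply via Beck's monadicity theorem that $R_\cB$ is monadic; hence $\cC\simeq(\cZ_\cB(\cC))^T$ as categories, where $T=R_\cB F$ is the induced monad on $\cZ_\cB(\cC)$. The projection formula, valid since our categories are rigid finite tensor categories, supplies a natural isomorphism
\[
T(X)\;=\;R_\cB(F(X))\;\cong\;R_\cB(\one)\otimes X,
\]
identifying $T$-modules in $\cZ_\cB(\cC)$ with right $R_\cB(\one)$-modules. Composing produces the desired equivalence $\cC\simeq\Rep_{\cZ_\cB(\cC)}(R_\cB(\one))$. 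The main subtlety is verifying that this equivalence is genuinely \emph{monoidal}: this requires $T$ to be a Hopf monad (not merely a monad) and the projection-formula isomorphism to respect multiplicative structures. Both of these are precisely what the Brugui\`eres-Natale reconstruction theorem provides, and the hypotheses of the proposition---$F$ strong monoidal, $R_\cB$ faithful and exact---are exactly those required to invoke it.
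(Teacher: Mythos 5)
Your proposal is correct and takes essentially the same route as the paper: the paper's entire proof is a one-line appeal to Bruguières--Natale \cite{BN11}*{Proposition~6.1}, and your argument simply unpacks the standard proof of that result (doctrinal adjunction for the algebra structure on $R_\cB(\one)$, Barr--Beck monadicity and the projection formula $R_\cB(F(X))\cong R_\cB(\one)\otimes X$ for the underlying equivalence) before invoking the same Bruguières--Natale machinery for commutativity and monoidality. So there is no substantive difference in approach, only in the level of detail.
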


\begin{proof}
The  result follows from \cite{BN11}*{Proposition~6.1}. 
\end{proof}

\begin{example}\label{expl:R1}
Take the setting of Example~\ref{expl:YDB} for $\cZ_\cB(\cC) \simeq \lYD{H}(\cB)$ with $H$ a Hopf algebra in $\cB$. Let $R_\cB$ be the right adjoint to the forgetful functor $F: \cZ_\cB(\cC)\to \cC$, which is faithful and exact. 
Then,  $R_\cB(\one)$ is the algebra $H$ with the structure of a commutative algebra in  $\lYD{H}(\cB)$ \cite{LW}*{Example~3.17} (see \cite{CFM}*{page~1332}, in the case of $\cB = \Vect$). Now by Proposition~\ref{prop-BNappl}, 
\begin{equation} \label{eq:loc-YDH}
    \Rep_{\lYD{H}(\cB)}(H, c_{H,-})\;\overset{\otimes}{\simeq}\; \lmod{H}(\cB) = \cC.
\end{equation}
\end{example}

\begin{proposition} \label{prop:local-triv}
The monoidal equivalence \eqref{eq:loc-YDH} above restricts to an equivalence of braided tensor categories
$$\locmod_{\lYD{H}(\cB)}(H, c_{H,-})\;\overset{\text{br.}\otimes}{\simeq}\; \cB.$$
\end{proposition}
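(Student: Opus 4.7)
The plan is to construct a braided tensor functor $\widetilde{\iota} \colon \cB \to \locmod_{\lYD{H}(\cB)}(H, c_{H,-})$ and show it is an equivalence, by composing it with the monoidal equivalence $\Phi \colon \lmod{H}(\cB) \tensimeq \Rep_{\lYD{H}(\cB)}(H, c_{H,-})$ of Proposition~\ref{prop-BNappl} and Example~\ref{expl:R1}. First, define an embedding $\iota \colon \cB \to \lYD{H}(\cB)$ by equipping an object $B$ with trivial $H$-action $\varepsilon_H \otimes \ide_B$ and trivial $H$-coaction $u_H \otimes \ide_B$; a direct computation with the YD-braiding formula in Section~\ref{sec:Hopfalg} shows that on such trivial YD-modules the YD-braiding of $\lYD{H}(\cB)$ reduces to the $\cB$-braiding, so $\iota$ is a (fully faithful) braided monoidal functor. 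Then set $\widetilde{\iota} := U \circ \iota$, where $U$ is the free right $H$-module functor from Lemma~\ref{indrep}; concretely, $\widetilde{\iota}(B) = (\iota(B) \otimes H, \; \ide_{\iota(B)} \otimes m_H)$.

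Next, verify that $\widetilde{\iota}(B)$ is actually a local module. Because the YD-structure on the $\iota(B)$-factor is trivial, the YD-braidings $c_{H, \widetilde{\iota}(B)}$ and $c_{\widetilde{\iota}(B), H}$ simplify, and the locality condition $a^r = a^r \circ c_{H, \widetilde{\iota}(B)} \circ c_{\widetilde{\iota}(B), H}$ reduces, after a short diagram chase, to the commutativity of $H$ as an algebra in $\lYD{H}(\cB)$, which holds because $H = R_\cB(\one)$ is commutative by Proposition~\ref{prop-BNappl}.

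To conclude that $\widetilde{\iota}$ is an equivalence of braided tensor categories, observe that $\widetilde{\iota}$ is the composition of the trivial-action functor $\cB \to \lmod{H}(\cB)$ (the $\cB$-central structure map from Example~\ref{expl:YDB}) with $\Phi$. Hence it suffices to show that $\Phi^{-1}\bigl(\locmod_{\lYD{H}(\cB)}(H, c_{H,-})\bigr)$ equals the image of $\cB$ in $\lmod{H}(\cB)$; equivalently, that $R_\cB(V) \in \Rep_{\lYD{H}(\cB)}(H, c_{H,-})$ is local if and only if the $H$-action on $V$ is trivial. The forward direction is handled by the verification above. The main obstacle is the converse direction: this requires unpacking the locality condition on $R_\cB(V)$ using the explicit formula for the right adjoint $R_\cB$ together with the YD-braiding, and isolating the condition that the $H$-action on $V$ factors through $\varepsilon_H$. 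Once this identification is established, braided compatibility is automatic since $\iota$ is braided and the braiding on $\locmod_{\lYD{H}(\cB)}(H, c_{H,-})$ is inherited from the YD-braiding by Proposition~\ref{prop:pareigis}.
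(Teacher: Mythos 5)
Your overall strategy (restrict the equivalence \eqref{eq:loc-YDH} and identify the local modules with the trivial $H$-modules in $\lmod{H}(\cB)$) is the paper's, but the functor you actually construct is not the right one, and the step where you claim it lands in local modules fails. Since $\iota(B)$ has trivial action \emph{and} trivial coaction, both Yetter--Drinfeld braidings between $\iota(B)$ and any object reduce to the braidings of $\cB$; in particular the double braiding of $\iota(B)$ with $H$ in $\lYD{H}(\cB)$ is the double braiding $c^{\cB}_{H,B}c^{\cB}_{B,H}$ of the underlying objects, which is not the identity unless $B$ M\"uger-centralizes $H$ in $\cB$. Commutativity of $H$ only removes the $H$--$H$ crossings: for $M=U(\iota(B))=\iota(B)\otimes H$ with $a^r_M=\ide_B\otimes m_H$, cancelling those crossings via $m_H\,c^{\mathrm{YD}}_{H,H}=m_H$ reduces the locality condition $a^r_M=a^r_M\,c_{H,M}c_{M,H}$ to
\[
(\ide_B\otimes m_H)\bigl(c^{\cB}_{H,B}c^{\cB}_{B,H}\otimes\ide_H\bigr)=\ide_B\otimes m_H,
\]
and precomposing with $\ide_{B\otimes H}\otimes u_H$ forces $c^{\cB}_{H,B}c^{\cB}_{B,H}=\ide_{B\otimes H}$. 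So the free module on $\iota(B)$ is local only in the centralizing case. A concrete failure: let $\cB$ be $\mathbb{Z}_4$-graded vector spaces with braiding $i^{ab}$ on degrees $a,b$, let $H=\Bbbk[x]/(x^4)$ be the quantum line with $x$ in degree $1$, and let $B$ be one-dimensional in degree $1$; on $(\xi\otimes 1_H)\otimes x$ the two crossings each contribute $i$, so $a^r_M\,c_{H,M}c_{M,H}=-\,a^r_M$ there, and $M$ is not local.

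Consequently your ``observation'' that $U\circ\iota\cong\Phi\circ(\text{trivial-action functor})$ is also false: under \eqref{eq:loc-YDH} the object corresponding to the trivial module $B^{\mathrm{triv}}$ is $R_\cB(B^{\mathrm{triv}})$, i.e.\ the coinduced object $H\otimes B$ with the Yetter--Drinfeld structure of \cite{LW}*{Equation~(3.4)} and right $H$-action given by the lax monoidal structure $\tau_{B^{\mathrm{triv}},\one}$; it is precisely this non-free structure (which intertwines the $H$-factor with $B$ through braidings) that makes the double braiding with $H$ untwist, and it is not isomorphic in $\Rep_{\lYD{H}(\cB)}(H,c_{H,-})$ to your free module, since one is local and the other generally is not. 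Hence the forward implication ``trivial action $\Rightarrow$ local'' is not ``handled by the verification above,'' and the construction must be repaired essentially as in the paper, by taking $\Gamma=R_\cB\circ F\circ G$. Your deferral of the converse (``local $\Rightarrow$ trivial action'') is at the same level of detail as the paper's proof and is not by itself the problem, but that computation too has to be carried out for the coinduced structure, not the free one.
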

\begin{proof}
Set the notation $A:= (H,c_{H,-})$, an algebra in $\cZ_\cB(\cC)$.
Consider the monoidal equivalence $R_\cB\colon \lmod{H}(\cB) \to \Rep_{\lYD{H}(\cB)}(A)$ from \eqref{eq:loc-YDH}, which restricts to a monoidal functor
$$\Gamma \colon \cB\xrightarrow{G}\lYD{H}(\cB)\xrightarrow{F}\lmod{H}(\cB) \xrightarrow{R_\cB}  \Rep_{\lYD{H}(\cB)}(A)$$
obtained by composition. Here, $G$ is the $\cB$-central functor for $\lYD{H}(\cB)$, and $F$ is the forgetful functor. Note here that as monoidal categories, $\cB$ and $\ov{\cB}$ are identical and only distinguished by their braiding. Recall that, for an object $X$ in $\cB$, $FG(X)=(X,a^l_X)$ is the left $H$-module with trivial $H$-action given by  $a^l_X=\varepsilon_H\otimes \ide_X$.

Since $\Gamma$ is a restriction of the equivalence \eqref{eq:loc-YDH} to full subcategories, it is fully faithful. Next, we show that $\Gamma$ is essentially surjective.

We observe that by Proposition \ref{prop-BNappl}, every object in $\Rep_{\lYD{H}(\cB)}(A)$ is of the form $R_\cB(V)$ for an object $V$ in $\cC=\lmod{H}(\cB)$. The right action of $A$ is given by the lax monoidal structure $\tau=(\tau_{V,W})_{V,W\in \lmod{H}(\cB)}$ (cf.  \cite{LW}*{Theorem~3.10}) via
\begin{align*}
a^r_{R_\cB(V)}=\tau_{V,\one}\colon R_\cB(V)\otimes A\to R_\cB(V).
\end{align*}
Recall the Yetter--Drinfeld module structure on $R_\cB(V)$ described in \cite{LW}*{Equation~(3.4)}. Note that, by definition of $\Gamma(X)$ we use this Yetter--Drinfeld module structure rather on $\Gamma(X)=R_\cB(FG(X))$, rather than the one obtained by the central functor $G$.
Using this Yetter--Drinfeld structure, we see that $\Gamma(X)$ is a local module for any object $X$ in $\cB$. Thus, the image of the functor $\Gamma$ is contained in the full subcategory of local modules.

Conversely, assume that $R_\cB(V)$ is a local module for some object $V\in \lmod{H}(\cB)$. Then we compute that the condition $$a^r_{R_\cB(V)}c_{H,R_\cB(V)}c_{R_\cB(V),H}=a^r_{R_\cB(V)}$$
implies that $a^l_V=\varepsilon_H \otimes \ide_V$ for the left action of $H$ on $V$. 
Thus, $V=FG(X)$, and the functor $\Gamma$ is essentially surjective onto the category $\locmod_{\lYD{H}(\cB)}(A)$.

   Therefore, $\Gamma$ is an equivalence of categories. Moreover, $\Gamma$ is a monoidal equivalence  by Proposition~\ref{prop-BNappl}, with the natural isomorphism $\mu^{\Gamma}_{X,Y}$ induced by 
$$\mu^{\Gamma}_{X,Y}:=\tau_{GF(X),GF(Y)}\colon \Gamma(X)\otimes \Gamma(Y)\to \Gamma(X\otimes Y).$$

It remains to show that $\Gamma$ is compatible with braidings. Indeed, we compute that
\begin{align*}
    \mu_{X,Y}^{\Gamma} c_{\Gamma(X),\Gamma(Y)}&=(m_H\otimes c_{X,Y})(\ide_H\otimes c_{X,H}\otimes \ide_Y)=\Gamma(c_{X,Y})\mu_{X,Y}^{\Gamma}.
\end{align*}
Thus, $\Gamma\colon \cB\to \locmod_{\lYD{H}(\cB)}(A)$ is a braided tensor equivalence.
\end{proof}


\section{Examples} \label{sec:examples}

In this section, we illustrate our main result, Theorem \ref{thm:locmodular}, for various rigid Frobenius algebras in non-semisimple modular tensor categories. In Section~\ref{sec:pos}, we consider examples in positive characteristic involving non-semisimple commutatative Hopf algebras. In  Section~\ref{sec:Z(kG-mod)}, we extend Davydov's classification of  connected \'etale algebras in $\cZ(\lmod{\Bbbk G})$ from \cite{Dav3} to arbitrary characteristic. The end of both Sections~\ref{sec:pos} and~\ref{sec:Z(kG-mod)}  includes further directions for consideration, and we discuss a conjecture pertaining to completely anisotropic categories and  Witt equivalence in Section~\ref{sec:ques}. To proceed, consider the notation below, which we will use throughout.

\begin{notation}[$A_g$]
For a group $G$ and an algebra $A$ in $\lcomod{\Bbbk G}$, set
$$A_g = \{a \in A ~|~ \delta_A(a) = g \otimes a\}.$$
Note that $A_{1_G}$ is the invariant subalgebra $A^{(\Bbbk G)^*}$ of $A$. 
\end{notation}

\subsection{Examples involving commutative Hopf algebras in positive characteristic}
\label{sec:pos}
In this part, let $\Bbbk$ be a field of characteristic $p>0$. 

\begin{proposition} \label{prop:ex-charp}
Let $K$ be a finite-dimensional, commutative, non-semisimple Hopf algebra over $\Bbbk$ such that its Drinfeld double, $\Drin(K)$, is ribbon. Let $L= \Bbbk N$, for $N$ a finite abelian group with $p \nmid  |N|$. Let $H$ denote the tensor product Hopf algebra $K \otimes L$ over $\Bbbk$. Then:
\begin{enumerate}[(1),font=\upshape]
     \item $\cZ(\lmod{H})$ is a non-semisimple modular tensor category;
    \smallskip
    \item $L$ is a rigid Frobenius algebra in $\cZ(\lmod{H})$;
    \smallskip
    \item $\locmod_{\cZ(\lmod{H})}(L) \simeq  \cZ(\lmod{K})$ as modular (i.e., ribbon)  categories;
    \smallskip
    \item $\FPdim(\locmod_{\cZ(\lmod{H})}(L)) = (\dim_\Bbbk K)^2$ \; and \; $\FPdim_{\cZ(\lmod{H})}(L) = \dim_\Bbbk L = |N|$.
\end{enumerate}
\end{proposition}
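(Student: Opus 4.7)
The approach is to leverage the tensor product structure $H = K \otimes L$ and reduce everything to the factors $\cZ(\lmod{K})$ (modular by hypothesis) and $\cZ(\lmod{L})$ (for a group algebra of an abelian group with order coprime to $p$). For part~(1), since $\Drin(H) \cong \Drin(K) \otimes \Drin(L)$, the equivalences \eqref{eq:Drin} and \eqref{eq:boxtimes}, together with Lemma~\ref{lem:Z-Deligne}, will produce a braided tensor equivalence $\cZ(\lmod{H}) \simeq \cZ(\lmod{K}) \boxtimes \cZ(\lmod{L})$. I would then observe that $\cZ(\lmod{K})$ is modular because $\Drin(K)$ is ribbon (by assumption) and is always factorizable, while $\cZ(\lmod{L})$ is modular since $L = \Bbbk N$ with $p \nmid |N|$ makes $\Drin(L) \simeq \Bbbk N \otimes \Bbbk \widehat{N}$ a semisimple factorizable ribbon Hopf algebra. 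Modularity is preserved under the Deligne tensor product, and non-semisimplicity of $K$ forces non-semisimplicity of the whole.

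For part~(2), I would view $L$ as the commutative algebra in $\lYD{L} \simeq \cZ(\lmod{L})$ with trivial left $L$-action and coregular left $L$-coaction $\delta(n) = n \otimes n$ for $n \in N$. Transferring via $\one \boxtimes L$ under the equivalence of~(1) produces a commutative algebra in $\cZ(\lmod{H})$. Rigid Frobenius will be checked using Proposition~\ref{prop:conn-etale}: connectedness follows because $\Hom_{\cZ(\lmod{L})}(\one, L)$ picks out the $\delta$-coinvariants of $L$, which is $\Bbbk \cdot 1_L$; separability is witnessed by the idempotent $|N|^{-1}\sum_{n \in N} n \otimes n^{-1}$, valid since $p \nmid |N|$; the quantum dimension equals $|N| \neq 0$ since the forgetful functor to $\Vect$ preserves dimensions; and the Yetter--Drinfeld twist on an object with trivial $L$-action is the identity, so $L$ has trivial twist. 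Proposition~\ref{prop:conn-etale} will then yield that $L$ is a rigid Frobenius algebra in $\cZ(\lmod{H})$.

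For part~(3), the equivalence from~(1) identifies $L \in \cZ(\lmod{H})$ with $\one_{\cZ(\lmod{K})} \boxtimes L$, so Lemma~\ref{lem:locmodAB} will provide a braided equivalence
$$\locmod_{\cZ(\lmod{H})}(L) \;\simeq\; \locmod_{\cZ(\lmod{K})}(\one) \;\boxtimes\; \locmod_{\cZ(\lmod{L})}(L) \;\simeq\; \cZ(\lmod{K}) \;\boxtimes\; \locmod_{\cZ(\lmod{L})}(L).$$
Applying Proposition~\ref{prop:local-triv} to the Hopf algebra $L$ over $\cB = \Vect$ (where $\lYD{L}(\Vect) \simeq \cZ(\lmod{L})$) yields $\locmod_{\cZ(\lmod{L})}(L) \simeq \Vect$ as braided tensor categories, and composing gives $\locmod_{\cZ(\lmod{H})}(L) \simeq \cZ(\lmod{K})$. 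Both sides are already known to be modular (part~(1) and Theorem~\ref{thm:locmodular}), and the ribbon structures will match because the twist on the local modules is induced from that on $\cZ(\lmod{H})$ while $L$ itself has trivial twist, introducing no extra scaling on the $\cZ(\lmod{K})$ factor.

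Part~(4) then follows immediately: $\FPdim_{\cZ(\lmod{H})}(L) = \dim_\Bbbk L = |N|$ since any fiber functor preserves Frobenius--Perron dimension, and $\FPdim(\locmod_{\cZ(\lmod{H})}(L)) = \FPdim(\cZ(\lmod{K})) = (\dim_\Bbbk K)^2$ by~\eqref{eq:FP-ZC} (alternatively, via Corollary~\ref{cor:locmod-FPdim}). The main obstacle I anticipate is verifying the compatibility of \emph{ribbon} (not merely braided monoidal) structures through the chain of equivalences in part~(3); this reduces to the triviality of $\theta_L$ established in part~(2), which is precisely what upgrades the braided tensor equivalence to a ribbon (i.e. modular) one.
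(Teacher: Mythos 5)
Your proposal is correct and, for parts (1), (3) and (4), follows essentially the same route as the paper: decompose $\cZ(\lmod{H})\simeq\cZ(\lmod{K})\boxtimes\cZ(\lmod{L})$ via \eqref{eq:boxtimes}, \eqref{eq:Drin} and Lemma~\ref{lem:Z-Deligne}, identify $L$ with $\one\boxtimes L$, obtain (3) from Lemma~\ref{lem:locmodAB} together with Proposition~\ref{prop:local-triv} applied with $\cB=\Vect$, and get (4) from \eqref{eq:FP-ZC} and Corollary~\ref{cor:locmod-FPdim} (your fiber-functor shortcut via Remark~\ref{rem:FPbound} is a fine alternative for $\FPdim_{\cZ(\lmod{H})}(L)$). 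The genuine divergence is part (2): the paper verifies Definition~\ref{def:rigid-Frob} directly, exhibiting $\Delta_L(a)=\sum_{b\in N}b\otimes b^{-1}a$ and $\varepsilon_L(a)=\delta_{e_N,a}1_\Bbbk$ and checking they are morphisms in $\lYD{H}$, so no pivotal or ribbon data enter at all; you instead invoke Proposition~\ref{prop:conn-etale}(b) (connected \'etale, $\dim_j\neq 0$, trivial twist), which is legitimate once part (1) supplies the ribbon structure, but it makes the argument depend on that structure and obliges you to verify the twist and quantum-dimension conditions. On that point your justification is the one loose step: the forgetful functor to $\Vect$ does \emph{not} in general preserve quantum dimensions (the pivot of $\Drin(H)$ need not act trivially), so you should compute $\dim_j(L)=|N|\neq 0$ and $\theta_L=\ide_L$ directly — e.g.\ using that the $H$-action on $L$ is trivial, so the ribbon and pivot elements act on $L$ only through the grading and one checks they act as the identity for the standard ribbon element on the $\Drin(L)$-factor, or note that once the Frobenius structure above is in hand, $\dim_j(L)=\varepsilon_L m\Delta_L u=|N|$ by \eqref{dimj-Frob}. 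With that repaired, your plan goes through and matches the paper's conclusions, including the ribbon-compatibility remark in (3).
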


\begin{proof}
(1) By \eqref{eq:boxtimes}, \eqref{eq:Drin}, and Lemma \ref{lem:Z-Deligne}, we have that $$\cZ(\lmod{H}) \simeq \lmod{\Drin(K)} \; \boxtimes\;  \lmod{\Drin(L)}$$ as braided monoidal categories. By the assumptions on $K$ and $L$, both $\lmod{\Drin(K)}$ and $\lmod{\Drin(L)}$ are non-semisimple modular categories; see \cite{Rad}*{Theorems~13.2.1 and~13.7.3}. Thus, $\cZ(\lmod{H})$ is as well, and the equivalence above upgrades to an equivalence of such structures.

\smallskip

(2) To start, let us identify elements $\ell \in L$ with elements $1_K \otimes \ell \in H$. Next, it is straight-forward to check that $L = \Bbbk N$ is an algebra in $\lYD{H}$ via the adjoint action and regular coaction, 
$$h \cdot \ell = h_1 \; \ell\; S(h_2) = \varepsilon(h) \;\ell, \qquad \quad \delta_L(\ell) =  \ell_{-1} \otimes \ell_0 = \ell \otimes \ell \qquad \text{(Sweedler notation)},$$
for all $h \in H$, and $\ell \in N$ (and extending linearly to $L$). The commutativity of $H$ is used in computations here. Moreover, the braiding of $\lYD{H}$ is given by $c_{L,L}(\ell \otimes \ell') = (\ell_{-1} \cdot \ell') \otimes \ell_0$, for all $\ell, \ell' \in L$. Now, $m\; c_{L,L}(a \otimes b) = \varepsilon(a) b a = ba = m(a \otimes b)$, for all $a, b \in N$. Therefore, $L$ is a commutative algebra in $\lYD{H}$.

Now we show that $L$ is a connected algebra in $\lYD{H}$. We have that 
$$ 1 \; \leq \; \dim_\Bbbk \Hom_{\lYD{H}}(\Bbbk, L) \; =\; \dim_\Bbbk L^{\Drin(H)} \; \leq \; \dim_\Bbbk L^{H^*}.$$
The first inequality holds as $u_L \in \lYD{H}$. The second equality holds as the isomorphism, $\Hom_\Bbbk(\Bbbk, L) \cong L$, given by $f \mapsto f(1_\Bbbk)$, descends to $\Hom_{\lmod{\Drin(H)}}(\Bbbk, L) \cong L^{\Drin(H)}$. Since $H^*$ is a Hopf subalgebra of $\Drin(H)$, we get that  $L^{\Drin(H)}$ is a invariant subalgebra of $L^{H^*}$. Finally,
$$L^{H^*} = L_1 = \{\ell \in L ~|~ \ell_{-1} \otimes \ell_0 = 1_H \otimes \ell\} = \Bbbk 1_L.$$
So, $\dim_\Bbbk L^{H^*} = 1$, and thus, $\dim_\Bbbk \Hom_{\lYD{H}}(\Bbbk, L) = 1$. Hence, $L$ is a connected algebra in $\lYD{H}$.

Now we show that $L = \Bbbk N$ is a special Frobenius in $\lYD{H}$. Take
$$\textstyle \Delta_L(a) = \sum_{b \in N} b \otimes b^{-1}a \quad \text{and} \quad \varepsilon_L = \delta_{e_N,a} 1_\Bbbk, \quad \text{for all } a \in N.$$
It is straight-forward to check that $(L, \Delta_L, \varepsilon_L)$ is a Frobenius algebra over $\Bbbk$. Next, $\Delta_L$ and  $\varepsilon_L$ are maps in $\lYD{H}$ due to the following computations: For all $h \in H$ and $a \in N$,
\[
\begin{array}{rl}
 \medskip
 \Delta_L(h \cdot a) &=  \;\textstyle \sum_{b \in N} b \otimes b^{-1}(h \cdot a) \; = \; \varepsilon(h) \sum_{b \in N} b \otimes b^{-1}a\\
 \vspace{.15in}
 &=  \; \textstyle \sum_{b \in N} \varepsilon(h_1) b \otimes  \varepsilon(h_2) b^{-1}a  \;=  \;\sum_{b \in N} (h_1 \cdot b) \otimes  (h_2 \cdot b^{-1}a) \; =  \; h \cdot \Delta_L(a); \\ 
\textstyle  \varepsilon_L(h\cdot a) &=  \; \varepsilon(h)\delta_{e_N, a} 1_{\Bbbk} \;=\; h \cdot \varepsilon_L(a);  \vspace{.15in}\\
 \medskip
 (\ide_H \otimes \Delta_L)\lambda^H_L(a) &= (\ide_H \otimes \Delta_L)(a \otimes a) = \textstyle \sum_{b \in N} a \otimes b \otimes b^{-1}a =  \sum_{b \in N} b b^{-1} a \otimes b \otimes b^{-1}a \\
& = \lambda^H_{L \otimes L} \Delta_L(a); \vspace{.15in}\\
 (\ide_H \otimes \varepsilon_L)\lambda^H_L(a) &= (\ide_H \otimes \varepsilon_L)(a \otimes a) \;=\; \delta_{e_N, a}(a \otimes 1_{\Bbbk}) \;=\;  \delta_{e_N, a} \lambda^H_{\Bbbk}(1_{\Bbbk}) \;=\; \lambda^H_{\Bbbk} \varepsilon_L(a).
\end{array}
\]
Here, $\lambda^H_X$ denotes the left $H$-coaction on $X$. Thus, $(L,\Delta_L,\varepsilon_L)$ is a Frobenius algebra in $\lYD{H}$. Finally, $\varepsilon_L \; u = \ide_{\Bbbk}$ and
$m \; \Delta_L = |N| \ide_{\Bbbk}$. In particular, $|N| \neq 0$ as $p$ does not divide $|N|$. So, $L$ is a special Frobenius algebra, and thus, a rigid Frobenius algebra in $\lYD{H}$.

\smallskip 

(3) We continue to identify elements $\ell \in L$ with elements $1_K \otimes \ell \in H$. We then have the following natural equivalences of ribbon categories:
\[
\begin{array}{rl}
\medskip
\locmod_{\cZ(\lmod{H})}(L) 
&\simeq \locmod_{\cZ(\lmod{K}) \boxtimes \cZ(\lmod{L})}(\one_{\lmod{K}} \boxtimes L) \\
\medskip
&\simeq \locmod_{\cZ(\lmod{K})}(\one_{\lmod{K}}) \boxtimes \locmod_{\cZ(\lmod{L})}(L)\\
\medskip
&\simeq \cZ(\lmod{K}) \boxtimes \Vect\\
&\simeq \cZ(\lmod{K}).
\end{array}
\]
The first equivalence follows from \eqref{eq:boxtimes} and Lemma \ref{lem:Z-Deligne}. Indeed, the Frobenius algebra $L$ in $\cZ(\lmod{H})$ is the image of $\one_{\lmod{K}}\boxtimes L$ under the functor given there. The third equivalence holds by Proposition~\ref{prop:local-triv}. 

\smallskip 

(4) The first equation follows directly from part (3), and we have by Corollary~\ref{cor:locmod-FPdim} that 
$$\FPdim(\Rep_{\cZ(\lmod{H})}(L))
= \frac{[\FPdim(\cZ(\lmod{H}))]^{1/2}}{[\FPdim(\locmod_{\cZ(\lmod{H})}(L))]^{1/2} }  = \frac{\dim_\Bbbk H}{\dim_\Bbbk K} =  \dim_\Bbbk L.$$

\vspace{-.2in}
\end{proof}

\begin{remark}
Even though Proposition~\ref{prop:ex-charp} does not yield new non-semisimple modular tensor categories, part (3) of this result does illustrate interesting correspondences between modular categories (of local modules) in the non-semisimple setting. Compare to \cite{FFRS}*{(1.13)-(1.15) and Theorem~7.6}.
\end{remark}

\begin{example} \label{ex:charp}
Examples of finite-dimensional, commutative, non-semisimple Hopf algebras $K$ over~$\Bbbk$, with $\Drin(K)$ ribbon, include the following.
\begin{enumerate}
    \item One could take $K = \Bbbk G$, for $G$ a finite abelian group with $ |G|$ divisible by $p$.
    \smallskip
    \item One could also take $K = u^{[p]}(\mathfrak{g})$, the restricted enveloping algebra of an abelian restricted Lie algebra $\mathfrak{g}$. Namely, $\text{Tr(ad}(x))=0$ for all $x \in \mathfrak{g}$, so by \cite{Humphreys}*{Theorems~1 and~3},  $u^{[p]}(\mathfrak{g})$ is unimodular. On the other hand, $u^{[p]}(\mathfrak{g})$ is cocommutative, so $(u^{[p]}(\mathfrak{g}))^*$ is commutative and thus is unimodular. Now $\Drin(u^{[p]}(\mathfrak{g}))$ is ribbon by \cite{KR93}*{Theorem~3(b)}.
\end{enumerate}
We thank Cris Negron for discussions toward obtaining (2) above.
\end{example}

\begin{question} \label{ques:charp}
 Can one generalize Proposition~\ref{prop:ex-charp} by replacing $L = \Bbbk N$ with a more general commutative Hopf algebra $L$ over $\Bbbk$, or further, by removing the commutativity assumptions on the $\Bbbk$-Hopf algebras $K$ and $L$? 
\end{question}


\subsection{Rigid Frobenius algebras in \texorpdfstring{$\cZ(\lmod{\Bbbk G})$}{Z(Rep-loc(kG))} and their local modules}\label{sec:Z(kG-mod)}
In this subsection, let $G$ be a finite group, and let $\Bbbk$ be an algebraically closed field of arbitrary characteristic. Our goal here is to  upgrade Davydov's classification of rigid Frobenius algebras in $\cZ(\lmod{\Bbbk G})$ [Theorem~\ref{thm:davclassification}], and to study their categories of local modules [Corollary~\ref{cor:localmodules}].  We work over a field of arbitrary characteristic, whereas Davydov only considers the characteristic 0 setting; our arguments are packaged in the language of rigid Frobenius algebras instead of the equivalent notion of indecomposable étale algebras as done in Davydov (cf. Proposition~\ref{prop:conn-etale}). 

\smallskip
To start, note that we will employ the braided monoidal equivalence between $\cZ(\lmod{\Bbbk G})$ and $\lYD{\Bbbk G}$ often without mention; see \eqref{eq:Drin}. Next, recall the following notation from \cite{Dav3}.

\begin{notation}[$H,N,\gamma,\epsilon$] \label{not:cocycledata}
Let $H$ be a finite group, and $N\triangleleft H$ a normal subgroup with $|N| \in \Bbbk^\times$. 

Let $\gamma\in Z^2(N,\Bbbk^\times)$ be a normalized $2$-cocycle, that is:
\begin{align}\label{eq:cocycle}
    \gamma(n,m)\gamma(nm,r)&=\gamma(n,mr)\gamma(m,r), & \gamma(n,1_N)&=\gamma(1_N,n)=1_{\Bbbk}, &\forall n,m,r\in N.
\end{align}

Moreover, let 
$$\epsilon\colon H\times N\to \Bbbk^\times, \quad (h,n)\mapsto \epsilon_h(n)$$
be a map such that for all $g,h\in H$ and $n,m\in N$,
\begin{align}
    \epsilon_{gh}(n)&=\epsilon_g(hnh^{-1})\epsilon_h(n),  \label{eq:Davcond1}\\
    \epsilon_h(nm)\gamma(n,m)&=\epsilon_h(n)\epsilon_h(m)\gamma(hnh^{-1},hmh^{-1}), \label{eq:Davcond2}\\
    \gamma(n,m)&=\epsilon_n(m)\gamma(nmn^{-1},n).
    \label{eq:Davcond3}
\end{align}
In particular, the normalized condition on $\gamma$, along with \eqref{eq:Davcond2} and \eqref{eq:Davcond3} respectively, imply that
\begin{align}
    \epsilon_{h}(1_N) = 1_{\Bbbk}, \qquad \epsilon_{1_H}(n) = 1_{\Bbbk}. \label{eq:Davcond4}
\end{align}
\end{notation}

The rigid Frobenius algebras in $\cZ(\lmod{\Bbbk G})$, up to isomorphism, will depend on the data above along with a subgroup $H$ of $G$, and will be denoted by $A(H,N,\gamma,\epsilon)$ later [Theorem~\ref{thm:davclassification}]. Moreover, $A(H,N,\gamma,\epsilon)$ will be an inflation of the rigid Frobenius algebra $B(N,\gamma,\epsilon)$ in $\cZ(\lmod{\Bbbk H})$ constructed below, via the subsequent Remark~\ref{rem:laxmon}. 
Next, compare the next result to \cite{Dav3}*{Proposition~3.4.2}.

\begin{proposition}[$B(N,\gamma,\epsilon)$] \label{prop:B(N,g,e)} Recall Notation~\ref{not:cocycledata}.
\begin{enumerate}[font=\upshape]
    \item Consider the $\Bbbk$-vector space $B(N,\gamma, \epsilon)$ with $\Bbbk$-basis $\{ e_n \mid n\in N\}$, and with
    \begin{enumerate}[font=\upshape]
    \smallskip
        \item[(i)] left $\Bbbk H$-action given by $h \cdot e_n=\epsilon_h(n) e_{hnh^{-1}}$, for $h\in H$;
        \smallskip
        \item[(ii)] left $\Bbbk H$-coaction given by $\delta(e_n)=n\otimes e_n$, that is, $e_n$ is homogeneous of degree $n\in N$; 
        
        \smallskip
        
         \item[(iii)] multiplication $m_B$ given by $e_n e_m = \gamma(n,m) e_{nm}$ for all  $ n,m\in N$; \smallskip
         \item[(iv)] unit $u_B$ given by $u_B(1_\Bbbk):=1_B = 1_{\Bbbk} e_{1_N}$.\smallskip
    \end{enumerate}
    Then, $B(N,\gamma, \epsilon)$ is a connected, commutative algebra in $\cZ(\lmod{\Bbbk H}) \simeq \lYD{\Bbbk H}$. \medskip
    \item Further, $B(N,\gamma, \epsilon)$ is a rigid Frobenius algebra in $\cZ(\lmod{\Bbbk H})$ with 
    $$\Delta_B(e_n) = \sum_{m\in N}\frac{\gamma(m^{-1},n)}{\gamma(m^{-1},m)} \;e_{m}\otimes e_{m^{-1}n}, \qquad \varepsilon_B(e_n) = \delta_{n,1_N}1_\Bbbk$$
 for all $n\in N$. \smallskip
    \item Every rigid Frobenius algebra $B$ in $\cZ(\lmod{\Bbbk H})$ such that $\dim_\Bbbk B_1=1$ is isomorphic to one of the form $B(N,\gamma, \epsilon)$, for some choice of data $N, \gamma, \epsilon$.
\end{enumerate}
\end{proposition}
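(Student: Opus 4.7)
For part (1), I would verify each axiom of a commutative algebra in $\lYD{\Bbbk H}$ directly from the defining data. Yetter--Drinfeld compatibility for a $\Bbbk H$-object reads $\delta(h \cdot v) = h v_{-1} h^{-1} \otimes h \cdot v_0$, and for $v = e_n$ both sides evaluate to $h n h^{-1} \otimes \epsilon_h(n) e_{h n h^{-1}}$. The $\Bbbk H$-action being an algebra map is exactly~\eqref{eq:Davcond2}; the coaction being an algebra map is immediate from the grading. Associativity of $m_B$ is the cocycle identity~\eqref{eq:cocycle}, and unitality is its normalization. Braided commutativity follows from the computation $m_B \circ c_{B,B}(e_n \otimes e_m) = \epsilon_n(m) \gamma(nmn^{-1}, n) e_{nm}$, which equals $\gamma(n,m) e_{nm} = m_B(e_n \otimes e_m)$ by~\eqref{eq:Davcond3}. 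Connectedness uses that $B_{1_N} = \Bbbk e_{1_N}$ is $H$-invariant by~\eqref{eq:Davcond4}.

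For part (2), I would first check that $\Delta_B$ and $\varepsilon_B$ are morphisms in $\lYD{\Bbbk H}$; this comes down to \eqref{eq:Davcond2} together with the fact that $\varepsilon_B$ factors through the $H$-invariant piece $B_{1_N}$. Coassociativity and the Frobenius compatibility~\eqref{eq:Frob-comp} reduce to further cocycle manipulations. Specialness is the key check: $\varepsilon_B u_B = \ide_{\one}$ is immediate, and expanding $m_B \Delta_B(e_n)$ gives $\sum_{m \in N} \gamma(m^{-1}, n) \gamma(m, m^{-1} n) / \gamma(m^{-1}, m) \cdot e_n$. Applying~\eqref{eq:cocycle} to the triple $(m, m^{-1}, n)$ gives $\gamma(m, m^{-1} n) \gamma(m^{-1}, n) = \gamma(m, m^{-1})$, and applying it to $(m, m^{-1}, m)$ gives the symmetry $\gamma(m, m^{-1}) = \gamma(m^{-1}, m)$, so each summand collapses and $m_B \Delta_B = |N| \ide_B$.

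For part (3), start with a rigid Frobenius algebra $B$ in $\cZ(\lmod{\Bbbk H})$ satisfying $\dim_\Bbbk B_1 = 1$, and decompose $B = \bigoplus_{h \in H} B_h$ via the coaction. Set $N := \{h \in H : B_h \neq 0\}$. Since the coaction is an algebra map, $B_h B_g \subseteq B_{hg}$; since $\varepsilon_B$ factors through $B_1$, the Frobenius pairing $p = \varepsilon_B m_B$ vanishes on $B_h \otimes B_g$ for $hg \neq 1_H$ and restricts to a non-degenerate pairing $B_h \otimes B_{h^{-1}} \to B_1 \cong \Bbbk$. The Yetter--Drinfeld condition gives $h \cdot B_g = B_{h g h^{-1}}$, so $N$ is closed under conjugation; combined with the pairing, $N$ is a normal subgroup of $H$. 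The crux is $\dim B_n = 1$ for $n \in N$: for $b, b' \in B_n$ and $c \in B_{n^{-1}}$, associativity $(bc) b' = b (cb')$ reads $p(b, c)\, b' = p(c, b')\, b$ in $B_n$, and choosing $c$ with $p(b, c) \neq 0$ by non-degeneracy forces $b'$ to be a scalar multiple of $b$. Once $\dim B_n = 1$, pick basis elements $e_n \in B_n$ (with $e_{1_N} := 1_B$) and define $\gamma(n, m), \epsilon_h(n) \in \Bbbk^\times$ by $e_n e_m = \gamma(n, m) e_{nm}$ and $h \cdot e_n = \epsilon_h(n) e_{h n h^{-1}}$; the algebra, module, Yetter--Drinfeld, and braided commutativity axioms then translate respectively into \eqref{eq:cocycle}, \eqref{eq:Davcond1}, \eqref{eq:Davcond2}, and \eqref{eq:Davcond3}, yielding the isomorphism $B \cong B(N, \gamma, \epsilon)$. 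The principal obstacle is the dimension bound $\dim B_n = 1$; everything else is bookkeeping with cocycles.
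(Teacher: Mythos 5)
Your proposal is correct. For parts (1) and (2) it takes essentially the paper's route: you match \eqref{eq:cocycle}--\eqref{eq:Davcond4} to the algebra, module, Yetter--Drinfeld, equivariance and braided-commutativity axioms exactly as the paper does, your braiding computation $m_B c_{B,B}(e_n\otimes e_m)=\epsilon_n(m)\gamma(nmn^{-1},n)e_{nm}$ is the intended use of \eqref{eq:Davcond3}, and your expansion of $m_B\Delta_B$ via the cocycle identity on $(m,m^{-1},n)$ and $(m,m^{-1},m)$ is precisely the computation the paper summarizes as $m_B\Delta_B(e_n)=|N|e_n$, with $|N|\in\Bbbk^\times$ from Notation~\ref{not:cocycledata} giving specialness. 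The real difference is in part (3): the paper is terse there, invoking Proposition~\ref{prop:conn-etale} to see that $B$ is indecomposable \'etale and then deferring to Davydov's Proposition~3.4.2, whereas you give the self-contained grading/pairing argument; this is essentially the content of that cited proof, and your crux step ($p(b,c)\,b'=p(c,b')\,b$ in $B_n$, forcing $\dim_\Bbbk B_n=1$) is sound, using $\dim_\Bbbk B_1=1$ to identify $bc$ and $cb'$ with multiples of $1_B$. Two points deserve a sentence each rather than being filed under bookkeeping. First, closure of $N$ under products: from the non-degenerate pairing $B_n\otimes B_{n^{-1}}\to\Bbbk$ and $\dim_\Bbbk B_1=1$ one gets that every nonzero homogeneous element is invertible (choose $c$ with $bc$ a nonzero multiple of $1_B$, likewise on the other side), whence $B_nB_m\neq 0$ and $nm\in N$; the pairing alone only gives closure under inverses. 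Second, since $\Bbbk$ may have positive characteristic, you must check that the extracted $N$ satisfies $|N|\in\Bbbk^\times$ as required by Notation~\ref{not:cocycledata}: by \eqref{dimj-Frob} the quantum dimension of the underlying object computes to $|N|\cdot 1_\Bbbk$ using the explicit Frobenius structure, and rigidity of $B$ forces $\dim_j(B)\neq 0$ by \eqref{dimj-beta}, so $|N|$ is invertible and the data is admissible. Neither point changes your strategy; they just complete the translation into the constraints of Notation~\ref{not:cocycledata}.
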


\begin{proof}
(1) The cocycle conditions \eqref{eq:cocycle} ensure that $B$ is a $\Bbbk$-algebra, and \eqref{eq:Davcond1} implies that $B$ is a $H$-module, while it is clear that $B$ is a Yetter--Drinfeld module over $\Bbbk H$ as $N$ is closed under conjugation. One checks that \eqref{eq:Davcond2}  ensures that the multiplication is a morphism in $\lYD{\Bbbk H}$. Moreover, $u_B\colon \one\to B$ is a morphism in $\lYD{\Bbbk H}$ by  \eqref{eq:Davcond4}. Thus, $B$ is an algebra in $\cZ(\lmod{\Bbbk H})$. By~\eqref{eq:Davcond3}, $B$ is commutative. Further, $B$ is connected since 
$$1 = \dim_\Bbbk \Bbbk 1_B \leq \dim_\Bbbk \Hom_{\lYD{\Bbbk H}}(\one, B)\leq \dim_\Bbbk \Hom_{\lcomod{\Bbbk H}}(\one, B) =  \dim_{\Bbbk}B^{H^*} =  \dim_{\Bbbk}B_1=1.$$ 

\smallskip

(2) It is straight-forward to check that $\Delta_B$ and $\varepsilon_B$ are morphisms in $\lYD{\Bbbk H}$. Now,  by \eqref{eq:cocycle}, $(B,\Delta_B, \varepsilon_B)$ is a Frobenius algebra in $\lYD{\Bbbk H}$. Finally, $m\Delta_B(e_n) = |N|e_n$, with $|N| \neq 0$, and $\varepsilon u(1_\Bbbk) = 1_\Bbbk$. So, $B$ is special Frobenius, as required.

\smallskip

(3) If $B$ is a rigid Frobenius algebra in $\cZ(\lmod{\Bbbk H})$ with $\dim_\Bbbk B_1=1$, then $B$ is indecomposable and étale by Proposition~\ref{prop:conn-etale}. Now proceeding as in \cite{Dav3}*{Proposition~3.4.2} yields the result.
\end{proof}

\begin{remark}\label{rem:laxmon}
Take a subgroup $H$  of $G$. Then the restriction functor $F\colon \lmod{\Bbbk G}\to \lmod{\Bbbk H}$ has a biadjoint functor given by induction $R\colon \lmod{\Bbbk H}\to \lmod{\Bbbk G}$,
which is faithful and exact.
In particular, the induction functor $R$ is lax monoidal with

\begin{align*}
R_{V,W}\colon R(V)\otimes R(W)&\to R(V\otimes W),\\ (g\otimes v)\otimes(k\otimes w) &\mapsto 
\begin{cases}
k\otimes (k^{-1}g v\otimes w), &\text{if $gH=kH$}\\
0, & \text{else,}
\end{cases}
\end{align*}
for $\Bbbk H$-modules $V,W$ 
 (cf. \cite{FHL}*{Appendix~B} for the formulas). The unit morphism of this lax monoidal structure is $R_0\colon \Bbbk \to R(\one), 1_\Bbbk \mapsto \sum_{i\in I}g_i$, where $G=\bigsqcup_{i\in I}g_i H$ is a left coset decomposition. In fact, say by \cite{FHL}*{Proposition~B.1}, $R$ induces a lax monoidal functor $$\widetilde{R}:\cZ(\lmod{\Bbbk H})\to \cZ(\lmod{\Bbbk G}).$$ 
\end{remark}

This brings us to the main result of the section, which is \cite{Dav3}*{Theorem~3.5.1} revised in the language of rigid Frobenius algebras. Its proof uses a different approach than in \cite{Dav3} by way of Remark~\ref{rem:laxmon}, along with Proposition~\ref{prop:B(N,g,e)}.

\begin{theorem}[$A(H,N,\gamma,\epsilon)$] \label{thm:davclassification} Recall Notation~\ref{not:cocycledata}, take a subgroup $H$ of $G$ with $|G:H| \in \Bbbk^{\times}$. Fix a coset decomposition $G=\bigsqcup_{i\in I} g_i H$.
\begin{enumerate}[font=\upshape]
    \item Let $A(H,N,\gamma, \epsilon)$ be the quotient $\Bbbk$-vector space spanned by $\{ a_{g,n} \mid g \in G, n\in N\}$, subject to the relations:
    \begin{align}
    a_{gh,n}=\epsilon_h(n)a_{g,hnh^{-1}}, & \; \; \forall h\in H,\label{eq:Davquotrel}
\end{align}
    and with
    \begin{enumerate}[font=\upshape]
    \smallskip
        \item[(i)] left $\Bbbk G$-action given by  $k\cdot a_{g,n}=a_{kg,n}$, for $k\in G$;
        \smallskip
        \item[(ii)] left $\Bbbk G$-coaction given by $\delta(a_{g,n})=g ng^{-1}\otimes a_{g,n}$; 
        \smallskip
         \item[(iii)] multiplication $m_A$ given by $$a_{g,n}a_{k,m}=\delta_{gH,kH} \;\gamma(k^{-1}gng^{-1}k,m)\;\epsilon_{k^{-1}g}(n)\;a_{k,k^{-1}gng^{-1}km},$$ for $g,k\in G$ and $n,m\in N$ (note that $k^{-1} g \in H$ if and only if $gH = kH$); \smallskip
         \item[(iv)] unit $u_A$ given by $u_A(1_\Bbbk) := 1_A =\sum_{i\in I}a_{g_i,1_N}$.\smallskip
    \end{enumerate}
    Then, $A(H,N,\gamma, \epsilon)$ is a connected, commutative algebra in $\cZ(\lmod{\Bbbk G}) \simeq \lYD{\Bbbk G}$. \medskip
    \item Further, $A(H,N,\gamma, \epsilon)$ is a rigid Frobenius algebra in $\cZ(\lmod{\Bbbk G})$ with 
    $$\Delta_A(a_{g,n}) = \sum_{m\in N}\frac{\gamma(m^{-1},n)}{\gamma(m^{-1},m)} \;a_{g,m}\otimes a_{g,m^{-1}n}, \qquad \varepsilon_A(a_{g,n}) = \delta_{n,1_N}1_\Bbbk,$$
 for all $g\in G$ and $n\in N$. \smallskip

    \item Every rigid Frobenius algebra  in $\cZ(\lmod{\Bbbk G})$  is isomorphic to one of the form $A(H,N,\gamma, \epsilon)$, for some choice of data $H,N, \gamma, \epsilon$.
\end{enumerate}
\end{theorem}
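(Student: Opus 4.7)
The plan is to realize $A(H,N,\gamma,\epsilon)$ as the image of $B(N,\gamma,\epsilon)$ under the lax monoidal induction functor $\widetilde{R}\colon \cZ(\lmod{\Bbbk H}) \to \cZ(\lmod{\Bbbk G})$ of Remark~\ref{rem:laxmon}, thereby obtaining parts (1) and (2), and then to establish part (3) by reverse-engineering the data $(H,N,\gamma,\epsilon)$ from an arbitrary rigid Frobenius algebra in $\cZ(\lmod{\Bbbk G})$.

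For parts (1) and (2), I would identify $\widetilde{R}(B(N,\gamma,\epsilon)) = \Bbbk G \otimes_{\Bbbk H} B(N,\gamma,\epsilon)$ with $A(H,N,\gamma,\epsilon)$ via $g \otimes e_n \mapsto a_{g,n}$. Under this identification, balancing over $\Bbbk H$ combined with the $H$-action of Proposition~\ref{prop:B(N,g,e)}(1)(i) recovers the defining relation \eqref{eq:Davquotrel}, while the induced $\Bbbk G$-action and $\Bbbk G$-coaction on $\widetilde{R}(B)$ match (i) and (ii). Unwinding the lax structure $R_{B,B}$ of Remark~\ref{rem:laxmon} and composing with $R(m_B)$ produces the product formula (iii), and $R(u_B) \circ R_0$ gives the unit in (iv). Since lax monoidal functors preserve algebras, $A(H,N,\gamma,\epsilon)$ is an algebra in $\cZ(\lmod{\Bbbk G})$; commutativity is inherited from $B$ after a check on generators using \eqref{eq:cocycle}--\eqref{eq:Davcond3}; and connectedness follows from the adjunction of Remark~\ref{rem:laxmon}:
\[
\Hom_{\cZ(\lmod{\Bbbk G})}(\one, \widetilde{R}(B)) \cong \Hom_{\cZ(\lmod{\Bbbk H})}(\one, B) \cong \Bbbk.
\]
For (2), since $R$ is biadjoint to the strong monoidal functor $F$, it carries a Frobenius monoidal structure whose oplax part transports $(\Delta_B, \varepsilon_B)$ of Proposition~\ref{prop:B(N,g,e)}(2) to the stated $(\Delta_A, \varepsilon_A)$; the identities $m_A \Delta_A = |N| \ide_A$ and $\varepsilon_A u_A = \ide_\Bbbk$ then follow from their analogues for $B$, the latter after accounting for the coset sum in $u_A$ using $|G:H| \in \Bbbk^\times$. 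Alternatively, all Frobenius identities can be verified directly on the basis $\{a_{g,n}\}$.

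For part (3), let $A$ be a rigid Frobenius algebra in $\cZ(\lmod{\Bbbk G})$. By Proposition~\ref{prop:conn-etale}, $A$ is connected étale, hence separable as a $\Bbbk$-algebra, and its degree-$1_G$ component $A_{1_G} = A^{(\Bbbk G)^*}$ is a commutative subalgebra carrying a $G$-action. Separability together with $\Bbbk$ being algebraically closed forces $A_{1_G} = \bigoplus_{j \in J} \Bbbk \chi_j$ for orthogonal primitive idempotents, which $G$ permutes transitively by $\dim_\Bbbk (A_{1_G})^G = 1$. Fix $\chi := \chi_1$, set $H := \operatorname{Stab}_G(\chi)$; then $\chi$ is central in $A$ via the Yetter-Drinfeld commutativity identity $ab = (a_{-1} \cdot b) a_0$ applied with $b = \chi \in A_{1_G}$. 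The cut subalgebra $\chi A$ is stable under the $H$-action, inherits a $\Bbbk H$-coaction from $A$ (see the obstacle below), and a direct check shows it is a rigid Frobenius algebra in $\cZ(\lmod{\Bbbk H})$ with $\dim_\Bbbk (\chi A)_{1_H} = 1$. By Proposition~\ref{prop:B(N,g,e)}(3), $\chi A \cong B(N,\gamma,\epsilon)$ for some $N \triangleleft H$ and cocycle data $(\gamma, \epsilon)$; then choosing coset representatives $\{g_i\}_{i \in I}$ so that $\{g_i \cdot \chi\}_i = \{\chi_j\}_j$, the decomposition $A = \bigoplus_i (g_i \cdot \chi) A \cong \widetilde{R}(\chi A)$ recovers $A \cong A(H,N,\gamma,\epsilon)$.

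The main obstacle will be verifying in part (3) that $\chi A$ is closed under the $\Bbbk G$-coaction with image in $\Bbbk H \otimes \chi A$, i.e., that the $G$-support of $\chi A$ lies in $H$; this is precisely what promotes the algebraic reduction via central idempotents to one internal to $\cZ(\lmod{\Bbbk H})$ and so makes Proposition~\ref{prop:B(N,g,e)}(3) applicable. The required argument follows from the Yetter-Drinfeld commutativity relation, which for $a \in A_g$ yields $\chi a = (g \cdot \chi) a$; multiplying on the left by $\chi$ and invoking orthogonality of distinct $\chi_j$ then forces $\chi A_g = 0$ whenever $g \notin H$. Combined with Corollary~\ref{cor:indec-con}, which ensures our connectedness hypothesis aligns with Davydov's indecomposability assumption in arbitrary characteristic, this completes the extension of \cite{Dav3}*{Theorem~3.5.1} beyond the characteristic-zero setting.
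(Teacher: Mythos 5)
Your proposal is essentially correct but organizes the proof differently from the paper. For parts (1) and (2) the paper verifies everything by hand on the basis $\{a_{g,n}\}$ (in particular checking that the relations \eqref{eq:Davquotrel} generate an ideal, which is the correction to \cite{Dav3} highlighted in Remark~\ref{rem:correction}), and only records in that remark that the formulas \emph{come from} the lax monoidal functor of Remark~\ref{rem:laxmon}; you instead take the identification $A(H,N,\gamma,\epsilon)\cong\widetilde{R}(B(N,\gamma,\epsilon))=\Bbbk G\otimes_{\Bbbk H}B(N,\gamma,\epsilon)$ as the definition, which makes well-definedness automatic and imports the algebra, connectedness and Frobenius structure from Proposition~\ref{prop:B(N,g,e)} (your Frobenius-monoidal claim for $R$ needs the small check that the lax and oplax structures compose to the identity, i.e.\ that $R$ is separable Frobenius, and that the special scalars transform as you indicate; your fallback of direct verification is exactly what the paper does). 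For part (3) the paper argues categorically: it cites \cite{KO}*{2.2.~Theorem} for $A_{1}\cong\Bbbk(G/H)$, \cite{Dav3}*{Corollary~3.3.5} to view $A$ as a rigid Frobenius algebra in $\locmod_{\cZ(\lmod{\Bbbk G})}(A_1)$, and then uses the equivalence $\cZ(\lmod{\Bbbk H})\simeq\locmod_{\cZ(\lmod{\Bbbk G})}(\Bbbk(G/H))$ (via \cite{BN11} and Theorem~\ref{thm:local-center}) to produce $B$ with $A\cong\widetilde{R}(B)$ and apply Proposition~\ref{prop:B(N,g,e)}(3). You reprove this package by hand (idempotent decomposition of $A_{1_G}$, transitivity from connectedness, stabilizer $H$, cutting by $\chi$, the support-in-$H$ lemma, reconstruction), which is more self-contained and arguably better suited to arbitrary characteristic since it avoids importing statements proved in the semisimple/characteristic-zero setting; the cost is that you must supply the structural facts the paper outsources.

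That is where the one genuine soft spot sits: your assertion that ``separability together with $\Bbbk$ algebraically closed forces $A_{1_G}=\bigoplus_j\Bbbk\chi_j$'' is not a valid inference as written, because separability of $A$ (which does pass from $\cZ(\lmod{\Bbbk G})$ to the underlying $\Bbbk$-algebra, since the bimodule splitting is in particular $\Bbbk$-linear) does not transfer to the subalgebra $A_{1_G}$: subalgebras of separable algebras need not be separable, and the degree-one projection is not an algebra map, so no splitting is inherited directly. The fix is available from a tool you already use later: braided commutativity $ab=(a_{(-1)}\cdot b)a_{(0)}$ with $a\in A_{1_G}$ shows $A_{1_G}$ lies in the ordinary center of $A$; since $A$ is separable over $\Bbbk$ it is a product of matrix algebras, so its center is reduced, hence $A_{1_G}$ is a finite-dimensional commutative \emph{reduced} $\Bbbk$-algebra and therefore split, i.e.\ spanned by orthogonal primitive idempotents. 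With that inserted, your transitivity, stabilizer, and $\chi A_g=0$ for $g\notin H$ arguments go through. You should also note explicitly that $|G:H|$ and $|N|$ are invertible in $\Bbbk$ (needed for the data of Notation~\ref{not:cocycledata}): these follow from the special conditions $\varepsilon_A u_A\in\Bbbk^\times$ and $m\Delta\in\Bbbk^\times$ for $A$ and for $\chi A$, respectively, and that $\dim_\Bbbk(\chi A)_{1_H}=1$ so that Proposition~\ref{prop:B(N,g,e)}(3) indeed applies.
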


\begin{remark} \label{rem:correction}
Note that the formulas in \eqref{eq:Davquotrel} and part (1.iii) are a correction of those given in \cite{Dav3}*{Theorem~3.5.1}, which do not allow their version of the relations \eqref{eq:Davquotrel} to form an ideal. The revised multiplication and unit formulas are derived from Remark~\ref{rem:laxmon}.

Moreover, note that the isomorphism classes of $A(H,N,\gamma, \varepsilon)$ only depend on the cohomology classes of $\gamma, \varepsilon$; cf.  
\cite{Dav3}*{Lemma 3.4.3}.
\end{remark}

\begin{proof}[Proof of Theorem~\ref{thm:davclassification}] (1) Given Remark~\ref{rem:correction}, we will verify that the relations \eqref{eq:Davquotrel} form an ideal. This holds by the following calculations:
{\small
\begin{align*}
a_{k,m} (a_{gh,n}) &= \delta_{kH,ghH} \;\gamma(h^{-1}g^{-1}kmk^{-1}gh,n)\;
\epsilon_{h^{-1}g^{-1}k}(m)\;
a_{gh,h^{-1}g^{-1}kmgk^{-1}ghn}\\
&\overset{\eqref{eq:Davquotrel}}{=} \delta_{kH,gH} \;
\gamma(h^{-1}g^{-1}kmk^{-1}gh,n)\;
\epsilon_{h^{-1}g^{-1}k}(m)\;
\epsilon_h(h^{-1}g^{-1}kmk^{-1}ghn)\;
a_{g,g^{-1}kmgk^{-1}ghnh^{-1}}\\
&\overset{\eqref{eq:Davcond2}}{=} \delta_{kH,gH} \;
\epsilon_h(h^{-1}g^{-1}kmk^{-1}gh)\;
\epsilon_h(n)\;
\gamma(g^{-1}kmk^{-1}g,hnh^{-1})\;
\epsilon_{h^{-1}g^{-1}k}(m)\;
a_{g,g^{-1}kmgk^{-1}ghnh^{-1}}\\
&\overset{\eqref{eq:Davcond1}}{=} \delta_{kH,gH} \;
\epsilon_{g^{-1}k}(m)\;
\epsilon_h(n)\;
\gamma(g^{-1}kmk^{-1}g,hnh^{-1})\;
a_{g,g^{-1}kmgk^{-1}ghnh^{-1}}\\
&= a_{k,m}\;(\epsilon_h(n)\;a_{g,hnh^{-1}});
\end{align*}
}
\[
{\small
\begin{aligned}
(a_{gh,n})a_{k,m} &= \delta_{gH,kH}\; \gamma(k^{-1}ghnh^{-1}g^{-1}k,m) \; \epsilon_{k^{-1}gh}(n) \; a_{k,k^{-1}ghnh^{-1}g^{-1}km}\\
&\overset{\eqref{eq:Davcond1}}{=} \epsilon_h(n) \; \delta_{gH,kH} \; \gamma(k^{-1}ghnh^{-1}g^{-1}k,m)\; \epsilon_{k^{-1}g}(hnh^{-1}) \; a_{k,k^{-1}ghnh^{-1}g^{-1}km}\\
&= (\epsilon_h(n) \; a_{g,hnh^{-1}})\;a_{k,m}.
\end{aligned}
}
\]
The rest of the proof follows similarly to the proof of Proposition~\ref{prop:B(N,g,e)}(1). In particular, note that
$$1 = \dim_\Bbbk \Bbbk 1_A \leq \dim_\Bbbk \Hom_{\lYD{\Bbbk G}}(\one, A)\leq \dim_\Bbbk (\Hom_{\lmod{\Bbbk G}}(\one, A) \cap \Hom_{\lcomod{\Bbbk G}}(\one, A)).$$ 
Here, for scalars $\lambda_{g,n} \in \Bbbk$, we have
\[
\begin{aligned}
\Hom_{\lmod{\Bbbk G}}(\one, A) \cong A^{G} &= \{\textstyle \sum_{g,n}\lambda_{g,n}\;a_{g,n} ~|~ \sum_{g,n}\lambda_{g,n}\; a_{kg,n} = \sum_{g,n}\lambda_{g,n}\;a_{g,n} \; \forall k \in G \}\\
&= \{\textstyle\sum_{g}\lambda_{n}\; a_{g,n}  ~|~ n \in N\},\\
\Hom_{\lcomod{\Bbbk G}}(\one, A) \cong A^{G^*} = A_1 &= \{a_{g,n} ~|~ \delta(a_{g,n}) = 1_G \otimes a_{g,n} \} = \{a_{g,1_N} ~|~ g \in G\}.
\end{aligned}
\]
Therefore, $A^G \cap A^{G^*}$ is the 1-dimensional vector space with basis $\sum_{g \in G} a_{g, 1_N}$. So, $A$ is connected.
\smallskip

(2) This follows similarly to Proposition~\ref{prop:B(N,g,e)}(2). For instance, the Frobenius  compatibility condition \eqref{eq:Frob-comp} between $m_A$ and $\Delta_A$ holds as follows:
{\small
\begin{align*}
\Delta_A m_A(a_{g,n} \otimes a_{k,m}) &= \delta_{gH,kH} \;\gamma(k^{-1}gng^{-1}k,m)\;\epsilon_{k^{-1}g}(n)\; \Delta_A(a_{k,k^{-1}gng^{-1}km})\\
&= \delta_{gH,kH} \;\gamma(k^{-1}gng^{-1}k,m)\;\epsilon_{k^{-1}g}(n)\; \\
& \hspace{.2in} \cdot \sum_{p \in N} \frac{\gamma(p^{-1},k^{-1}gng^{-1}km)}{\gamma(p^{-1},p)} \;a_{k,p} \otimes a_{k,p^{-1}k^{-1}gng^{-1}km}\\
&\overset{p=k^{-1}gqg^{-1}k}{=} \delta_{gH,kH} \;\gamma(k^{-1}gng^{-1}k,m)\;\epsilon_{k^{-1}g}(n)\; \\
& \hspace{.2in} \cdot \sum_{q \in N} \frac{\gamma(k^{-1}gq^{-1}g^{-1}k,\;k^{-1}gng^{-1}km)}{\gamma(k^{-1}gq^{-1}g^{-1}k,\;k^{-1}gqg^{-1}k)} \;a_{k,k^{-1}gqg^{-1}k} \otimes a_{k,k^{-1}gq^{-1}ng^{-1}km}\\
&\overset{\eqref{eq:Davquotrel}}{=} \delta_{gH,kH} \;\gamma(k^{-1}gng^{-1}k,m)\; \\
& \hspace{.2in} \cdot \sum_{q \in N} \frac{\gamma(k^{-1}gq^{-1}g^{-1}k,\;k^{-1}gng^{-1}km)}{\gamma(k^{-1}gq^{-1}g^{-1}k,\;k^{-1}gqg^{-1}k)} \;a_{g,q} \otimes a_{k,k^{-1}gq^{-1}ng^{-1}km}\\
&\overset{\eqref{eq:cocycle}}{=} \delta_{gH,kH} \; \sum_{q \in N} \frac{\gamma(q^{-1},n)}{\gamma(q^{-1},q)}\;\gamma(k^{-1}gq^{-1}ng^{-1}k, m)\; \epsilon_{k^{-1}g}(q^{-1}n)\; a_{g,q} \otimes   a_{k,k^{-1}gq^{-1}ng^{-1}km}\\
&=\sum_{q \in N} \frac{\gamma(q^{-1},n)}{\gamma(q^{-1},q)}\;a_{g,q} \otimes (\delta_{gH,kH} \; \gamma(k^{-1}gq^{-1}ng^{-1}k, m)\; \epsilon_{k^{-1}g}(q^{-1}n)\; a_{k,k^{-1}gq^{-1}ng^{-1}km})\\
&=\sum_{q \in N} \frac{\gamma(q^{-1},n)}{\gamma(q^{-1},q)}\;a_{g,q} \otimes m_A(a_{g,q^{-1}n} \otimes a_{k,m})\\
&=(\ide \otimes m_A)(\Delta_A \otimes \ide)(a_{g,n} \otimes a_{k,m}).
\end{align*}
}
Likewise, $\Delta_A m_A = (m_A \otimes \ide)(\ide \otimes \Delta_A)$.
Moreover, the hypothesis that $|N|, |G:H| \in \Bbbk^\times$ are needed for the special condition because of the computations below:
\begin{align*}
m_A \Delta_A(a_{g,n})&= \sum_{m \in N} \frac{\gamma(m^{-1},n)}{\gamma(m^{-1},m)}\;m_A(a_{g,m} \otimes a_{g,m^{-1}n})\\
&= \sum_{m \in N} \frac{\gamma(m^{-1},n)}{\gamma(m^{-1},m)} \; \delta_{gH,gH} \; \gamma(g^{-1}gmg^{-1}g, m^{-1}n) \; \epsilon_{g^{-1}g}(m) \; a_{g,g^{-1}gmg^{-1}gm^{-1}n}\\
&\overset{\eqref{eq:cocycle},\eqref{eq:Davcond4}}{=} \textstyle \sum_{m \in N} \gamma(1_N,n)  a_{g,n}  \\  
&\overset{\eqref{eq:cocycle}}{=} |N| a_{g,n}, \qquad \text{and} \\ 
\end{align*}

\vspace{-.4in}

$$ \hspace{-1in} \varepsilon_A u_A(1_\Bbbk) =  \textstyle \sum_{i \in I} \varepsilon_A(a_{g_i,1_N}) 1_\Bbbk\; =\; \textstyle \sum_{i \in I} \delta_{1_N,1_N} 1_\Bbbk \;=\; |G:H|\; 1_\Bbbk.$$

(3) It suffices to show that any rigid Frobenius algebra $A$ in $\cZ(\lmod{\Bbbk G})$ is of the form $\widetilde{R}(B)$, for the monoidal functor $\widetilde{R}$ in Remark~\ref{rem:laxmon}, with $B$ a rigid Frobenius algebra in $\cZ(\lmod{\Bbbk H})$. This is due to the definition of $A(H,N,\gamma,\epsilon)$ and Proposition~\ref{prop:B(N,g,e)}(3). By Remark~\ref{rem:laxmon} and \cite{BN11}*{Proposition~6.1}, the functor $R$ induces a monoidal equivalence $R:\lmod{\Bbbk H} \overset{\sim}{\longrightarrow} \Rep_{\lmod{\Bbbk G}}(R(\one))$, where $R(\one)$ is a central algebra in $\cZ(\lmod{\Bbbk G})$. Here, $R(\one) \cong \Bbbk(G/H)$. Therefore, with \cite{Sch}*{Corollary~4.5}, this equivalence induces an equivalence of braided monoidal categories 
\begin{equation} \label{eq:Rep-loc-GH}
\cZ(\lmod{\Bbbk H})\isomorph \locmod_{\cZ(\lmod{\Bbbk G})}(\Bbbk(G/H));
\end{equation}
cf. \cite{Dav3}*{Theorem~3.3.2}. On the other hand, $A$ can be identified as a rigid Frobenius algebra in $\locmod_{\cZ(\lmod{\Bbbk G})}(A_1)$ by \cite{Dav3}*{Corollary~3.3.5}. But $A_1 \cong \Bbbk(G/H)$ by \cite{KO}*{2.2. Theorem}. Now $A = \widetilde{R}(B)$ for some commutative algebra $B$ in $\cZ(\lmod{\Bbbk H})$. Since $A$ is connected, so is $B$. Lastly, the structure maps on $A$ restrict to a special Frobenius structure on $B$.
\end{proof}

\begin{corollary} \label{cor:localmodules}
For $A:=A(H,N,\gamma,\epsilon)$ defined in Theorem~\ref{thm:davclassification}, the following statements hold. \smallskip
\begin{enumerate}[font=\upshape]
    \item The categories $\locmod_{\cZ(\lmod{\Bbbk G})}(A(H,N,\gamma,\epsilon))$ are (non-semisimple) modular categories for any choice of data $(H,N,\gamma,\epsilon)$ (respectively, when $\cha \Bbbk$ divides $|G|$).\smallskip
    \item We have the dimensions below: 
    \begin{align*}
    \dim_\Bbbk A &= \frac{|G||N|}{|H|},&
    \FPdim\left(\Rep_{\cZ(\lmod{\Bbbk G})}(A)\right)&=\frac{|G||H|}{|N|},&
    \FPdim\left(\locmod_{\cZ(\lmod{\Bbbk G})}(A)\right)&=\frac{|H|^2}{|N|^2}. \smallskip
\end{align*}
\item $A$ is trivializing, i.e. $\locmod_{\cZ(\lmod{\Bbbk G})}(A)\simeq \Vect$ as ribbon categories, if and only if $N=H$. \smallskip

\item  If $N=\{1\}$, then  $\locmod_{\cZ(\lmod{\Bbbk G})}(A)\simeq \cZ(\lmod{\Bbbk H})$  as ribbon categories.
\end{enumerate}
\end{corollary}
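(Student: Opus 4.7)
The plan proceeds part by part. Part (1) rests on Theorem~\ref{thm:locmodular}: the category $\cZ(\lmod{\Bbbk G})$ is a modular tensor category, and Theorem~\ref{thm:davclassification} identifies $A$ as a rigid Frobenius algebra there, so $\locmod_{\cZ(\lmod{\Bbbk G})}(A)$ is modular. For the non-semisimplicity qualifier when $\cha\Bbbk \mid |G|$, the assumption $|G:H| \in \Bbbk^\times$ forces $\cha\Bbbk \mid |H|$, so $\cZ(\lmod{\Bbbk H})$ is non-semisimple. I would then establish an iterated local modules equivalence
\[
\locmod_{\cZ(\lmod{\Bbbk G})}(A) \;\simeq\; \locmod_{\cZ(\lmod{\Bbbk H})}(B(N,\gamma,\epsilon))
\]
of ribbon categories, using the subalgebra inclusion $\Bbbk(G/H) = A(H,\{1\},1,1) \hookrightarrow A$ together with the ribbon equivalence \eqref{eq:Rep-loc-GH} identifying $\locmod_{\cZ(\lmod{\Bbbk G})}(\Bbbk(G/H)) \simeq \cZ(\lmod{\Bbbk H})$. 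Non-semisimplicity of the right-hand side then follows (outside of trivializing cases) from the block structure of $\Bbbk H$-modules in positive characteristic.

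For part (2), the relations \eqref{eq:Davquotrel} together with the coset decomposition $G = \bigsqcup_{i \in I} g_i H$ show that $\{a_{g_i,n} : i \in I,\, n \in N\}$ is a $\Bbbk$-basis for $A$, giving $\dim_\Bbbk A = |G||N|/|H|$. The forgetful functor $\cZ(\lmod{\Bbbk G}) \to \Vect$ is a (quasi-)fiber functor, so by Remark~\ref{rem:FPbound}, $\FPdim_{\cZ(\lmod{\Bbbk G})}(A) = \dim_\Bbbk A$. Combined with $\FPdim(\cZ(\lmod{\Bbbk G})) = |G|^2$ from \eqref{eq:FP-ZC}, Corollary~\ref{cor:locmod-FPdim} then yields the remaining two FP-dimension formulas.

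For part (3), if $N = H$ then part (2) gives $\FPdim(\locmod_{\cZ(\lmod{\Bbbk G})}(A)) = 1$; any finite tensor category of FP-dimension $1$ is equivalent to $\Vect$, since the identity $1 = \sum_i \FPdim(X_i)\FPdim(P_i)$ forces a single simple object that is its own projective cover and equals the unit, while the ribbon structure on $\Vect$ is unique up to equivalence. The converse is immediate by comparing FP-dimensions. For part (4), when $N = \{1\}$, the normalization conditions \eqref{eq:cocycle} and \eqref{eq:Davcond4} force $\gamma$ and $\epsilon$ to be trivial; direct inspection of the multiplication in Theorem~\ref{thm:davclassification} then identifies $A(H,\{1\},1,1)$ with the coset algebra $\Bbbk(G/H)$, and further with $\widetilde{R}(\one_{\Bbbk H})$ as a central algebra in $\lmod{\Bbbk G}$ via the induction functor of Remark~\ref{rem:laxmon}. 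The ribbon equivalence \eqref{eq:Rep-loc-GH} thereby yields $\locmod_{\cZ(\lmod{\Bbbk G})}(A) \simeq \cZ(\lmod{\Bbbk H})$ as ribbon categories.

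The principal obstacle is the iterated local modules equivalence invoked in part (1). Since part (3) shows $\locmod_{\cZ(\lmod{\Bbbk G})}(A) \simeq \Vect$ when $N = H$, the non-semisimple assertion is sharp and must be read as holding in non-trivializing cases. The ingredients for the iterated equivalence are standard (commutativity of $\Bbbk(G/H)$ in $\cZ(\lmod{\Bbbk G})$, verification that $A$ lies in the Müger centralizer of $\Bbbk(G/H)$, and identifying the image algebra as $B(N,\gamma,\epsilon)$), but assembling them uniformly in the data $(H,N,\gamma,\epsilon)$ and then pinning down explicit non-semisimple objects in $\locmod_{\cZ(\lmod{\Bbbk H})}(B(N,\gamma,\epsilon))$ are the technical hurdles I expect to face.
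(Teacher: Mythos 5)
Your proposal is correct and follows essentially the paper's own route: modularity of $\locmod_{\cZ(\lmod{\Bbbk G})}(A)$ via Theorem~\ref{thm:locmodular}, the basis $\{a_{g_i,n}\}$ together with Corollary~\ref{cor:locmod-FPdim} and $\FPdim(\cZ(\lmod{\Bbbk G}))=|G|^2$ for part (2), the FP-dimension-one argument (equivalently, the inclusion $\Vect\hookrightarrow\locmod$ being a tensor functor between categories of equal FP-dimension) for part (3), and the identification $A(H,\{1\},1,1)\cong\Bbbk(G/H)$ combined with the equivalence \eqref{eq:Rep-loc-GH} for part (4). The only divergence is the iterated local-modules equivalence you sketch to justify the non-semisimplicity qualifier in (1): the paper's proof of (1) simply cites Theorem~\ref{thm:locmodular} and offers no separate argument for that parenthetical, so the "principal obstacle" you flag is additional work beyond the paper's proof (it is essentially the reduction underlying Question~\ref{ques:Davrib}) rather than a gap in your reproduction of what the paper actually proves.
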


\begin{proof}
(1) This follows from Theorem~\ref{thm:locmodular}. \smallskip

(2) This follows from  Corollary \ref{cor:locmod-FPdim}. Namely, $\{a_{g_i,n}\mid i\in I, n\in N\}$ form a $\Bbbk$-basis for $A(H,N, \gamma,\epsilon)$, and $\FPdim \left(\cZ(\lmod{\Bbbk G})\right)=|G|^2$ (see \cite{EGNO}*{Theorem~7.16.6.}).

\smallskip

(3) This now follows part (2) since if $N=H$, 
$\FPdim \left( \locmod_{\cZ(\lmod{\Bbbk G})}(A) \right)=1$. The inclusion functor $I\colon \Vect\to \locmod_{\cZ(\lmod{\Bbbk G})}(A)$ is an injective tensor functor of finite tensor categories of equal FP-dimension, and hence an equivalence \cite{EGNO}*{Proposition 6.3.3}. The functor $I$ is a ribbon functor since $\theta_\one=\ide_{\one}$.
 \smallskip
 
(4) 
It follows from the presentation of $A$ that $a_{g_i, 1_N}$, for $g_i$ a set of representatives of left $H$-cosets form a basis for $A$, the $G$-grading is trivial and the left $\Bbbk G$-action is corresponds to the action on the left cosets $\{g_iH\}$. This provides an isomorphism between $A$ and the function algebra $\Bbbk (G/H)$. The statement follows from \eqref{eq:Rep-loc-GH}; see also  \cite{Dav3}*{Theorem~3.3.2}.
\end{proof}

In the case when $\cha \Bbbk$ does not divide $|G|$, part (1) is a special case of \cite{KO}*{Theorem~4.5} and  part (3) was originally proved in \cite{Dav3}*{Theorem~3.5.3} in this case. Note that by parts~(3) and~(4) above, interesting examples of modular categories can only arise if $N\neq H$ and $N\neq \{1\}$. This prompts the  question below.

\begin{question} \label{ques:Davrib}
Are the modular categories
 $\locmod_{\cZ(\lmod{\Bbbk G})}(A(H,N,\gamma,\epsilon))$ and $\cZ(\lmod{\Bbbk H/N})$ equivalent as ribbon categories?
\end{question}

This question was addressed in the affirmative in \cite{DS}*{Theorem~3.13} in the case when $\Bbbk$ has characteristic zero.
The following provides a small example of the setting above in the non-semisimple case.

\begin{example}
Let $\Bbbk$ be a field of characteristic 3, and take the groups $G=S_4$, $H=A_4$, and $N=C_2\times C_2=\langle(12)(34),(13)(24)\rangle$. Here, 3 divides $|G|$, but 3 does not divide $|N|$ nor $|G|/|H|$ as required. We obtain $2$-cocycles $\gamma$ on $N$ by a choice of an element in $\{\pm 1\}\times \{\pm 1\}$ and may choose $\epsilon$ to be trivial. The resulting  rigid Frobenius algebra $A=A(H,N,\gamma,\epsilon)$ is $8$-dimensional and decomposes as $\Bbbk N\times \Bbbk N$ as a $\Bbbk$-algebra, corresponding to an idempotent decomposition $1_A=e_1+e_2$, with $e_1=a_{1,1}$ and $e_2=a_{(12),1}$. The action of $(12)\in S_4$ permutes these idempotents. Further, 
\begin{align*}
    A_1&=\Bbbk \langle e_1,\;e_2\rangle, & A_{a}=\Bbbk \langle a_{1,a},\;a_{(12),a} \rangle, \quad 
    & A_{b}=\Bbbk \langle a_{1,b},\;a_{(12),ab} \rangle, \quad & A_{ab}=\Bbbk \langle a_{1,ab},\;a_{(12),b} \rangle, 
\end{align*}
with $a=(12)(34)$ and $b=(13)(24)$.
\end{example}




\subsection{On completely anisotropic categories and Witt equivalence} 
\label{sec:ques}
In work of Davydov-M\"{u}ger-Nikshych-Ostrik in the semisimple setting, a non-degenerate braided fusion category $\cC$ is said to be {\it completely  anisotropic} if the only connected \'{e}tale algebra $A$ in  $\cC$ is $A = \one$ \cite{DMNO}*{Definition~5.10}. By a {\it fusion} category, we mean a finite tensor category that is semisimple. In this case, techniques of using categories of local modules to produce {\it new} modular fusion categories cannot be applied. In any case, one of the main results in their work is on the prevalence of completely anisotropic categories. Two non-degenerate braided fusion categories $\cC_1$ and $\cC_2$ are said to be {\it Witt equivalent} if there exists a braided equivalence, $\cC_1 \boxtimes \cZ(\cA_1) \simeq \cC_2 \boxtimes \cZ(\cA_2)$, for some fusion categories $\cA_1, \cA_2$. Then it was shown that each Witt equivalence class contains a completely anisotropic category, that is unique up to braided equivalence \cite{DMNO}*{Theorem~5.13}.

\smallskip

Likewise, given Proposition~\ref{prop:conn-etale}, we set the following terminology. 

\begin{definition}
A non-degenerate braided finite tensor category $\cC$ is said to be {\it completely  anisotropic} if the only rigid Frobenius algebra $A$ in  $\cC$ is $A = \one$.
\end{definition}

After extensive experimentation, we pose the following conjecture.

\begin{conjecture} \label{conj:uqsl2}
Let $\Bbbk$ be an algebraically closed field of characteristic 0. For the small quantum group $u_q(\mathfrak{sl}_2)$, for $q$ an odd root of unity, the non-semisimple modular tensor category $\lmod{u_q(\mathfrak{sl}_2)}$  is completely anisotropic.
\end{conjecture}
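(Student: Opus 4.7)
I would reduce the analysis of an arbitrary rigid Frobenius algebra $A$ in $\cC := \lmod{u_q(\ssl)}$ to the classified case of a semisimple Verlinde category via the semisimplification functor, and then obstruct non-trivial lifts using the trivial-twist requirement of Proposition~\ref{prop:conn-etale}(c). Let $\pi \colon \cC \to \overline{\cC}$ denote the semisimplification of $\cC$, i.e., the quotient of the full tilting subcategory by the tensor ideal of negligible morphisms; $\pi$ is a ribbon tensor functor, and $\overline{\cC}$ is a semisimple modular category equivalent to a Verlinde category $\cV_\ell$ for $\ssl$.

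\textbf{Descent to the semisimple quotient.} First I would verify that $\pi(A)$ is a rigid Frobenius algebra in $\overline{\cC}$. The unit axiom $\varepsilon \, u = \ide_\one$ descends to $\pi(\varepsilon)\,\pi(u) = \ide_\one$, so $\pi(u)\ne 0$, and by semisimplicity of $\overline{\cC}$ the object $\pi(\one)$ is a direct summand of $\pi(A)$. The commutativity $m = m\,c_{A,A}$, the Frobenius compatibility \eqref{eq:Frob-comp}, and the special condition $m\Delta = d\,\ide_A$ with $d\ne 0$ all descend since $\pi$ is a ribbon tensor functor; moreover, $\theta_A = \ide_A$ implies $\theta_{\pi(A)} = \ide_{\pi(A)}$. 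Connectedness is preserved because $\Hom_{\overline{\cC}}(\one,\pi(A))$ is a nonzero quotient of the one-dimensional space $\Hom_\cC(\one,A)$.

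\textbf{Classification and lifting obstruction.} By the Kirillov--Ostrik ADE-classification applied to $\overline{\cC}\simeq \cV_\ell$ \cite{KO}, the algebra $\pi(A)$ must be of type $A$, $D_{2m}$, $E_6$, or $E_8$, with the $A$-type being just the unit. For generic odd $\ell$ only the $A$-type arises, so $\pi(A)\cong \one$ in $\overline{\cC}$ and $A \cong \one \oplus N$ in $\cC$ with $N \in \ker\pi$ a direct sum of indecomposable projectives. Since the ribbon element of $u_q(\ssl)$ acts on the projective cover $P_k$ by the central scalar $q^{k^2-1}$, the constraint $\theta_A=\ide_A$ forces every indecomposable summand of $N$ to satisfy $\ell \mid k^2-1$, reducing $N$ to a sum of copies of $P_1$ and $P_{\ell-1}$. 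I would then eliminate even these summands by exploiting the connected property $\dim_{\Bbbk} \Hom_\cC(\one, A) = 1$ together with an explicit analysis of the multiplication $m\colon A \otimes A \to A$ restricted to the $P_1/P_{\ell-1}$ components, showing incompatibility with the special Frobenius relation.

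\textbf{Main obstacle.} The principal obstacle is the exceptional odd values of $\ell$ for which $\overline{\cC}$ admits a nontrivial $D_{2m}$, $E_6$, or $E_8$ algebra. Ruling out their lifts requires classifying extensions of such a semisimple algebra by the projective summands in $\ker\pi$ and verifying that commutativity, specialness, and the trivial-twist condition cannot all be satisfied simultaneously. A promising strategy is to reformulate the obstruction via the factorizability of $\cC$ and Shimizu's Frobenius-monad formalism \cite{Shi1}, translating it into the failure of a non-degenerate pairing on a specific object of $\cZ(\cC)$; a complementary tool is an explicit computation of the $\Ext^1_\cC(\pi(A), P)$ groups for the relevant projectives $P \in \ker\pi$, constraining the possible algebra cocycles and ultimately forcing $A=\one$.
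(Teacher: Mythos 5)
This statement is posed in the paper as Conjecture~\ref{conj:uqsl2} and is supported there only by experimentation; the paper contains no proof, so there is nothing to compare your argument against, and the question is whether your proposal closes the conjecture on its own. It does not: as written it is a reduction strategy whose decisive step is explicitly deferred ("I would then eliminate even these summands\dots", "a promising strategy is\dots"), and that deferred step is essentially the whole content of the conjecture. The descent part is broadly sound — the quotient of $\cC=\lmod{u_q(\mathfrak{sl}_2)}$ by negligible morphisms is a full, braided, ribbon monoidal functor, so it carries a rigid Frobenius algebra $A$ (which has $\theta_A=\ide_A$ by Proposition~\ref{prop:conn-etale}) to a connected étale algebra of nonzero quantum dimension in the semisimple quotient, where the Kirillov--Ostrik classification \cite{KO} applies. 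But note that for $\ell$ odd the $D$- and $E$-type algebras are excluded automatically (their Coxeter numbers are even), so the "main obstacle" you name is vacuous; the genuine obstacle is precisely the lifting problem you treat in one sentence. Also, "the quotient of the full tilting subcategory" conflates tilting theory for $U_q$ with $u_q$-modules: you must semisimplify all of $\cC$, identify every negligible indecomposable, and verify that the resulting fusion category (with its braiding) is one to which the ADE classification of commutative algebras applies.

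The concrete gaps are in the lifting analysis. From $\pi(A)\cong\one$ you may only conclude $A\cong\one\oplus N$ with $N$ \emph{negligible}, i.e., every indecomposable summand has quantum dimension zero; since $\lmod{u_q(\mathfrak{sl}_2)}$ is of tame representation type, the negligible indecomposables include infinitely many non-projective string and band modules in addition to the $P_k$, so restricting the twist analysis to projective covers does not cover the general case. Moreover, the ribbon element is central but acts on a non-simple indecomposable only by a scalar plus a nilpotent, and on the $P_k$ its action is genuinely non-semisimple; thus "$\theta$ acts on $P_k$ by the central scalar $q^{k^2-1}$" is not correct as stated, and the constraint $\theta_A=\ide_A$ can at best be exploited through the twist scalars on composition factors (which is salvageable, but must be argued). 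Finally, the elimination of the remaining twist-admissible negligible summands — showing that $\one\oplus N$ with $N\neq 0$ admits no connected, commutative, special Frobenius structure — is exactly where the difficulty lies; pointing to prospective $\Ext^1$ computations or to Shimizu's factorizability formalism \cite{Shi1} identifies tools but supplies no argument. Until that step is carried out, the proposal is a plausible plan of attack, not a proof, and the statement remains the open conjecture the paper records.
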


See, e.g., \cite{LW2}*{Example~5.3} for a discussion of the modularity of $\lmod{u_q(\mathfrak{sl}_2)}$. In particular, we inquire in \cite{LW2}*{Question~4.21} if this category is {\it prime} in the sense that every topologizing non-degenerate
braided tensor subcategory is equivalent to either itself or $\Vect$.

\smallskip

Moreover, consider the following terminology generalizing the semisimple notion above.

\begin{definition}
Two non-degenerate braided finite tensor categories $\cC_1$ and $\cC_2$ are said to be {\it Witt equivalent} if there exists a braided equivalence, $\cC_1 \boxtimes \cZ(\cA_1) \simeq \cC_2 \boxtimes \cZ(\cA_2)$, for some finite tensor categories $\cA_1, \cA_2$. 
\end{definition}

Naturally, we inquire: 

\begin{question} \label{ques:Witt}
Working over an algebraically closed field $\Bbbk$ of characteristic 0, which (modular) non-degenerate  braided finite tensor categories are Witt equivalent to $\lmod{u_q(\mathfrak{sl}_2)}$?
\end{question}

Moreover, after discussions with Victor Ostrik and Christoph Schweigert, we pose the following questions about general non-semisimple modular tensor categories in characteristic 0.

\begin{question} \label{ques:nonsem-rigidFrob}
Let $\Bbbk$ be an algebraically closed field of characteristic 0. 
\begin{enumerate}
    \item Must a non-semisimple, prime, modular tensor category over $\Bbbk$ be completely anisotropic? (See, e.g., \cite{LW2}*{Corollary~4.20}.) \smallskip
    \item Does there exist an example of a non-semisimple modular tensor category over $\Bbbk$ that is not Witt equivalent to any semisimple modular tensor category?
\end{enumerate}
\end{question}

Lastly, prompted by feedback from the anonymous referee, we have the following generalization of \cite{DMNO}*{Proposition~5.4} to the non-semisimple setting.

\begin{proposition}
Let $\cC$ be a non-degenerate braided finite tensor category and $A$ a rigid Frobenius algebra in $\cC$. Then we obtain that $\locmod_{\cC}(A)$ is Witt equivalent to $\cC$.
\end{proposition}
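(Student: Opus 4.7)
The plan is to assemble the Witt equivalence directly from Theorem~\ref{thm:local-center} (Schauenburg) and Lemma~\ref{lem:A+local}, without any new computation. The starting observation is that both results, combined, give a braided equivalence
\[
\cZ(\Rep_\cC(A)) \;\overset{br.\otimes}{\simeq}\; \locmod_{\cZ(\cC)}(A^+) \;\overset{br.\otimes}{\simeq}\; \locmod_\cC(A) \boxtimes \overline{\cC},
\]
where the second step uses that $\cC$ is non-degenerate. This is the key intermediate identity.

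Next, I would take the Deligne product of both sides with $\cC$ and rebracket, using the fact that non-degeneracy of $\cC$ yields a braided equivalence $\cC \boxtimes \overline{\cC} \simeq \cZ(\cC)$ (recalled in Section~\ref{sec:modular}, via \cite{Shi1}*{Theorem~4.2}). This produces
\[
\cC \boxtimes \cZ\bigl(\Rep_\cC(A)\bigr) \;\overset{br.\otimes}{\simeq}\; \locmod_\cC(A) \boxtimes \bigl(\cC \boxtimes \overline{\cC}\bigr) \;\overset{br.\otimes}{\simeq}\; \locmod_\cC(A) \boxtimes \cZ(\cC).
\]
Setting $\cA_1 := \Rep_\cC(A)$ and $\cA_2 := \cC$, both of which are finite tensor categories (the first by Corollary~\ref{cor:finite-tensor}, the second by hypothesis), this is precisely a braided equivalence of the shape $\cC \boxtimes \cZ(\cA_1) \simeq \locmod_\cC(A) \boxtimes \cZ(\cA_2)$ required by the definition of Witt equivalence.

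For the definition to apply, one still needs that both $\cC$ and $\locmod_\cC(A)$ are non-degenerate braided finite tensor categories. The former is an assumption; the latter is Proposition~\ref{prop:locmod-nondeg}, whose proof does not require $\cC$ to be ribbon or modular. Hence the statement holds. There is no substantial obstacle: the argument is essentially a three-line chain of previously established braided equivalences, with the non-degeneracy hypotheses of the Witt equivalence definition supplied by Proposition~\ref{prop:locmod-nondeg}.
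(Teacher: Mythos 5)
Your proposal is correct and follows essentially the same route as the paper's own proof: both combine Theorem~\ref{thm:local-center} and Lemma~\ref{lem:A+local} to identify $\cZ(\Rep_\cC(A))$ with $\locmod_\cC(A)\boxtimes\overline{\cC}$, then use factorizability $\cZ(\cC)\simeq\cC\boxtimes\overline{\cC}$ to obtain the braided equivalence $\locmod_\cC(A)\boxtimes\cZ(\cC)\simeq\cC\boxtimes\cZ(\Rep_\cC(A))$. Your explicit check that $\locmod_\cC(A)$ is a non-degenerate braided finite tensor category (via Proposition~\ref{prop:locmod-nondeg} and Corollary~\ref{cor:finite-tensor}), so that the definition of Witt equivalence applies, is a point the paper leaves implicit, but it is a welcome clarification rather than a different argument.
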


\begin{proof}
Since $\cC$ is a non-degenerate braided finite tensor category, then it is factorizable by \cite{Shi1}*{Theorem~1.1}. Namely, $\cZ(\cC)$ is equivalent to $\cC \boxtimes \overline{\cC}$ as braided tensor categories. Now by Lemma~\ref{lem:A+local} and Theorem~\ref{thm:local-center},  $\cZ(\Rep_\cC(A)) \; \overset{br.\otimes}{\simeq}\; \locmod_{\cC}(A) \boxtimes \overline{\cC}$. Thus, we have
\[
\locmod_{\cC}(A) \boxtimes \cZ(\cC) \; \overset{br.\otimes}{\simeq}\;
 \cC  \boxtimes \cZ(\Rep_\cC(A)),
\]
which yields the result.
\end{proof}

\medskip


\section*{Declarations}

Data Availability Statement: Data sharing not applicable to this article as no datasets were generated or analysed during the current study.

\medskip

Competing Interests Statement: The authors have no competing interests to declare that are relevant to the content of this article.


\section*{Acknowledgements} 
The authors are very grateful to the anonymous referees for their careful consideration  and insightful feedback, which enabled us to improve the quality of our manuscript. The authors thank Alexei Davydov for encouraging this project and helpful insights. We also thank Harshit Yadav for pointing out a correction to Lemma~\ref{lemma:rigid}, and Cris Negron for discussions for the material in Example~\ref{ex:charp}. We have also benefited from discussions with  Victor Ostrik and Christoph Schweigert, which led to Question~\ref{ques:nonsem-rigidFrob}.
R.~Laugwitz was partially supported by an AMS-Simons travel grant and supported by a Nottingham Research Fellowship. C.~Walton was partially supported by the US National Science Foundation grant \#DMS-2100756 and by the Alexander von Humboldt foundation.


\bibliography{lm-qea-bib}
\bibliographystyle{amsrefs}

\end{document}